\numberwithin{equation}{section}
\theoremstyle{plain}
\newtheorem{theorem}{Theorem}[section]
\newtheorem{corollary}[theorem]{Corollary}
\newtheorem{lemma}[theorem]{Lemma}
\newtheorem{proposition}[theorem]{Proposition}
\theoremstyle{remark}
\theoremstyle{definition}
\newtheorem{definition}[theorem]{Definition}
\def\R{\mathbb{R}}
\def\C{\mathbb{C}}
\def\Z{\mathbb{Z}}
\newcommand{\lie}[1]{\mathfrak{#1}}
\newcommand{\lier}[1]{\mathfrak{#1}_{0}}
\newcommand{\Span}[1]{\mathrm{Span}\langle{#1}\rangle}
\def\gK{\lie{g},K}
\def\brgK{(\gK)}
\def\Hom{\mathrm{Hom}}
\def\Ext{\mathrm{Ext}}
\def\modulo{\mathrm{mod}\,}
\def\tr{\mathrm{tr}}
\def\diag{\mathrm{diag}}
\def\Ind{\mathrm{Ind}}
\def\Ad{\mathrm{Ad}}
\def\pr{\mathrm{pr}}
\def\sgn{\mathrm{sgn}}
\def\I{\sqrt{-1}}
\def\ds{\displaystyle}
\title[Socle filtrations of representations of $SL(3,\R)$ and 
$Sp(2,\R)$]
{The socle filtrations of principal series representations of 
$SL(3,\R)$ and $Sp(2,\R)$}
\author{Naoki Hashimoto}
\address{
Department of Physics and Mathematics, 
Aoyama Gakuin University, 
5-10-1, Fuchinobe, Chuo-ku, Sagamihara, Kanagawa 252-5258, Japan. }
\email{nhashimoto0514@gmail.com}
\author{Kenji Taniguchi}
\address{
Department of Physics and Mathematics, 
Aoyama Gakuin University, 
5-10-1, Fuchinobe, Chuo-ku, Sagamihara, Kanagawa 252-5258, Japan. }
\email{taniken@gem.aoyama.ac.jp}
\author{Go Yamanaka}
\address{
Department of Physics and Mathematics, 
Aoyama Gakuin University, 
5-10-1, Fuchinobe, Chuo-ku, Sagamihara, Kanagawa 252-5258, Japan. }
\email{tohma17@gmail.com}
\subjclass[2010]{Primary 22E46}
\keywords{principal series representation, socle filtration, 
$SL(3,\R)$, $Sp(2,\R)$}
\begin{document}

\begin{abstract}
We study the structure of the $\brgK$-modules of 
the principal series representations of $SL(3,\R)$ and $Sp(2,\R)$ 
induced from minimal parabolic subgroups, 
in the case when the infinitesimal character is nonsingular. 
The composition factors of these modules are known by 
Kazhdan-Lusztig-Vogan conjecture. 
In this paper, we give complete descriptions of 
the socle filtrations of these modules. 

\end{abstract}

\maketitle

\section{Introduction}
\label{section:introduction}

Given a representation of a group or an algebra, 
the determination of its composition series is a natural problem. 
In \cite{KL}, Kazhdan and Lusztig conjectured that 
the composition factors of Verma modules of complex reductive Lie algebras 
are determined by the value of so-called Kazhdan-Lusztig polynomials at 
$q=1$. 
This conjecture is called a Kazhdan-Lusztig conjecture, 
and it is proved independently by Beilinson and Bernstein (\cite{BB}) 
and by Brylinski and Kashiwara (\cite{BK}). 

Let $G$ be a real reductive Lie group, 
and $K$ a maximal compact subgroup of $G$. 
The complexified Lie algebra of $G$ is denoted by $\lie{g}$. 
By Harish-Chandra's subquotient theorem, 
every irreducible $\brgK$-module, 
is realized as a subquotient module 
of some principal series module. 
Moreover, Langlands (\cite{L}) and Mili\v{c}i\'{c} 
classified the irreducible $\brgK$-modules 
in terms of standard modules, namely some kind of generalized 
principal series modules. 
As for the composition factor problem for standard modules, 
Vogan generalized Kazhdan-Lusztig conjecture to the standard modules 
of $G$ (\cite{VIC-II}), which we call the Kazhdan-Lusztig-Vogan (KLV) conjecture. 
This conjecture was proved by himself (\cite{VIC-III}). 

As introduced above, the composition factor problem for standard modules is 
completely solved in 1980's. 
On the other hand, the composition series problem, 
namely the complete determination of the socle filtrations 
of standard modules, 
is very difficult. 
Here, for a $\brgK$-module $V$ of finite length, 
the socle is defined as the largest semisimple submodule of $V$. 
Let $V_{1}$ be the socle of $V$, 
and let $V_{2}$ be the socle of $V/V_{1}$, and so on. 
The filtration of $V$ so obtained is called the socle filtration of $V$. 

For the groups of real rank one, 
this problem is completely solved by Collingwood (\cite{C}). 
As for higher rank groups, there are many researches on 
the structure of degenerate principal series 
(for example \cite{HT}, \cite{LeeS-T}, \cite{LL-1}, \cite{LL-2}). 
But, as far as the authors know, for the principal series representations 
induced from minimal parabolic subgroups, 
there are few complete results. 
The intertwining operators 
between principal series representations are explored 
in the cases of $G=SL(3,\C)$ by Tsuchikawa \cite{Tsu}, 
$G=SL(3,\R)$ by A. I. Fomin \cite{F} 
and $G=Sp(2,\R)$ by Mui\'{c} \cite{Muic}, 
and they obtained some partial results on the structure of 
principal series representations. 
The problem of complete determination of socle filtrations of 
principal series representations is still open for groups 
with higher real rank. 
The main objective of this paper is to describe completely 
the socle filtrations of the principal series modules 
induced from minimal parabolic subgroups with nonsingular integral 
infinitesimal characters, 
in the cases of the groups $SL(3,\R)$ and $Sp(2,\R)$. 
The authors hope that the results of this paper become 
cornerstones for solving the composition series problem generally. 

One application of the results of this paper is 
the description of the socle filtration of the standard Whittaker 
$\brgK$-modules \cite{T2} defined by the second author. 
Since the structures of these modules resemble those of 
the principal series modules in some parts, 
the results of this paper play important roles 
when we determine the socle filtrations of 
the standard Whittaker $\brgK$-modules of $SL(3,\R)$ and $Sp(2,\R)$. 
Details of this study on Whittaker modules will be reported elsewhere. 

We explain the contents of this paper. 
In Section~\ref{section:Langlands classification}, 
we review the Langlands classification of 
irreducible $\brgK$-modules 
after Vogan's book \cite{V-Green}. 
In Section~\ref{section:outline}, 
we explain the outline of our computation. 
We frequently use the shift operators of $K$-types, 
so we also explain them in this section. 
Section~\ref{section:other tools} consists of the explanation of 
other tools used in this paper. 
Section~\ref{section:representation of K} 
consists of the review of the well-known structure of 
the finite dimensional representations of $\lie{sl}(2)$, 
since the maximal compact subgroups of $SL(3,\R)$ and $Sp(2,\R)$ are 
$SO(3)$ and $U(2)$, respectively.

We investigate the structure of principal series modules of 
$SL(3,\R)$ in Sections~\ref{section:irreducible modules. SL(3)} -- 
\ref{section:main results for SL(3)}. 
In Section~\ref{section:irreducible modules. SL(3)}, 
we review the classification of irreducible $\brgK$-modules of $SL(3,\R)$, 
and the $K$-spectra of these irreducible modules are presented 
in Section~\ref{section:K-spectra, SL(3)}. 
In Section~\ref{section:shift operators, SL(3)}, 
we write down the shift operators of $K$-types for $SL(3,\R)$ explicitly. 
These operators are used for determination of candidates of irreducible 
submodules in principal series modules. 
This is done in Section~\ref{section:P_{-1}=p_{-2}=0} and 
\ref{section:candidates for irr sub. SL(3)}. 
After these preparations, 
we determine the socle filtrations of principal series modules 
of $SL(3,\R)$ completely in Section~\ref{section:main results for SL(3)}. 
The main result is Theorem~\ref{theorem:main results for SL(3)}. 

We treat the case of $Sp(2,\R)$ in 
Sections~\ref{section:the group Sp(2)} 
-- 
\ref{section:determination, X(11)}. 
We review the structure of the group $Sp(2,\R)$ 
in Section~\ref{section:the group Sp(2)}, 
and that of irreducible modules of it 
in Section~\ref{section:irreducible modules of Sp(2)}. 
$K$-spectra of irreducible modules are presented in 
Section~\ref{section:K-spectra, Sp(2)}. 
In Section~\ref{section:shift operators, Sp(2)}, 
we write down the shift operators of $K$-types explicitly. 
In Section~\ref{section:candidates for submodules of PS}, 
The candidates of irreducible submodules are obtained, 
and the socle filtrations of the principal series modules 
in the block $PSO(4,1)$ are determined. 
Those of the principal series modules in the block 
$PSO(3,2)$ are determined in 
Sections~\ref{section:determination, X(00)} and 
\ref{section:determination, X(11)}. 
The main results for the case of $Sp(2,\R)$ are 
Theorems~\ref{theorem:mail result, PSO(4,1)}, 
\ref{theorem:main results for Sp(2), X(gamma_{10})} 
and 
\ref{theorem:main results for Sp(2), X(gamma_{11})}.

Before going ahead, we fix some notation. 
For a Lie group $L$, its Lie algebra is denoted by 
the corresponding German letter 
$\lier{l}$ and its complexification by $\lie{l}$. 
The complex dual space of $\lie{l}$ is denoted by $\lie{l}^{\ast}$. 
For a compact Lie group $L$, the set of equivalence classes of 
irreducible representations of $L$ is denoted by $\widehat{L}$. 
For $\tau \in \widehat{L}$, 
its representation space is denoted by $V_{\tau}^{L}$. 
If $\tau$ is specified by a highest weight $\lambda$ of it, 
we also denote it by $V_{\lambda}^{L}$. 
This notation will be used for irreducible finite dimensional 
representations of $\lie{sl}(2)$. 
Let $A$ be a Lie group and $B$ its closed subgroup. 
For a representation $(\pi,V)$ of $A$ and an irreducible representation 
$(\tau,W)$ of $B$, 
the $\tau$-isotypic subspace of $V$ is denoted by $V(\tau)$. 

We denote the matrix unit $(\delta_{i,k}\, \delta_{j,l})_{i,j}$ by 
$E_{k,l}$. 
The diagonal matrix with the entries $a_{1}, \dots, a_{k}$ is 
denoted by $\diag(a_{1},\dots,a_{k})$. 
Once a basis $\{v_{i}\}$ of a vector space $V$ is fixed, 
we often denote an element of $V$ by its coordinate with respect to 
this basis. 
For example, 
once a basis $\{e_{i}\}$ of a Cartan subalgebra $\lie{t}$ of $K$ is 
fixed, we write the element $\alpha = \sum_{i} a_{i} e_{i}$ 
as $(a_{1}, \dots, a_{k})$. 

Next, we fix the notation of $\brgK$-modules. 
Let $G$ be a real reductive linear Lie group in the sense of 
\cite{V-Green}. 
Choose a Cartan involution $\theta$ of $G$, 
and let $K$ be the maximal compact subgroup of $G$ consisting of 
fixed points of $\theta$. 
The corresponding Cartan decomposition of $\lier{g}$ is 
denoted by $\lier{g} = \lier{k} + \lier{s}$. 
For an admissible $\brgK$-module $X$, 
the {\it $K$-spectrum} of $X$, 
namely the set 
$\{V_{\tau}^{K} \in \widehat{K} 
\mid 
\Hom_{K}(V_{\tau}^{K}, X|_{K}) \not= 0\}$ 
of $K$-types of $X$, 
is denoted by $\widehat{K}(X)$. 

Finally, we fix the notation of principal series modules. 
Let $(\lie{a}_{m})_{0}$ be a maximal commutative subspace of $\lier{s}$. 
Define $A_{m} := \exp (\lie{a}_{m})_{0}$ and $M_{m} := Z_{K}(A)$. 
Choose a positive system $\Sigma^{+}$ of 
the restricted root system $\Sigma(\lier{g}, (\lie{a}_{m})_{0})$. 
As usual, half the sum of elements in $\Sigma^{+}$ is denoted by $\rho_{m}$. 
Define the nilpotent subgroup $N_{m}$ of $G$ which corresponds 
to the positive system $\Sigma^{+}$. 
Then, $P_{m} := M_{m} A_{m} N_{m}$ is a minimal parabolic subgroup of $G$. 
For a representation of $M$ and a linear character $\nu \in \lie{a}^{\ast}$, 
define the principal series module by 
\[
I(\sigma, \nu) 
:= 
\mathrm{Ind}_{P_{m}}^{G}
(\sigma \otimes e^{\nu+\rho_{m}} \otimes 1)_{\mbox{\tiny $K$-finite}}.
\]
As we explained in the introduction, 
the objective of this paper is 
to determine the socle filtration of $I(\sigma, \nu)$ explicitly.

\noindent
\textbf{Acknowledgment.} 
The second author was supported by JSPS KAKENHI Grant Number JP24540027.

\section{Regular characters and the Langlands classification}
\label{section:Langlands classification}

In this section, we review 
the Langlands classification of $\brgK$-modules 
by means of regular characters after Vogan's book \cite{V-Green}. 

Let $H$ be a Cartan subgroup of $G$. 
An element $\Lambda$ of $\lie{h}^{\ast}$ defines 
an infinitesimal character $\chi_{\Lambda}$ of $Z(\lie{g})$. 
If a $\brgK$-module $V$ admits the infinitesimal character $\chi_{\Lambda}$, 
we also say $V$ admits the infinitesimal character $\Lambda$.

Suppose that $H$ is $\theta$-stable. 
let $T := H \cap K$ and $A := H \cap \exp \lier{s}$. 
Then $H = T A$. 
The centralizer of $A$ in $G$ is denoted by $L$, 
and its Langlands decomposition by $L = M A$. 
Then, $T$ is a compact Cartan subgroup of $M$. 

A {\it regular character} (\cite[Definition~6.6.1]{V-Green}) 
is an ordered pair $\gamma = (\Gamma, \overline{\gamma})$ with 
$\Gamma$ an ordinary character of $H$ and 
$\overline{\gamma} \in \lie{h}^{\ast}$ satisfying 
\begin{enumerate}
\item
if $\alpha \in \Delta(\lie{m}, \lie{t})$, 
then $\langle \alpha, \overline{\gamma} \rangle$ is 
real and non-zero. 
\item
The differential of $\Gamma$ is 
\[
d \Gamma 
= \overline{\gamma} + \rho_{\lie{m}} - 2 \rho_{\lie{m} \cap \lie{k}}, 
\]
where $\rho_{\lie{m}} \in \lie{h}^{\ast}$ is half the sum of roots in 
$\Delta^{+}(\lie{m}, \lie{t}) 
:= 
\{\alpha \in \Delta(\lie{m}, \lie{t}) 
\mid 
\langle \alpha, \overline{\gamma} \rangle > 0\}$, 
and $\rho_{\lie{m} \cap \lie{k}}$ is half the sum of 
compact roots in $\Delta^{+}(\lie{m}, \lie{t})$.  
\end{enumerate}
The set of regular characters of $H$ is denoted by $\widehat{H}'$.

Choose a regular character 
$\gamma = (\Gamma, \overline{\gamma}) 
\in \widehat{H}'$. 
There exists a discrete series module $\sigma$ of $M$ 
such that the highest weight of its minimal $M \cap K$-type 
is $\Gamma|_{T}$. 
Put $\nu := \overline{\gamma}|_{\lie{a}}$. 
Let $P = M A N$ a parabolic subgroup of $G$ with Levi factor $MA$, 
and let $\rho$ be half the sum of roots in $\Delta(\lie{n}, \lie{a})$. 
Denote by $X(\gamma)$ the $\brgK$-modules of 
$\mathrm{Ind}_{P}^{G} (\sigma \otimes e^{\nu+\rho} \otimes 1)$, 
and call it 
the {\it standard modules} with parameter 
$(H, \gamma)$. 
Note that this module admits the infinitesimal character $\overline{\gamma}$.

Since we only treat $\brgK$-modules with nonsingular infinitesimal characters 
in this paper, 
we assume that $\overline{\gamma}$ is nonsingular, for simplicity. 
If we choose $N$ to be negative with respect to $\nu$, 
then $X(\gamma)$ has a unique irreducible submodule. 
We call it the {\it Langlands submodule of $X(\gamma)$} 
and we denote it by $\overline{X}(\gamma)$. 

\begin{theorem}[Langlands classification]
Let $\Lambda$ be a nonsingular infinitesimal character. 
Define $\widehat{H}_{\Lambda}'$ 
to be the set of regular characters 
$\gamma = (\Gamma, \overline{\gamma}) \in \widehat{H}'$ 
which satisfies $\chi_{\overline{\gamma}} = \chi_{\Lambda}$.  
\begin{enumerate}
\item
For an irreducible $\brgK$-module $\pi$ 
with the infinitesimal character $\Lambda$, 
there exists a Cartan subgroup $H$ of $G$ and a regular character 
$\gamma \in \widehat{H}_{\Lambda}'$ such that 
$\pi$ is isomorphic to $\overline{X}(\gamma)$. 
\item
If $\overline{X}(\gamma_{1}) \simeq \overline{X}(\gamma_{2})$, 
$\gamma_{i} \in (\widehat{H_{i}})_{\Lambda}'$ ($i=1,2$), 
then $(H_{1}, \gamma_{1})$ and $(H_{2}, \gamma_{2})$ are conjugate 
under the action of $K$. 
\end{enumerate}
\end{theorem}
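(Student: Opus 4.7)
The plan is to follow the classical strategy dating back to Langlands, in the form reorganized by Vogan in \cite{V-Green}. The existence half reduces the classification of irreducible $\brgK$-modules to the parametrization of tempered modules via the Langlands subrepresentation construction, and this in turn reduces to Harish-Chandra's parametrization of discrete series on Levi subgroups by their Harish-Chandra parameters. Throughout, the nonsingularity assumption on $\Lambda$ lets us avoid coherent continuation and gives the clean, one-to-one parametrization asserted in the theorem.

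For existence in (1), starting with an irreducible $\pi$ admitting infinitesimal character $\Lambda$, I would analyze the asymptotic behavior of its matrix coefficients. By Casselman's subrepresentation theorem, $\pi$ embeds into some minimal principal series. Among all such embeddings, I would select the one whose exponent $\nu$ lies in (the closure of) the negative chamber of some sub-Levi $MA$, so that $\pi$ appears as a genuine submodule rather than merely a subquotient. This singles out a $\theta$-stable Cartan $H = TA$ and a tempered irreducible $(\lie{m}, M \cap K)$-module $\sigma$ of $M$. Combining the Harish-Chandra parameter of $\sigma$ on $\lie{t}^{\ast}$ with $\nu \in \lie{a}^{\ast}$ produces the sought $\overline{\gamma} \in \lie{h}^{\ast}$; Vogan's formula $d\Gamma = \overline{\gamma} + \rhom - 2 \rho_{\lie{m} \cap \lie{k}}$ then lifts this to a character $\Gamma$ of $H$ whose restriction to $T$ is the highest weight of the minimal $M \cap K$-type of $\sigma$. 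Nonsingularity of $\Lambda$ guarantees the regularity of $\overline{\gamma}$ required in the definition of $\widehat{H}'$.

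For uniqueness in (2), suppose $\overline{X}(\gamma_1) \simeq \overline{X}(\gamma_2)$. The agreement of infinitesimal characters forces $\overline{\gamma_1}$ and $\overline{\gamma_2}$ to lie in the same $G$-orbit on Cartan data. The asymptotic exponents of the common module along the split center of the Levi are intrinsically defined and recover $(A, \nu)$ up to $K$-conjugacy; after conjugating by $K$ we may assume $H_1 = H_2$ and $\nu_1 = \nu_2$. The leading term of the asymptotic expansion then realizes $\sigma_i$ as a module over $M$, and its minimal $M \cap K$-type fixes $\Gamma_i|_T$. The defining identity $d\Gamma_i = \overline{\gamma_i} + \rhom - 2 \rho_{\lie{m} \cap \lie{k}}$ determines $d\Gamma_i$, hence the full characters $\Gamma_1, \Gamma_2$ agree on the identity component, and matching minimal $M \cap K$-types extends this to the whole of $H$.

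The technical heart and main obstacle is the asymptotic analysis: producing an embedding of $\pi$ into a standard module with the correct exponent property requires Casselman's theory of leading exponents together with matrix coefficient estimates in the spirit of Harish-Chandra and Mili\v{c}i\'{c}. A companion technical issue is the irreducibility and uniqueness of the Langlands submodule inside $X(\gamma)$ when $N$ is chosen negative with respect to $\nu$; here the nonsingularity of $\Lambda$ is crucial, since it guarantees that the relevant long intertwining operator has a kernel of the expected shape. Once these asymptotic and intertwining facts are in hand, the bookkeeping of regular characters sketched above is essentially formal.
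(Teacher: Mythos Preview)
The paper does not prove this theorem at all: it is stated in Section~\ref{section:Langlands classification} purely as background, with the surrounding text explicitly saying ``we review the Langlands classification of irreducible $\brgK$-modules after Vogan's book \cite{V-Green}.'' There is no argument, not even a sketch; the result is simply quoted from the literature (Langlands \cite{L}, Vogan \cite{V-Green}) as a known classification underlying the rest of the paper.

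Your proposal is therefore not comparable to a proof in the paper, since none exists there. As a stand-alone sketch of the classical argument it is reasonable in broad outline, though it elides several genuine subtleties (e.g.\ passing from Casselman embeddings to an embedding with the required negativity of $\nu$, the reduction from tempered to discrete series on $M$, and the handling of disconnected $M$ when lifting to the full character $\Gamma$). If your aim is only to match the paper, the correct response is simply to cite \cite{V-Green} as the paper does.
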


\section{Outline of computation}\label{section:outline}

Most of this paper consists of direct computation, 
so it may be helpful for readers to write the outline of computation here. 

Firstly, we divide the set of irreducible modules into 
blocks (\cite[Definition~9.2.1]{V-Green}). 
Block equivalence of irreducible $\brgK$-modules is the equivalence relation 
generated by 
\[
X \sim Y \quad \Leftrightarrow \quad 
\Ext_{\gK}^{1}(X, Y) \not= 0.
\]
The equivalence classes are called {\it blocks}. 
By this definition, we may restrict our consideration 
within a block. 

Secondly, by the translation principle, we may restrict our interest in 
the case of a special infinitesimal character, 
especially in the case when the infinitesimal character is trivial.

The most important part of our computation is 
to seek the candidates for the irreducible submodules 
of principal series modules in question. 
This is done by use of shift operators of $K$-types. 
We explain this part in detail. 
The KLV-conjecture tells us 
the composition factors of a principal series module. 
By the Blattner formula of $K$-spectra of discrete series modules and 
the $K$-spectra of principal series modules, 
we can compute the $K$-spectra of all irreducible modules. 
Suppose that $\tau_{1}$ is a $K$-type of 
an irreducible $\brgK$-module $\pi$, 
but another irreducible $K$-representation $\tau_{2}$ is not. 
Suppose moreover that there exists a shift operator of $K$-types $P$ 
in a principal series module, which sends 
elements of $\tau_{1}$ to $\tau_{2}$. 
If $\pi$ is in the socle of this principal series module, 
then the kernel of $P$ is non-trivial. 
In this way, we determine the candidates for irreducible factors 
in the socle of the principal series module. 
This method is used for the determination of 
the embedding of discrete series modules 
into induced modules in \cite{Y}. 
We apply this method to other irreducible modules. 

The shift operators of $K$-types are used also for 
the determination of the socle filtration. 
Suppose that a non-zero vector $v_{1}$ is known 
to be contained in an irreducible factor $V_{1}$, and 
another non-zero vector $v_{2}$ is known to be contained 
in an irreducible factor $V_{2}$ but not in $V_{1}$. 
If there exists a shift operator of $K$-types 
which sends $v_{1}$ to $v_{2}$, 
then the irreducible factor $V_{1}$ lies in a floor higher than $V_{2}$. 

Given a vector of the principal series module, 
it is in general hard to specify the irreducible factor in which 
this vector is contained, 
but in some cases it is easy. 
For example, if the multiplicity of a $K$-type $\tau$ 
of a principal series module in question is one, 
we can specify the irreducible factor in which this $K$-type $\tau$ is 
contained from the information of $K$-spectra of irreducible factors. 

We also use some known facts on the structure of $\brgK$-modules. 
These are summarized in the next section.

We explain the shift operator of $K$-types briefly. 
Let $(\tau, V_{\tau}^{K})$ be an irreducible representation of $K$, 
and $(\tau^{\ast}, (V_{\tau}^{K})^{\ast})$ be its 
contragredient representation. 
There is a natural identification
\begin{align*}
\Hom_{K}(\tau, I(\sigma, \nu)) 
& \simeq 
C_{\tau^{\ast}}^{\infty} 
(K \backslash G / M_{m} A_{m} N_{m}; \sigma \otimes e^{\nu+\rho_{m}}) 
\\
& := 
\{\Phi : G \overset{C^{\infty}}{\longrightarrow} 
(V_{\tau}^{K})^{\ast} \otimes V_{\sigma}^{M_{m}} \mid 
\\
& \qquad \qquad 
\Phi(kgman) = a^{-\nu-\rho_{m}} \tau^{\ast}(k) \otimes \sigma(m)^{-1} \, 
\Phi(g)
\\
& \qquad \qquad \qquad 
k \in K, g \in G, m \in M_{m}, a \in A_{m}, n \in N_{m}\}. 
\end{align*}
The correspondence is given by 
\[
\varphi(v)(g) = \langle \Phi(g), v \rangle, 
\qquad 
v \in V_{\tau}^{K}, 
\]
where $\varphi \in \Hom_{K}(\tau, I(\sigma, \nu))$, 
$\Phi \in 
C_{\tau^{\ast}}^{\infty} 
(K \backslash G / M_{m} A_{m} N_{m}; \sigma \otimes e^{\nu+\rho_{m}})$ 
and $\langle \ , \ \rangle$ is the paring of $(V_{\tau}^{K})^{\ast}$ 
and $V_{\tau}^{K}$. 

Let $\{X_{i}\}$ be a basis of $\lie{s}$ 
and let $\{X^{i}\}$ be the dual basis of $\lie{s}$ 
with respect to a non-degenerate invariant bilinear form on $\lie{g}$. 
The adjoint representation of $K$ on $\lie{s}$ is denoted by 
$(\Ad_{\lie{s}}, \lie{s})$. 
Define a $K$-equivariant map 
\[
\nabla : C_{\tau^{\ast}}^{\infty} 
(K \backslash G / M_{m} A_{m} N_{m}; \sigma \otimes e^{\nu+\rho_{m}})
\rightarrow 
C_{\tau^{\ast} \otimes \Ad_{\lie{s}}}^{\infty} 
(K \backslash G / M_{m} A_{m} N_{m}; \sigma \otimes e^{\nu+\rho_{m}}), 
\]
by 
\[
\nabla \Phi(g) 
:= 
\sum_{i} L(X_{i})\, \Phi(g) \otimes X^{i}.
\]
Here, $L( \ast )$ denotes the left translation. 

Let $\Delta(\lie{s}, \lie{t})$ be the weight space of the 
adjoint representation$(\Ad_{\lie{s}}, \lie{s})$ 
with respect to a Cartan subalgebra $\lie{t}$ of $\lie{k}$. 
For the groups $G=SL(3,\R)$ and $Sp(2,\R)$, 
the multiplicities of each weights are all one. 
If $\tau = \tau_{\lambda}$, namely if the highest weight of 
$\tau$ is $\lambda$, 
then the irreducible decomposition of 
$\tau_{\lambda} \otimes \Ad_{\lie{s}}$ is given by 
\[
\tau_{\lambda} \otimes \Ad_{\lie{s}} 
\simeq 
\bigoplus_{\alpha \in \Delta(\lie{s},\lie{t})} 
m(\alpha)\, \tau_{\lambda+\alpha}, 
\qquad 
m(\alpha) = 0 \mbox{ or } 1. 
\]
Let 
\[
\mathrm{pr}_{\alpha} : 
(\tau_{\lambda})^{\ast} \otimes \Ad_{\lie{s}} 
\rightarrow (\tau_{\lambda+\alpha})^{\ast}
\]
be the natural projection along this decomposition. 
Then 
\begin{align*}
\mathcal{P}_{\alpha} 
:= 
\pr_{\alpha} \circ \nabla 
\, : \, &
\Hom_{K}(\tau_{\lambda}, I(\sigma, \nu)) 
\simeq 
C_{(\tau_{\lambda})^{\ast}}^{\infty} 
(K \backslash G / M_{m} A_{m} N_{m}; \sigma \otimes e^{\nu+\rho_{m}}) 
\\
& \rightarrow 
C_{(\tau_{\lambda+\alpha})^{\ast}}^{\infty} 
(K \backslash G / M_{m} A_{m} N_{m}; \sigma \otimes e^{\nu+\rho_{m}}) 
\simeq 
\Hom_{K}(\tau_{\lambda+\alpha}, I(\sigma, \nu)) 
\end{align*}
is a $K$-equivariant map obtained from the action of $\lie{g}$ on 
$I(\sigma, \nu)$. 
We call these operators the {\it shift operators of $K$-types}.

\section{Other tools}
\label{section:other tools}

Our direct method explained in the last section 
is not sufficient for our purpose. 
We use several known results on the representations 
of real reductive groups. 

\subsection{Dual principal series}
\label{subsection:dual principal series}

It is well known that there is a non-degenerate invariant pairing 
\begin{align*}
& 
I(\sigma, \nu) \times I(\sigma^{\ast}, -\nu) 
\rightarrow \C, 
&
&
\langle f_{1}, f_{2} \rangle 
= 
\int_{K} f_{1}(k) \, f_{2}(k) \, dk. 
\end{align*}
By this, we can compare the socle filtrations of these principal series 
modules. 

\subsection{Integral intertwining operators between principal series}

We also use the well known integral intertwining operators 
between principal series modules and their factorizations 
(\cite{Schiffmann}). 
Fix a minimal parabolic subgroup $P_{m} = M_{m} A_{m} N_{m}$. 
Let $\Sigma^{+}$ be the positive system of the root system 
$\Sigma(\lier{g}, \lier{a})$ corresponding to $N_{m}$. 
Choose $\nu \in \lie{a}^{\ast}$ so that $\mathrm{Re}\, \nu$ is 
positive with respect to $\Sigma^{+}$. 
Denote by $w^{\circ}$ the longest element of the Weyl group $W(G, A_{m})$ 
with respect to this positive system. 
Let $w^{\circ} = r_{1} r_{2} \cdots r_{\ell}$ be a reduced expression 
of $w^{\circ}$, and put $w_{k} := r_{k} \cdots r_{\ell}$, 
$k = 1, 2, \dots, \ell$. 
Then, there are series of intertwining operators 
\begin{align*}
I(\sigma, \nu) 
& \rightarrow 
I(w_{\ell} \cdot (\sigma, \nu)) 
\rightarrow 
I(w_{\ell-1} \cdot (\sigma, \nu)) 
\rightarrow \cdots 
\\
& 
\rightarrow 
I(w_{k} \cdot (\sigma, \nu)) 
\rightarrow \cdots 
\rightarrow 
I(w^{\circ} \cdot (\sigma, \nu)). 
\end{align*}
It is known that the composition of these operators is not zero. 

These operators help us in some cases. 
For example, 
suppose that the socles of 
$I(w_{k} \cdot (\sigma, \nu))$ 
and 
$I(w_{k-1} \cdot (\sigma, \nu))$ 
are known to be identical, and they consist of one irreducible factor. 
Moreover, suppose that the multiplicity of 
this irreducible factor in these principal series is one. 
Then, since there exists a non-trivial intertwining operator between 
these principal series modules, they are isomorphic. 

\subsection{Horizontal symmetry}
\label{subsection:horizontal symmetry} 
In some case, 
there is a symmetric structure in the socle filtration of 
a principal series module. 

\begin{theorem}\label{theorem:Vogan, Borel-Wallach}
\rm{(}Vogan, Borel-Wallach, \cite[Chapter I, Corollary~7.5]{BW}\rm{)}. 
Let $G$ be a connected real semisimple linear Lie group whose complexification 
is simply connected. 
Then there exists an element $\mu$ of $\mathrm{Aut}\, G$ which satisfies 
the following properties: 
\begin{enumerate}
\item
$\mu (K) = K$, 
\item
For any irreducible admissible $\brgK$-module $(\pi,V)$, 
the twisted module  
\[
(\pi^{\mu},V^{\mu}) 
:= (\pi \circ \mu^{-1}, V)
\] 
is isomorphic to the contragredient $\brgK$-module 
$(\pi^{*},V^{*})$. 
\end{enumerate}
\end{theorem}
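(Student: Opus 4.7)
The plan is to construct $\mu$ as a group-theoretic lift of a Chevalley involution of $\lie{g}$, and then verify property (2) by comparing Harish-Chandra distribution characters.

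First, I would choose a $\theta$-stable Cartan subalgebra $\lier{h}$ of $\lier{g}$, its complexification $\lie{h}\subset\lie{g}$, a positive system $\Delta^{+}(\lie{g},\lie{h})$, and a Chevalley basis $\{H_{i},E_{\alpha}\}$. The Chevalley involution
\[
\omega\in\mathrm{Aut}(\lie{g}),
\quad
\omega(H)=-H \text{ for } H\in\lie{h},
\quad
\omega(E_{\alpha})=-E_{-\alpha},
\]
is a holomorphic involution which, with a careful choice of Chevalley basis adapted to the real form, can be arranged to commute both with $\theta$ and with the complex conjugation defining $\lier{g}$. Hence $\omega$ restricts to a Lie algebra involution of $\lier{g}$ preserving $\lier{k}$. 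Because $G_{\mathbb{C}}$ is simply connected, $\omega$ lifts unambiguously to a holomorphic automorphism of $G_{\mathbb{C}}$ whose restriction $\mu$ to $G$ is a real analytic automorphism. Since $d\mu$ preserves $\lier{k}$ and (for connected $G$ with simply connected complexification) $K$ is the unique analytic subgroup with Lie algebra $\lier{k}$, we obtain $\mu(K)=K$, giving property (1).

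For property (2), I would invoke Harish-Chandra's theorem that an irreducible admissible $\brgK$-module is determined up to isomorphism by its global distribution character, which on the regular semisimple set is a locally integrable $G$-invariant function. Since $\mu$ is an involution,
\[
\Theta_{\pi^{\mu}}(g)=\Theta_{\pi}(\mu(g)),
\qquad
\Theta_{\pi^{*}}(g)=\Theta_{\pi}(g^{-1}),
\]
so it suffices to show that $\mu(g)$ and $g^{-1}$ are $G$-conjugate for every regular semisimple $g$. On the chosen Cartan $H$ this is immediate from $d\mu=-\mathrm{Id}$ on $\lie{h}$, which integrates to $\mu(h)=h^{-1}$ on the identity component of $H$. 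On any other Cartan subgroup $H'$ of $G$, $\mu(H')$ is again a Cartan on which the induced action is conjugate to $-\mathrm{Id}$ modulo the real Weyl group $W(G,H')$, so $\mu(h')$ and $(h')^{-1}$ remain $G$-conjugate. The resulting identity $\Theta_{\pi^{\mu}}=\Theta_{\pi^{*}}$ combined with the reconstruction theorem yields $(\pi^{\mu},V^{\mu})\simeq(\pi^{*},V^{*})$.

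The main technical obstacle I anticipate is the construction step: arranging the Chevalley basis so that $\omega$ simultaneously commutes with $\theta$ and with the real structure of $\lier{g}$, so that $\mu$ is a genuine real analytic automorphism of $G$ rather than merely an automorphism of $G_{\mathbb{C}}$. The simple-connectedness of $G_{\mathbb{C}}$ is essential in this step, since it guarantees that the Lie-algebra involution lifts without ambiguity; once $\mu$ is in place, the character identity and the Harish-Chandra characterization theorem close the argument in one stroke.
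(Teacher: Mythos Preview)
The paper does not prove this theorem; it merely quotes it from the literature (Borel--Wallach, Chapter I, Corollary 7.5) and then applies it. So there is no ``paper's own proof'' to compare against. Your outline is in fact essentially the standard argument found in Borel--Wallach: construct a Chevalley-type involution adapted to the real form and to $\theta$, lift it to $G$ using the simple-connectedness of $G_{\mathbb{C}}$, and then identify $\pi^{\mu}$ with $\pi^{*}$ via Harish-Chandra's character theory and the conjugacy of $\mu(g)$ with $g^{-1}$ on the regular set.

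One point in your sketch deserves more care. Your treatment of Cartan subgroups other than the chosen $\theta$-stable one is too quick: the sentence ``the induced action is conjugate to $-\mathrm{Id}$ modulo the real Weyl group $W(G,H')$, so $\mu(h')$ and $(h')^{-1}$ remain $G$-conjugate'' hides the actual work. For a disconnected Cartan $H'$, knowing that $d\mu$ acts as $-\mathrm{Id}$ on $\lie{h}'$ after conjugation only controls the identity component; one must still check that $\mu$ carries each connected component of $H'$ to the component containing the inverses, and that the implementing Weyl element lies in $W(G,H')$ rather than merely in the complex Weyl group. This is exactly where the hypothesis that $G_{\mathbb{C}}$ is simply connected (hence $G$ has a well-behaved component structure for its Cartans) is used a second time. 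You correctly flag the construction of $\mu$ as the main technical obstacle, but the conjugacy verification on all Cartans is of comparable delicacy and should not be glossed over.
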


In the setting of Theorem~\ref{theorem:Vogan, Borel-Wallach}, 
if $\mu$ stabilizes $A_{m}$, 
then $I(\sigma, \nu)^{\mu}$ is also a principal series 
module. 
So if we know the structure of $I(\sigma, \nu)$, 
then that of $I(\sigma, \nu)^{\mu}$ 
is determined. 
This is a matter of course. 
But in a special situation, 
this theorem implies a symmetric structure in $I(\sigma, \nu)$. 

In order to state it simply, we need a definition. 

\begin{definition}
Let $\pi_{1}$ and $\pi_{2}$ be admissible $\brgK$-modules 
with nonsingular infinitesimal characters. 
These $\brgK$-modules are called {\it quasi-isomorphic} 
if they lie in the same coherent family 
and lie in the same open Weyl chamber. 
The module $\pi_{2}$ is {\it quasi-dual} to $\pi_{1}$ 
if $\pi_{2}$ is quasi-isomorphic to the contragredient module 
of $\pi_{1}$. 
\end{definition}

\begin{corollary}[Horizontal symmetry]
\label{corollary:horizontal symmetry}
In the setting of Theorem~\ref{theorem:Vogan, Borel-Wallach}, 
suppose that $I(\sigma, \nu)^{\mu}$ is quasi-isomorphic to $I(\sigma, \nu)$. 
Then an irreducible factor $\pi$ of $I(\sigma,\nu)$ 
and its quasi-dual module $\pi'$, with the same infinitesimal character 
as that of $\pi$,  appear in $I(\sigma, \nu)$ as a pair $\pi \oplus \pi'$. 
In other words, they appear in the same floor of the socle 
filtration of $I(\sigma,\nu)$. 
\end{corollary}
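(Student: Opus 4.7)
The plan is to produce two descriptions of the socle filtration of $I(\sigma,\nu)$ — the tautological one and the one obtained by first transporting through $\mu$ and then through a coherent continuation — and to conclude from the uniqueness of the socle filtration.

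First I would observe that $X \mapsto X^{\mu}$ is an exact auto-equivalence of the category of admissible $\brgK$-modules, because $\mu$ stabilises $K$ and is an automorphism of $G$. Such an equivalence preserves the socle and hence the socle filtration: applied to $0 = V_{0} \subset V_{1} \subset \cdots \subset V_{n} = I(\sigma,\nu)$ it produces the socle filtration of $I(\sigma,\nu)^{\mu}$, whose $k$-th floor is $F_{k}^{\mu} := (V_{k}/V_{k-1})^{\mu}$. By Theorem~\ref{theorem:Vogan, Borel-Wallach}, each irreducible constituent $\pi$ of $F_{k}$ is sent to $\pi^{\mu} \simeq \pi^{*}$, so the $k$-th floor of the socle filtration of $I(\sigma,\nu)^{\mu}$ is the direct sum of the contragredients $\pi^{*}$ of the irreducibles $\pi$ appearing in $F_{k}$.

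Next, the hypothesis says that $I(\sigma,\nu)^{\mu}$ and $I(\sigma,\nu)$ lie in the same coherent family and in the same open Weyl chamber, so I would apply the Jantzen--Zuckerman translation functor $T$ realising this coherent continuation. Inside a single open Weyl chamber $T$ is an exact equivalence of the corresponding block categories and therefore preserves socle filtrations; moreover, by the very definition of quasi-duality it sends each contragredient $\pi^{*}$ to the quasi-dual $\pi'$. Transporting the socle filtration of $I(\sigma,\nu)^{\mu}$ through $T$ then produces a socle filtration of $T(I(\sigma,\nu)^{\mu}) \simeq I(\sigma,\nu)$ whose $k$-th floor is the direct sum of the quasi-duals $\pi'$ of the irreducibles $\pi$ originally appearing in $F_{k}$.

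Because the socle filtration of a finite-length module is uniquely determined, the two resulting descriptions of the $k$-th floor of $I(\sigma,\nu)$ must coincide as multisets of irreducibles; in particular, whenever $\pi$ occurs in the $k$-th floor, so does $\pi'$. This is exactly the asserted pairing $\pi \oplus \pi'$ (with the degenerate case $\pi \simeq \pi'$ being self-quasi-dual). The main obstacle I anticipate is the careful invocation of the translation principle: one needs that the Jantzen--Zuckerman functor within a single open Weyl chamber is an equivalence of block categories (so that it is exact, socle-preserving, and sends $\pi^{*}$ to the quasi-dual $\pi'$). Once this is in place, together with the Vogan--Borel--Wallach twist isomorphism $\pi^{\mu} \simeq \pi^{*}$ and the uniqueness of the socle filtration, the horizontal symmetry follows immediately.
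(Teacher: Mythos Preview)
Your proposal is correct and follows essentially the same route as the paper: apply the $\mu$-twist (sending $\pi$ to $\pi^{\ast}$ by Theorem~\ref{theorem:Vogan, Borel-Wallach}), then the translation functor within the open Weyl chamber (sending $\pi^{\ast}$ to the quasi-dual $\pi'$), and use that both operations preserve socle filtrations. Your write-up is in fact more explicit than the paper's, which compresses the same argument into three sentences without spelling out why the twist and the translation functor preserve socle layers.
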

\begin{proof}
Put $I(\sigma', \nu') \simeq I(\sigma, \nu)^{\mu}$. 
In the situation of this corollary, 
$\psi_{\nu'}^{\nu} I(\sigma', \nu') 
\simeq I(\sigma, \nu)$, 
where $\psi_{\nu'}^{\nu}$ is the translation functor. 
By Theorem~\ref{theorem:Vogan, Borel-Wallach}, 
if $\pi$ is in the $k$-th floor of the socle filtration of $I(\sigma, \nu)$, 
so is the contragredient module $\pi^{\ast}$ of $\pi$ 
in that of $I(\sigma', \nu')$. 
By the translation principle, 
$\pi'$ is in the $k$-th floor of $I(\sigma, \nu)$. 
\end{proof}

\subsection{Parity of length}
\label{subsection:parity of length}

In \cite[Definition~8.1.4]{V-Green}, 
Vogan defined the (integral) length $\ell(\gamma)$ of a regular character 
$\gamma$. 
The following theorem is a consequence of KLV-conjecture. 
\begin{theorem}\rm{(}Vogan, \cite[Theorem~9.5.1]{V-Green}\rm{)}. 
Let $\gamma_{1}$, $\gamma_{2}$ be regular characters 
of the same nonsingular infinitesimal character. 
Suppose that they are not conjugate under $K$. 
Then 
\[
\Ext_{\gK}^{1}(\overline{X}(\gamma_{1}), \overline{X}(\gamma_{2})) 
\not = 0
\]
only if 
\[
\ell(\gamma_{1}) - \ell(\gamma_{2}) \equiv 1 
\quad 
(\mathrm{mod}\, 2). 
\]
\end{theorem}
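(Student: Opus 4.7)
The plan is to deduce this parity statement as a formal consequence of the KLV conjecture (now Vogan's theorem) combined with the parity vanishing property of KLV polynomials, rather than to compute $\Ext^{1}$ directly by hand.

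First, I would recall the setup: the block containing $\overline{X}(\gamma_{1})$ and $\overline{X}(\gamma_{2})$ is controlled by the integral length function $\ell$ on regular characters and by the KLV polynomials $P_{\gamma, \gamma'}(q) \in \Z_{\geq 0}[q]$. These compute the composition multiplicities of standard modules (and, via the inverse matrix, express irreducibles as signed sums of standard modules), and they satisfy a strong parity property: every nonzero coefficient of $P_{\gamma, \gamma'}(q)$ occurs in a degree of a single parity, determined by $\ell(\gamma) - \ell(\gamma')$. This parity property is proved geometrically via $K_{\C}$-equivariant perverse sheaves on the complex flag variety together with the purity of intersection cohomology.

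Second, I would pass to a homological interpretation: on the standard module $X(\gamma)$ there is a Jantzen-type filtration whose graded layers are described by the coefficients of the KLV polynomials. Concretely, $\overline{X}(\gamma')$ occurs in the $k$-th layer of this filtration on $X(\gamma)$ with multiplicity equal (up to an appropriate shift) to the coefficient of $q^{k}$ in $P_{\gamma', \gamma}(q)$. A nonsplit extension $0 \to \overline{X}(\gamma_{2}) \to E \to \overline{X}(\gamma_{1}) \to 0$ forces $\overline{X}(\gamma_{1})$ and $\overline{X}(\gamma_{2})$ to appear in consecutive layers of the Jantzen filtration of some standard module in the block (either $X(\gamma_{1})$ itself, or a related one obtained by translation and duality). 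Combining this with the parity vanishing: the condition that $\overline{X}(\gamma_{i})$ appears only in layers whose index $k$ has a fixed parity (determined by $\ell(\gamma) - \ell(\gamma_{i})$) is incompatible with the two factors living in consecutive layers unless $\ell(\gamma_{1})$ and $\ell(\gamma_{2})$ have opposite parities, giving $\ell(\gamma_{1}) - \ell(\gamma_{2}) \equiv 1 \pmod{2}$.

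The main obstacle is the parity vanishing itself. In the complex Lie algebra case (original Kazhdan--Lusztig) it follows from the purity of the intersection cohomology of Schubert varieties; in Vogan's $\brgK$-module setting one must replace Schubert varieties by closures of $K_{\C}$-orbits on the flag variety and verify that the relevant IC sheaves remain pure, which is the nonformal geometric input. A secondary, but technical, obstacle is to set up the Jantzen-style filtration so that its layers are provably identified with KLV coefficients; once both inputs are granted, the deduction of the $\Ext^{1}$ parity statement is essentially bookkeeping.
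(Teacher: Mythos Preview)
The paper does not prove this statement at all: it is quoted as Theorem~9.5.1 of \cite{V-Green} and immediately used as a black box to obtain Corollary~\ref{corollary:parity condition}. So there is no ``paper's own proof'' to compare your sketch against.

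On your sketch itself: the ingredients you list (KLV polynomials, parity vanishing of their coefficients via purity of IC sheaves on $K_{\C}$-orbit closures, Jantzen/weight filtrations on standard modules) are exactly the circle of ideas from which the result is deduced, and the paper itself says only that ``the following theorem is a consequence of KLV-conjecture.'' The genuine gap in your argument is the step where you assert that a nonsplit extension $0 \to \overline{X}(\gamma_{2}) \to E \to \overline{X}(\gamma_{1}) \to 0$ forces the two irreducibles into \emph{consecutive} layers of the Jantzen filtration of some standard module. That implication is not a formality: it amounts to knowing that the layers of the Jantzen (or weight) filtration are semisimple, i.e.\ that $\Ext^{1}$ already vanishes between irreducibles sitting in the same layer --- which is essentially what you are trying to prove. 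To close the loop one needs an additional input, for instance that the weight filtration agrees with the socle filtration (rigidity), or the Koszulity of the block, or the more direct combinatorial argument with cross actions and Cayley transforms that Vogan actually runs in \cite{V-Green}. As written, the middle paragraph of your proposal is close to circular; the first and third paragraphs correctly identify the hard geometric input but do not by themselves deliver the $\Ext^{1}$ statement.
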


This theorem enables us to narrow down the candidates 
for irreducible factors lying in each floor. 
We use it in a situation as below.

\begin{corollary}[Parity condition]
\label{corollary:parity condition}
Suppose that $V$ is a $\brgK$-module of finite length. 
If the lengths of the irreducible factors in the 
$k$-th floor of the socle filtration of $V$ 
are all even (resp. odd), 
then those of the factors in $(k+1)$-st floor are all odd (resp. even). 
\end{corollary}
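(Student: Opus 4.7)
My plan is to reduce the statement to a direct application of Vogan's parity theorem just quoted. For $V$ of finite length with socle filtration $0=V_{0}\subseteq V_{1}\subseteq V_{2}\subseteq\cdots$, each floor $V_{k+1}/V_{k}=\mathrm{soc}(V/V_{k})$ is a semisimple $\brgK$-module. Fix an arbitrary simple summand $\overline{X}(\gamma_{1})$ of the $(k+1)$-st floor; the aim is to exhibit a simple summand $\overline{X}(\gamma_{2})$ of the $k$-th floor with $\Ext_{\gK}^{1}(\overline{X}(\gamma_{1}),\overline{X}(\gamma_{2}))\ne 0$. Once this is done, Vogan's theorem forces $\ell(\gamma_{1})-\ell(\gamma_{2})\equiv 1\pmod{2}$, which is the claimed alternation of parities.

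To produce $\gamma_{2}$, I would work inside $V_{k+1}/V_{k-1}$ and consider the short exact sequence
\[
0\longrightarrow V_{k}/V_{k-1}\longrightarrow V_{k+1}/V_{k-1}\longrightarrow V_{k+1}/V_{k}\longrightarrow 0.
\]
Pulling back along the inclusion $\overline{X}(\gamma_{1})\hookrightarrow V_{k+1}/V_{k}$ gives an extension $0\to V_{k}/V_{k-1}\to E\to \overline{X}(\gamma_{1})\to 0$. The key point is that this sequence cannot split: a splitting would realise $\overline{X}(\gamma_{1})$ as a simple submodule of $V_{k+1}/V_{k-1}\subseteq V/V_{k-1}$, forcing it into $\mathrm{soc}(V/V_{k-1})=V_{k}/V_{k-1}$; however, any such lift must project nontrivially onto $\overline{X}(\gamma_{1})\subseteq V_{k+1}/V_{k}$, contradicting the fact that $V_{k}/V_{k-1}$ is the kernel of this projection. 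Since $V_{k}/V_{k-1}$ is semisimple and $\Ext^{1}$ distributes over direct sums, the non-vanishing of the resulting class in $\Ext_{\gK}^{1}(\overline{X}(\gamma_{1}),V_{k}/V_{k-1})$ yields at least one simple constituent $\overline{X}(\gamma_{2})$ of the $k$-th floor with $\Ext_{\gK}^{1}(\overline{X}(\gamma_{1}),\overline{X}(\gamma_{2}))\ne 0$.

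The main obstacle I anticipate is the degenerate case in which the extension partner $\overline{X}(\gamma_{2})$ produced above is actually isomorphic to $\overline{X}(\gamma_{1})$, since Vogan's theorem applies only when the two regular characters are not $K$-conjugate. To handle this one would argue separately that a simple Harish-Chandra module with regular integral infinitesimal character satisfies $\Ext_{\gK}^{1}(\overline{X}(\gamma),\overline{X}(\gamma))=0$, so that the extension class must in fact lie in $\Ext^{1}$ against isotypic components of $V_{k}/V_{k-1}$ distinct from $\overline{X}(\gamma_{1})$; equivalently, a single irreducible cannot appear in two adjacent floors of the socle filtration in the blocks under consideration. Once this edge case is disposed of, the parity alternation is immediate.
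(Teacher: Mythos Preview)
Your argument is correct and is precisely the standard way to extract the parity corollary from Vogan's theorem. The paper itself offers no proof of this corollary at all: it is stated immediately after Vogan's theorem as a direct consequence, with the details left to the reader. Your pullback argument showing that each simple $\overline{X}(\gamma_{1})$ in floor $k+1$ admits a nontrivial $\Ext^{1}$ against some simple in floor $k$ is exactly what is implicit in the paper's ``corollary'' label.

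Regarding the edge case you flag, the vanishing $\Ext_{\gK}^{1}(\overline{X}(\gamma),\overline{X}(\gamma))=0$ for irreducible Harish--Chandra modules with nonsingular integral infinitesimal character is a known fact (it follows, for instance, from Beilinson--Bernstein localization at a regular parameter, or from Vogan's coherent continuation theory: self-extensions would give a nontrivial nilpotent endomorphism of the coherent family at a regular point). In the concrete applications the paper makes of this corollary, this issue never actually arises, since in each case the socles already identified are multiplicity-free and the factors appearing in adjacent floors are distinct; but your instinct that a clean general statement requires this ingredient is correct.
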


\section{Finite dimensional representations of $\lie{sl}_{2}$}
\label{section:representation of K}
In order to write down the shift operator of $K$-types 
(Section~\ref{section:outline}) explicitly, 
it is needed to write the irreducible decomposition of 
the tensor product $\tau_{\lambda} \otimes \Ad_{\lie{s}}$. 
Since the maximal compact subgroups $K$ of $G = SL(3,\R)$ 
and $Sp(2, \R)$ are 
$SO(3)$ and $U(2)$, 
respectively, 
we need the irreducible decomposition of the tensor product of 
representations of $\lie{sl}_{2}$. 

Let $\{H, E, F\}$ be the standard $\lie{sl}_{2}$-triple, 
namely they satisfy 
\begin{align*}
& 
[H, E] = 2 E, 
&
&
[H, F] = -2 F, 
&
&
[E, F] = H. 
\end{align*}

For $\lambda \in \frac{1}{2} \Z_{\geq 0}$, 
let 
\[
\{v_{q}^{\lambda} 
\mid 
-\lambda \leq q \leq \lambda, 
\
\lambda - q \in \Z
\}
\]
be the basis of the irreducible representation of $\lie{sl}_{2}$ 
with the highest weight $2 \lambda$, 
which satisfies 
\begin{align*}
& 
H v_{q}^{\lambda} 
= 
2 q\, v_{q}^{\lambda}, 
&
&
E v_{q}^{\lambda} 
= 
(\lambda-q)\, v_{q+1}^{\lambda}, 
&
&
F v_{q}^{\lambda} 
= 
(\lambda+q)\, v_{q-1}^{\lambda}. 
\end{align*}
Note that the highest weight is not $\lambda$ but $2\lambda$. 
For later purpose, 
we need the actions of some elements in $SU(2)$. 
\begin{lemma}
\label{lemma:the action of elements on repn of SU(2)}
For 
$\begin{pmatrix}
a & 0 \\ 0 & a^{-1} 
\end{pmatrix}, 
\begin{pmatrix}
0 & -1 \\ 1 & 0 
\end{pmatrix}
\in SU(2)$, 
\begin{align*}
&
\tau_{2\lambda}
(
\begin{pmatrix}
a & 0 \\ 0 & a^{-1} 
\end{pmatrix}
)
\, 
v_{q}^{\lambda} 
= 
a^{2q} 
\, 
v_{q}^{\lambda}, 
&
&
\tau_{2\lambda}
(
\begin{pmatrix}
0 & -1 \\ 1 & 0 
\end{pmatrix}
)
\, 
v_{q}^{\lambda} 
= 
(-1)^{\lambda-q} 
\, 
v_{-q}^{\lambda}. 
\end{align*}
\end{lemma}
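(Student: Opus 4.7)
The first formula is immediate: write $\mathrm{diag}(a,a^{-1})=\exp((\log a)H)$ and exponentiate the eigenvalue $Hv_q^{\lambda}=2q\,v_q^{\lambda}$ to get the scalar $a^{2q}$. So the only real content is the second formula, for $J:=\begin{pmatrix}0&-1\\1&0\end{pmatrix}$.

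The plan is to determine $\tau_{2\lambda}(J)v_q^{\lambda}$ in three steps. First, a direct calculation gives $\mathrm{Ad}(J)H=-H$, $\mathrm{Ad}(J)E=-F$ and $\mathrm{Ad}(J)F=-E$. The first of these shows that $\tau_{2\lambda}(J)$ sends the weight-$2q$ space to the weight-$(-2q)$ space, so we may write
\[
\tau_{2\lambda}(J)\,v_q^{\lambda}=c_q\,v_{-q}^{\lambda}
\]
for some scalars $c_q$, $-\lambda\le q\le\lambda$.

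Second, I would pin down the $c_q$ up to an overall constant by using $\mathrm{Ad}(J)E=-F$, i.e.\ $\tau_{2\lambda}(J)\,E=-F\,\tau_{2\lambda}(J)$. Applying both sides to $v_q^{\lambda}$ and using the formulas $Ev_q^{\lambda}=(\lambda-q)v_{q+1}^{\lambda}$ and $Fv_{-q}^{\lambda}=(\lambda-q)v_{-q-1}^{\lambda}$ yields
\[
(\lambda-q)\,c_{q+1}\,v_{-q-1}^{\lambda}=-(\lambda-q)\,c_q\,v_{-q-1}^{\lambda},
\]
hence the recursion $c_{q+1}=-c_q$ for $-\lambda\le q<\lambda$. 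Therefore $c_q=(-1)^{\lambda-q}c_{\lambda}$.

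Finally, I would fix the normalization $c_{\lambda}$ by choosing a concrete model of the representation: realize $V_{2\lambda}^{SU(2)}$ as the space of homogeneous polynomials of degree $2\lambda$ in $x,y$, with $SU(2)$ acting by $(g\cdot f)(x,y)=f((x,y)g)$, and identify $v_q^{\lambda}$ with $x^{\lambda+q}y^{\lambda-q}$ (a quick check against the formulas for $H$, $E$, $F$ confirms this is the correct identification with the prescribed basis). Then
\[
\tau_{2\lambda}(J)\,v_{\lambda}^{\lambda}(x,y)=(x,y)\mapsto(y,-x)\text{ applied to }x^{2\lambda}=y^{2\lambda}=v_{-\lambda}^{\lambda},
\]
so $c_{\lambda}=1$ and the lemma follows. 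The only mildly technical point is the verification that the polynomial model realizes the abstract basis $\{v_q^{\lambda}\}$ on the nose (rather than up to scalar factors that could alter $c_{\lambda}$), but this is a routine check against the three formulas defining $v_q^{\lambda}$.
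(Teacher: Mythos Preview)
Your proof is correct. The paper states this lemma without proof, treating it as a standard fact about $SU(2)$ representations, so there is nothing to compare against. One minor observation: once you have the polynomial model with $v_q^{\lambda}=x^{\lambda+q}y^{\lambda-q}$ and the action $(g\cdot f)(x,y)=f((x,y)g)$, the recursion step is actually unnecessary, since applying $J$ directly gives $(J\cdot v_q^{\lambda})(x,y)=y^{\lambda+q}(-x)^{\lambda-q}=(-1)^{\lambda-q}v_{-q}^{\lambda}$ for every $q$ at once. But your two-step approach (recursion plus normalization) is perfectly valid and arguably more conceptual.
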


\begin{lemma}[Irreducible decomposition of 
$\tau_{2\lambda} \otimes \tau_{2\mu}$]
\label{lemma:irred. decomp. of tau otimes Ad}
For $\lambda, \mu \in \frac{1}{2} \Z_{\geq 0}$ with $\lambda \geq \mu$, 
the irreducible decomposition of the tensor product 
$(\tau_{2\lambda}, V_{2\lambda}^{\lie{sl}_{2}}) 
\otimes 
(\tau_{2\mu}, V_{2\mu}^{\lie{sl}_{2}})$ is given by 
\begin{align*}
v_{q}^{\lambda} \otimes v_{i}^{\mu} 
= 
\sum_{|j| \leq \mu, \ \mu-j \in \Z}
c^{\lambda, \mu}(q, i; j)\, v_{q+i}^{\lambda+j}, 
\end{align*}
where 
\begin{align*}
c^{\lambda, \mu}(q, i; j) 
= 
\frac{(\mu+i)!}{(2\mu)!} 
\sum_{p=0}^{\mu-i} 
\begin{pmatrix}
\mu-i 
\\ 
p
\end{pmatrix}
&\prod_{a=0}^{p-1} (j-\mu+a) 
\prod_{a=p+i+1}^{\mu}(j+a) 
\\
& \times 
\frac{(\lambda+q)!}{(\lambda+q-p)!}
\frac{(\lambda-q)!}{(\lambda-q-\mu+j+p)!}.
\end{align*}
\end{lemma}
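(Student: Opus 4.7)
The plan is to derive the formula from the classical Clebsch--Gordan decomposition
\[
V_{2\lambda}^{\lie{sl}_{2}} \otimes V_{2\mu}^{\lie{sl}_{2}}
\simeq
\bigoplus_{j=-\mu}^{\mu} V_{2(\lambda+j)}^{\lie{sl}_{2}},
\qquad \mu - j \in \Z,
\]
which is multiplicity-free and thus fixes both the shape of the sum and the range of the summation index $j$ in the statement. The content of the lemma is only the explicit value of $c^{\lambda,\mu}(q,i;j)$ with respect to the (implicit) normalization of each basis $\{v_{q+i}^{\lambda+j}\}$ that the formula itself encodes.

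The first reduction is to the boundary case $i=\mu$. From $F v_{q}^{\mu}=(\mu+q)v_{q-1}^{\mu}$ one reads off $v_{i}^{\mu}=\frac{(\mu+i)!}{(2\mu)!}F^{\mu-i}v_{\mu}^{\mu}$, hence
\[
v_{q}^{\lambda}\otimes v_{i}^{\mu}
=
\frac{(\mu+i)!}{(2\mu)!}\,(1\otimes F)^{\mu-i}(v_{q}^{\lambda}\otimes v_{\mu}^{\mu}),
\]
which accounts for the prefactor $(\mu+i)!/(2\mu)!$ in the formula. For the case $i=\mu$ itself, one applies $(F\otimes 1)^{\lambda-q}$ to the overall highest weight vector $v_{\lambda}^{\lambda}\otimes v_{\mu}^{\mu}=v_{\lambda+\mu}^{\lambda+\mu}$ and expands $F\otimes 1=F_{\Delta}-1\otimes F$, where $F_{\Delta}=F\otimes 1+1\otimes F$ commutes with each summand since $[F\otimes 1,1\otimes F]=0$. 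A short calculation then yields $c^{\lambda,\mu}(q,\mu;j)=(\lambda-q)!/(\lambda-q-\mu+j)!$, which is exactly what the stated formula collapses to at $i=\mu$, since both products and the $p$-sum reduce to a single term.

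For general $i$, one approach is the binomial expansion
\[
(1\otimes F)^{\mu-i}
=
(F_{\Delta}-F\otimes 1)^{\mu-i}
=
\sum_{p=0}^{\mu-i}(-1)^{p}\binom{\mu-i}{p}F_{\Delta}^{\mu-i-p}(F\otimes 1)^{p},
\]
well defined because $F_{\Delta}$ and $F\otimes 1$ commute. Applied to $v_{q}^{\lambda}\otimes v_{\mu}^{\mu}$, the operator $(F\otimes 1)^{p}$ contributes $(\lambda+q)!/(\lambda+q-p)!$ and shifts $q\mapsto q-p$ in the first tensor factor, while $F_{\Delta}^{\mu-i-p}$ preserves each summand $V_{2(\lambda+j)}^{\lie{sl}_{2}}$ and acts there by a standard product of $\lie{sl}_{2}$-lowering factors. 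Substituting the $i=\mu$ formula (with $q$ replaced by $q-p$) and collecting by $j$ produces a closed-form expression for $c^{\lambda,\mu}(q,i;j)$ as a sum over $p$, which must still be rewritten into the form in the statement. An equivalent route is to proceed by downward induction on $\mu-i$: applying $F_{\Delta}$ to the formula for $v_{q}^{\lambda}\otimes v_{i+1}^{\mu}$ and using $F_{\Delta}v_{q+i+1}^{\lambda+j}=(\lambda+j+q+i+1)v_{q+i}^{\lambda+j}$ together with $F_{\Delta}(v_{q}^{\lambda}\otimes v_{i+1}^{\mu})=(\lambda+q)v_{q-1}^{\lambda}\otimes v_{i+1}^{\mu}+(\mu+i+1)v_{q}^{\lambda}\otimes v_{i}^{\mu}$ gives the recursion
\[
(\mu+i+1)\,c^{\lambda,\mu}(q,i;j)
=
(\lambda+j+q+i+1)\,c^{\lambda,\mu}(q,i+1;j)
-(\lambda+q)\,c^{\lambda,\mu}(q-1,i+1;j),
\]
which one verifies is satisfied by the stated closed form.

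The main obstacle, common to both routes, is the combinatorial bookkeeping at the end. The natural output of the binomial expansion is not the same as the stated expression term by term; matching the two forms requires repeated factorial manipulation, notably $\prod_{a=0}^{p-1}(j-\mu+a)=(-1)^{p}(\mu-j)!/(\mu-j-p)!$ to absorb the sign and analogous rewrites to convert factorial ratios in $\lambda,q$ into the $j$-products $\prod_{a=p+i+1}^{\mu}(j+a)$ and the remaining factor $(\lambda-q)!/(\lambda-q-\mu+j+p)!$. This step is tedious but routine, and beyond it no further conceptual ingredient is required; the appearance of $\binom{\mu-i}{p}$ in the stated formula is the clearest structural signal that one of the two strategies above is the intended derivation.
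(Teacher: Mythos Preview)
The paper states this lemma without proof; it is treated as a standard Clebsch--Gordan computation, so there is no argument in the paper to compare against. Your overall strategy via the diagonal lowering operator $F_{\Delta}$ and the recursion
\[
(\mu+i+1)\,c^{\lambda,\mu}(q,i;j)
=(\lambda+j+q+i+1)\,c^{\lambda,\mu}(q,i+1;j)
-(\lambda+q)\,c^{\lambda,\mu}(q-1,i+1;j)
\]
is correct and standard, and verifying that the closed form satisfies it is indeed routine.

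There is, however, a small circularity in your derivation of the base case $i=\mu$. Expanding $(F\otimes 1)^{\lambda-q}=(F_{\Delta}-1\otimes F)^{\lambda-q}$ and applying it to $v_{\lambda}^{\lambda}\otimes v_{\mu}^{\mu}$ produces terms $(1\otimes F)^{k}(v_{\lambda}^{\lambda}\otimes v_{\mu}^{\mu})$, which are scalar multiples of $v_{\lambda}^{\lambda}\otimes v_{\mu-k}^{\mu}$; decomposing those in the basis $\{v_{r}^{\lambda+j}\}$ already requires the general-$i$ formula you are trying to reduce to. A clean repair is to use $E_{\Delta}$ instead: since $Ev_{\mu}^{\mu}=0$ one has
\[
E_{\Delta}(v_{q}^{\lambda}\otimes v_{\mu}^{\mu})
=(\lambda-q)\,v_{q+1}^{\lambda}\otimes v_{\mu}^{\mu},
\]
so the $i=\mu$ slice is preserved by $E_{\Delta}$. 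Comparing with $E_{\Delta}v_{q+\mu}^{\lambda+j}=(\lambda+j-q-\mu)\,v_{q+1+\mu}^{\lambda+j}$ yields the one-variable recursion
$(\lambda-q)\,c(q+1,\mu;j)=(\lambda+j-q-\mu)\,c(q,\mu;j)$, and iterating from $q=\lambda+j-\mu$ with the normalization $c(\lambda+j-\mu,\mu;j)=(\mu-j)!$ (which is exactly what the stated formula gives at that point) produces $c(q,\mu;j)=(\lambda-q)!/(\lambda-q-\mu+j)!$ as desired, under the usual convention $1/(-n)!=0$. With this fix, your inductive scheme is complete.
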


\section{Irreducible $(\gK)$-modules of $SL(3, \R)$}
\label{section:irreducible modules. SL(3)}

From this section to section \ref{section:main results for SL(3)}, 
we set $G = SL(3, \R)$ and 
investigate the socle filtrations of 
the principal series modules for $G$. 
In this section, 
we review the classification of 
the irreducible $(\gK)$-modules of $G$. 

$G = SL(3, \R)$ has two conjugacy classes of Cartan subgroups. 
One is split and the other is fundamental. 
Let $H_{s} = M_{m} A_{m}$ 
be the split Cartan subgroup, 
where 
\begin{align*}
& 
A_{m} 
:= 
\{\diag(a_{1}, a_{2}, a_{3}) \in G \mid a_{i} > 0\},
\\
& 
M_{m} 
:= Z_{K}(A_{m}) 
= \{I, m_{-1,1}^{sl}, m_{1,-1}^{sl}, m_{-1,-1}^{sl}\}, 
\quad 
m_{i,j}^{sl} := \diag(i,j,ij). 
\end{align*}
The Weyl group $W(G, H_{s})$ is isomorphic to 
$W(\lie{g}, \lie{h}_{s}) \simeq \mathfrak{S}_{3}$, 
which acts on $H_{s}$ by the permutation 
\begin{align}
& 
s \cdot \diag(h_{1}, h_{2}, h_{3}) 
= 
\diag(h_{s^{-1}(1)}, h_{s^{-1}(2)}, h_{s^{-1}(3)}), 
\label{eq:action of W on h_s}
\\
& \qquad 
s \in \mathfrak{S}_{3},  
\quad 
\diag(h_{1}, h_{2}, h_{3}) \in H_{s}. 
\notag
\end{align}

Let $\widehat{M_{m}}$ be the set of equivalence classes of 
irreducible representations of $M_{m}$. 
Define $\sigma_{i,j} \in \widehat{M_{m}}$, $i,j \in \Z$, by 
\[
\sigma_{i,j}(m_{-1,1}^{sl}) = (-1)^{i}, 
\qquad 
\sigma_{i,j}(m_{1,-1}^{sl}) = (-1)^{j}. 
\]
Then $\widehat{M_{m}}$ consists of four elements $\sigma_{i,j}$, 
$i,j \in \{0,1\}$. 
The action of $W(\lie{g},\lie{h}_{s}) \simeq \mathfrak{S}_{3}$ 
on $\widehat{M_{m}}$ is given by 
\begin{align}
& 
r_{1,2} \cdot \sigma_{i,j} = \sigma_{j,i}, 
&
&
r_{2,3} \cdot \sigma_{i,j} = \sigma_{i+j,j}, 
\label{eq:action of W on hat{M}}
\end{align}
where, $r_{p,q}$ is the permutation of $p$ and $q$. 

Let $f_{i}$ be the elements of $\lie{h}_{s}^{\ast}$ defined by 
\begin{equation}\label{eq:e_i, SL(3)}
f_{i}(\diag(x_{1}, x_{2}, x_{3})) = x_{i}, 
\quad 
\diag(x_{1}, x_{2}, x_{3}) \in \lie{h}_{s}. 
\end{equation}
Then the root system $\Delta(\lie{g}, \lie{h}_{s})$ is 
$\{f_{i} - f_{j} \mid 1 \leq i \not= j \leq 3\}$. 

In this paper, we specify regular characters by the numbers 
used in ATLAS \cite {Atlas}. 

Let 
\begin{align}
& 
\Lambda = \sum_{i=1}^{3} \Lambda_{i} f_{i}, 
& 
& 
\Lambda_{1}-\Lambda_{2}, \ \Lambda_{2}-\Lambda_{3} \in \Z_{>0}
\label{eq:Lambda}
\end{align} 
be a nonsingular integral infinitesimal character. 
Then there are four conjugacy classes of regular characters of $H_{m}$ 
with the infinitesimal character $\Lambda$. 
These four correspond to the ATLAS number "3", "4", "5" in the block $PU(2,1)$ 
and "0" in the block $PU(3)$. 
According to these numbers, 
we define a regular character $\gamma_{3}$, $\gamma_{4}$, $\gamma_{5}$ 
and $\gamma_{0'}$ by 
\begin{align}
\gamma_{3} 
&= 
(\sigma_{\Lambda_{1}-\Lambda_{3}, \Lambda_{2}-\Lambda_{3}+1}, 
(\Lambda_{1},\Lambda_{2},\Lambda_{3})) 
\sim 
(\sigma_{\Lambda_{2}-\Lambda_{3}+1, \Lambda_{1}-\Lambda_{3}}, 
(\Lambda_{2},\Lambda_{1},\Lambda_{3})) 
\notag
\\
&
\sim 
(\sigma_{\Lambda_{1}-\Lambda_{2}+1, \Lambda_{1}-\Lambda_{3}}, 
(\Lambda_{2},\Lambda_{3},\Lambda_{1})) 
\sim 
(\sigma_{\Lambda_{1}-\Lambda_{3}, \Lambda_{1}-\Lambda_{2}+1}, 
(\Lambda_{3},\Lambda_{2},\Lambda_{1})) 
\label{eq:gamma_3}
\\
&
\sim 
(\sigma_{\Lambda_{2}-\Lambda_{3}+1, \Lambda_{1}-\Lambda_{2}+1}, 
(\Lambda_{3},\Lambda_{1},\Lambda_{2})) 
\sim 
(\sigma_{\Lambda_{1}-\Lambda_{2}+1, \Lambda_{2}-\Lambda_{3}+1}, 
(\Lambda_{1},\Lambda_{3},\Lambda_{2})), 
\notag
\end{align}
\begin{align}
\gamma_{4} 
&= 
(\sigma_{\Lambda_{2}-\Lambda_{3}, \Lambda_{1}-\Lambda_{3}+1}, 
(\Lambda_{2},\Lambda_{1},\Lambda_{3})) 
\sim 
(\sigma_{\Lambda_{1}-\Lambda_{3}+1, \Lambda_{2}-\Lambda_{3}}, 
(\Lambda_{1},\Lambda_{2},\Lambda_{3})) 
\notag
\\
&
\sim 
(\sigma_{\Lambda_{1}-\Lambda_{2}+1, \Lambda_{2}-\Lambda_{3}}, 
(\Lambda_{1},\Lambda_{3},\Lambda_{2})) 
\sim 
(\sigma_{\Lambda_{2}-\Lambda_{3}, \Lambda_{1}-\Lambda_{2}+1}, 
(\Lambda_{3},\Lambda_{1},\Lambda_{2})) 
\label{eq:gamma_4}
\\
&
\sim 
(\sigma_{\Lambda_{1}-\Lambda_{3}+1, \Lambda_{1}-\Lambda_{2}+1}, 
(\Lambda_{3},\Lambda_{2},\Lambda_{1})) 
\sim 
(\sigma_{\Lambda_{1}-\Lambda_{2}+1, \Lambda_{1}-\Lambda_{3}+1}, 
(\Lambda_{2},\Lambda_{3},\Lambda_{1})), 
\notag
\end{align}
\begin{align}
\gamma_{5} 
&= 
(\sigma_{\Lambda_{1}-\Lambda_{2}, \Lambda_{2}-\Lambda_{3}+1}, 
(\Lambda_{1},\Lambda_{3},\Lambda_{2})) 
\sim 
(\sigma_{\Lambda_{2}-\Lambda_{3}+1, \Lambda_{1}-\Lambda_{2}}, 
(\Lambda_{3},\Lambda_{1},\Lambda_{2})) 
\notag
\\
& 
\sim 
(\sigma_{\Lambda_{1}-\Lambda_{3}+1, \Lambda_{1}-\Lambda_{2}}, 
(\Lambda_{3},\Lambda_{2},\Lambda_{1})) 
\sim 
(\sigma_{\Lambda_{1}-\Lambda_{2}, \Lambda_{1}-\Lambda_{3}+1}, 
(\Lambda_{2},\Lambda_{3},\Lambda_{1})) 
\label{eq:gamma_5}
\\
& 
\sim 
(\sigma_{\Lambda_{2}-\Lambda_{3}+1, \Lambda_{1}-\Lambda_{3}+1}, 
(\Lambda_{2},\Lambda_{1},\Lambda_{3})) 
\sim 
(\sigma_{\Lambda_{1}-\Lambda_{3}+1, \Lambda_{2}-\Lambda_{3}+1}, 
(\Lambda_{1},\Lambda_{2},\Lambda_{3})), 
\notag
\end{align}
and 
\[
\gamma_{0'} 
= 
(\sigma_{\Lambda_{1}-\Lambda_{3}, \Lambda_{2}-\Lambda_{3}}, 
(\Lambda_{1},\Lambda_{2},\Lambda_{3})). 
\]
Here, $\sim$ means the $K$-conjugacy, 
and we denoted regular characters in such a way as  
$\gamma=(\sigma, \nu) \in \widehat{M_{m}} \times \lie{a}_{m}^{\ast}$, 
since a regular character of $H_{m}$ is determined by 
its restriction to $M_{m}$ and $\lie{a}_{m}$. 
Such notation will be applied to regular characters of other Cartan subgroups. 
The lengths of these regular characters are all two. 

Next, consider the fundamental Cartan subgroup. 
Let 
\begin{align*}
& 
H_{f} = T_{f} A_{f}, 
\\
& 
T_{f} 
:= 
\left\{
\left. 
\begin{pmatrix}
\cos \theta & -\sin \theta & 
\\
\sin \theta & \cos \theta & 
\\
& & 1 
\end{pmatrix}
\right| 
\theta \in \R
\right\}, 
\quad 
A_{f} 
:= 
\{\diag(a,a,a^{-2}) \mid a > 0\}
\end{align*}
be a fundamental Cartan subgroup of $G$. 
Define $L_{f}$ to be the centralizer of $A_{f}$ in $G$. 
Then 
\[
L_{f} 
= 
\left\{
\left. 
\begin{pmatrix}
A & 
\\
& (\det A)^{-1} 
\end{pmatrix}
\right| 
A \in GL(2,\R)
\right\}. 
\]
Therefore, $L_{f} = M_{f} A_{f}$, where 
\[
M_{f} = 
\left\{
\left. 
\begin{pmatrix}
A & 
\\
& (\det A)^{-1} 
\end{pmatrix}
\right| 
A \in SL^{\pm}(2,\R)
\right\}. 
\]
The order of the Weyl group $W(G, H_{f})$ is two, 
and this group is generated by $\diag(1,-1,-1)$. 
Since $H_{f}$ is connected, 
there are 
$\# W(\lie{g}, \lie{h}_{f}) / W(G, H_{f}) 
= 3$ equivalence classes of 
irreducible $(\gK)$-modules 
with a given integral infinitesimal character, 
which correspond to the ATLAS numbers "0", "1" and "2" in the block 
$PU(2,1)$. 
According to these  numbers, 
we define regular characters $\gamma_{0}$, $\gamma_{1}$ and $\gamma_{2}$ 
of $H_{f}$ by 
\begin{align*}
\gamma_{0} 
&= 
(\Lambda_{1}-\Lambda_{3}+1,\Lambda_{1}+\Lambda_{3}-2\Lambda_{2}) 
\in \widehat{SO(2)} \times \lie{a}_{f}^{\ast}, 
\\
\gamma_{1} 
&= 
(\Lambda_{1}-\Lambda_{2}+1,\Lambda_{1}+\Lambda_{2}-2\Lambda_{3}) 
\in \widehat{SO(2)} \times \lie{a}_{f}^{\ast}, 
\\
\gamma_{2} 
&= 
(\Lambda_{2}-\Lambda_{3}+1,\Lambda_{2}+\Lambda_{3}-2\Lambda_{1}) 
\in \widehat{SO(2)} \times \lie{a}_{f}^{\ast}. 
\end{align*}
Here, we denoted the restriction of $\gamma_{j}$ to $T_{f} \simeq SO(2)$ 
by its differential. 
The length of $\gamma_{0}$ is zero and those of $\gamma_{1}, \gamma_{2}$ 
are one. 

The composition factors of $X(\gamma_{i})$, $i=1,\dots,5,0'$ 
are known by KLV-conjecture. 

\begin{theorem}
\label{theorem:composition factors for SL(3)}
In the Grothendieck group, $X(\gamma_{i})$, $i=1,\dots,5,0'$, decomposes 
into irreducible modules as follows: 
\begin{align*}
&
X(\gamma_{0}) = \overline{X}(\gamma_{0}), 
\qquad
X(\gamma_{1}) = \overline{X}(\gamma_{0}) + \overline{X}(\gamma_{1}), 
\qquad
X(\gamma_{2}) = \overline{X}(\gamma_{0}) + \overline{X}(\gamma_{2}), 
\\
&
X(\gamma_{3}) 
= 
\overline{X}(\gamma_{0})+\overline{X}(\gamma_{1})
+\overline{X}(\gamma_{2})+\overline{X}(\gamma_{3}), 
\\
&
X(\gamma_{4})
= 
\overline{X}(\gamma_{0})+\overline{X}(\gamma_{1})+\overline{X}(\gamma_{4}), 
\qquad
X(\gamma_{5})
= 
\overline{X}(\gamma_{0})+\overline{X}(\gamma_{2})+\overline{X}(\gamma_{5}), 
\\
& X(\gamma_{0'}) = \overline{X}(\gamma_{0'}). 
\end{align*}
\end{theorem}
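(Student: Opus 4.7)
The plan is to deduce each of the six identities from the Kazhdan-Lusztig-Vogan conjecture proved by Vogan in \cite{VIC-III}, which expresses the composition multiplicity $[X(\gamma) : \overline{X}(\gamma')]$ as the value at $q=1$ of a Kazhdan-Lusztig-Vogan polynomial $P_{\gamma',\gamma}(q)$. Before invoking it, I would first justify the separation into the two blocks named in the statement: the block containing $\gamma_{0'}$ is a singleton (inherited from the quasisplit inner form $PU(3)$), so $X(\gamma_{0'})=\overline{X}(\gamma_{0'})$ is immediate, while $\gamma_0,\dots,\gamma_5$ all lie in the block $PU(2,1)$ and require a genuine computation.

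Within the principal block, I would record the length function $\ell(\gamma_0)=0$, $\ell(\gamma_1)=\ell(\gamma_2)=1$, $\ell(\gamma_3)=\ell(\gamma_4)=\ell(\gamma_5)=2$, which follows from the explicit formulas of Vogan's \cite[Definition 8.1.4]{V-Green} applied to the regular characters written down in (\ref{eq:gamma_3})--(\ref{eq:gamma_5}) and the definition of $\gamma_0,\gamma_1,\gamma_2$. I would then determine the Bruhat order on the six characters by identifying, for each simple root of $\overline{\gamma}$, which of the operations (cross action, Cayley transform, real/complex root) applies; since three of the characters are supported on the fundamental Cartan $H_f$ and three on the split Cartan $H_s$, each Cayley transform links a length-$k$ character on $H_f$ to a length-$(k{+}1)$ character on $H_s$ and is pinned down by the compatibility of the $M_m$-character $\sigma_{i,j}$ with $\Gamma|_{T_f}$. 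The outcome is the Hasse diagram in which $\gamma_3$ dominates $\gamma_0,\gamma_1,\gamma_2$, while $\gamma_4$ dominates $\gamma_0,\gamma_1$ and $\gamma_5$ dominates $\gamma_0,\gamma_2$.

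Given the Hasse diagram, the KLV polynomials are computed by the standard recursion of \cite{VIC-II}, starting from $P_{\gamma,\gamma}=1$ and ascending one length at a time. The real-rank-two situation is small enough that the recursion terminates in two steps and no cancellations occur; one may equivalently verify via the ATLAS software \cite{Atlas} that every nonzero $P_{\gamma',\gamma}$ in this block is the constant polynomial $1$. Consequently $[X(\gamma):\overline{X}(\gamma')]=1$ whenever $\gamma'\leq\gamma$ in the Bruhat order and is $0$ otherwise; reading off the sums of the $\gamma'$ that lie below each $\gamma$ yields exactly the six displayed identities.

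The only substantive obstacle is the bookkeeping in the second step: one must correctly match the discrete/principal data on $H_f$ with the $M_m$-types $\sigma_{i,j}$ across the Cayley transforms so that the correct triples of $\gamma_i$'s are Bruhat-linked, rather than a spuriously different matching. Once that table is in hand, the remainder is a routine check; the heavy lifting is done by the KLV theorem \cite{VIC-III} quoted above.
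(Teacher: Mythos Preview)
Your proposal is correct and matches the paper's approach: the paper does not give a proof of this theorem at all, but simply states that ``the composition factors of $X(\gamma_{i})$ \dots\ are known by KLV-conjecture'' and records the result. Your sketch of how one actually carries out the KLV computation (block separation, length function, Bruhat order via Cayley transforms, and the observation that all relevant polynomials equal $1$) is the standard way to extract these numbers and is consistent with the data the paper records; it supplies the details the paper omits.
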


For $G=SL(3,\R)$, the automorphism $\mu$ 
in Section~\ref{subsection:horizontal symmetry} is given by the involution 
\[
\mu : G \to G, 
\qquad 
\mu(g) = {}^{t}g^{-1}. 
\]
This automorphism stabilizes the Cartan subgroups 
$H_{m}$ and $H_{f}$. 
For two regular characters $\gamma$ and $\gamma'$, 
we write $\gamma \dot{\sim} \gamma'$ if $X(\gamma)$ and $X(\gamma')$ 
are in the same coherent family. 
Then by easy calculation, 
$\gamma_{0} \circ \mu \dot{\sim} \gamma_{0}$, 
$\gamma_{1} \circ \mu \dot{\sim} \gamma_{2}$, 
$\gamma_{3} \circ \mu \dot{\sim} \gamma_{3}$ 
and $\gamma_{4} \circ \mu \dot{\sim} \gamma_{5}$. 
It follows that 
\begin{lemma}
\label{lemma:quasi-dual modules, SL(3)}
$\overline{X}(\gamma_{0})$ and $\overline{X}(\gamma_{3})$ are quasi-self dual. 
$\overline{X}(\gamma_{1})$ is quasi-dual to $\overline{X}(\gamma_{2})$, 
and  
$\overline{X}(\gamma_{4})$ to $\overline{X}(\gamma_{5})$. 
\end{lemma}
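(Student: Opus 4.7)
The plan is to derive the quasi-duality statements directly from Theorem~\ref{theorem:Vogan, Borel-Wallach} together with the coherent-family relations $\gamma_{0} \circ \mu \dot{\sim} \gamma_{0}$, $\gamma_{3} \circ \mu \dot{\sim} \gamma_{3}$, $\gamma_{1} \circ \mu \dot{\sim} \gamma_{2}$, and $\gamma_{4} \circ \mu \dot{\sim} \gamma_{5}$ flagged in the paragraph immediately preceding the lemma. Since $\mu$ is an involution that stabilizes both $H_{m}$ and $H_{f}$, the twist $X(\gamma_{i})^{\mu}$ is the standard module attached to $\gamma_{i} \circ \mu$, so that $\overline{X}(\gamma_{i})^{\mu} \simeq \overline{X}(\gamma_{i} \circ \mu)$. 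Combined with $\overline{X}(\gamma_{i})^{\mu} \simeq \overline{X}(\gamma_{i})^{*}$ from Theorem~\ref{theorem:Vogan, Borel-Wallach}, this gives $\overline{X}(\gamma_{i})^{*} \simeq \overline{X}(\gamma_{i} \circ \mu)$. Once the four $\dot{\sim}$-relations are verified, $\overline{X}(\gamma_{i})^{*}$ lies in the same coherent family as the partner $\overline{X}(\gamma_{j})$; since all modules share the nonsingular infinitesimal character $\Lambda$ and hence the same open Weyl chamber, the quasi-duality statements follow at once from the definition.

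It remains to verify the four coherent-family relations. Since $\mu(g) = {}^{t}g^{-1}$ inverts any diagonal matrix, its restriction to $H_{m}$ is trivial on $M_{m}$ (whose elements are diagonal with entries in $\{\pm 1\}$) and acts as $-1$ on $\lie{a}_{m}$, sending $(\Lambda_{1},\Lambda_{2},\Lambda_{3})$ to $(-\Lambda_{1},-\Lambda_{2},-\Lambda_{3})$. A parallel check using $R(\theta)^{-1} = {}^{t}R(\theta)$ shows that $\mu$ is trivial on $T_{f}$ and acts as $-1$ on $\lie{a}_{f}$, sending the continuous parameter of each $\gamma_{i}$ on $H_{f}$ to its negative. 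Substituting these into the defining formulas for $\gamma_{0}, \dots, \gamma_{5}$, and using the Weyl-group actions on $\widehat{M_{m}}$ and on $\lie{a}_{m}^{\ast}$ recorded in Section~\ref{section:irreducible modules. SL(3)} to conjugate back to a dominant representative, one checks case by case that $\gamma_{i} \circ \mu$ agrees with the claimed $\gamma_{j}$ up to the $K$-conjugacies appearing in the several equivalent forms of each $\gamma_{j}$.

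The main technical point I expect to confront is the combinatorial bookkeeping for the $H_{m}$-cases $\gamma_{3}, \gamma_{4}, \gamma_{5}$: each admits six equivalent $K$-conjugate representatives, and one must select the correct one so that the discrete $M_{m}$-character and the continuous $\lie{a}_{m}$-parameter transform in a mutually consistent way under the joint Weyl action. Once this matching is in place, no further input is needed, and the lemma follows.
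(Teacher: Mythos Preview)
Your proposal is correct and follows essentially the same approach as the paper: the paper simply writes ``It follows that'' after recording the relations $\gamma_{0}\circ\mu\dot{\sim}\gamma_{0}$, $\gamma_{1}\circ\mu\dot{\sim}\gamma_{2}$, $\gamma_{3}\circ\mu\dot{\sim}\gamma_{3}$, $\gamma_{4}\circ\mu\dot{\sim}\gamma_{5}$, and you are spelling out the mechanism (Theorem~\ref{theorem:Vogan, Borel-Wallach} plus the identification $\overline{X}(\gamma_i)^{\mu}\simeq\overline{X}(\gamma_i\circ\mu)$) and supplying the ``easy calculation'' of how $\mu$ acts on $M_m$, $\lie{a}_m$, $T_f$, and $\lie{a}_f$. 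One small caution on phrasing: the contragredient $\overline{X}(\gamma_i)^{*}$ does not itself have infinitesimal character $\Lambda$ but rather $-\Lambda$, so the Weyl-chamber clause in the quasi-isomorphism definition is satisfied because after Weyl conjugation the parameter of $\gamma_i\circ\mu$ lands back in the dominant chamber, not because all modules literally share $\Lambda$; you may want to tighten that sentence.
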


Put $\mu' = w^{\circ} \mu (w^{\circ})^{-1}$, 
where $w^{\circ}$ is the longest element of the Weyl group $W(G,A_{m})$. 
This automorphism stabilizes $K$, $M_{m}$, $A_{m}$ and $N_{m}$ 
respectively. 
For $\sigma_{i,j} \in \widehat{M_{m}}$ and 
$\nu=(\nu_{1},\nu_{2},\nu_{3}) \in \lie{a}^{\ast}$, 
\begin{align*}
&
\sigma_{i,j} \circ (\mu')^{-1} 
= \sigma_{i,i+j}, 
&
&
\nu \circ (\mu')^{-1} = (-\nu_{3},-\nu_{2},-\nu_{1}) 
\in \lie{a}^{\ast}. 
\end{align*}
It follows that 
\begin{equation}
\label{eq:principal series twisted by mu}
I(\sigma_{i,j}, (\nu_{1},\nu_{2},\nu_{3}))^{\mu}
\simeq
I(\sigma_{i,i+j}, (-\nu_{3},-\nu_{2},-\nu_{1})). 
\end{equation}

\section{$K$-types of the standard modules of $SL(3,\R)$}
\label{section:K-spectra, SL(3)} 

For our calculation, we need the $K$-spectra of the irreducible modules 
$\overline{X}(\gamma_{i})$, $i=0,1,\dots,5$. 
In fact, we only need the information of the minimal $K$-types 
of $\overline{X}(\gamma_{i})$, $i=0,1,2$ for general $\Lambda$ and 
that of the $K$-spectra of $\overline{X}(\gamma_{i})$, $i=0,1,\dots,5$, 
for the trivial infinitesimal character $\Lambda = \rho_{m} = (1,0,-1)$. 

Before going ahead, 
we fix the identification 
$\lie{k} = \lie{so}(3) \otimes \C \simeq \lie{sl}_{2}$. 

Let $A_{ij} = E_{ij} - E_{ji}$ and $S_{ij} := E_{ij} + E_{ji}$. 
The commutation relations of them are 
$[A_{ij}, A_{jk}] = A_{ik}$, 
$[A_{ij}, S_{jk}] = S_{ik}$ for $i \not= j \not= k \not= i$, 
$[A_{ij}, S_{ij}] = 2(E_{ii} - E_{jj})$ and 
$[A_{ij}, E_{jj}] = S_{ij}$.

As a basis of $\lie{k}$, choose 
\begin{align}
& 
D := \I A_{21}, 
&
& 
Y_{+} := \I A_{32} + A_{31}, 
&
&
Y_{-} := \I A_{32} - A_{31}. 
\label{eq:sl_2 triple for so(3)}
\end{align}
Then they satisfy 
$[2D,Y_{+}] = 2Y_{+}$, 
$[2D,Y_{-}] = - 2Y_{-}$ 
and $[Y_{+},Y_{-}] = 2D$, 
namely $\{2D, Y_{+}, Y_{-}\}$ is a standard $\lie{sl}_{2}$-triple. 
Then, the irreducible representation $V_{\lambda}^{SO(3)}$, 
where $\lambda$ is the highest weight with respect to $D$, is 
identified with $V_{2\lambda}^{\lie{sl}_{2}}$ as a $SO(3)$-module. 
Under this identification 
$\lie{k} \simeq \lie{sl}_{2} = \lie{su}(2) \otimes \C$, 
the surjection map 
$SU(2) \rightarrow SO(3)$ sends the elements 
$\begin{pmatrix}
0 & -1 \\ 1 & 0 
\end{pmatrix}$ 
and 
$\begin{pmatrix}
0 & \I \\ \I & 0 
\end{pmatrix}$ in $SU(2)$ 
to $m_{-1,1}$ and $m_{1,-1}$ in $SO(3)$, respectively. 
It follows from Lemma~\ref{lemma:the action of elements on repn of SU(2)} 
that the elements 
$m_{-1,1,}, m_{1,-1} \in M_{m} \subset SO(3)$ act on 
$v_{q}^{\lambda} \in V_{\lambda}^{SO(3)}$ by 
\begin{align}
& 
\tau_{\lambda}(m_{-1,1}) v_{q}^{\lambda} 
= 
(-1)^{\lambda-q} v_{-q}^{\lambda}
&
&
\mbox{and} 
&
&
\tau_{\lambda}(m_{1,-1}) v_{q}^{\lambda} 
= (-1)^{\lambda} v_{-q}^{\lambda}. 
\label{eq:action of M, SL(3)} 
\end{align}

Firstly, consider the standard modules $X(\gamma_{i})$, $i=0,1,2$. 
Let $\pi_{DS}^{SL_{2}^{\pm}}(\mu)$ be the $(\gK)$-module of 
the discrete series representation of $M_{f} \simeq SL^{\pm}(2,\R)$ 
with the minimal $K$-type $V_{\mu}^{O(2)}$. 
Let 
\[
P_{f} = L_{f} N_{f} = M_{f} A_{f} N_{f} 
\]
be a parabolic subgroup of $G$. 
We choose an appropriate basis of $\lie{a}_{f}$, 
and we identify an element of $\lie{a}_{f}$ 
with a complex number. 
Then, the standard module $X(\gamma_{i})$, $i=0,1,2$, is 
the space of $K$-finite vectors of 
\begin{align*}
& 
\Ind_{P_{f}}^{G}(\pi_{DS}^{SL_{2}^{\pm}}(\Lambda_{p}-\Lambda_{q}+1) 
\otimes e^{\Lambda_{p}+\Lambda_{q}-2\Lambda_{r}+3} 
\otimes 1), 
&
&
(p,q,r) 
= 
\left\{
\begin{matrix}
(1,3,2) & (i=0), 
\\
(1,2,3) & (i=1), 
\\
(2,3,1) & (i=2). 
\end{matrix}
\right.
\end{align*}

From this, we know that the $K \cap M_{f} =O(2)$-spectrum of 
$\pi_{DS}^{SL_{2}^{\pm}}(\mu)$ is 
\[
\{V_{\mu+2k}^{O(2)} \mid k = 0,1,2,\dots\},
\]
and the multiplicities of these are all one. 
By Frobenius reciprocity, 
the $K$-spectra of $X(\gamma_{i})$, $i=0,1,2$, are 
\begin{align}
& 
\widehat{K}(X(\gamma_{i})) 
= 
\Z_{\geq \Lambda_{p}-\Lambda_{q}+1}, 
&
&
m_{\lambda} 
= 
\lfloor 
\frac{\lambda-\Lambda_{p}+\Lambda_{q}+1}{2}
\rfloor, 
\label{eq:K-spectra, X(gamma_i), i=0,1,2}
\end{align}
where, 
$(p,q) 
= 
(1,3)$ for $i=0$, 
$(1,2)$ for $i=1$ and 
$(2,3)$ for $i=2$, respectively, 
and $m_{\lambda}$ is the multiplicity of $V_{\lambda}^{SO(3)}$ 
in $X(\gamma_{i})$. 
Note that the minimal $K$-types of $\overline{X}(\gamma_{0})$, 
$\overline{X}(\gamma_{1})$ and $\overline{X}(\gamma_{2})$ 
are 
$V_{\Lambda_{1}-\Lambda_{3}+1}^{SO(3)}$, 
$V_{\Lambda_{1}-\Lambda_{2}+1}^{SO(3)}$ and 
$V_{\Lambda_{2}-\Lambda_{3}+1}^{SO(3)}$, respectively, 
and the multiplicities of them are all one.

For the standard modules $X(\gamma_{i})$, $i=3,4,5$, 
we need the information of $K$-types 
when the infinitesimal character is trivial. 
In this case, 
\begin{align*}
X(\gamma_{i}) 
= \Ind_{P_{m}}^{G}(\sigma_{p,q} \otimes e^{\rho_{m}+\rho_{m}} \otimes 1), 
&
&
(p,q) 
= 
\left\{ 
\begin{matrix}
(0,0) & (i=3), 
\\
(1,1) & (i=4), 
\\
(1,0) & (i=5). 
\end{matrix}
\right. 
\end{align*}

Let us calculate the $K=SO(3)$ spectra of these modules. 
By \eqref{eq:action of M, SL(3)}  and the Frobenius reciprocity, 
the $K$-spectra of $X(\gamma_{i})$, $i=3,4,5$, are as follows:  
\begin{align*}
& 
\widehat{K}(X(\gamma_{3})) 
= 
\Z_{\geq 0}, 
\qquad 
m_{\lambda} 
= 
\begin{cases}
\frac{\lambda}{2}+1 \quad \mbox{(if $\lambda$ is even)}
\\
\frac{\lambda-1}{2} \quad \mbox{(if $\lambda$ is odd)}
\end{cases}, 
\\
& 
\widehat{K}(X(\gamma_{4})) 
=
\widehat{K}(X(\gamma_{5})) 
= 
\Z_{\geq 1}, 
\qquad 
m_{\lambda} 
= 
\begin{cases}
\frac{\lambda}{2} \quad \mbox{(if $\lambda$ is even)}
\\
\frac{\lambda+1}{2} \quad \mbox{(if $\lambda$ is odd)}
\end{cases}.
\end{align*}

From these informations on the $K$-spectra of standard modules 
$X(\gamma_{i})$, $i=0,1,\dots,5$, 
and Theorem~\ref{theorem:composition factors for SL(3)}, 
we obtain the following lemma. 

\begin{lemma}
\label{lemma:K-spectra of irreducibles, SL(3)}
\begin{enumerate}
\item
If the nonsingular infinitesimal character is 
$\Lambda = (\Lambda_{1}, \Lambda_{2}, \Lambda_{3})$, 
$\Lambda_{1}-\Lambda_{2}, \Lambda_{2}-\Lambda_{3} \in \Z_{>0}$, 
then the minimal $K$-types of the irreducible modules 
$\overline{X}(\gamma_{i})$, $i=0,1,2$, are 
$\Lambda_{1}-\Lambda_{3}+1$, $\Lambda_{1}-\Lambda_{2}+1$ 
and $\Lambda_{2}-\Lambda_{3}+1$, respectively. 
\item
If the infinitesimal character $\Lambda$ is trivial, 
namely $\Lambda = \rho_{m} =(1,0,-1)$, 
then the multiplicities of small $K$-types in 
the irreducible modules are as follows; 
\begin{center}
\begin{tabular}{|c||c|c|c|c|c|c|}
\hline
$K$-type $\lambda$ & $0$ & $1$ & $2$ & $3$ & $4$ & $5$ 
\\\hline\hline
$\overline{X}(\gamma_{0})$ & $0$ & $0$ & $0$ & $1$ & $1$ & $2$ 
\\\hline
$\overline{X}(\gamma_{1})$ & $0$ & $0$ & $1$ & $0$ & $1$ & $0$ 
\\\hline
$\overline{X}(\gamma_{2})$ & $0$ & $0$ & $1$ & $0$ & $1$ & $0$ 
\\\hline
$\overline{X}(\gamma_{3})$ & $1$ & $0$ & $0$ & $0$ & $0$ & $0$ 
\\\hline
$\overline{X}(\gamma_{4})$ & $0$ & $1$ & $0$ & $1$ & $0$ & $1$ 
\\\hline
$\overline{X}(\gamma_{5})$ & $0$ & $1$ & $0$ & $1$ & $0$ & $1$ 
\\\hline
\end{tabular}
\end{center}
Here, each row exhibits the multiplicities of $V_{\lambda}^{SO(3)}$ 
in the irreducible module $\overline{X}(\gamma_{i})$. 
\end{enumerate}
\end{lemma}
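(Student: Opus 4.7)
The plan is to derive both statements from the $K$-spectrum formulas for the standard modules $X(\gamma_i)$ computed just above, combined with the composition series identities of Theorem~\ref{theorem:composition factors for SL(3)} in the Grothendieck group. Since those identities are linear, they translate directly into linear identities among the $K$-multiplicities of the irreducible constituents, and I would solve them in the obvious triangular order dictated by the block structure.

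For part (1), formula~\eqref{eq:K-spectra, X(gamma_i), i=0,1,2} shows that the lowest $K$-type of $X(\gamma_i)$ for $i=0,1,2$ is $\Lambda_1-\Lambda_3+1$, $\Lambda_1-\Lambda_2+1$, $\Lambda_2-\Lambda_3+1$ respectively, each occurring with multiplicity one. Since $X(\gamma_0)=\overline{X}(\gamma_0)$, the minimal $K$-type of $\overline{X}(\gamma_0)$ is immediately $\Lambda_1-\Lambda_3+1$. For $X(\gamma_1)=\overline{X}(\gamma_0)+\overline{X}(\gamma_1)$, the hypothesis $\Lambda_1-\Lambda_2,\Lambda_2-\Lambda_3\in\Z_{>0}$ forces the strict inequality $\Lambda_1-\Lambda_3+1>\Lambda_1-\Lambda_2+1$, so the minimal $K$-type $\Lambda_1-\Lambda_2+1$ of $X(\gamma_1)$ cannot come from $\overline{X}(\gamma_0)$ and must therefore be carried by $\overline{X}(\gamma_1)$. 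The argument for $\overline{X}(\gamma_2)$ is identical, using $\Lambda_1-\Lambda_3+1>\Lambda_2-\Lambda_3+1$.

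For part (2) I would specialize to $\Lambda=\rho_{m}=(1,0,-1)$, so that $\Lambda_1-\Lambda_3+1=3$ and $\Lambda_1-\Lambda_2+1=\Lambda_2-\Lambda_3+1=2$. Plugging these values into \eqref{eq:K-spectra, X(gamma_i), i=0,1,2} tabulates the multiplicities of $V_\lambda^{SO(3)}$ in $X(\gamma_0),X(\gamma_1),X(\gamma_2)$ for $\lambda=0,\dots,5$, while the parity-based formulas displayed in the text furnish those for $X(\gamma_3),X(\gamma_4),X(\gamma_5)$. I would then read off the irreducible $K$-spectra by inverting Theorem~\ref{theorem:composition factors for SL(3)} row by row: $\overline{X}(\gamma_0)=X(\gamma_0)$; $\overline{X}(\gamma_i)=X(\gamma_i)-\overline{X}(\gamma_0)$ for $i=1,2$; $\overline{X}(\gamma_3)=X(\gamma_3)-\overline{X}(\gamma_0)-\overline{X}(\gamma_1)-\overline{X}(\gamma_2)$; and analogously $\overline{X}(\gamma_4)=X(\gamma_4)-\overline{X}(\gamma_0)-\overline{X}(\gamma_1)$, $\overline{X}(\gamma_5)=X(\gamma_5)-\overline{X}(\gamma_0)-\overline{X}(\gamma_2)$. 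No step here is conceptually hard; the main thing requiring care is the bookkeeping of these subtractions and the verification that every resulting multiplicity is a non-negative integer, which is automatic since each $\overline{X}(\gamma_i)$ is a genuine $\brgK$-module. The triangular shape of the decomposition matrix in Theorem~\ref{theorem:composition factors for SL(3)} guarantees that the $K$-spectra of the irreducible constituents are uniquely pinned down by those of the standard modules, so no further structural input is required.
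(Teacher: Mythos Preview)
Your proposal is correct and follows precisely the approach the paper uses: the paper summarizes its argument in the sentence ``From these informations on the $K$-spectra of standard modules $X(\gamma_{i})$, $i=0,1,\dots,5$, and Theorem~\ref{theorem:composition factors for SL(3)}, we obtain the following lemma,'' which is exactly the triangular inversion you describe. The only minor addition worth making explicit in part~(1) is that $\Lambda_{1}-\Lambda_{2}+1$ is not merely \emph{a} $K$-type of $\overline{X}(\gamma_{1})$ but the \emph{minimal} one, which follows immediately since $\overline{X}(\gamma_{1})$ is a subquotient of $X(\gamma_{1})$ and hence its $K$-types are bounded below by the minimal $K$-type of $X(\gamma_{1})$.
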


Note that, if $\Lambda=\rho_{m}$, 
the irreducible module $\overline{X}(\gamma_{3})$ is 
the trivial representation, which is isomorphic
to $V_{0}^{SO(3)}$ as a $K$-module, 
and the minimal $K$-types of  $X(\gamma_{i})$, $i=4,5$, 
are $V_{1}^{SO(3)}$ with multiplicity one.


\section{Shift operators for $SL(3,\R)$}
\label{section:shift operators, SL(3)}

In this section, we write down the shift operators of $K$-types explicitly, 
in the case $G=SL(3,\R)$. 
Choose $A_{m}$ as in section~\ref{section:irreducible modules. SL(3)}. 
Define 
\begin{align*}
N_{m} &= \{
\begin{pmatrix}
1 & x_{12} & x_{13} 
\\
& 1 & x_{23} 
\\
& & 1 
\end{pmatrix} 
\mid x_{ij} \in \R\}. 
\end{align*}
Then 
$G=KA_{m} N_{m}$ is an Iwasawa decomposition of $G$.

The adjoint representation $(\lie{s}, \Ad_{\lie{s}})$ of $K$ 
is irreducible and 
$\lie{s} \simeq V_{2}^{SO(3)} = V_{4}^{\lie{sl}_{2}}$. 
By \eqref{eq:sl_2 triple for so(3)}, This isomorphism is given by 
\begin{align*}
& 
v_{\pm 2}^{2} 
= 
E_{11} - E_{22} \pm \I S_{12} 
= 
E_{11} - E_{22} \pm D \pm 2 \I E_{12}, 
\\
&
v_{\pm 1}^{2}
= 
\frac{1}{2}(\mp S_{13} - \I S_{23}) 
= -\frac{1}{2} Y_{\pm} 
\mp E_{13}-\I E_{23}, 
\\
&
v_{0}^{2}
= 
-\frac{1}{3} 
(E_{11} + E_{22} - 2 E_{33}). 
\end{align*}

Define an invariant bilinear form $\langle \ , \ \rangle$ on $\lie{g}$ 
by 
$\langle X, Y \rangle = \tr(XY)$. 
Then 
\begin{align*}
&
\langle 
v_{2}^{2}, 
v_{-2}^{2}
\rangle 
= 4, 
&
&
\langle 
v_{1}^{2}, v_{-1}^{2}
\rangle 
= 
-1, 
&
&
\langle 
v_{0}^{2}, v_{0}^{2}
\rangle 
= \frac{2}{3}. 
\end{align*}

The operator $\nabla$ defined in Section~\ref{section:outline} is 
given by 
\begin{align*}
4 \nabla 
\phi 
:= 
& 
L(E_{11} - E_{22} - \I S_{12}) \phi 
\otimes 
v_{2}^{2}
-
2 \, 
L(S_{13} - \I S_{23}) \phi 
\otimes 
v_{1}^{2} 
\\
&
- 2 \, 
L(E_{11} + E_{22} - 2 E_{33}) \phi 
\otimes 
v_{0}^{2} 
\\
& 
+ 2 \,
L(S_{13} + \I S_{23}) \phi 
\otimes 
v_{-1}^{2}
+ 
L(E_{11} - E_{22} + \I S_{12}) \phi 
\otimes 
v_{-2}^{2}.
\end{align*}

For $(\tau_{\lambda}, V_{\lambda}^{SO(3)}) \in \widehat{K}$ 
and $\nu = (\nu_{1}, \nu_{2}, \nu_{3}) \in \lie{a}_{m}^{\ast}$, 
we define 
\begin{align*}
C_{(\tau_{\lambda})^{\ast}}^{\infty}
&(K \backslash G / A_{m} N_{m}; 
e^{\nu+\rho_{m}})
\\
&=
\{\phi_{\lambda} 
: G \overset{C^{\infty}}{\longrightarrow} V_{(\tau_{\lambda})^{\ast}} 
\mid 
\\
& \hspace{15mm} 
\phi_{\lambda}(kan) 
= 
a^{-\nu-\rho_{m}} (\tau_{\lambda})^{\ast}(k)\, \phi_{\lambda}(e), 
\ 
k \in K, a \in A_{m}, n \in N_{m}\}. 
\end{align*}
Note that we may and do replace $(\tau_{\lambda})^{\ast}$ 
by $\tau_{\lambda}$, 
since $(\tau_{\lambda}, V_{\lambda}^{SO(3)})$ is self-dual. 

For $j=-2,-1,0,1,2$, let $\mathrm{pr}_{j}$ be the natural projection 
\[
V_{\lambda}^{SO(3)} \otimes \lie{s} 
\simeq 
\bigoplus_{j=-2}^{2} V_{\lambda+j}^{SO(3)} 
\rightarrow 
V_{\lambda+j}^{SO(3)}. 
\]
We define the shift operators of $K$-types $P_{j}$ ($j=-2, -1, 0, 1, 2$) by 
\[
P_{j} = \mathrm{pr}_{j} \circ \nabla : 
C^{\infty}_{\tau_{\lambda}}(K \backslash G / A_{m} N_{m}; e^{\nu+\rho_{m}}) 
\rightarrow 
C^{\infty}_{\tau_{\lambda+j}}(K \backslash G / A_{m} N_{m}; e^{\nu+\rho_{m}}). 
\]

For 
$\phi 
\in C_{\tau}^{\infty}
(K \backslash G / A_{m} N_{m}; e^{\nu+\rho_{m}})$, 
the action of elements of Lie algebra is as follows; 
\begin{align*}
&
(L(E_{ii}) \phi)(e) 
= 
\langle e_{i}, \nu + \rho_{m} \rangle \phi(e), 
\\
&
(L(X) \phi)(e) 
= 
-\tau_{\lambda}(X) \phi(e) \quad (X \in \lie{k}), 
\\
&
(L(Y) \phi)(e) = 0 \quad (Y \in \lie{n}). 
\end{align*}
Put 
\[
\phi_{\lambda}(kan) 
= a^{-\nu-\rho_{m}} 
\tau_{\lambda}(k)^{-1} 
\sum_{q=-\lambda}^{\lambda} 
c(q)\, v_{q}^{\lambda} 
\in 
C_{\tau_{\lambda}}^{\infty}
(K \backslash G / A_{m} N_{m}; e^{\nu+\rho_{m}}), 
\qquad 
c(q) \in \C. 
\]
For notational convenience, we regard $c(q)$ to be zero if $|q| > \lambda$. 
For $\alpha \in \C$ and non-negative integer $k$, 
we denote 
\[
(\alpha)_{\downarrow (k)} 
:= 
\alpha(\alpha-1)\cdots(\alpha-k+1) 
= 
\frac{\varGamma(\alpha+1)}{\varGamma(\alpha-k+1)}. 
\]
The shift operators of $K$-types for $G=SL(3,\R)$ 
are as follows. 
\begin{align}
(P_{2} \phi_{\lambda})(e) 
= 
\sum_{q=-\lambda-2}^{\lambda+2} 
\Big\{ 
&
\left(\nu_{1} - \nu_{2} + q - 1 \right)\, 
c(q-2)
- 
2 \left(\nu_{1} + \nu_{2} - 2 \nu_{3} + 2 \lambda + 3
\right)\, 
c(q) 
\notag
\\
&
\quad 
+ 
\left(\nu_{1} - \nu_{2} - q - 1 
\right)\, 
c(q+2) 
\Big\}
v_{q}^{\lambda+2}, 
\label{eq:P_2, SL(3)}
\end{align}
\begin{align}
(P_{1} \phi_{\lambda})(e) 
= 
\sum_{q=-\lambda-1}^{\lambda+1} 
\Big\{ 
&
(\lambda-q+2) 
\left(\nu_{1} - \nu_{2} + q - 1 
\right)\, 
c(q-2)
\notag
\\
& \quad 
+
2 q \left(\nu_{1} + \nu_{2} - 2 \nu_{3} + \lambda + 1 
\right)\, 
c(q)
\label{eq:P_1, SL(3)}
\\
& 
\quad 
- 
(\lambda+q+2) 
\left(\nu_{1} - \nu_{2} - q - 1 
\right)\, 
c(q+2) 
\Big\}
v_{q}^{\lambda+1}, 
\notag
\end{align}
\begin{align}
(P_{0} \phi_{\lambda})(e) 
= 
\sum_{q=-\lambda}^{\lambda} 
\Big\{ 
&
(\lambda-q+2)_{\downarrow (2)} 
\left(\nu_{1} - \nu_{2} + q - 1 
\right)\, 
c(q-2) 
\notag
\\
& 
\quad 
-  
\frac{2}{3} 
(3q^{2}-\lambda^{2}-\lambda) 
\left(\nu_{1} + \nu_{2} - 2 \nu_{3} \right)\, 
c(q)
\label{eq:P_0, SL(3)}
\\
& 
\quad 
+ 
(\lambda+q+2)_{\downarrow (2)} 
\left(\nu_{1} - \nu_{2} - q - 1 
\right)\, 
c(q+2)
\Big\} 
v_{q}^{\lambda}, 
\notag
\end{align}
\begin{align}
(P_{-1} \phi_{\lambda})(e) 
= 
\sum_{q=-\lambda+1}^{\lambda-1} 
\Big\{
&
(\lambda-q+2)_{\downarrow (3)} 
\left(\nu_{1} - \nu_{2} + q - 1 
\right)\, 
c(q-2) 
\notag
\\
& 
\quad 
- 
2 q (\lambda-q) (\lambda+q) 
\left(\nu_{1} + \nu_{2} - 2 \nu_{3} - \lambda 
\right)\, 
c(q) 
\label{eq:P_{-1}, SL(3)} 
\\
& \quad 
- 
(\lambda+q+2)_{\downarrow (3)} 
\left(\nu_{1} - \nu_{2} - q - 1 
\right)\, 
c(q+2)
\Big\} 
v_{q}^{\lambda-1}, 
\notag
\end{align}
\begin{align}
(P_{-2} \phi_{\lambda})(e) 
= 
\sum_{q=-\lambda+2}^{\lambda-2} 
\Big\{
& 
(\lambda-q+2)_{\downarrow (4)} 
\left(\nu_{1} - \nu_{2} + q - 1 
\right)\, 
c(q-2) 
\notag
\\
& 
\quad 
- 
2 (\lambda+q)_{\downarrow (2)} (\lambda-q)_{\downarrow (2)} 
\left(\nu_{1} + \nu_{2} - 2 \nu_{3} - 2 \lambda + 1
\right)\, 
c(q) 
\label{eq:P_{-2}, SL(3)}
\\
& 
\quad 
+ 
(\lambda+q+2)_{\downarrow (4)} 
\left(\nu_{1} - \nu_{2} - q - 1 
\right)\, 
c(q+2) 
\Big\}
v_{q}^{\lambda-2}. 
\notag
\end{align}

\section{Solutions of the system of equations 
$P_{-1} \phi_{\lambda} = 0$, $P_{-2} \phi_{\lambda} = 0$}
\label{section:P_{-1}=p_{-2}=0}

We will solve the system of equations $P_{-1} \phi_{\lambda}(e) = 0$, 
$P_{-2} \phi_{\lambda}(e) = 0$. 
By \eqref{eq:P_{-1}, SL(3)} and \eqref{eq:P_{-2}, SL(3)}, 
the system of equations $P_{-1} \phi_{\lambda}(e) = 0$, 
$P_{-2} \phi_{\lambda}(e) = 0$ is equivalent to 
\begin{align}
& 
(\lambda-q+2)_{\downarrow (2)} 
(\nu_{1} - \nu_{2} + q -1)\, c(q-2) 
\label{eq:c(q-2) and c(q)}\\
& 
- (\lambda+q)_{\downarrow (2)} 
(\nu_{1} + \nu_{2} - 2 \nu_{3} - 2 \lambda + q + 1)\, 
c(q) = 0, 
\notag
\\
& \qquad 
(-\lambda+2 \leq q \leq \lambda-1), 
\notag
\\
& 
(\lambda-q)_{(2)} 
(\nu_{1} + \nu_{2} - 2 \nu_{3} - 2 \lambda - q + 1)\, c(q) 
\label{eq:c(q) and c(q+2)}\\
& 
- (\lambda+q+2)_{\downarrow (2)} 
(\nu_{1} - \nu_{2} - q - 1)\, c(q+2) 
= 0 
\notag
\\
& \qquad 
(-\lambda+1 \leq q \leq \lambda-2). 
\notag
\end{align}
By replacing the parameter $q$ of \eqref{eq:c(q) and c(q+2)} by $q-2$, 
we get 
\begin{align}
& 
(\lambda-q+2)_{\downarrow (2)} 
(\nu_{1} + \nu_{2} - 2 \nu_{3} - 2 \lambda - q + 3)\, c(q-2) 
\notag
\\
& 
- (\lambda+q)_{\downarrow (2)} 
(\nu_{1} - \nu_{2} - q + 1)\, c(q) 
= 0 
\label{eq:c(q) and c(q+2)'}\\
& \qquad 
(-\lambda+3 \leq q \leq \lambda). 
\notag
\end{align}
Suppose $-\lambda+3 \leq \lambda-1$, i.e. $\lambda \geq 2$. 
If 
\begin{align*}
& 
\begin{vmatrix} 
\nu_{1} + \nu_{2} - 2 \nu_{3} - 2 \lambda - q + 3 
&
-(\nu_{1} - \nu_{2} - q + 1) 
\\
-(\nu_{1} - \nu_{2} + q - 1) 
&
\nu_{1} + \nu_{2} - 2 \nu_{3} - 2 \lambda + q + 1 
\end{vmatrix}
\\
&= 
4(\nu_{1} - \nu_{3} - \lambda + 1) 
(\nu_{2} - \nu_{3} - \lambda + 1) 
\\
& \not= 0, 
\end{align*}
namely if $\lambda \not= \nu_{1} - \nu_{3} + 1$ and 
$\lambda \not= \nu_{2} - \nu_{3} + 1$, 
then $c(q) = c(q-2) = 0$ for 
$q=-\lambda+3, -\lambda+4, \dots, \lambda-1$. 
Therefore, if $\lambda \geq 3$, 
then 
$c(q) = 0$ for $-\lambda+1 \leq q \leq \lambda-1$, 
and if $\lambda = 2$, then 
$c(1)=c(-1)=0$. 

Suppose first that $\lambda \geq 3$, 
$\lambda \not= \nu_{1} - \nu_{3} + 1$ and 
$\lambda \not= \nu_{2} - \nu_{3} + 1$.  
In this case, 
\[
(\nu_{1} - \nu_{2} - \lambda + 1)\, c(-\lambda) 
= (\nu_{1} - \nu_{2} - \lambda + 1)\, c(\lambda) 
= 0.
\]
So, if $\lambda \not= \nu_{1} - \nu_{2} + 1$, 
then $c(\lambda) = c(-\lambda) = 0$, 
and if $\lambda = \nu_{1} - \nu_{2} + 1$, 
then $c(\lambda)$ and $c(-\lambda)$ are arbitrary. 

Secondly, suppose that $\lambda=2$, 
$\lambda \not= \nu_{1} - \nu_{3} + 1$ and 
$\lambda \not= \nu_{2} - \nu_{3} + 1$. 
In this case, $c(-1) = c(1) = 0$, and 
$c(2), c(0), c(-2)$ satisfies 
\begin{align*}
& 
(\nu_{1} + \nu_{2} - 2 \nu_{3} - 3)\, c(0) 
- 
6 (\nu_{1} - \nu_{2} - 1)\, c(-2) = 0, 
\\
& 
(\nu_{1} + \nu_{2} - 2 \nu_{3} - 3)\, c(0) 
- 
6 (\nu_{1} - \nu_{2} - 1)\, c(2) = 0. 
\end{align*}
Since we are assuming $2 = \lambda \not= \nu_{2}-\nu_{3}+1$, 
if $\nu_{1} - \nu_{2} = 1$, 
then $\nu_{1} + \nu_{2} - 2 \nu_{3} - 3 
= (\nu_{1}-\nu_{2}-1)+2(\nu_{2}-\nu_{3}-1) \not= 0$. 
Therefore, $c(0) = 0$ and $c(2)$, $c(-2)$ are arbitrary. 
Note that the condition $\nu_{1}-\nu_{2}=1$ is equivalent to 
$\lambda = 2 = \nu_{1}-\nu_{2}+1$, 
and this result is the same as that of the above case for 
$\lambda \geq 3$ and 
$\lambda = \nu_{1}-\nu_{2}+1$. 
If $\nu_{1} - \nu_{2} \not= 1$, namely 
$\lambda = 2 \not= \nu_{1} - \nu_{2} + 1$, 
then 
$\ds 
c(2) = c(-2) 
= 
\frac{1}{6} 
\frac{\nu_{1} - \nu_{2} - 2 \nu_{3} - 3}{\nu_{1} - \nu_{2} - 1} 
c(0)$. 

We obtained 
\begin{proposition}\label{proposition:4.1}
Suppose $\lambda \not= \nu_{1} - \nu_{3} + 1$ and 
$\lambda \not= \nu_{2} - \nu_{3} + 1$. 
\begin{enumerate}
\item
If $\lambda = \nu_{1} - \nu_{2} + 1$, then 
the solution space of $P_{-1} \phi_{\lambda}(e) = 0$ and 
$P_{-2} \phi_{\lambda}(e) = 0$ is spanned by 
$v_{\lambda}^{\lambda} + (-1)^{i}v_{-\lambda}^{\lambda}$, $i=0,1$. 
These vectors are contained in 
$V_{\lambda}^{SO(3)}
(\sigma_{i,\lambda+i})$, 
respectively.  
\item
If $\lambda \geq 3$ and $\lambda\not= \nu_{1} - \nu_{2} + 1$, 
then 
the solution space of $P_{-1} \phi_{\lambda}(e) = 0$ and 
$P_{-2} \phi_{\lambda}(e) = 0$ is 
$\{0\}$. 
\item
If $\lambda = 2$ and $\lambda\not= \nu_{1} - \nu_{2} + 1$, 
then 
the solution space of $P_{-1} \phi_{\lambda}(e) = 0$ and 
$P_{-2} \phi_{\lambda}(e) = 0$ is spanned by 
$\ds 
\frac{1}{6} 
\frac{\nu_{1} + \nu_{2} - 2 \nu_{3} - 3}{\nu_{1} - \nu_{2} -1}
(v_{2}^{2} + v_{-2}^{2})
+ 
v_{0}^{2}$. 
This vector is contained in $V_{2}^{SO(3)}(\sigma_{0,0})$. 
\end{enumerate}
\end{proposition}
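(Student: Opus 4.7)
The plan is to treat \eqref{eq:P_{-1}, SL(3)} and \eqref{eq:P_{-2}, SL(3)} as a pair of coupled three-term recurrences in the coefficients $c(q)$ and to eliminate the common middle terms in favor of a $2 \times 2$ linear system on the pair $(c(q-2),\,c(q))$. After pulling out the factorial prefactors $(\lambda-q+2)_{\downarrow(2)}$ and $(\lambda+q)_{\downarrow(2)}$, which are nonzero on the interior range $-\lambda+3 \le q \le \lambda-1$, the resulting $2 \times 2$ matrix has entries linear in $\nu_1-\nu_2$ and $\nu_1+\nu_2-2\nu_3$. Expanding its determinant and reorganising as a difference of two squares, one should obtain the factorisation $4(\nu_1-\nu_3-\lambda+1)(\nu_2-\nu_3-\lambda+1)$. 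Both factors are nonzero under the standing hypotheses of the proposition.

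Once nonvanishing of this determinant is established, the interior coefficients collapse: $c(q-2)=c(q)=0$ for every $q$ in the interior range. Iterating across the range yields $c(q)=0$ for all $|q| \le \lambda - 1$ as soon as $\lambda \ge 3$; for $\lambda = 2$ the interior range collapses to the single value $q = 1$ and only $c(\pm 1) = 0$ is obtained, leaving $c(\pm 2)$ and $c(0)$ to be pinned down by the boundary equations. The tail equations at $q = \pm \lambda$ always reduce to $(\nu_1-\nu_2-\lambda+1)\,c(\pm\lambda) = 0$, so $c(\pm\lambda)$ become free parameters exactly when $\lambda = \nu_1 - \nu_2 + 1$, recovering part (1); under the opposite hypothesis they vanish, yielding part (2). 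For the remaining $\lambda = 2$ case, the two surviving equations form a simple $2 \times 2$ system in $c(\pm 2), c(0)$ whose solution gives the explicit vector of part (3), after checking that the degenerate subcase $\lambda = 2 = \nu_1-\nu_2+1$ is excluded by hypothesis.

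The last step is to identify the $M_m$-isotypic subspace of each solution vector. Applying \eqref{eq:action of M, SL(3)} to $v_\lambda^\lambda + (-1)^{i}v_{-\lambda}^\lambda$ shows that the eigenvalue of $m_{-1,1}^{sl}$ is $(-1)^{i}$ and that of $m_{1,-1}^{sl}$ is $(-1)^{\lambda+i}$, matching the character $\sigma_{i,\lambda+i}$; the analogous check for the $\lambda = 2$ vector, noting that both $v_{\pm 2}^{2}$ and $v_{0}^{2}$ are fixed by both generators of $M_m$, identifies it as the $\sigma_{0,0}$-isotypic component. The main obstacle is really the bookkeeping at the endpoints: the factorial prefactors vanish at $q = \pm(\lambda-1), \pm\lambda$, so the boundary equations must be extracted directly from the original recurrences rather than from the reduced $2\times 2$ system, and one has to verify that these boundary relations are compatible with (and not stronger than) the interior conclusion.
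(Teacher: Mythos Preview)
Your proposal is correct and follows essentially the same route as the paper. The paper reduces the pair of three-term recurrences \eqref{eq:P_{-1}, SL(3)}, \eqref{eq:P_{-2}, SL(3)} to the two-term relations \eqref{eq:c(q-2) and c(q)} and \eqref{eq:c(q) and c(q+2)} (by eliminating $c(q+2)$, respectively $c(q-2)$, from the pair at the same index $q$, not by eliminating the middle term $c(q)$ as your wording suggests), then shifts the second to obtain a $2\times 2$ system on $(c(q-2),c(q))$ with determinant $4(\nu_1-\nu_3-\lambda+1)(\nu_2-\nu_3-\lambda+1)$; from there the interior-collapse, boundary, and $\lambda=2$ analyses proceed exactly as you outline. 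One small slip: for $\lambda=2$ the surviving system is two equations in the three unknowns $c(2),c(0),c(-2)$ (hence a one-dimensional solution space), not a $2\times 2$ system.
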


\subsection{The case $\lambda = \nu_{2} - \nu_{3} + 1$}

Next, suppose that $\lambda = \nu_{2} - \nu_{3} + 1$. 
Then \eqref{eq:c(q-2) and c(q)} is 
\begin{align}
(\nu_{1} - \nu_{2} + q - 1) 
& \times 
\left\{
(\lambda - q + 2)_{\downarrow (2)} c(q-2) 
- (\lambda + q)_{\downarrow (2)} c(q) 
\right\} = 0, 
\notag
\\
& (-\lambda + 2 \leq q \leq \lambda - 1), 
\notag
\end{align}
and \eqref{eq:c(q) and c(q+2)'} is 
\begin{align}
(\nu_{1} - \nu_{2}  - q + 1) 
&\times 
\left\{
(\lambda - q + 2)_{\downarrow (2)} c(q-2) 
- (\lambda + q)_{\downarrow (2)} c(q) 
\right\} = 0, 
\notag
\\
& (-\lambda + 3 \leq q \leq \lambda).
\notag
\end{align}
Since $\nu$ is regular, $\nu_{1}$ and $ \nu_{2}$ are different, 
so if $\nu_{1} - \nu_{2} + q - 1$ is zero, 
then $\nu_{1} - \nu_{2} - q + 1$ is not zero. 
It follows that \eqref{eq:c(q-2) and c(q)} and 
\eqref{eq:c(q) and c(q+2)'} are equivalent to 
\begin{align} 
& (\nu_{1} - \nu_{2} - \lambda + 1) 
\left\{
1 \cdot 2\,  c(\lambda-2) 
- 
2 \lambda (2 \lambda - 1)\, c(\lambda) 
\right\} = 0, 
\label{eq:nu2-nu3+1-1}
\\
& (\lambda - q + 2)_{\downarrow (2)} c(q-2) 
= (\lambda + q)_{\downarrow (2)} c(q), 
\quad 
(-\lambda + 3 \leq q \leq \lambda - 1), 
\label{eq:nu2-nu3+1-2}
\\
& (\nu_{1} - \nu_{2} - \lambda + 1) 
\left\{
2\lambda (2\lambda - 1)\, c(-\lambda) 
- 1 \cdot 2\, c(-\lambda + 2) 
\right\} = 0. 
\label{eq:nu2-nu3+1-3}
\end{align}
Therefore, if $\lambda \not= \nu_{1} - \nu_{2} + 1$, then 
\[
c(q-2k) 
= 
\frac{(\lambda+q)_{\downarrow (2k)}}{(\lambda-q+2k)_{\downarrow (2k)}}
c(q), 
\]
so 
\begin{align}
& 
c(\lambda - 2k) 
= 
\frac{(2\lambda)_{\downarrow (2k)}}{(2k)!} c_{0}, 
&
& (0 \leq k \leq \lambda), 
\label{eq:c(lambda-2k), SL(3), l=nu2-nu3+1}
\\
& 
c(\lambda - 1 - 2k) 
= 
\frac{(2\lambda-1)_{\downarrow (2k)}}{(2k+1)!} c_{1}, 
&
& (0 \leq k \leq \lambda-1) 
\label{eq:c(lambda-1-2k), SL(3), l=nu2-nu3+1}
\end{align}
for some constants $c_{0}, c_{1}$. 
If $\lambda = \nu_{1} - \nu_{2} + 1$, then 
$c(\lambda)$ and $c(-\lambda)$ are arbitrary and 
other terms are given by 
\eqref{eq:c(lambda-2k), SL(3), l=nu2-nu3+1} 
and 
\eqref{eq:c(lambda-1-2k), SL(3), l=nu2-nu3+1}. 

\begin{proposition}\label{proposition:4.2}
\begin{enumerate}
\item
If $\lambda = \nu_{2} - \nu_{3} + 1$ and 
$\lambda \not= \nu_{1} - \nu_{2} + 1$, 
then the solution space of $P_{-1} \phi_{\lambda}(e) = 0$ and 
$P_{-2} \phi_{\lambda}(e) = 0$ is spanned by the two vectors 
$\sum_{k=0}^{\lambda-i} 
\frac{(2\lambda-i)_{\downarrow (2k)}}{(2k+i)!} 
v_{\lambda-i-2k}^{\lambda}$, 
$i=0,1$. 
These vectors are contained in 
$V_{\lambda}^{SO(3)}(\sigma_{i,\lambda})$, respectively.  
\item
If $\lambda = \nu_{2} - \nu_{3} + 1$ and 
$\lambda = \nu_{1} - \nu_{2} + 1$, 
then the solution space of $P_{-1} \phi_{\lambda}(e) = 0$ and 
$P_{-2} \phi_{\lambda}(e) = 0$ is spanned by the four vectors 
$v_{\lambda}^{\lambda}
+(-1)^{i} 
v_{-\lambda}^{\lambda}, 
\sum_{k=0}^{\lambda-i} 
\frac{(2\lambda-i)_{\downarrow (2k)}}{(2k+i)!}
v_{\lambda-i-2k}^{\lambda}$, 
$i=0,1$. 
These vectors are contained in 
$V_{\lambda}^{SO(3)}(\sigma_{i,\lambda+i})$ and 
$V_{\lambda}^{SO(3)}(\sigma_{i,\lambda})$, 
respectively. 
\end{enumerate}
\end{proposition}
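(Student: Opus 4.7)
The plan is to build directly on the recurrences \eqref{eq:nu2-nu3+1-1}, \eqref{eq:nu2-nu3+1-2}, \eqref{eq:nu2-nu3+1-3} and the explicit solution formulas \eqref{eq:c(lambda-2k), SL(3), l=nu2-nu3+1}, \eqref{eq:c(lambda-1-2k), SL(3), l=nu2-nu3+1} already derived in the paragraph preceding the statement. Under the standing hypothesis $\lambda = \nu_2 - \nu_3 + 1$, these express every interior coefficient $c(q)$ in terms of the two boundary values $c(\lambda)$ and $c(\lambda - 1)$, and the remaining job is two-fold: (i) determine how much additional freedom the two boundary equations \eqref{eq:nu2-nu3+1-1} and \eqref{eq:nu2-nu3+1-3} leave in each subcase, and (ii) identify the $M_m$-isotypic label of each basis vector using \eqref{eq:action of M, SL(3)}.

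For assertion (1), the hypothesis $\lambda \neq \nu_1 - \nu_2 + 1$ makes the common prefactor $\nu_1 - \nu_2 - \lambda + 1$ appearing in \eqref{eq:nu2-nu3+1-1} and \eqref{eq:nu2-nu3+1-3} nonzero, so these two boundary equations become genuine recurrences extending the interior recurrence \eqref{eq:nu2-nu3+1-2} to the endpoints. The even-parity chain (indices $q \equiv \lambda \pmod 2$) is therefore determined by the single initial value $c_0 := c(\lambda)$ via \eqref{eq:c(lambda-2k), SL(3), l=nu2-nu3+1}, and the odd-parity chain is determined by $c_1 := c(\lambda - 1)$ via \eqref{eq:c(lambda-1-2k), SL(3), l=nu2-nu3+1}. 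The choices $(c_0, c_1) = (1, 0)$ and $(0, 1)$ give precisely the two basis vectors in the statement.

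For assertion (2), the added equality $\lambda = \nu_1 - \nu_2 + 1$ makes that prefactor vanish, so \eqref{eq:nu2-nu3+1-1} and \eqref{eq:nu2-nu3+1-3} become trivial. Consequently $c(\lambda)$ and $c(-\lambda)$ decouple from the rest of the even-parity chain, producing two supplementary solutions supported in the single pair $\{v_\lambda^\lambda, v_{-\lambda}^\lambda\}$. Organizing them as $v_\lambda^\lambda \pm v_{-\lambda}^\lambda$, together with the two chain solutions inherited from part (1), yields a four-dimensional solution space.

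The remaining task is the $M_m$-isotypic bookkeeping, carried out via \eqref{eq:action of M, SL(3)}. A direct computation shows that $v_\lambda^\lambda + (-1)^i v_{-\lambda}^\lambda$ has $m_{-1,1}$-eigenvalue $(-1)^i$ and $m_{1,-1}$-eigenvalue $(-1)^{\lambda + i}$, so it lies in $V_\lambda^{SO(3)}(\sigma_{i, \lambda + i})$. For each chain vector one first verifies the symmetry $c(-q) = c(q)$ directly from \eqref{eq:c(lambda-2k), SL(3), l=nu2-nu3+1} and \eqref{eq:c(lambda-1-2k), SL(3), l=nu2-nu3+1}; since the support is concentrated on indices $q$ with $q \equiv \lambda - i \pmod 2$, the eigenvalue computation then yields membership in $V_\lambda^{SO(3)}(\sigma_{i, \lambda})$. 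There is no real obstacle beyond tracking the two parity chains and the two boundary degenerations carefully in parallel.
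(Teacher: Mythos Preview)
Your proposal is correct and follows essentially the same approach as the paper: you build directly on the recurrences \eqref{eq:nu2-nu3+1-1}--\eqref{eq:nu2-nu3+1-3} and the closed formulas \eqref{eq:c(lambda-2k), SL(3), l=nu2-nu3+1}, \eqref{eq:c(lambda-1-2k), SL(3), l=nu2-nu3+1} derived in the paragraph preceding the proposition, exactly as the paper does. Your added verification of the $M_m$-isotypic labels via \eqref{eq:action of M, SL(3)} (checking the symmetry $c(-q)=c(q)$ and reading off the eigenvalues on each parity class) makes explicit a routine step that the paper simply asserts.
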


\subsection{The case $\lambda = \nu_{1} - \nu_{3} + 1$.} 

Let us consider the case $\lambda = \nu_{1} - \nu_{3} + 1$. 
Note that $\lambda$ is not equal to $\nu_{1} - \nu_{2} + 1$ nor 
to $\nu_{2} - \nu_{3} + 1$ because of the nonsingularity of $\nu$.  

In this case, 
\begin{align*}
& 
\nu_{1} + \nu_{2} - 2 \nu_{3} - 2 \lambda + q + 1
= - \nu_{1} + \nu_{2} + q - 1
\quad \mbox{and} 
\\
& 
\nu_{1} + \nu_{2} - 2 \nu_{3} - 2 \lambda - q + 3
= - \nu_{1} + \nu_{2} - q + 1. 
\end{align*}
It follows that the equations 
\eqref{eq:c(q-2) and c(q)} and \eqref{eq:c(q) and c(q+2)'} are the same, 
so the conditions are 
\begin{align}
(\lambda - q + 2)_{\downarrow (2)}
& (\nu_{1} - \nu_{2} + q - 1)\, c(q-2) 
\notag
\\
& + (\lambda+q)_{\downarrow (2)} (\nu_{1} - \nu_{2} - q + 1)\, 
c(q) = 0, 
\label{eq:nu1-nu3+1}
\\
& (-\lambda + 2 \leq q \leq \lambda). 
\notag
\end{align}
As far as the coefficients in \eqref{eq:nu1-nu3+1} are non-zero, 
$c(q)$ and $c(q')$ with $q \equiv q' \ \modulo 2$ satisfy
\begin{align}
\frac{c(q)}{c(q')} 
&= 
\frac{\varGamma(\frac{\nu_{1}-\nu_{2}+1+q}{2})}
{\varGamma(\frac{\nu_{1}-\nu_{2}+1+q'}{2})} 
\frac{\varGamma(\frac{\nu_{2}-\nu_{1}+1+q'}{2})}
{\varGamma(\frac{\nu_{2}-\nu_{1}+1+q}{2})} 
\frac{(\lambda+q')! (\lambda-q')!}{(\lambda+q)! (\lambda-q)!} 
\label{eq:c(q) and c(q-2k)-2}
\\
&= 
(-1)^{(q-q')/2}
\frac{\varGamma(\frac{\nu_{2}-\nu_{1}+1+q'}{2})}
{\varGamma(\frac{\nu_{2}-\nu_{1}+1+q}{2})} 
\frac{\varGamma(\frac{\nu_{2}-\nu_{1}+1-q'}{2})}
{\varGamma(\frac{\nu_{2}-\nu_{1}+1-q}{2})} 
\frac{(\lambda+q')! (\lambda-q')!}{(\lambda+q)! (\lambda-q)!}. 
\label{eq:c(q) and c(q-2k)-3}
\end{align}
Some of the coefficients in \eqref{eq:nu1-nu3+1} can be 
zero only if $q - \nu_{1} + \nu_{2} \equiv 1 \ (\modulo 2)$. 
It follows that 
\begin{align}
& 
\sum_{
-\lambda \leq q \leq \lambda 
\atop 
q - \nu_{1}+\nu_{2} \equiv 0 \ \modulo 2
}
\frac{\varGamma(\frac{\nu_{1}-\nu_{2}+1+q}{2})}
{\varGamma(\frac{\nu_{2}-\nu_{1}+1+q}{2})}
\frac{1}{(\lambda+q)! (\lambda-q)!}
v_{q}^{\lambda} 
\label{eq:a solution of the eqn nu1-nu3+1-1}
\end{align}
is a solution of \eqref{eq:nu1-nu3+1}. 
If all the coefficients in \eqref{eq:nu1-nu3+1} are non-zero, 
\begin{align}
& 
\sum_{-\lambda \leq q \leq \lambda 
\atop q - \nu_{1}+\nu_{2} \equiv 1 \ \modulo 2}
\frac{(-1)^{q/2}}
{\varGamma(\frac{\nu_{2}-\nu_{1}+1+q}{2})
\varGamma(\frac{\nu_{2}-\nu_{1}+1-q}{2})}
\frac{1}{(\lambda+q)! (\lambda-q)!}
v_{q}^{\lambda} 
\label{eq:a solution of the eqn nu1-nu3+1-2}
\end{align}
is another solution. 
Here, for later convenience, 
we wrote this solution in a way different from 
\eqref{eq:a solution of the eqn nu1-nu3+1-1}. 

Consider the case when $\nu_{1}-\nu_{2} \pm q + 1$ can be zero. 
Since $\lambda = \nu_{2}-\nu_{3}+1$ and $-\lambda \leq q \leq \lambda$, 
this is the case when $\nu_{2} > \nu_{3}$ 
and 
$2 \nu_{1} \geq \nu_{2}+ \nu_{3}$. 
Therefore, $\nu$ must satisfy 
$\nu_{1} > \nu_{2} > \nu_{3}$ or $\nu_{2} > \nu_{1} > \nu_{3}$. 
Suppose first that $\nu_{1} > \nu_{2} > \nu_{3}$. 
In this case, 
$0 < \nu_{1}-\nu_{2}+1 \leq \nu_{1}-\nu_{3}+1 = \lambda$ 
and 
$c(\pm(\nu_{1}-\nu_{2}-1)) = 0$ by \eqref{eq:nu1-nu3+1}. 
By using \eqref{eq:nu1-nu3+1} again, we obtain 
$c(\nu_{1}-\nu_{2}-1-2k) = 0$ for $0 \leq k \leq \nu_{1}-\nu_{2}-1$. 
Therefore \eqref{eq:c(q) and c(q-2k)-2} implies that 
\begin{align*}
& 
\sum_{ q \geq \nu_{1}-\nu_{2}+1 
\atop 
q \equiv \nu_{1}-\nu_{2}+1 \ \modulo 2} 
\frac{\varGamma(\frac{\nu_{1}-\nu_{2}+1+q}{2})}
{\varGamma(\frac{\nu_{2}-\nu_{1}+1+q}{2})} 
\frac{1}{(\lambda+q)! (\lambda-q)!}
v_{q}^{\lambda} 
\qquad 
\mbox{and} 
\\
& 
\sum_{ q \geq \nu_{1}-\nu_{2}+1 
\atop 
q \equiv \nu_{1}-\nu_{2}+1 \ \modulo 2} 
\frac{\varGamma(\frac{\nu_{1}-\nu_{2}+1+q}{2})}
{\varGamma(\frac{\nu_{2}-\nu_{1}+1+q}{2})} 
\frac{1}{(\lambda+q)! (\lambda-q)!}
v_{-q}^{\lambda}
\end{align*}
are solutions of \eqref{eq:nu1-nu3+1}. 

\begin{proposition}\label{proposition:nu1-nu3+1, (2)}
If $\lambda = \nu_{1} - \nu_{3} + 1$ and $\nu_{1} > \nu_{2} > \nu_{3}$, 
then the solution space of $P_{-1} \phi_{\lambda}(e) = 0$ and 
$P_{-2} \phi_{\lambda}(e) = 0$ is 
\begin{align*}
\mathrm{Span}
\langle & 
v_{o} := 
\sum_{-\lambda \leq q \leq \lambda 
\atop q - \nu_{1}+\nu_{2} \equiv 0 \ \modulo 2}
\frac{
\varGamma(\frac{\nu_{1}-\nu_{2}+1+q}{2})}
{\varGamma(\frac{\nu_{2}-\nu_{1}+1+q}{2})}
\frac{1}{(\lambda+q)! (\lambda-q)!}
v_{q}^{\lambda}, 
\\
&
v_{+} := 
\sum_{ q \geq \nu_{1}-\nu_{2}+1 
\atop 
q \equiv \nu_{1}-\nu_{2}+1 \ \modulo 2} 
\frac{\varGamma(\frac{\nu_{1}-\nu_{2}+1+q}{2})}
{\varGamma(\frac{\nu_{2}-\nu_{1}+1+q}{2})} 
\frac{1}{(\lambda+q)! (\lambda-q)!}
v_{q}^{\lambda}, 
\\
& 
v_{-} := 
\sum_{ q \geq \nu_{1}-\nu_{2}+1 
\atop 
q \equiv \nu_{1}-\nu_{2}+1 \ \modulo 2} 
\frac{\varGamma(\frac{\nu_{1}-\nu_{2}+1+q}{2})}
{\varGamma(\frac{\nu_{2}-\nu_{1}+1+q}{2})} 
\frac{1}{(\lambda+q)! (\lambda-q)!}
v_{-q}^{\lambda}
\rangle 
\end{align*}
The vectors 
$v_{+} + (-1)^{i} v_{-}$, $i=0,1$, 
are contained in 
$V_{\lambda}^{SO(3)}(\sigma_{\nu_{2}-\nu_{3}+i, \nu_{1}-\nu_{3}+1+i})$ 
and 
$v_{o}$ is contained in 
$V_{\lambda}^{SO(3)}(\sigma_{\nu_{1}-\nu_{3}+1, \nu_{2}-\nu_{3}+1})$. 
\end{proposition}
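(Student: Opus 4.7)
The plan is to analyze the recurrence \eqref{eq:nu1-nu3+1} separately on its two parity classes, exploit the special cancellations that occur precisely because $\lambda=\nu_{1}-\nu_{3}+1$ and $\nu_{1}>\nu_{2}>\nu_{3}$, and then read off the $M_{m}$-types from the $M_{m}$-action formulas \eqref{eq:action of M, SL(3)}.

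First, as already noted before the proposition, the two equations $P_{-1}\phi_{\lambda}(e)=0$ and $P_{-2}\phi_{\lambda}(e)=0$ collapse to the single three-term relation \eqref{eq:nu1-nu3+1}, which is a two-step recurrence and therefore splits on $\{q\equiv\nu_{1}-\nu_{2}\ \modulo 2\}$ and $\{q\equiv\nu_{1}-\nu_{2}+1\ \modulo 2\}$. On the first (``even'') chain both factors $\nu_{1}-\nu_{2}\pm q+1$ are odd integers hence nonzero throughout the range, so the ratio formula \eqref{eq:c(q) and c(q-2k)-2} determines $c(q)$ from any one value; summing with the normalization of \eqref{eq:a solution of the eqn nu1-nu3+1-1} yields the one-parameter family $\Span{v_{o}}$.

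On the second (``odd'') chain I would track the two break points $q=\pm(\nu_{1}-\nu_{2}+1)$, which lie strictly inside $[-\lambda,\lambda]$ thanks to $0<\nu_{1}-\nu_{2}+1<\nu_{1}-\nu_{3}+1=\lambda$. At $q=\nu_{1}-\nu_{2}+1$ the coefficient of $c(q)$ vanishes and all remaining factors are nonzero (using $\nu_{2}>\nu_{3}$), so \eqref{eq:nu1-nu3+1} forces $c(\nu_{1}-\nu_{2}-1)=0$; an entirely symmetric analysis at $q=-(\nu_{1}-\nu_{2}-1)$ forces $c(-(\nu_{1}-\nu_{2}-1))=0$. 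In the inner window $|q|\le\nu_{1}-\nu_{2}-1$ all coefficients of the recurrence are nonzero, so these boundary zeros propagate and force $c(q)=0$ for every such $q$. The two outer tails $q\ge\nu_{1}-\nu_{2}+1$ and $q\le-(\nu_{1}-\nu_{2}+1)$ are thereafter uncoupled, each has a nonzero recurrence that produces a one-parameter family, and iterating \eqref{eq:c(q) and c(q-2k)-2} with the normalization of \eqref{eq:a solution of the eqn nu1-nu3+1-1} gives exactly $v_{+}$ and $v_{-}$. The total solution space is therefore $3$-dimensional as claimed.

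It remains to identify the $M_{m}$-isotypic components. The action \eqref{eq:action of M, SL(3)} sends $v_{q}^{\lambda}\mapsto(-1)^{\lambda-q}v_{-q}^{\lambda}$ under $m_{-1,1}$ and $v_{q}^{\lambda}\mapsto(-1)^{\lambda}v_{-q}^{\lambda}$ under $m_{1,-1}$. For $v_{+}\pm v_{-}$ the pairs $v_{q}^{\lambda}\pm v_{-q}^{\lambda}$ all have $q\equiv\nu_{1}-\nu_{2}+1\ \modulo 2$, so $(-1)^{\lambda-q}=(-1)^{\nu_{2}-\nu_{3}}$ is constant and a direct computation yields the eigencharacter $\sigma_{\nu_{2}-\nu_{3}+i,\,\nu_{1}-\nu_{3}+1+i}$ for $i=0,1$. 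For $v_{o}$ the key step is computing $c(-q)/c(q)$: since $\frac{\nu_{2}-\nu_{1}+1-q}{2}=1-\frac{\nu_{1}-\nu_{2}+1+q}{2}$, the reflection formula $\Gamma(z)\Gamma(1-z)=\pi/\sin(\pi z)$ gives
\[
\frac{c(q)}{c(-q)}
=\frac{\sin\bigl(\pi\tfrac{\nu_{2}-\nu_{1}+1+q}{2}\bigr)}
{\sin\bigl(\pi\tfrac{\nu_{1}-\nu_{2}+1+q}{2}\bigr)}
=(-1)^{\nu_{1}-\nu_{2}},
\]
because the two arguments differ by the integer $\nu_{2}-\nu_{1}$. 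Combining this with the constant sign $(-1)^{\lambda+q}=(-1)^{\nu_{2}+\nu_{3}+1}$ on the even chain then yields $m_{-1,1}v_{o}=(-1)^{\nu_{1}-\nu_{3}+1}v_{o}$ and $m_{1,-1}v_{o}=(-1)^{\nu_{2}-\nu_{3}+1}v_{o}$, placing $v_{o}$ in $V_{\lambda}^{SO(3)}(\sigma_{\nu_{1}-\nu_{3}+1,\nu_{2}-\nu_{3}+1})$ as claimed.

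The main obstacle is the bookkeeping at the two break points $q=\pm(\nu_{1}-\nu_{2}+1)$: one has to verify that the ``accidental'' zero of one factor of \eqref{eq:nu1-nu3+1} does not coincide with a simultaneous zero of the other factor, so that $c(\pm(\nu_{1}-\nu_{2}-1))$ are indeed forced to vanish and the recurrence genuinely decouples into three independent blocks; the hypothesis $\nu_{1}>\nu_{2}>\nu_{3}$ is used precisely to guarantee this nondegeneracy. The remaining calculations are mechanical.
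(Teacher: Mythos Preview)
Your proof is correct and follows essentially the same approach as the paper: reduce $P_{-1}\phi_\lambda=P_{-2}\phi_\lambda=0$ to the single two-step recurrence \eqref{eq:nu1-nu3+1}, split into two parity chains, observe that on the chain $q\equiv\nu_1-\nu_2\ \modulo 2$ no coefficient vanishes (giving the one-parameter family $v_o$ via \eqref{eq:c(q) and c(q-2k)-2}), and on the other chain locate the breaks at $q=\pm(\nu_1-\nu_2+1)$ that kill the inner window and decouple the two outer tails $v_+,v_-$. Your verification of the $M_m$-types, in particular the reflection-formula computation showing $c(-q)/c(q)=(-1)^{\nu_1-\nu_2}$ for $v_o$, spells out a step the paper simply asserts, but the underlying argument is the same.
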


Suppose that $\nu$ satisfies $\nu_{2} > \nu_{1} > \nu_{3}$ 
and $2 \nu_{1} \geq \nu_{2}+\nu_{3}$. 
In this case, $0 < \nu_{2}-\nu_{1}+1 \leq \nu_{1}-\nu_{3}+1 = \lambda$ 
and $c(\pm(\nu_{2}-\nu_{1}+1)) = 0$, by \eqref{eq:nu1-nu3+1}. 
By using \eqref{eq:nu1-nu3+1} again, 
we obtain $c(\pm(\nu_{2}-\nu_{1}+1+2k)) = 0$ 
for $k \geq 0$. 
Therefore, in this case, 
\eqref{eq:a solution of the eqn nu1-nu3+1-2} is 
a solution if we regard $1/\varGamma(-k) = 0$ for $k \in \Z_{\geq 0}$.

We have obtained all the solutions of \eqref{eq:nu1-nu3+1}. 
\begin{proposition}\label{proposition:nu1-nu3+1, (1)}
If $\lambda = \nu_{1} - \nu_{3} + 1$, 
and either $\nu_{2} > \nu_{1} > \nu_{3}$ or 
$\nu_{1} > \nu_{3} > \nu_{2}$, then 
the solution space of $P_{-1} \phi_{\lambda}(e) = 0$ and 
$P_{-2} \phi_{\lambda}(e) = 0$ is 
$\Span{w_{i} \mid i=0,1}$, where 
\begin{align*}
w_{0} 
&= 
\sum_{-\lambda \leq q \leq \lambda 
\atop q \equiv \nu_{1}-\nu_{2} \ \modulo 2}
\frac{\varGamma(\frac{\nu_{1}-\nu_{2}+1+q}{2})}
{\varGamma(\frac{\nu_{2}-\nu_{1}+1+q}{2})}
\frac{1}
{(\lambda+q)! (\lambda-q)!}
v_{q}^{\lambda},
\\
w_{1} 
&= 
\sum_{-\lambda \leq q \leq \lambda 
\atop q \equiv \nu_{1}-\nu_{2}+1 \ \modulo 2}
\frac{(-1)^{q/2}}
{\varGamma(\frac{\nu_{2}-\nu_{1}+1-q}{2}) 
\varGamma(\frac{\nu_{2}-\nu_{1}+1+q}{2})}
\frac{1}
{(\lambda+q)! (\lambda-q)!}
v_{q}^{\lambda}.
\end{align*}
These vectors $w_{i}$, $i=0,1$, are contained in 
$V_{\lambda}^{SO(3)}(\sigma_{\nu_{1}-\nu_{3}+1,\nu_{2}-\nu_{3}+1+i})$, 
respectively. 
\end{proposition}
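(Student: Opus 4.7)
The plan is to exploit the fact that in the regimes specified by the hypotheses, the coefficients appearing in the two-term recurrence \eqref{eq:nu1-nu3+1} are essentially non-vanishing, so that the recurrence splits cleanly according to the parity of $q$ and produces exactly two independent solutions, which can then be written down in closed form via the ratios \eqref{eq:c(q) and c(q-2k)-2} and \eqref{eq:c(q) and c(q-2k)-3} already derived.

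First I would verify the non-vanishing of the factors $(\nu_{1}-\nu_{2}+q-1)$ and $(\nu_{1}-\nu_{2}-q+1)$ for $|q|\le\lambda$ in each subcase. If $\nu_{1}>\nu_{3}>\nu_{2}$, then $\nu_{1}-\nu_{2}\ge \nu_{1}-\nu_{3}+1=\lambda$, so both potential zeros $q=\pm(\nu_{1}-\nu_{2}+1)$ lie strictly outside $[-\lambda,\lambda]$. If $\nu_{2}>\nu_{1}>\nu_{3}$ with $2\nu_{1}<\nu_{2}+\nu_{3}$, an analogous bound again places the zeros out of range. The only subtle subcase is $\nu_{2}>\nu_{1}>\nu_{3}$ with $2\nu_{1}\ge\nu_{2}+\nu_{3}$: here the factor $(\nu_{1}-\nu_{2}+q-1)$ does vanish at $q=\nu_{2}-\nu_{1}+1$, and iterating \eqref{eq:nu1-nu3+1} forces $c(\pm(\nu_{2}-\nu_{1}+1+2k))=0$; but this vanishing is already built into the expression for $w_{1}$ via the convention $1/\varGamma(-k)=0$, as observed in the discussion preceding the proposition.

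With the non-vanishing settled, \eqref{eq:nu1-nu3+1} becomes a genuine two-term recurrence with non-zero coefficients, so the sequences $\{c(q):q\equiv\epsilon\ \modulo 2\}$ for $\epsilon\in\{0,1\}$ are each determined by a single initial value, giving solution space of dimension at most two. To produce explicit representatives I would use \eqref{eq:c(q) and c(q-2k)-2} to obtain the closed form $w_{0}$ on the parity class $q\equiv\nu_{1}-\nu_{2}\ \modulo 2$, and the alternative form \eqref{eq:c(q) and c(q-2k)-3} to obtain $w_{1}$ on $q\equiv\nu_{1}-\nu_{2}+1\ \modulo 2$. The choice of \eqref{eq:c(q) and c(q-2k)-3} for $w_{1}$ rather than \eqref{eq:c(q) and c(q-2k)-2} is essential: in the borderline subcase where the coefficient does vanish, the reciprocal $\varGamma$-values in \eqref{eq:c(q) and c(q-2k)-3} vanish at precisely the correct indices, so the same formula remains a valid solution uniformly across all subcases covered by the proposition.

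Finally I would determine the $M_{m}$-isotypic component of each $w_{i}$ using \eqref{eq:action of M, SL(3)}: $\tau_{\lambda}(m_{-1,1})v_{q}^{\lambda}=(-1)^{\lambda-q}v_{-q}^{\lambda}$ and $\tau_{\lambda}(m_{1,-1})v_{q}^{\lambda}=(-1)^{\lambda}v_{-q}^{\lambda}$. The required identity $c(-q)=(-1)^{q}c(q)$ for $w_{0}$ and $c(-q)=-(-1)^{q}c(q)$ for $w_{1}$ follows from the reflection identity $\varGamma(z)\varGamma(1-z)=\pi/\sin(\pi z)$ applied to the two $\varGamma$-quotients, giving the eigenvalues $(-1)^{\nu_{1}-\nu_{3}+1}$ and $(-1)^{\nu_{2}-\nu_{3}+1+i}$ for $m_{-1,1}$ and $m_{1,-1}$ respectively on $w_{i}$; this pins down the isotypic component as $V_{\lambda}^{SO(3)}(\sigma_{\nu_{1}-\nu_{3}+1,\nu_{2}-\nu_{3}+1+i})$. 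I expect the main obstacle to be keeping the sign and parity bookkeeping consistent through the reflection formula, and cleanly unifying the borderline subcase where coefficients vanish with the generic one; otherwise the computations are routine manipulations of $\varGamma$-functions.
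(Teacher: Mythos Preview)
Your proposal is correct and follows essentially the same approach as the paper: the proposition is stated immediately after the paper's own analysis of the recurrence \eqref{eq:nu1-nu3+1} and its closed-form solutions \eqref{eq:a solution of the eqn nu1-nu3+1-1} and \eqref{eq:a solution of the eqn nu1-nu3+1-2}, together with the treatment of the borderline subcase $\nu_{2}>\nu_{1}>\nu_{3}$, $2\nu_{1}\ge\nu_{2}+\nu_{3}$ via the convention $1/\varGamma(-k)=0$, which is exactly what you outline. Your explicit verification of the $M_{m}$-isotypic components through the reflection formula supplies detail the paper leaves implicit; one small slip is that the zeros of the coefficients in \eqref{eq:nu1-nu3+1} occur at $q=\nu_{1}-\nu_{2}+1$ and $q=\nu_{2}-\nu_{1}+1$ rather than at $q=\pm(\nu_{1}-\nu_{2}+1)$, but this does not affect your argument.
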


\section{Candidates for irreducible submodules. $SL(3,\R)$ case.}
\label{section:candidates for irr sub. SL(3)}

In this section, we seek the candidates for the irreducible 
submodules of the principal series modules of $SL(3,\R)$. 

\subsection{The irreducible modules $\overline{X}(\gamma_{i})$, $i=0,1,2$}

The results in the previous section imply 
the possible principal series modules into which the irreducible 
modules $\overline{X}(\gamma_{i})$, $i=0,1,2$, are embedded. 

Let $\Lambda = (\Lambda_{1}, \Lambda_{2}, \Lambda_{3})$ be 
a dominant nonsingular integral infinitesimal character. 
Namely, it satisfies 
$\Lambda_{1}-\Lambda_{2}, \Lambda_{2}-\Lambda_{3} \in \Z_{>0}$. 
By Lemma~\ref{lemma:K-spectra of irreducibles, SL(3)} (1), 
we know that the minimal $K$-types of $\overline{X}(\gamma_{i})$, 
$i=0,1,2$, are 
$\lambda = \Lambda_{1}-\Lambda_{3}+1$, 
$\Lambda_{1}-\Lambda_{2}+1$ and 
$\Lambda_{2}-\Lambda_{3}+1$, respectively. 
Suppose that $\nu = (\nu_{1}, \nu_{2}, \nu_{3})$ is an element 
of the orbit $W(G,H_{s}) \cdot \Lambda$. 
By Propositions~\ref{proposition:4.1}, \ref{proposition:4.2}, 
\ref{proposition:nu1-nu3+1, (2)} and 
\ref{proposition:nu1-nu3+1, (1)}, 
we know the principal series modules 
of which 
$\overline{X}(\gamma_{i})$, $i=0,1,2$, may be an irreducible submodule. 
Moreover, by \eqref{eq:gamma_3}, \eqref{eq:gamma_4} and \eqref{eq:gamma_5}, 
we know which standard module is isomorphic to this principal series, 
in the Grothendieck group. 
We write these informations in a table. 

\begin{table}[hbt]
\caption{Possible embeddings of $\overline{X}(\gamma_{i})$, 
$i=0,1,2$}
\label{table:possible embeddings of X(gamma_i), i=0,1,2}
\begin{center}
\begin{tabular}{|c|c|c|c|c|}
\hline
Irred. Mod. & Propn \#
& \multicolumn{2}{|c|}{$I(\sigma_{i,j}, \nu)$} & Std Mod. 
\\ \hline 
 & & $\nu$ & $\sigma_{i,j}$ & $X(\gamma_{k})$ 
\\ \hline \hline 
$\overline{X}(\gamma_{0})$ 
& 
\ref{proposition:4.1} (1) 
&
$(\Lambda_{1}, \Lambda_{3}, \Lambda_{2})$ 
&
$\sigma_{\Lambda_{1}-\Lambda_{2}, \Lambda_{2}-\Lambda_{3}+1}$ 
&
$k=5$ 
\\ \hline 
& 
\ref{proposition:4.1} (1) 
&
$(\Lambda_{1}, \Lambda_{3}, \Lambda_{2})$ 
&
$\sigma_{\Lambda_{1}-\Lambda_{2}+1, \Lambda_{2}-\Lambda_{3}}$ 
&
$k=4$ 
\\ \hline 
&
\ref{proposition:4.2} (1) 
&
$(\Lambda_{2},\Lambda_{1},\Lambda_{3})$ 
&
$\sigma_{\Lambda_{2}-\Lambda_{3}, \Lambda_{1}-\Lambda_{3}+1}$ 
&
$k=4$ 
\\ \hline 
&
\ref{proposition:4.2} (1) 
&
$(\Lambda_{2},\Lambda_{1},\Lambda_{3})$ 
&
$\sigma_{\Lambda_{2}-\Lambda_{3}+1, \Lambda_{1}-\Lambda_{3}+1}$ 
&
$k=5$ 
\\ \hline 
& 
\ref{proposition:nu1-nu3+1, (2)} 
& 
$(\Lambda_{1},\Lambda_{2},\Lambda_{3})$ 
&
$\sigma_{\Lambda_{1}-\Lambda_{3}, \Lambda_{2}-\Lambda_{3}+1}$ 
&
$k=3$ 
\\ \hline 
& 
\ref{proposition:nu1-nu3+1, (2)} 
& 
$(\Lambda_{1},\Lambda_{2},\Lambda_{3})$ 
&
$\sigma_{\Lambda_{1}-\Lambda_{3}+1, \Lambda_{2}-\Lambda_{3}}$ 
&
$k=4$ 
\\ \hline 
& 
\ref{proposition:nu1-nu3+1, (2)} 
& 
$(\Lambda_{1},\Lambda_{2},\Lambda_{3})$ 
&
$\sigma_{\Lambda_{1}-\Lambda_{3}+1, \Lambda_{2}-\Lambda_{3}+1}$ 
&
$k=5$ 
\\ \hline \hline 
$\overline{X}(\gamma_{1})$ 
&
\ref{proposition:4.1} (1) 
&
$(\Lambda_{1},\Lambda_{2},\Lambda_{3})$ 
&
$\sigma_{\Lambda_{1}-\Lambda_{3},\Lambda_{2}-\Lambda_{3}+1}$ 
&
$k=3$ 
\\ \hline 
&
\ref{proposition:4.1} (1) 
&
$(\Lambda_{1},\Lambda_{2},\Lambda_{3})$ 
&
$\sigma_{\Lambda_{1}-\Lambda_{3}+1,\Lambda_{2}-\Lambda_{3}}$ 
&
$k=4$ 
\\ \hline 
&
\ref{proposition:4.2} (1) 
&
$(\Lambda_{3},\Lambda_{1},\Lambda_{2})$ 
&
$\sigma_{\Lambda_{2}-\Lambda_{3}+1,\Lambda_{1}-\Lambda_{2}+1}$ 
&
$k=3$ 
\\ \hline 
&
\ref{proposition:4.2} (1) 
&
$(\Lambda_{3},\Lambda_{1},\Lambda_{2})$ 
&
$\sigma_{\Lambda_{2}-\Lambda_{3},\Lambda_{1}-\Lambda_{2}+1}$ 
&
$k=4$ 
\\ \hline 
&
\ref{proposition:nu1-nu3+1, (1)} 
&
$(\Lambda_{1},\Lambda_{3},\Lambda_{2})$ 
&
$\sigma_{\Lambda_{1}-\Lambda_{2}+1,\Lambda_{2}-\Lambda_{3}+1}$ 
&
$k=3$ 
\\ \hline 
&
\ref{proposition:nu1-nu3+1, (1)} 
&
$(\Lambda_{1},\Lambda_{3},\Lambda_{2})$ 
&
$\sigma_{\Lambda_{1}-\Lambda_{2}+1,\Lambda_{2}-\Lambda_{3}}$ 
&
$k=4$ 
\\ \hline \hline 
$\overline{X}(\gamma_{2})$ 
&
\ref{proposition:4.1} (1) 
&
$(\Lambda_{2},\Lambda_{3},\Lambda_{1})$ 
&
$\sigma_{\Lambda_{1}-\Lambda_{2}+1,\Lambda_{1}-\Lambda_{3}}$ 
&
$k=3$ 
\\ \hline 
&
\ref{proposition:4.1} (1) 
&
$(\Lambda_{2},\Lambda_{3},\Lambda_{1})$ 
&
$\sigma_{\Lambda_{1}-\Lambda_{2},\Lambda_{1}-\Lambda_{3}+1}$ 
&
$k=5$ 
\\ \hline 
&
\ref{proposition:4.2} (1) 
&
$(\Lambda_{1},\Lambda_{2},\Lambda_{3})$ 
&
$\sigma_{\Lambda_{1}-\Lambda_{3},\Lambda_{2}-\Lambda_{3}+1}$ 
&
$k=3$ 
\\ \hline 
&
\ref{proposition:4.2} (1) 
&
$(\Lambda_{1},\Lambda_{2},\Lambda_{3})$ 
&
$\sigma_{\Lambda_{1}-\Lambda_{3}+1,\Lambda_{2}-\Lambda_{3}+1}$ 
&
$k=5$ 
\\ \hline 
&
\ref{proposition:nu1-nu3+1, (1)} 
&
$(\Lambda_{2},\Lambda_{1},\Lambda_{3})$ 
&
$\sigma_{\Lambda_{2}-\Lambda_{3}+1,\Lambda_{1}-\Lambda_{3}}$ 
&
$k=3$ 
\\ \hline 
&
\ref{proposition:nu1-nu3+1, (1)} 
&
$(\Lambda_{2},\Lambda_{1},\Lambda_{3})$ 
&
$\sigma_{\Lambda_{2}-\Lambda_{3}+1,\Lambda_{1}-\Lambda_{3}+1}$ 
&
$k=5$ 
\\ \hline 
\end{tabular}
\end{center}
\end{table}

We see that there exist principal series modules which may have 
two or more irreducible submodules. 
For example, 
$\overline{X}(\gamma_{0})$, $\overline{X}(\gamma_{1})$ and 
$\overline{X}(\gamma_{2})$ may be submodules of 
$I(\sigma_{\Lambda_{1}-\Lambda_{3},\Lambda_{2}-\Lambda_{3}+1}, 
(\Lambda_{1},\Lambda_{2},\Lambda_{3}))$. 
Let us explore whether this is true or not. 
For this purpose, we use the solutions obtained in 
Propositions~\ref{proposition:4.1}, \ref{proposition:4.2}, 
\ref{proposition:nu1-nu3+1, (2)} and 
\ref{proposition:nu1-nu3+1, (1)}. 

By the translation principle, we may put $\Lambda = \rho_{m} = (1,0,-1)$. 
Firstly, let us consider the case 
$I(\sigma_{\Lambda_{1}-\Lambda_{3},\Lambda_{2}-\Lambda_{3}+1}, 
(\Lambda_{1},\Lambda_{2},\Lambda_{3}))
=
I(\sigma_{0,0}, (1,0,-1))$. 
By \eqref{eq:principal series twisted by mu}, 
this principal series module has horizontal symmetry 
(Corollary~\ref{corollary:horizontal symmetry}). 
Since $\overline{X}(\gamma_{1})$ and $\overline{X}(\gamma_{2})$ are 
quasi-dual (Lemma~\ref{lemma:quasi-dual modules, SL(3)}), 
they appear in $I(\sigma_{0,0}, (1,0,-1))$ as a pair 
$\overline{X}(\gamma_{1}) \oplus \overline{X}(\gamma_{2})$. 
By Proposition~\ref{proposition:4.2} (2) and 
Table~\ref{table:possible embeddings of X(gamma_i), i=0,1,2}, 
the solution which corresponds to 
$\overline{X}(\gamma_{1}) \oplus \overline{X}(\gamma_{2}) 
\subset 
I(\sigma_{0,0}, (1,0,-1))$ is 
$c_{1}(v_{2}^{2}+v_{-2}^{2})+c_{2} v_{0}^{2}$, 
where $c_{1}, c_{2}$ are non-zero constants. 
In the case $\Lambda=\rho_{m}$, 
Lemma~\ref{lemma:K-spectra of irreducibles, SL(3)} (2) says that 
the minimal $K$-types of $\overline{X}(\gamma_{1})$ 
and $\overline{X}(\gamma_{2})$ are $\lambda=2$ and 
$\lambda=3$ is not a $K$-type of them. 
Therefore, if $\overline{X}(\gamma_{1}) \oplus \overline{X}(\gamma_{2})$ 
lies in the socle of $I(\sigma_{0,0}, (1,0,-1))$, 
then $P_{1}(c_{1}(v_{2}^{2}+v_{-2}^{2})+c_{2} v_{0}^{2}) = 0$ 
for any $c_{1}, c_{2}$. 
But, by \eqref{eq:P_1, SL(3)}, 
$P_{1}(c_{1}(v_{2}^{2}+v_{-2}^{2})+c_{2} v_{0}^{2}) 
= 
(24 c_{1}+4c_{2})\, (v_{2}^{3}-v_{-2}^{3})$. 
It follows that $\overline{X}(\gamma_{1}) \oplus \overline{X}(\gamma_{2})$ 
does not lie in the socle. 

Just in the same way, we can check that 
$\overline{X}(\gamma_{1})$ (resp. $\overline{X}(\gamma_{2})$) 
does not lie in the socle of 
$I(\sigma_{\Lambda_{1}-\Lambda_{3}+1,\Lambda_{2}-\Lambda_{3}}, 
(\Lambda_{1},\Lambda_{2},\Lambda_{3}))$, 
$I(\sigma_{\Lambda_{1}-\Lambda_{2}+1,\Lambda_{2}-\Lambda_{3}}, 
(\Lambda_{1},\Lambda_{3},\Lambda_{2}))$ 
(resp. 
$I(\sigma_{\Lambda_{1}-\Lambda_{3}+1,\Lambda_{2}-\Lambda_{3}+1}, 
(\Lambda_{1},\Lambda_{2},\Lambda_{3}))$, 
$I(\sigma_{\Lambda_{2}-\Lambda_{3}+1,\Lambda_{1}-\Lambda_{3}+1}, 
(\Lambda_{2},\Lambda_{1},\Lambda_{3}))$). 

We have obtained possible embeddings of $\overline{X}(\gamma_{i})$, 
$i=0,1,2$ into principal series modules. 
In the followings of this paper, 
$I(\sigma,\nu) \approx X(\gamma_{j})$ means that 
they are isomorphic as elements in the Grothendieck group. 
Since $I(\sigma,\nu)$ is specified if we know the character $\nu$ 
and the standard module $X(\gamma_{j})$ satisfying 
$I(\sigma, \nu) \approx X(\gamma_{j})$, 
we write only these informations in the following proposition. 
\begin{proposition}
\label{proposition:candidate where bar{X}(gamma_i), i=0,1,2, is sub}
Suppose that the nonsingular integral infinitesimal character 
is $\Lambda = (\Lambda_{1},\Lambda_{2},\Lambda_{3})$, 
$\Lambda_{1}-\Lambda_{2}, \Lambda_{2}-\Lambda_{3} \in \Z_{>0}$. 
\begin{enumerate}
\item
$\overline{X}(\gamma_{0})$ can be a submodule of $I(\sigma, \nu)$ 
only if one of the following conditions holds: 
\begin{enumerate}
\item
$I(\sigma,\nu) \approx X(\gamma_{3})$ 
and 
$\nu = (\Lambda_{1}, \Lambda_{2},\Lambda_{3})$. 
In this case, 
$\overline{X}(\gamma_{1}) \oplus \overline{X}(\gamma_{2})$ 
lies in the higher floor than $\overline{X}(\gamma_{0})$. 
\item
$I(\sigma,\nu) \approx X(\gamma_{4})$ 
and 
$\nu 
= 
(\Lambda_{1},\Lambda_{2},\Lambda_{3})$. 
In this case, 
$\overline{X}(\gamma_{1})$ 
lies in the higher floor than $\overline{X}(\gamma_{0})$. 
\item
$I(\sigma,\nu) \approx X(\gamma_{5})$ 
and 
$\nu 
= 
(\Lambda_{1},\Lambda_{2},\Lambda_{3})$. 
In this case, 
$\overline{X}(\gamma_{2})$ 
lies in the higher floor than $\overline{X}(\gamma_{0})$. 
\item
$I(\sigma,\nu) \approx X(\gamma_{4})$ 
and 
$\nu 
= 
(\Lambda_{2},\Lambda_{1},\Lambda_{3})$. 
\item
$I(\sigma,\nu) \approx X(\gamma_{5})$ 
and 
$\nu 
= 
(\Lambda_{2},\Lambda_{1},\Lambda_{3})$. 
In this case, 
$\overline{X}(\gamma_{2})$ 
lies in the higher floor than $\overline{X}(\gamma_{0})$. 
\item
$I(\sigma,\nu) \approx X(\gamma_{4})$ 
and 
$\nu 
= 
(\Lambda_{1},\Lambda_{3},\Lambda_{2})$. 
In this case, 
$\overline{X}(\gamma_{1})$ 
lies in the higher floor than $\overline{X}(\gamma_{0})$. 
\item
$I(\sigma,\nu) \approx X(\gamma_{5})$ 
and 
$\nu 
= 
(\Lambda_{1},\Lambda_{3},\Lambda_{2})$. 
\end{enumerate}
\item
$\overline{X}(\gamma_{1})$ can be a submodule of 
$I(\sigma, \nu)$ only if one of the following conditions holds: 
\begin{enumerate}
\item
$I(\sigma,\nu) \approx X(\gamma_{3})$ 
and 
$\nu 
= 
(\Lambda_{3},\Lambda_{1},\Lambda_{2})$. 
\item
$I(\sigma,\nu) \approx X(\gamma_{4})$ 
and 
$\nu 
= 
(\Lambda_{3},\Lambda_{1},\Lambda_{2})$. 
\item
$I(\sigma,\nu) \approx X(\gamma_{3})$ 
and 
$\nu 
= 
(\Lambda_{1},\Lambda_{3},\Lambda_{2})$. 
\end{enumerate}
\item
$\overline{X}(\gamma_{2})$ can be a submodule of 
$I(\sigma, \nu)$ only if one of the following conditions holds: 
\begin{enumerate}
\item
$I(\sigma,\nu) \approx X(\gamma_{3})$ 
and 
$\nu 
= 
(\Lambda_{2},\Lambda_{1},\Lambda_{3})$. 
\item
$I(\sigma,\nu) \approx X(\gamma_{3})$ 
and 
$\nu 
= 
(\Lambda_{2},\Lambda_{3},\Lambda_{1})$. 
\item
$I(\sigma,\nu) \approx X(\gamma_{5})$ 
and 
$\nu 
= 
(\Lambda_{2},\Lambda_{3},\Lambda_{1})$. 
\end{enumerate}
\end{enumerate}
\end{proposition}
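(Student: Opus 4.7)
The plan is to combine the solution-space descriptions from Section~\ref{section:P_{-1}=p_{-2}=0}, the $K$-spectrum data of Lemma~\ref{lemma:K-spectra of irreducibles, SL(3)}, and the composition series of Theorem~\ref{theorem:composition factors for SL(3)} to first list every principal series into which $\overline{X}(\gamma_i)$ ($i=0,1,2$) could possibly embed, and then eliminate those candidates forced out of the socle by a non-vanishing upward shift operator.

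First I recall that each $\overline{X}(\gamma_i)$ has a specific minimal $K$-type $V_\lambda^{SO(3)}$ occurring with multiplicity one. If $\overline{X}(\gamma_i) \hookrightarrow I(\sigma,\nu)$, any minimal $K$-type vector realising this embedding must be annihilated by the downward shift operators $P_{-1}$ and $P_{-2}$, since neither $V_{\lambda-1}^{SO(3)}$ nor $V_{\lambda-2}^{SO(3)}$ is a $K$-type of $\overline{X}(\gamma_i)$. Propositions~\ref{proposition:4.1}, \ref{proposition:4.2}, \ref{proposition:nu1-nu3+1, (2)} and \ref{proposition:nu1-nu3+1, (1)} classify when such a non-zero kernel vector exists and record the $M_m$-isotypic subspace $V_\lambda^{SO(3)}(\sigma_{i,j})$ to which it belongs. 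Matching $\lambda$ against one of $\nu_1-\nu_2+1$, $\nu_2-\nu_3+1$, $\nu_1-\nu_3+1$ for each $\nu \in W(G,H_s)\cdot\Lambda$, and using \eqref{eq:gamma_3}--\eqref{eq:gamma_5} to identify which standard module $X(\gamma_k)$ represents $I(\sigma_{i,j},\nu)$ in the Grothendieck group, yields Table~\ref{table:possible embeddings of X(gamma_i), i=0,1,2} of all potential embeddings.

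Second, certain table entries list more than one candidate submodule for a single principal series, and I must decide which actually lie in the socle. The key tool is the upward shift operator $P_1$: if $\overline{X}(\gamma_j)$ sits in the socle of $I(\sigma,\nu)$, then its minimal $K$-type vector $v \in \Ker P_{-1} \cap \Ker P_{-2}$ must additionally satisfy $P_1 v = 0$ whenever $V_{\lambda+1}^{SO(3)}$ is absent from $\overline{X}(\gamma_j)$. By the translation principle I reduce to $\Lambda = \rho_m = (1,0,-1)$, where Lemma~\ref{lemma:K-spectra of irreducibles, SL(3)}(2) supplies the small $K$-multiplicities explicitly. For the candidate $\overline{X}(\gamma_1)\oplus\overline{X}(\gamma_2) \subset I(\sigma_{0,0},\rho_m)$, horizontal symmetry (Corollary~\ref{corollary:horizontal symmetry}) combined with Lemma~\ref{lemma:quasi-dual modules, SL(3)} forces the pair to occur together, and the direct computation $P_1(c_1(v_{2}^{2}+v_{-2}^{2})+c_2 v_{0}^{2}) = (24c_1+4c_2)(v_{2}^{3}-v_{-2}^{3})$ is non-zero, ruling the pair out of the socle. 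Parallel computations at $\Lambda = \rho_m$ eliminate $\overline{X}(\gamma_1)$ and $\overline{X}(\gamma_2)$ individually from the socles of the four further principal series singled out in the discussion preceding the proposition.

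Third, once a candidate such as $\overline{X}(\gamma_1)\oplus\overline{X}(\gamma_2)$ or a single $\overline{X}(\gamma_j)$ is ruled out of the socle of a principal series that still retains $\overline{X}(\gamma_0)$ as a socle factor, the excluded candidate must occupy a strictly higher floor in the socle filtration, which is precisely the ``higher floor'' content of items (1)(a)--(1)(f). The principal obstacle will be the bookkeeping: the seven sub-cases of (1) together with the three each of (2) and (3) each require identifying the correct $M_m$-character $\sigma_{i,j}$ at the appropriate Weyl-orbit representative $\nu$, carrying out the relevant $P_1$-computation at $\Lambda = \rho_m$, and tracking the quasi-dual pairings so that the roles of $\overline{X}(\gamma_1)$ and $\overline{X}(\gamma_2)$ are assigned consistently with Lemma~\ref{lemma:quasi-dual modules, SL(3)}.
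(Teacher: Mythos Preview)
Your proposal is correct and follows essentially the same approach as the paper: build Table~\ref{table:possible embeddings of X(gamma_i), i=0,1,2} from the kernel of $P_{-1},P_{-2}$ via Propositions~\ref{proposition:4.1}--\ref{proposition:nu1-nu3+1, (1)}, then at $\Lambda=\rho_m$ use the non-vanishing of $P_1$ on the relevant solution vectors (together with horizontal symmetry for the $\overline{X}(\gamma_1)\oplus\overline{X}(\gamma_2)$ pair) to exclude the spurious candidates from the socle, which yields the ``higher floor'' assertions. The only cosmetic slip is your reference to ``items (1)(a)--(1)(f)'' for the higher-floor claims; in fact (1)(d) carries no such claim while (1)(e) does, but this does not affect the argument.
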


\subsection{The irreducible module $\overline{X}(\gamma_{3})$} 

If $\Lambda$ is trivial, 
then $\overline{X}(\gamma_{3})$ is the trivial $\brgK$-module 
and then its $K$-type is $V_{0}^{SO(3)}$ alone. 
So an embedding corresponds to a non-trivial solution 
of $(P_{1} \phi_{0})(e) = 0$ and $(P_{2} \phi_{0})(e) = 0$. 

For $\lambda = 0$, the equation $(P_{1} \phi_{0})(e) = 0$ is trivial. 
By \eqref{eq:P_2, SL(3)}, $(P_{2} \phi_{0})(e) = 0$ is 
\[
\begin{cases}
(\nu_{1}-\nu_{2}+1)\, c(0) = 0
\\
(\nu_{1}+\nu_{2}-2\nu_{3}+3)\, c(0) = 0
\end{cases}. 
\]
This system of equations has non-trivial solution if and only if 
$\nu_{2}=\nu_{1}+1$ and $\nu_{3}=\nu_{1}+2$. 
Since we are considering the case of trivial infinitesimal character, 
the possible $\nu$ is only $(-1,0,1)$. 
By Casselman's subrepresentation theorem, every irreducible 
admissible $\brgK$-module is an submodule of at least one 
principal series module. 
So the unique solution obtained above really corresponds to an embedding. 
By translation principle, we obtained the following proposition. 

\begin{proposition}
\label{proposition:embedding of X(gamma_3)}
In the case when the infinitesimal character is 
$\Lambda = (\Lambda_{1}, \Lambda_{2}, \Lambda_{3})$, 
$\Lambda_{1}-\Lambda_{2}, \Lambda_{2}-\Lambda_{3} \in \Z_{>0}$, 
the irreducible finite dimensional module $\overline{X}(\gamma_{3})$ 
is embedded into only one principal series module 
$I(\sigma_{\Lambda_{1}-\Lambda_{3},\Lambda_{1}-\Lambda_{2}+1}, 
(\Lambda_{3}, \Lambda_{2}, \Lambda_{1}))$. 
\end{proposition}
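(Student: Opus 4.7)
The plan is to reduce to the trivial infinitesimal character $\Lambda = \rho_{m} = (1,0,-1)$ via the translation principle, and then exploit the fact that $\overline{X}(\gamma_{3})$ becomes the trivial $\brgK$-module, whose only $K$-type is $V_{0}^{SO(3)}$ by Lemma~\ref{lemma:K-spectra of irreducibles, SL(3)}(2). An embedding $\overline{X}(\gamma_{3}) \hookrightarrow I(\sigma, \nu)$ then corresponds via Frobenius reciprocity to a nonzero $\phi_{0} \in C^{\infty}_{\tau_{0}}(K \backslash G / A_{m} N_{m}; \sigma \otimes e^{\nu + \rho_{m}})$ on which the action of $\lie{s}$ vanishes. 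Since $V_{0}^{SO(3)} \otimes \Ad_{\lie{s}} \simeq V_{2}^{SO(3)}$ has no $\tau_{\mu}$-constituent other than $\mu = 2$, this vanishing reduces to the single equation $P_{2} \phi_{0} = 0$. Furthermore $M_{m}$ acts trivially on $V_{0}^{SO(3)}$, so Frobenius reciprocity also forces $\sigma = \sigma_{0,0}$.

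Next I would substitute $\lambda = 0$ in formula \eqref{eq:P_2, SL(3)}. The convention $c(q) = 0$ for $|q| > \lambda$ collapses the sum, and the surviving coefficients in front of $v_{\pm 2}^{2}$ and $v_{0}^{2}$ yield the system
\[
(\nu_{1} - \nu_{2} + 1)\, c(0) = 0, \qquad (\nu_{1} + \nu_{2} - 2\nu_{3} + 3)\, c(0) = 0.
\]
A nonzero solution requires $\nu_{2} = \nu_{1} + 1$ and $\nu_{3} = \nu_{1} + 2$; among the six permutations of $\rho_{m} = (1,0,-1)$ only $\nu = (-1,0,1) = -\rho_{m}$ fulfills both. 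Existence of an actual embedding is then automatic: either invoke Casselman's subrepresentation theorem, or observe directly that the constant function $\Phi \equiv 1$ lies in $I(\sigma_{0,0}, -\rho_{m})$ and spans a trivial $\brgK$-submodule.

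Finally I lift back to a general dominant nonsingular integral $\Lambda$ by the translation principle. The longest Weyl element of $W(G,H_{s})$, namely the swap of the first and third coordinates, sends $\rho_{m}$ to $-\rho_{m}$, so the orbit element $(-1,0,1)$ corresponds to $\nu = (\Lambda_{3}, \Lambda_{2}, \Lambda_{1})$. Since $\Lambda_{1} - \Lambda_{3}$ and $\Lambda_{1} - \Lambda_{2} + 1$ both reduce mod $2$ to $0$ at $\Lambda = \rho_{m}$, the character $\sigma_{0,0}$ corresponds to $\sigma_{\Lambda_{1}-\Lambda_{3},\, \Lambda_{1}-\Lambda_{2}+1}$ in the parametrization \eqref{eq:gamma_3}. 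This recovers exactly the principal series named in the statement. The only mildly delicate step is justifying that no other shift operator imposes a constraint on $\phi_{0}$, but this is immediate from the decomposition $V_{0}^{SO(3)} \otimes V_{2}^{SO(3)} = V_{2}^{SO(3)}$; no substantive obstacle arises.
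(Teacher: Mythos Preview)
Your proof is correct and follows essentially the same route as the paper: reduce to the trivial infinitesimal character, observe that the only $K$-type of $\overline{X}(\gamma_{3})$ is $V_{0}^{SO(3)}$, solve $P_{2}\phi_{0}=0$ to get $\nu=(-1,0,1)$, invoke Casselman for existence, and translate back. Your justification that only $P_{2}$ can be nonzero (via $V_{0}^{SO(3)}\otimes\Ad_{\lie{s}}\simeq V_{2}^{SO(3)}$) is a slightly cleaner phrasing of what the paper does by noting that $P_{1}\phi_{0}=0$ is automatically trivial at $\lambda=0$.
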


\subsection{The irreducible modules $\overline{X}(\gamma_{4})$ and 
$\overline{X}(\gamma_{5})$} 
\label{section:embeddings of 4, 5, SL(3)} 

Suppose that $\Lambda$ is trivial. 
Then 
Lemma~\ref{lemma:K-spectra of irreducibles, SL(3)}(2) 
says that 
both $\overline{X}(\gamma_{4})$ and $\overline{X}(\gamma_{5})$ contain 
the $K$-type $\lambda=1$, 
with multiplicity one and do not contain 
$\lambda=2$. 
It follows that if 
$\Ind_{P_{m}}^{G}(\sigma_{i,j} \otimes e^{-\nu-\rho_{m}})$, 
$\nu \in W(G,H_{s}) \cdot \rho_{m}$, contains 
$\overline{X}(\gamma_{k})$, $k=4$ or $5$, as a submodule, 
then the equation $(P_{1} \phi_{1})(e) = 0$ has a non-zero solution. 
For 
$\phi_{1}(e) = \sum_{q=-1}^{1} c(q)\, v_{q}^{1}$,  
this equation is equivalent to 
\begin{align}
& 
(\nu_{1}-\nu_{2}+1)\, c(0) = 0, 
\label{eq:P_1=0,lambda=1,0}
\\
& 
(\nu_{1}-\nu_{2})\, c(-1) + (\nu_{1}+\nu_{2}-2\nu_{3}+2)\, c(1) = 0, 
\label{eq:P_1=0,lambda=1,1}
\\
& 
(\nu_{1}+\nu_{2}-2\nu_{3}+2)\, c(-1) + (\nu_{1}-\nu_{2})\, c(1) = 0, 
\label{eq:P_1=0,lambda=1,-1}
\end{align}
by \eqref{eq:P_1, SL(3)}. 

The equation \eqref{eq:P_1=0,lambda=1,0} has a non-zero solution 
if and only if $\nu_{2}=\nu_{1}+1$, i.e. 
$\nu = (0,1,-1)$ or $(-1,0,1)$, and the solution space is 
$\C v_{0}^{1}$. 
Since $v_{0}^{1} \in V_{1}^{SO(3)}(\sigma_{1,1})$, 
this vector corresponds to the composition factor 
$\overline{X}(\gamma_{4}) 
\subset 
I(\sigma_{1,1}, (0,1,-1)) 
\approx X(\gamma_{4})$ 
and 
$\overline{X}(\gamma_{5}) 
\subset 
I(\sigma_{1,1}, (-1,0,1)) 
\approx X(\gamma_{5})$. 
Here, we used \eqref{eq:gamma_4} and \eqref{eq:gamma_5}.

The system of equations \eqref{eq:P_1=0,lambda=1,1} and 
\eqref{eq:P_1=0,lambda=1,-1} has a non-zero solution 
if and only if 
\[
(\nu_{1}+\nu_{2}-2\nu_{3}+2)^{2}-(\nu_{1}-\nu_{2})^{2} 
= 
4(\nu_{1}-\nu_{3}+1) (\nu_{2}-\nu_{3}+1)
= 0.
\]
If $\nu_{3}=\nu_{1}+1$, i.e. 
$\nu = (-1,1,0)$ or $(0,-1,1)$, then the solution space is 
$\C(v_{1}^{1}+v_{-1}^{1})$. 
If $\nu_{3}=\nu_{2}+1$, i.e. 
$\nu = (-1,0,1)$ or $(1,-1,0)$, then the solution space is 
$\C(v_{1}^{1}-v_{-1}^{1})$. 
Since 
$v_{1}^{1}+v_{-1}^{1} 
\in V_{1}^{SO(3)}(\sigma_{0,1})$ 
and 
$v_{1}^{1}-v_{-1}^{1} 
\in V_{1}^{SO(3)}(\sigma_{1,0})$, 
the informations \eqref{eq:gamma_4} and \eqref{eq:gamma_5} 
imply that 
these vectors correspond to (the possibilities of the) 
composition factors 
$\overline{X}(\gamma_{4}) 
\subset 
I(\sigma_{0,1}, (0,-1,1)), I(\sigma_{1,0}, (-1,0,1)) 
\approx 
X(\gamma_{4})$ 
and 
$\overline{X}(\gamma_{5}) 
\subset 
I(\sigma_{0,1}, (-1,1,0)), I(\sigma_{1,0}, (1,-1,0)) 
\approx 
X(\gamma_{5})$. 
We have obtained the following results. 
\begin{proposition}
\label{proposition:embeddings of X(gamma_i), i=4,5} 
Suppose that the nonsingular integral infinitesimal character 
is $\Lambda = (\Lambda_{1},\Lambda_{2},\Lambda_{3})$, 
$\Lambda_{1}-\Lambda_{2}, \Lambda_{2}-\Lambda_{3} \in \Z_{>0}$. 
\begin{enumerate}
\item
$\overline{X}(\gamma_{4})$ can be a submodule of 
$I(\sigma,\nu)$ only if 
$I(\sigma,\nu) \approx X(\gamma_{4})$ and 
$\nu 
= 
(\Lambda_{2},\Lambda_{1},\Lambda_{3})$, 
$(\Lambda_{2},\Lambda_{3},\Lambda_{1})$ 
or 
$(\Lambda_{3},\Lambda_{2},\Lambda_{1})$. 
\item
$\overline{X}(\gamma_{5})$ can be a submodule of 
$I(\sigma,\nu)$ only if 
$I(\sigma,\nu) \approx X(\gamma_{5})$ and 
$\nu 
= 
(\Lambda_{3},\Lambda_{2},\Lambda_{1})$, 
$(\Lambda_{3},\Lambda_{1},\Lambda_{2})$ 
or 
$(\Lambda_{1},\Lambda_{3},\Lambda_{2})$. 
\end{enumerate}
\end{proposition}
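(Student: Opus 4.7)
The plan is to reduce to the trivial infinitesimal character by the translation principle and then exploit the observation from Lemma~\ref{lemma:K-spectra of irreducibles, SL(3)}~(2) that when $\Lambda = \rho_m$, both $\overline{X}(\gamma_{4})$ and $\overline{X}(\gamma_{5})$ contain the $K$-type $V_{1}^{SO(3)}$ with multiplicity one, while $V_{2}^{SO(3)}$ is absent from their $K$-spectra. Consequently, if $\overline{X}(\gamma_{k})$ ($k=4,5$) embeds into a principal series module $I(\sigma,\nu)$, then the image of the minimal $K$-type gives a nonzero element $\phi_{1} \in \mathrm{Hom}_{K}(\tau_{1}, I(\sigma,\nu))$ whose image under $P_{1}$, which lands in a $\tau_{2}$-isotypic space, must vanish.

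Thus the first step is to solve $P_{1}\phi_{1}(e) = 0$ explicitly. Writing $\phi_{1}(e) = c(-1)v_{-1}^{1} + c(0)v_{0}^{1} + c(1)v_{1}^{1}$ and applying formula \eqref{eq:P_1, SL(3)} with $\lambda=1$ produces three scalar equations in $c(-1), c(0), c(1)$, indexed by the weights $q = -1, 0, 1$ of $v_{q}^{2}$. The $q=0$ equation decouples and reads $(\nu_{1}-\nu_{2}+1)c(0)=0$, yielding a nonzero solution $c(0)\neq 0$ only when $\nu_{2} = \nu_{1}+1$; within the orbit $W(G,H_{s})\cdot\rho_{m}$ this forces $\nu \in \{(0,1,-1), (-1,0,1)\}$. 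The other two equations form a $2 \times 2$ system in $c(\pm 1)$ whose determinant factors as $4(\nu_{1}-\nu_{3}+1)(\nu_{2}-\nu_{3}+1)$, so a nonzero solution requires $\nu_{3}=\nu_{1}+1$ or $\nu_{3}=\nu_{2}+1$; each condition picks out two elements of the Weyl orbit and yields a one-dimensional solution space spanned by $v_{1}^{1}\pm v_{-1}^{1}$.

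Next, for each solution I identify the $\sigma_{i,j}$-isotypic component containing it: by \eqref{eq:action of M, SL(3)}, $v_{0}^{1} \in V_{1}^{SO(3)}(\sigma_{1,1})$ and $v_{1}^{1} \pm v_{-1}^{1} \in V_{1}^{SO(3)}(\sigma_{i,j})$ for appropriate parities of $i,j$. Pairing each $(\sigma, \nu)$ with its $W$-equivalent regular character via \eqref{eq:gamma_4} or \eqref{eq:gamma_5} tells me whether $I(\sigma,\nu)$ sits over $X(\gamma_{4})$ or $X(\gamma_{5})$ in the Grothendieck group, and the composition factor formulas from Theorem~\ref{theorem:composition factors for SL(3)} show which of $\overline{X}(\gamma_{4})$ or $\overline{X}(\gamma_{5})$ can legitimately be the submodule in question. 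Finally, the translation principle promotes each $\nu$ of the trivial infinitesimal character case to the corresponding Weyl-image of $\Lambda = (\Lambda_{1},\Lambda_{2},\Lambda_{3})$, producing exactly the three listed $\nu$ in each of (1) and (2).

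The computation is quite mechanical once the shift operator formula is in hand; the main care required is the bookkeeping of matching each scalar solution to its $M_{m}$-isotypic label and then tracking the $K$-conjugacy equivalences in \eqref{eq:gamma_4}--\eqref{eq:gamma_5} to correctly identify the Grothendieck-group class of the ambient principal series. Since Casselman's subrepresentation theorem guarantees that $\overline{X}(\gamma_{4})$ and $\overline{X}(\gamma_{5})$ actually occur as submodules of \emph{some} principal series, none of these candidate $\nu$'s can be discarded a priori, though whether all three actually realize embeddings is a separate question beyond the ``only if'' statement proved here.
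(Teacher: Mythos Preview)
Your proposal is correct and follows essentially the same approach as the paper: reduce to trivial infinitesimal character via translation, solve $P_{1}\phi_{1}(e)=0$ explicitly, separate the decoupled equation for $c(0)$ from the $2\times 2$ system in $c(\pm 1)$ with determinant $4(\nu_{1}-\nu_{3}+1)(\nu_{2}-\nu_{3}+1)$, and then match each solution to its $M_{m}$-isotypic component and to the $K$-conjugacy classes in \eqref{eq:gamma_4}--\eqref{eq:gamma_5}. One small bookkeeping remark: the equation $(\nu_{1}-\nu_{2}+1)c(0)=0$ actually comes from the $q=\pm 2$ coefficients of $v_{q}^{2}$ in $(P_{1}\phi_{1})(e)$ (the $q=0$ coefficient vanishes identically since $c(\pm 2)=0$), but this does not affect the argument.
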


\section{Determination of the socle filtration. $SL(3,\R)$ case.}
\label{section:main results for SL(3)}

From 
Propositions~
\ref{proposition:candidate where bar{X}(gamma_i), i=0,1,2, is sub}, 
\ref{proposition:embedding of X(gamma_3)} and 
\ref{proposition:embeddings of X(gamma_i), i=4,5}, 
we can determine the socle filtrations of the principal 
series modules. 

For the $SL(3,\R)$ case, 
the dual module of $I(\sigma_{i,j}, \nu)$ is 
$I(\sigma_{i,j}, -\nu)$ 
and the $\mu'$-twisted module 
(see \eqref{eq:principal series twisted by mu}) 
of the latter is isomorphic to 
$I(\sigma_{i,i+j}, w^{0} \cdot \nu)$, 
where $w^{0}$ is the longest element of $W(G,A_{m})$. 
It follows that the socle filtrations of 
\begin{align}
& 
I(\sigma_{i,j}, (\nu_{1},\nu_{2},\nu_{3})) 
&
&
\mbox{and}
&
&
I(\sigma_{i,i+j}, (\nu_{3},\nu_{2},\nu_{1})) 
\label{eq:upside down}
\end{align}
are upside down.

We make a list of the candidates for the irreducible submodules of 
$I(\sigma, \nu)$. 
In Table~\ref{table:candidates for irreducible submodules}, 
the first column means that the principal series in consideration 
is isomorphic to $X(\gamma_{i})$ in the Grothendieck group. 
The second and fourth columns are the "$\nu$" part of 
$I(\sigma, \nu)$, and the third and fifth columns are the 
candidates for irreducible submodules of $I(\sigma, \nu)$.

\begin{table}[h]
\caption{Candidates for irreducible submodules}
\label{table:candidates for irreducible submodules} 
\begin{center}
\begin{tabular}{|c||c|c||c|c|}
\hline 
& $\nu$ & Irred.Sub. & $\nu$ & Irred.Sub.
\\ \hline \hline 
$X(\gamma_{3})$ 
& 
$(\Lambda_{1},\Lambda_{2},\Lambda_{3})$ 
& 
$\overline{X}(\gamma_{0})$ 
& 
$(\Lambda_{3},\Lambda_{2},\Lambda_{1})$ 
&
$\overline{X}(\gamma_{3})$ 
\\ \hline
& 
$(\Lambda_{2},\Lambda_{1},\Lambda_{3})$ 
& 
$\overline{X}(\gamma_{2})$ 
& 
$(\Lambda_{3},\Lambda_{1},\Lambda_{2})$ 
&
$\overline{X}(\gamma_{1})$ 
\\ \hline
& 
$(\Lambda_{2},\Lambda_{3},\Lambda_{1})$ 
& 
$\overline{X}(\gamma_{2})$ 
& 
$(\Lambda_{1},\Lambda_{3},\Lambda_{2})$ 
&
$\overline{X}(\gamma_{1})$ 
\\ \hline \hline 
$X(\gamma_{4})$ 
& 
$(\Lambda_{1},\Lambda_{2},\Lambda_{3})$ 
& 
$\overline{X}(\gamma_{0})$ 
& 
$(\Lambda_{3},\Lambda_{2},\Lambda_{1})$ 
&
$\overline{X}(\gamma_{4})$ 
\\ \hline
& 
$(\Lambda_{2},\Lambda_{1},\Lambda_{3})$ 
& 
$\overline{X}(\gamma_{0})$, $\overline{X}(\gamma_{4})$ 
& 
$(\Lambda_{3},\Lambda_{1},\Lambda_{2})$ 
&
$\overline{X}(\gamma_{1})$ 
\\ \hline
& 
$(\Lambda_{2},\Lambda_{3},\Lambda_{1})$ 
& 
$\overline{X}(\gamma_{4})$ 
& 
$(\Lambda_{1},\Lambda_{3},\Lambda_{2})$ 
&
$\overline{X}(\gamma_{0})$ 
\\ \hline \hline 
$X(\gamma_{5})$ 
& 
$(\Lambda_{1},\Lambda_{2},\Lambda_{3})$ 
& 
$\overline{X}(\gamma_{0})$ 
& 
$(\Lambda_{3},\Lambda_{2},\Lambda_{1})$ 
&
$\overline{X}(\gamma_{5})$ 
\\ \hline
& 
$(\Lambda_{2},\Lambda_{1},\Lambda_{3})$ 
& 
$\overline{X}(\gamma_{0})$
& 
$(\Lambda_{3},\Lambda_{1},\Lambda_{2})$ 
&
$\overline{X}(\gamma_{5})$ 
\\ \hline
& 
$(\Lambda_{2},\Lambda_{3},\Lambda_{1})$ 
& 
$\overline{X}(\gamma_{2})$ 
& 
$(\Lambda_{1},\Lambda_{3},\Lambda_{2})$ 
&
$\overline{X}(\gamma_{0})$, $\overline{X}(\gamma_{5})$  
\\ \hline
\end{tabular}
\end{center}
\end{table}
We know the composition factors of $X(\gamma_{3})$, 
$X(\gamma_{4})$ and $X(\gamma_{5})$ 
(Theorem~\ref{theorem:composition factors for SL(3)}). 
We also know that the lengths (Section~\ref{subsection:parity of length}) 
of regular characters are 
\begin{align*}
&
\ell(\gamma_{0}) 
= 0, 
&
&
\ell(\gamma_{1}) 
= 
\ell(\gamma_{2}) = 1, 
&
&
\ell(\gamma_{3}) = \ell(\gamma_{4}) = \ell(\gamma_{5}) = 2. 
\end{align*}
From Table~\ref{table:candidates for irreducible submodules}, 
the parity condition (Corollary~\ref{corollary:parity condition}) and 
\eqref{eq:upside down}, 
all the socle filtrations of principal series modules are completely 
determined. 
For example, consider the principal series 
$I(\sigma_{\Lambda_{1}-\Lambda_{3},\Lambda_{2}-\Lambda_{3}+1}, 
(\Lambda_{1},\Lambda_{2},\Lambda_{3})) 
\approx 
X(\gamma_{3})$. 
The irreducible factors of this module are 
$\overline{X}(\gamma_{i})$, $i=0,1,2,3$, 
and the multiplicities are all one. 
The second row of Table~\ref{table:candidates for irreducible submodules} 
says that the candidates for the irreducible submodules 
of this module is $\overline{X}(\gamma_{0})$ 
only, so it is really the unique submodule. 
It also tells us that $\overline{X}(\gamma_{3})$ is the unique 
quotient module of it. 
The lengths of the other irreducible factors $\overline{X}(\gamma_{1})$ 
and $\overline{X}(\gamma_{2})$ are odd, but 
those of $\overline{X}(\gamma_{0})$ and $\overline{X}(\gamma_{3})$ are 
even. 
Then by the parity condition, 
the socle filtration of 
$I(\sigma_{\Lambda_{1}-\Lambda_{3},\Lambda_{2}-\Lambda_{3}+1}, 
(\Lambda_{1},\Lambda_{2},\Lambda_{3}))$ 
is 
\begin{equation}
\label{eq:example of socle filtration}
\begin{xy}
(0,6)*{\overline{X}(\gamma_{3})}="A_{1}", 
(0,0)*{\overline{X}(\gamma_{1}) \oplus \overline{X}(\gamma_{2})}
="A_{2}", 
(0,-6)*{\overline{X}(\gamma_{0})}="A_{3}", 
\end{xy}
\end{equation}
This diagram means that 
the socle of 
$I := I(\sigma_{\Lambda_{1}-\Lambda_{3},\Lambda_{2}-\Lambda_{3}+1}, 
(\Lambda_{1},\Lambda_{2},\Lambda_{3}))$ 
is $\overline{X}(\gamma_{0})$; 
the socle of 
$I/\overline{X}(\gamma_{0})$ 
is $\overline{X}(\gamma_{1}) \oplus \overline{X}(\gamma_{2})$; 
and the socle of 
$(I/\overline{X}(\gamma_{0})) / 
(\overline{X}(\gamma_{1}) \oplus \overline{X}(\gamma_{2}) 
/ \overline{X}(\gamma_{0}))$ 
is $\overline{X}(\gamma_{3})$. 
Hereafter, for notational convenience, 
we abbreviate \eqref{eq:example of socle filtration} as 
\begin{equation}
\label{eq:example of socle filtration'}
\begin{xy}
(0,4)*{\overline{3}}="A_{1}", 
(0,0)*{\overline{1} \oplus \overline{2}}
="A_{2}", 
(0,-4)*{\overline{0}}="A_{3}", 
\end{xy}
\end{equation}

In the same way, we obtain the main results for the $SL(3,\R)$ case. 
\begin{theorem}
\label{theorem:main results for SL(3)}
Let 
$\Lambda = (\Lambda_{1}, \Lambda_{2}, \Lambda_{3})$ be 
a nonsingular dominant integral infinitesimal character of $SL(3,\R)$. 
The socle filtrations of the principal series modules 
$I(\sigma,\nu)$ with $\nu \in W(G,A_{m}) \cdot \Lambda$ are 
as follows: 
\begin{enumerate}
\item
The cases $I(\sigma,\nu) \approx X(\gamma_{3})$. 
\begin{center}
\begin{tabular}{|c||c|}
\hline 
$I(\sigma_{\Lambda_{1}-\Lambda_{3},\Lambda_{2}-\Lambda_{3}+1}, 
(\Lambda_{1},\Lambda_{2},\Lambda_{3}))$ 
&
$I(\sigma_{\Lambda_{1}-\Lambda_{3},\Lambda_{1}-\Lambda_{2}+1}, 
(\Lambda_{3},\Lambda_{2},\Lambda_{1}))$ 
\\ \hline 
\
\vspace{-3mm}
&
\\
$\begin{xy}
(0,4)*{\overline{3}}="A_{1}", 
(0,0)*{\overline{1} \oplus \overline{2}}
="A_{2}", 
(0,-4)*{\overline{0}}="A_{3}", 
\end{xy}$ 
&
$\begin{xy}
(0,4)*{\overline{0}}="A_{1}", 
(0,0)*{\overline{1} \oplus \overline{2}}
="A_{2}", 
(0,-4)*{\overline{3}}="A_{3}", 
\end{xy}$ 
\\
\ \vspace{-3mm}
&
\\ \hline \hline 
$I(\sigma_{\Lambda_{2}-\Lambda_{3}+1,\Lambda_{1}-\Lambda_{3}}, 
(\Lambda_{2},\Lambda_{1},\Lambda_{3}))$, 
&
$I(\sigma_{\Lambda_{2}-\Lambda_{3}+1,\Lambda_{1}-\Lambda_{2}+1}, 
(\Lambda_{3},\Lambda_{1},\Lambda_{2}))$, 
\\
$I(\sigma_{\Lambda_{1}-\Lambda_{2}+1,\Lambda_{1}-\Lambda_{3}} 
(\Lambda_{2},\Lambda_{3},\Lambda_{1}))$ 
&
$I(\sigma_{\Lambda_{1}-\Lambda_{2}+1,\Lambda_{2}-\Lambda_{3}+1} 
(\Lambda_{1},\Lambda_{3},\Lambda_{2}))$ 
\\ \hline 
\
\vspace{-3mm}
&
\\
$\begin{xy}
(0,4)*{\overline{1}}="A_{1}", 
(0,0)*{\overline{0} \oplus \overline{3}}
="A_{2}", 
(0,-4)*{\overline{2}}="A_{3}", 
\end{xy}$ 
&
$\begin{xy}
(0,4)*{\overline{2}}="A_{1}", 
(0,0)*{\overline{0} \oplus \overline{3}}
="A_{2}", 
(0,-4)*{\overline{1}}="A_{3}", 
\end{xy}$ 
\\ \hline 
\end{tabular}
\end{center}
\item
The cases $I(\sigma,\nu) \approx X(\gamma_{4})$. 
\begin{center}
\begin{tabular}{|c||c|}
\hline 
$I(\sigma_{\Lambda_{1}-\Lambda_{3}+1,\Lambda_{2}-\Lambda_{3}}, 
(\Lambda_{1},\Lambda_{2},\Lambda_{3}))$, 
&
$I(\sigma_{\Lambda_{1}-\Lambda_{3}+1,\Lambda_{1}-\Lambda_{2}+1}, 
(\Lambda_{3},\Lambda_{2},\Lambda_{1}))$, 
\\
$I(\sigma_{\Lambda_{1}-\Lambda_{2}+1,\Lambda_{2}-\Lambda_{3}}, 
(\Lambda_{1},\Lambda_{3},\Lambda_{2}))$ 
&
$I(\sigma_{\Lambda_{1}-\Lambda_{2}+1,\Lambda_{1}-\Lambda_{3}+1}, 
(\Lambda_{2},\Lambda_{3},\Lambda_{1}))$ 
\\ \hline 
\
\vspace{-3mm}
&
\\
$\begin{xy}
(0,4)*{\overline{4}}="A_{1}", 
(0,0)*{\overline{1}}
="A_{2}", 
(0,-4)*{\overline{0}}="A_{3}", 
\end{xy}$ 
&
$\begin{xy}
(0,4)*{\overline{0}}="A_{1}", 
(0,0)*{\overline{1}}
="A_{2}", 
(0,-4)*{\overline{4}}="A_{3}", 
\end{xy}$ 
\\
\
\vspace{-3mm}
& 
\\ \hline \hline 
$I(\sigma_{\Lambda_{2}-\Lambda_{3},\Lambda_{1}-\Lambda_{3}+1}, 
(\Lambda_{2},\Lambda_{1},\Lambda_{3}))$ 
&
$I(\sigma_{\Lambda_{2}-\Lambda_{3},\Lambda_{1}-\Lambda_{2}+1}, 
(\Lambda_{3},\Lambda_{1},\Lambda_{2}))$ 
\\ \hline 
\ 
\vspace{-3mm}
&
\\
$\begin{xy}
(0,2)*{\overline{1}}="A_{1}", 
(0,-2)*{\overline{0} \oplus \overline{4}}
="A_{2}", 
\end{xy}$ 
&
$\begin{xy}
(0,2)*{\overline{0} \oplus \overline{4}}
="A_{2}", 
(0,-2)*{\overline{1}}="A_{3}", 
\end{xy}$ 
\\ \hline 
\end{tabular}
\end{center}
\item
The cases $I(\sigma,\nu) \approx X(\gamma_{5})$. 
\begin{center}
\begin{tabular}{|c||c|}
\hline 
$I(\sigma_{\Lambda_{1}-\Lambda_{3}+1,\Lambda_{2}-\Lambda_{3}+1}, 
(\Lambda_{1},\Lambda_{2},\Lambda_{3}))$, 
&
$I(\sigma_{\Lambda_{1}-\Lambda_{3}+1,\Lambda_{1}-\Lambda_{2}}, 
(\Lambda_{3},\Lambda_{2},\Lambda_{1}))$, 
\\
$I(\sigma_{\Lambda_{2}-\Lambda_{3}+1,\Lambda_{1}-\Lambda_{3}+1}, 
(\Lambda_{2},\Lambda_{1},\Lambda_{3}))$ 
&
$I(\sigma_{\Lambda_{2}-\Lambda_{3}+1,\Lambda_{1}-\Lambda_{2}}, 
(\Lambda_{3},\Lambda_{1},\Lambda_{2}))$ 
\\ \hline 
\ \vspace{-3mm} 
&
\\
$\begin{xy}
(0,4)*{\overline{5}}="A_{1}", 
(0,0)*{\overline{2}}
="A_{2}", 
(0,-4)*{\overline{0}}="A_{3}", 
\end{xy}$ 
&
$\begin{xy}
(0,4)*{\overline{0}}="A_{1}", 
(0,0)*{\overline{2}}
="A_{2}", 
(0,-4)*{\overline{5}}="A_{3}", 
\end{xy}$ 
\\
\ \vspace{-3mm} 
& 
\\ \hline \hline 
$I(\sigma_{\Lambda_{1}-\Lambda_{2},\Lambda_{2}-\Lambda_{3}+1}, 
(\Lambda_{1},\Lambda_{3},\Lambda_{2}))$ 
&
$I(\sigma_{\Lambda_{1}-\Lambda_{2},\Lambda_{1}-\Lambda_{3}+1}, 
(\Lambda_{2},\Lambda_{3},\Lambda_{1}))$ 
\\ \hline 
\ \vspace{-3mm} 
& 
\\ 
$\begin{xy}
(0,2)*{\overline{2}}="A_{1}", 
(0,-2)*{\overline{0} \oplus \overline{5}}
="A_{2}", 
\end{xy}$ 
&
$\begin{xy}
(0,2)*{\overline{0} \oplus \overline{5}}
="A_{2}", 
(0,-2)*{\overline{2}}="A_{3}", 
\end{xy}$ 
\\ \hline 
\end{tabular}
\end{center}
\end{enumerate}
\end{theorem}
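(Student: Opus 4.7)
The plan is to combine three ingredients prepared earlier in the paper: Table~\ref{table:candidates for irreducible submodules} of candidates for irreducible submodules, the duality/twist isomorphism \eqref{eq:upside down} which pairs principal series with mutually reversed socle filtrations, and the parity condition (Corollary~\ref{corollary:parity condition}) applied with the known lengths $\ell(\gamma_0)=0$, $\ell(\gamma_1)=\ell(\gamma_2)=1$ and $\ell(\gamma_3)=\ell(\gamma_4)=\ell(\gamma_5)=2$. The walk-through presented just before the theorem for $I(\sigma_{\Lambda_1-\Lambda_3,\Lambda_2-\Lambda_3+1},(\Lambda_1,\Lambda_2,\Lambda_3))$ is the template; the plan is to carry out that same three-step argument uniformly for each of the principal series listed in (1)--(3).

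First I would pin down the socle of each $I(\sigma,\nu)$. Table~\ref{table:candidates for irreducible submodules} supplies at most two candidates, and Casselman's subrepresentation theorem forces at least one composition factor to embed, so whenever the table gives a unique candidate that candidate is the socle. In the two anomalous rows---$I(\sigma,\nu)\approx X(\gamma_4)$ with $\nu=(\Lambda_2,\Lambda_1,\Lambda_3)$ and $I(\sigma,\nu)\approx X(\gamma_5)$ with $\nu=(\Lambda_1,\Lambda_3,\Lambda_2)$---the two listed candidates carry no ``lies higher'' relation coming from Proposition~\ref{proposition:candidate where bar{X}(gamma_i), i=0,1,2, is sub}, and I would argue that both must therefore sit in the socle as a direct sum. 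Next I would read off the cosocles: each module in the statement is matched by \eqref{eq:upside down} with a partner in the same list whose filtration is reversed, so the socle computations immediately determine all cosocles. Finally I would fill the intermediate floors by parity: in every case where the top and bottom consist only of even-length factors ($\overline{X}(\gamma_0)$, $\overline{X}(\gamma_3)$, $\overline{X}(\gamma_4)$, or $\overline{X}(\gamma_5)$), the middle must be a direct sum of the remaining odd-length factors ($\overline{X}(\gamma_1)$ and/or $\overline{X}(\gamma_2)$), and Corollary~\ref{corollary:parity condition} moreover forbids a further separation of those odd-length factors across two floors.

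The main obstacle is justifying, in the two-candidate cases, that the two candidates genuinely share a single floor rather than being stacked. If they were stacked on separate floors, the parity condition would demand an additional factor of the opposite parity wedged between them, and Theorem~\ref{theorem:composition factors for SL(3)} (the KLV decomposition) supplies no such factor among the composition multiplicities. This contradiction collapses the two candidates onto the same floor. With this point settled, every socle filtration in parts (1)--(3) is pinned down, matching the displayed Hasse diagrams.
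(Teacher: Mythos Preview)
Your overall strategy---Table~\ref{table:candidates for irreducible submodules} for socles, the pairing \eqref{eq:upside down} for cosocles, and Corollary~\ref{corollary:parity condition} for the middle---is exactly the paper's, and for all the one-candidate rows it goes through verbatim.

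The gap is in your treatment of the two-candidate rows, and it is precisely the sentence ``Theorem~\ref{theorem:composition factors for SL(3)} \dots\ supplies no such factor.'' Take $I(\sigma,\nu)\approx X(\gamma_{4})$ with $\nu=(\Lambda_{2},\Lambda_{1},\Lambda_{3})$: the composition factors are $\overline{0},\overline{1},\overline{4}$, and $\overline{1}$ has odd length, so the KLV list \emph{does} supply an odd-length factor that could sit between $\overline{0}$ and $\overline{4}$. Parity alone therefore does not prevent the three-step tower $\overline{0}\,/\,\overline{1}\,/\,\overline{4}$. What actually kills this possibility is the cosocle constraint you already set up: the partner module under \eqref{eq:upside down} is the row $\nu=(\Lambda_{3},\Lambda_{1},\Lambda_{2})$, whose unique socle candidate is $\overline{1}$, so the original module has cosocle exactly $\overline{1}$. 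With $\overline{1}$ pinned to the top floor, a hypothetical stacking of $\overline{0}$ and $\overline{4}$ would place two even-length factors on adjacent floors with nothing between them, and \emph{now} Corollary~\ref{corollary:parity condition} gives the contradiction. The same repair works for the $X(\gamma_{5})$ row with $\nu=(\Lambda_{1},\Lambda_{3},\Lambda_{2})$, using the partner $\nu=(\Lambda_{2},\Lambda_{3},\Lambda_{1})$ with unique socle candidate $\overline{2}$.

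So the fix is small: in the two-candidate cases, invoke the cosocle (which you have already computed via duality) \emph{before} appealing to parity, rather than trying to run parity on its own.
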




\section{The group $Sp(2,\R)$} 
\label{section:the group Sp(2)}

From this section to Section~\ref{section:determination, X(11)}, 
we consider the group  
\[
G = 
Sp(2, \R) 
= 
\left\{
g \in GL(4, \R) 
\mid 
{}^{t}g 
\begin{pmatrix}
O & -I \\ I & O 
\end{pmatrix}
g = 
\begin{pmatrix}
O & -I \\ I & O 
\end{pmatrix}
\right\} 
\]
and investigate the socle filtrations of 
the principal series modules of $G$. 

Let 
\begin{equation}\label{eq:K=U(2)}
K 
:= 
\left\{ 
\begin{pmatrix} 
A & B \\ -B & A 
\end{pmatrix}
\in Sp(2, \R)
\right\} \simeq U(2), 
\quad 
\begin{pmatrix} 
A & B \\ -B & A 
\end{pmatrix}
\mapsto 
A + \I B 
\end{equation}
be a maximal compact subgroup of $G$. 

Let $\lier{g} = \lier{k} \oplus \lier{s}$ 
be the Cartan decomposition of $\lier{g}$ with respect to the 
Cartan involution $\theta(X) = -{}^{t}X$. 
Choose a maximal commutative subspace %
$(\lie{a}_{m})_{0}
= \{\diag(a_{1}, a_{2}, -a_{1}, -a_{2}) \in G 
\mid 
a_{i} \in \R\}$ of $\lier{s}$ and define 
\begin{align*}
A_{m} 
&= \{\diag(a_{1}, a_{2}, a_{1}^{-1}, a_{2}^{-1}) \in G \mid a_{i} > 0\} 
\qquad 
\mbox{ and }
\\
N_{m} 
&= 
\left\{
\begin{pmatrix}
1 & x_{12} & & 
\\
& 1 & & 
\\
& & 1 & 
\\
& & -x_{12} & 1  
\end{pmatrix}
\begin{pmatrix}
1 & & x_{13} & x_{14} 
\\
& 1 & x_{14} & x_{24} 
\\
& & 1 & 
\\
& & & 1 
\end{pmatrix} 
\mid x_{ij} \in \R
\right\}. 
\end{align*}
Then 
$G=KA_{m}N_{m}$ is an Iwasawa decomposition of $G$.

Define a basis $H_{1}, H_{2}$ of $(\lie{a}_{m})_{0}$ by 
\[
H_{i} := E_{ii} - E_{2+i,2+i}, \qquad i=1,2.
\] 
Let $f_{1}$, $f_{2}$, be the dual basis of $(\lie{a}_{m})_{0}^{\ast}$ 
defined by $f_{i}(H_{j}) = \delta_{i,j}$. 
The root system 
$\Sigma
= 
\Sigma(\lier{g}, (\lie{a}_{m})_{0})$ is 
$\{\pm f_{1} \pm f_{2}, \pm 2 f_{1}, \pm 2 f_{2}\}$. 
We choose $\Sigma^{+} = \{f_{1} \pm f_{2}, 2 f_{1}, 2 f_{2}\}$ 
as a positive system of it. 
Define root vectors for positive roots by 
\begin{align*}
& X_{f_{1}-f_{2}} 
:= E_{12} - E_{43}, 
&
& X_{f_{1}+f_{2}} 
:= E_{14} + E_{23}, 
&
& X_{2 f_{1}} 
:= E_{13}, 
&
& X_{2 f_{2}} 
:= E_{24},
\end{align*}
and for negative roots $-\alpha$ ($\alpha \in \Sigma^{+}$), 
define $X_{-\alpha} = {}^{t} X_{\alpha}$. 

As a basis of $\lie{k}$, we choose 
\begin{align*}
& 
D_{i} 
:= \I(X_{-2 f_{i}}-X_{2f_{i}}), \quad i=1,2, 
\\
& 
Y_{\pm} 
:= 
\frac{1}{2}
\{\mp (X_{-f_{1}+f_{2}} - X_{f_{1}-f_{2}})
+\I (X_{-f_{1}-f_{2}}-X_{f_{1}+f_{2}})
\}. 
\end{align*}
Under the identification \eqref{eq:K=U(2)}, 
these vectors correspond to $E_{ii}, E_{12}, E_{21}$ of 
$\lie{gl}(2,\C)$, 
respectively. 

Let $T := \{\exp(x_{1} D_{1} + x_{2} D_{2}) 
\mid x_{1}, x_{2} \in \R\}$ be a Cartan subgroup of $K$ (also of $G$). 
Define a basis $\{e_{1}, e_{2}\}$ of $\lie{t}^{\ast}$ 
by $e_{i}(D_{j}) = \delta_{i,j}$. 

Now, we fix a basis of an irreducible representation of $K=U(2)$. 
Let $\{v_{q}^{\lambda}\}$ be the basis of irreducible representation 
of $SU(2)$ with the highest weight $2\lambda$, 
which is defined in Section~\ref{section:representation of K}. 
Let $1_{\mu}$ be the basis of the one dimensional representation of 
the center of $U(2)$ defined by 
\[
\begin{pmatrix}
a & 0 \\ 0 & a 
\end{pmatrix} 
\cdot 1_{\mu} 
= 
a^{\mu}, 
\qquad 
\begin{pmatrix}
a & 0 \\ 0 & a 
\end{pmatrix} 
\in U(2). 
\]
Let $(\tau_{(\lambda_{1},\lambda_{2})}, V_{(\lambda_{1},\lambda_{2})}^{U(2)})$ 
be the irreducible representation 
of $K=U(2)$ with the highest weight $\lambda=(\lambda_{1},\lambda_{2})$. 
We define a basis 
$\{v_{i}^{(\lambda_{1},\lambda_{2})} \mid \lambda_{2} \leq i \leq \lambda_{1}\}$ 
of it by 
\begin{equation}\label{eq:basis of repn of U(2)}
v_{q}^{(\lambda_{1},\lambda_{2})} 
:= 
v_{q-(\lambda_{1}+\lambda_{2})/2}^{(\lambda_{1}-\lambda_{2})/2} 
\otimes 
1_{\lambda_{1}+\lambda_{2}}. 
\end{equation}
The action of elements in $\lie{k}$ is given by 
\begin{align*}
&
D_{1}\, v_{q}^{(\lambda_{1},\lambda_{2})} 
= 
q \, v_{q}^{(\lambda_{1},\lambda_{2})}, 
&
&
D_{2}\, v_{q}^{(\lambda_{1},\lambda_{2})} 
= 
(\lambda_{1}+\lambda_{2}-q)\, v_{q}^{(\lambda_{1},\lambda_{2})}, 
\\
&
Y_{+}\, v_{q}^{(\lambda_{1},\lambda_{2})} 
= 
(\lambda_{1}-q)\, v_{q+1}^{(\lambda_{1},\lambda_{2})}, 
&
&
Y_{-}\, v_{q}^{(\lambda_{1},\lambda_{2})} 
= 
(q-\lambda_{2})\, v_{q-1}^{(\lambda_{1},\lambda_{2})}. 
\end{align*}

\section{Irreducible $\brgK$-modules of $Sp(2,\R)$}
\label{section:irreducible modules of Sp(2)}

There are four conjugacy classes of Cartan subgroups in $G=Sp(2,\R)$. 
One of them is the split Cartan subgroup $H_{s} = M_{m} A_{m}$, 
where 
\[
M_{m} := Z_{K}(A_{m}) 
= 
\{I, m_{-1,1}^{sp}, m_{1,-1}^{sp}, m_{-1,-1}^{sp}\}, 
\quad 
m_{i,j}^{sp} := \diag(i,j,i,j). 
\]

Let $\widehat{M_{m}}$ be the set of equivalence classes of 
irreducible representations of $M_{m}$. 
Define $\sigma_{i,j} \in \widehat{M_{m}}$, $i,j \in \Z$, by 
\begin{align*}
& 
\sigma_{i,j}(m_{-1,1}^{sp}) = (-1)^{i}, 
&
&
\sigma_{i,j}(m_{1,-1}^{sp}) = (-1)^{j}. 
\end{align*}
Then $\widehat{M_{m}}$ consists of four elements $\sigma_{i,j}$, 
$i,j \in \{0,1\}$. 
The action of $W(G, H_{s}) \simeq W(B_{2})$ on 
$\widehat{M_{m}}$ is given by 
\begin{align}
&
r_{1,2} \cdot \sigma_{i,j} = \sigma_{j,i}, 
&
&
\epsilon_{1} \cdot \sigma_{i,j} 
= 
\epsilon_{2} \cdot \sigma_{i,j} 
= 
\sigma_{i,j}, 
\label{eq:action of W on hat{M}, Sp(2)}
\end{align}
where $r_{1,2}$ is the permutation of $1$ and $2$, 
and $\epsilon_{i}$, $i=1,2$, is the reflection with respect to 
 the root $2 f_{i}$. . 

Let 
\begin{align}
\label{eq:infinitesimal character Lambda, Sp(2)} 
&
\Lambda 
= 
\Lambda_{1} f_{1} + \Lambda_{2} f_{2}, 
&
&
\Lambda_{1}, \Lambda_{2} \in \Z, 
\quad 
\Lambda_{1} > \Lambda_{2} > 0 
\end{align}
be a nonsingular integral infinitesimal character. 
There are four conjugacy classes of regular characters of 
$H_{s}$ with a given infinitesimal character, 
which correspond to the ATLAS numbers 
"10", "11" in the block $PSO(3,2)$, 
"4" in the block $PSO(4,1)$ and "0" in the block $PSO(5)$. 
According to these numbers, we define regular characters 
$\gamma_{1}$, $\gamma_{11}$, $\gamma_{4'}$ and $\gamma_{0''}$ 
of $H_{s}$ by 
\begin{align}
& 
\gamma_{10} 
=   
(\sigma_{\Lambda_{1},\Lambda_{2}+1}, (\pm \Lambda_{1}, \pm \Lambda_{2})) 
\sim 
(\sigma_{\Lambda_{2}+1,\Lambda_{1}}, (\pm \Lambda_{2}, \pm \Lambda_{1})),  
\label{eq:gamma_{10}, Sp(2)} 
\\
& 
\gamma_{11} 
= 
(\sigma_{\Lambda_{2},\Lambda_{1}+1}, (\pm \Lambda_{2},\pm \Lambda_{1})) 
\sim 
(\sigma_{\Lambda_{1}+1,\Lambda_{2}}, (\pm \Lambda_{1},\pm \Lambda_{2})), 
\label{eq:gamma_{11}. Sp(2)} 
\\
& 
\gamma_{4'} 
= 
(\sigma_{\Lambda_{1}+1,\Lambda_{2}+1}, (\pm \Lambda_{1}, \pm \Lambda_{2})) 
\sim 
(\sigma_{\Lambda_{2}+1,\Lambda_{1}+1}, (\pm \Lambda_{2}, \pm \Lambda_{1})), 
\label{eq:gamma_{4'}, Sp(2)}
\\
& 
\gamma_{0''} 
= 
(\sigma_{\Lambda_{1},\Lambda_{2}}, (\pm \Lambda_{1}, \pm \Lambda_{2})) 
\sim 
(\sigma_{\Lambda_{2},\Lambda_{1}}, (\pm \Lambda_{2}, \pm \Lambda_{1})). 
\label{eq:gamma_{0''}, Sp(2)} 
\end{align}
Here, the notation is the same as in the case of $SL(3,\R)$. 
Note that the lengths of these regular characters 
are all three.

Next, consider the fundamental Cartan subgroup 
$H_{f} = T \simeq U(1)^{\times 2}$. 
There are four conjugacy classes of regular characters of $H_{f}$ 
with a given nonsingular integral infinitesimal character, 
which correspond to the ATLAS numbers "0", "1", "2" and "3" 
in the block $PSO(3,2)$. 
According to these numbers, we define regular characters 
$\gamma_{j}$, $j=0,1,2,3$, of $H_{f}$ by 
\begin{align*}
& 
\gamma_{0} 
= 
(\Lambda_{1}+1,-\Lambda_{2}), 
&
&
\gamma_{1} = (\Lambda_{2}, -\Lambda_{1}-1), 
\\
&
\gamma_{2} = (\Lambda_{1}+1,\Lambda_{2}+2), 
&
&
\gamma_{3} = (-\Lambda_{2}-2,-\Lambda_{1}-1). 
\end{align*}
Here we denoted $\gamma_{j} \in \widehat{H_{f}}$ by its differential. 
The standard module $X(\gamma_{2})$ and $X(\gamma_{3})$ are 
the holomorphic and anti-holomorphic discrete series, 
and $X(\gamma_{0})$ and $X(\gamma_{1})$ are large discrete series modules. 

There are two other Cartan subgroups of $Sp(2,\R)$. 
One is $H_{long} = T_{long} A_{long}$, 
which is the Cayley transform of the compact Cartan subgroup 
through a long non-compact imaginary root. 
The groups $T_{long}$ and $A_{long}$ 
are isomorphic to $U(1) \times \{\pm 1\}$ and $\R_{>0}$, respectively, 
so $H_{long}$ has two connected components. 
The Levi subgroup $L_{long} := Z_{G}(A_{long})$ 
is isomorphic to $SL(2,\R) \times \{\pm 1\} \times \R_{>0}$. 
The Weyl group $W(G,H_{long})$ is isomorphic to $\mathfrak{S}_{2}$, 
which is the reflection on $\lie{a}_{long}$. 
Therefore, when a nonsingular integral infinitesimal character 
is fixed, there are $8/2 \times 2 = 8$ conjugacy classes of 
regular characters attached to $H_{long}$. 
They correspond to the ATLAS numbers 
"5", "6", "7", "8" in the block $PSO(3,2)$ an 
"0", "1", "2", "3" in the block $PSO(4,1)$. 
According to these numbers, we define regular characters 
$\gamma_{j}$, 
$j=5, 6, 7, 8, 0', 1', 2', 3'$ by 
\begin{align*}
& 
\gamma_{5} = (\Lambda_{1}+1, \sgn^{\Lambda_{2}}, \Lambda_{2}), 
&
&
\gamma_{6} = (-\Lambda_{1}-1, \sgn^{\Lambda_{2}}, \Lambda_{2}), 
\\
&
\gamma_{7} = (\Lambda_{2}+1, \sgn^{\Lambda_{1}}, \Lambda_{1}), 
&
&
\gamma_{8} = (-\Lambda_{2}-1, \sgn^{\Lambda_{1}}, \Lambda_{1}), 
\\
&
\gamma_{0'} 
:= 
(\Lambda_{1}+1, \sgn^{\Lambda_{2}+1}, \Lambda_{2}), 
&
&
\gamma_{1'} 
:= 
(-\Lambda_{1}-1, \sgn^{\Lambda_{2}+1}, \Lambda_{2}), 
\\
&
\gamma_{2'} 
:= 
(\Lambda_{2}+1, \sgn^{\Lambda_{1}+1}, \Lambda_{1}), 
&
&
\gamma_{3'} 
:= 
(-\Lambda_{2}-1, \sgn^{\Lambda_{1}+1}, \Lambda_{1}). 
\end{align*}
Here, the expression 
$\gamma_{j} 
= 
(\epsilon_{j}(\Lambda_{a(j)}+1), 
\sgn^{b(j)}, \Lambda_{c(j)})$ 
means that the restriction of $\gamma_{j}$ to 
$T_{long} \simeq U(1) \times \{\pm 1\}$ is 
$(\epsilon_{j}(\Lambda_{a(j)}+1), \sgn^{b(j)}) 
\in \widehat{U(1)} \times \widehat{\{\pm 1\}}$ 
and its restriction to $\lie{a}_{long}$ is 
$\Lambda_{c(j)}$. 
The lengths of these regular characters are 
$\ell(\gamma_{5}) = \ell(\gamma_{6}) 
= \ell(\gamma_{0'}) = \ell(\gamma_{1'}) = 1$ 
and $\ell(\gamma_{7}) = \ell(\gamma_{8}) 
= \ell(\gamma_{2'}) = \ell(\gamma_{3'}) = 2$. 

The other intermediate Cartan subgroup is 
$H_{short} = T_{short} A_{short}$, 
which is the Cayley transform of 
the compact Cartan subgroup through a short non-compact imaginary root. 
The groups $T_{short}$ and $A_{short}$ are 
isomorphic to $SO(2)$ and $\R_{>0}$, respectively, 
so $H_{short}$ is connected. 
The Levi subgroup 
$L_{short} := Z_{G}(A_{short})$ is isomorphic to 
$GL(2,\R) \simeq SL^{\pm}(2,\R) \times \R_{>0}$. 
The Weyl group $W(G,H_{short})$ is isomorphic to 
$\mathfrak{S}_{2} \times \mathfrak{S}_{2}$, 
generated by the reflections on $\lie{t}_{short}$ and 
$\lie{a}_{short}$. 
Therefore, when a nonsingular integral infinitesimal character is fixed, 
there are $8/4=2$ conjugacy classes of regular characters. 
They correspond to the ATLAS numbers "4" and "9" in the block $PSO(3,2)$. 
According to these numbers, we define regular characters 
$\gamma_{4}$ and $\gamma_{9}$ of $H_{short}$ by 
\begin{align*}
&
\gamma_{4} 
= 
(\Lambda_{1}+\Lambda_{2}+1, \Lambda_{1}-\Lambda_{2}), 
&
&
\gamma_{9} 
= 
(\Lambda_{1}-\Lambda_{2}+1, \Lambda_{1}+\Lambda_{2}) 
\in 
\widehat{SO(2)} \times \lie{a}_{short}. 
\end{align*}
The length of $\gamma_{4}$ is one and that of $\gamma_{9}$ is two.

\section{$K$-types of the irreducible modules of $Sp(2,\R)$}
\label{section:K-spectra, Sp(2)} 

In this section, we calculate the $K$-spectra of 
the irreducible modules of $Sp(2,\R)$. 
For the determination of the socle filtration of principal series modules, 
we need them in the case when the infinitesimal character $\Lambda$ is trivial 
and the highest weight $\lambda=(\lambda_{1},\lambda_{2})$ of 
the irreducible representation $V_{\lambda}^{U(2)}$ of 
$K \simeq U(2)$ satisfies $|\lambda_{1}|, |\lambda_{2}| \leq 3$. 
The $K$-spectra of irreducible modules can be computed 
by using the KLV-conjecture and the Blattner formula. 

\begin{theorem}
\label{theorem:composition factors for Sp(2)}
In the Grothendieck group, the standard modules 
$X(\gamma_{i})$ decomposes into irreducible modules as follows: 
\begin{enumerate}
\item 
{\rm (Block $PSO(3,2)$)} 
\begin{align*}
&
X(\gamma_{i}) = \overline{X}(\gamma_{i}), 
\quad i=0,1,2,3, 
\\
& 
X(\gamma_{4}) = 
\overline{X}(\gamma_{0}) + \overline{X}(\gamma_{1}) 
+ \overline{X}(\gamma_{4}), 
\\
&
X(\gamma_{5}) = 
\overline{X}(\gamma_{0}) + \overline{X}(\gamma_{2}) 
+ \overline{X}(\gamma_{5}), 
\qquad
X(\gamma_{6}) = 
\overline{X}(\gamma_{1}) + \overline{X}(\gamma_{3}) 
+ \overline{X}(\gamma_{6}), 
\\
&
X(\gamma_{7}) = 
\overline{X}(\gamma_{0}) + \overline{X}(\gamma_{4})
+\overline{X}(\gamma_{5}) + \overline{X}(\gamma_{7}), 
\\
&
X(\gamma_{8}) 
= 
\overline{X}(\gamma_{1})+\overline{X}(\gamma_{4})
+\overline{X}(\gamma_{6})+\overline{X}(\gamma_{8}), 
\\
&
X(\gamma_{9})
= 
\overline{X}(\gamma_{0}) 
+ \overline{X}(\gamma_{1}) + \overline{X}(\gamma_{4})
+ \overline{X}(\gamma_{5}) + \overline{X}(\gamma_{6})
+ \overline{X}(\gamma_{9}), 
\\
&
X(\gamma_{10})
= 
\overline{X}(\gamma_{0}) + \overline{X}(\gamma_{1}) 
+ 2 \times \overline{X}(\gamma_{4}) + \overline{X}(\gamma_{5}) 
+ \overline{X}(\gamma_{6}) 
\\
& \hspace{20mm}
+ \overline{X}(\gamma_{7}) 
+ \overline{X}(\gamma_{8}) + \overline{X}(\gamma_{9}) 
+ \overline{X}(\gamma_{10}), 
\\
&
X(\gamma_{11})
= 
\overline{X}(\gamma_{0}) + \overline{X}(\gamma_{1}) 
+ \overline{X}(\gamma_{2}) + \overline{X}(\gamma_{3}) 
\\
& \hspace{20mm}
+ \overline{X}(\gamma_{4}) + \overline{X}(\gamma_{5}) 
+ \overline{X}(\gamma_{6}) + \overline{X}(\gamma_{9}) 
+ \overline{X}(\gamma_{11}). 
\end{align*}
\item 
{\rm (Block $PSO(4,1)$)} 
\begin{align*}
&
X(\gamma_{i}) = \overline{X}(\gamma_{i}), 
\quad i=0', 1', 
\\
& 
X(\gamma_{2'}) 
= 
\overline{X}(\gamma_{0'}) + \overline{X}(\gamma_{2'}), 
\qquad 
X(\gamma_{3'}) 
= 
\overline{X}(\gamma_{1'}) + \overline{X}(\gamma_{3'}), 
\\
&
X(\gamma_{4'}) 
= 
\overline{X}(\gamma_{0'}) + \overline{X}(\gamma_{1'})
+
\overline{X}(\gamma_{2'}) + \overline{X}(\gamma_{3'}) 
+ \overline{X}(\gamma_{4'}). 
\end{align*}
\item
{\rm (Block $PSO(5)$)} 
\[
X(\gamma_{0''}) = \overline{X}(\gamma_{0''}). 
\]
\end{enumerate}
\end{theorem}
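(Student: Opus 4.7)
The plan is to derive the composition series in the Grothendieck group directly from the Kazhdan-Lusztig-Vogan conjecture, which is a theorem by Vogan \cite{VIC-III}. The statement is, strictly speaking, a tabulation: the real work is done by the KLV algorithm, and my task is to organize it block-by-block and present the result.

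First, I would reduce to the case $\Lambda=\rho$ (half the sum of the positive roots of $\lie{g}$) by the translation principle, which is legitimate because all $\gamma_i$ considered here have the same fixed nonsingular integral infinitesimal character $\Lambda = \Lambda_1 f_1 + \Lambda_2 f_2$; translation preserves composition series multiplicities on the nonsingular block. Next, I would classify the regular characters into their three blocks ($PSO(3,2)$, $PSO(4,1)$, $PSO(5)$) as already indicated by the ATLAS labels and the Cartan subgroup each $\gamma_i$ is attached to, so the decompositions can be treated separately. Within each block, the KLV algorithm produces Kazhdan-Lusztig-Vogan polynomials $P_{\gamma,\delta}(q) \in \Z[q]$ indexed by ordered pairs of regular characters; evaluated at $q=1$ and assembled with the appropriate signs $(-1)^{\ell(\gamma)-\ell(\delta)}$, the resulting upper-triangular matrix inverts the expansion of standard modules into irreducibles, giving
\[
X(\gamma) = \sum_{\delta} (-1)^{\ell(\gamma)-\ell(\delta)} P_{\gamma,\delta}(1)\, \overline{X}(\delta).
\]

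Concretely, I would: (i) list the lengths $\ell(\gamma_i)$ as recorded in Section~\ref{section:irreducible modules of Sp(2)}; (ii) determine the Bruhat order on the block from the ATLAS descent data (cross-actions and Cayley transforms between Cartan subgroups); (iii) run the Vogan recursion for the $P_{\gamma,\delta}$; and (iv) invert to read off the $\overline{X}(\delta)$-multiplicity in each $X(\gamma)$. The $PSO(5)$ block is a single element and trivial; the $PSO(4,1)$ block has five elements ($\gamma_{0'},\gamma_{1'},\gamma_{2'},\gamma_{3'},\gamma_{4'}$) of lengths $1,1,2,2,3$, and here the standard modules below $X(\gamma_{4'})$ give independent short recursions; the $PSO(3,2)$ block has twelve elements and is where the main computation lives.

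The chief obstacle is precisely this combinatorial calculation in the $PSO(3,2)$ block: one must keep track of a $12\times 12$ KLV matrix, the Cayley/cross-action patterns relating the short-root Cayley transform $H_{short}$, the long-root Cayley transform $H_{long}$, the compact Cartan $H_f$, and the split Cartan $H_m$, and in particular the fact that $\overline{X}(\gamma_4)$ appears with multiplicity $2$ in $X(\gamma_{10})$, which reflects a length-two Kazhdan-Lusztig polynomial $P_{\gamma_{10},\gamma_4}(q)=1+q$ (or equivalently a non-trivial piece of the Bruhat interval between them). I would verify the resulting table against the ATLAS output \cite{Atlas} for $Sp(2,\R)$, and cross-check by noting that summing dimensions of irreducible pieces must reproduce the known length of each standard module under restriction to $K$, and by confirming that the parity constraint of Corollary~\ref{corollary:parity condition} is consistent with the appearances of each $\overline{X}(\gamma_i)$ in each $X(\gamma_j)$ across the list.
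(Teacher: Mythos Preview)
The paper does not prove this theorem; it is simply recorded as a known consequence of the Kazhdan--Lusztig--Vogan conjecture (proved in \cite{VIC-III}), with the explicit decompositions read off from the ATLAS output \cite{Atlas}. Your plan to actually run the KLV recursion block-by-block and cross-check against ATLAS is therefore more than the paper does, and is exactly the right way to verify the statement.

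That said, your displayed formula
\[
X(\gamma) = \sum_{\delta} (-1)^{\ell(\gamma)-\ell(\delta)} P_{\gamma,\delta}(1)\, \overline{X}(\delta)
\]
is written in the wrong direction. The multiplicities of irreducibles in a standard module are non-negative integers (they are honest Jordan--H\"older multiplicities), so no alternating sign can appear there. In Vogan's conventions one has
\[
[X(\gamma)] \;=\; \sum_{\delta \leq \gamma} P_{\delta,\gamma}(1)\,[\overline{X}(\delta)],
\]
with $P_{\delta,\gamma}(1)\in\Z_{\geq 0}$; the signed formula is the \emph{inverse} relation, expressing $[\overline{X}(\gamma)]$ as an alternating sum of standard characters. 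In particular the multiplicity $2$ of $\overline{X}(\gamma_4)$ in $X(\gamma_{10})$ is $P_{\gamma_4,\gamma_{10}}(1)=2$, consistent with a polynomial of the shape $1+q$. Also, your proposed cross-check of ``summing dimensions'' does not apply directly since all the modules are infinite-dimensional; the workable version is to compare $K$-type multiplicities on both sides (which is exactly what the paper does in Section~\ref{section:K-spectra, Sp(2)} to extract $K$-spectra of the irreducibles), and Corollary~\ref{corollary:parity condition} constrains socle layers rather than composition multiplicities, so it is not a check on this theorem per se.
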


Let us calculate the $K$-spectra of the irreducible modules. 
The multiplicities of $K$-types 
in a discrete series module $X(\gamma_{j})$, $j=0,1,2,3$, are given by 
the Blattner formula (\cite{Schmid}) 
\begin{equation}
\label{eq:Blattner formula}
m(\lambda) 
= 
\sum_{w \in W(K,T_{f})} 
\epsilon(w)\, Q(w (\lambda+\rho_{c})-\overline{\gamma}-\rho_{n}). 
\end{equation}

Secondly, consider the standard modules 
$X(\gamma_{4})$ and $X(\gamma_{9})$. 
These are the generalized principal 
series modules 
\begin{align*}
&
X(\gamma_{i}) 
= 
\Ind_{P_{short}}^{G}(
\pi_{DS}^{SL^{\pm}(2,\R)}(\Lambda_{1}+\epsilon_{i} \Lambda_{2}+1) 
\otimes e^{\Lambda_{1}-\epsilon_{i} \Lambda_{2}+1}),
&
&
\epsilon_{i} 
= 
\left\{
\begin{matrix}
1 & (i=4) 
\\
-1 & (i=9) 
\end{matrix}
\right. ,
\end{align*}
where $P_{short} = M_{short} A_{short} N_{short}$ is a parabolic subgroup 
of $G$ with the Levi factor $L_{short} = M_{short} A_{short}$, 
$\pi_{DS}^{SL^{\pm}(2,\R)}(\alpha)$ is the discrete series module 
of $SL^{\pm}(2,\R)$ with the minimal $K \cap L_{short} \simeq O(2)$-type 
$\alpha$, 
and $e^{\beta}$ is the one dimensional representation of 
$A_{short} \simeq \R_{> 0}$ defined by 
$\R_{>0} \ni a \mapsto e^{\beta \cdot \log a}$. 
Therefore, their $K$-spectra are 
\[
\widehat{K}(X(\gamma_{i})) 
= 
\{\lambda=(\lambda_{1},\lambda_{2}) \in \widehat{U(2)} 
\mid 
\lambda_{1}-\lambda_{2} 
\in 
\Lambda_{1}+\epsilon_{i} \Lambda_{2}+1 + 2 \Z_{\geq 0}
\}, 
\]
with the multiplicities 
\begin{equation}
\label{eq:multi. of X(gamma_{4})}
m_{\lambda} 
= 
\frac{1}{2}(
\lambda_{1}-\lambda_{2}-\Lambda_{1}-\epsilon_{i} \Lambda_{2}+1). 
\end{equation}

Thirdly, 
consider the standard modules $X(\gamma_{i})$, $i=5,6,7,8,0',1',2',3'$. 
These are generalized principal series modules induced from 
a parabolic subgroup 
$P_{long} = M_{long} A_{long} N_{long}$ of $G$, 
where $M_{long} \simeq SL(2,\R) \times \{\pm 1\}$ 
and $A_{long} \simeq \R_{>0}$. 
In Section~\ref{section:irreducible modules of Sp(2)}, 
we wrote the regular characters $\gamma_{j}$, $j=5,6,7,8,0',1',2',3'$, 
in a form like 
\[
\gamma_{j} = (\epsilon_{j} (\Lambda_{a(j)}+1), \sgn^{b(j)}, \Lambda_{c(j)}), 
\qquad 
\epsilon_{i} = +1 \mbox{ or } -1.
\]
Under this notation, 
\begin{align*}
X(\gamma_{j}) 
= 
\Ind_{P_{long}}^{G} 
(\pi_{DS}^{SL(2,\R)}(\epsilon_{j} (\Lambda_{a(j)}+1)) \otimes \sgn^{b(j)} 
\otimes e^{\Lambda_{c(j)}+1}). 
\end{align*}
Therefore, the $K$-spectra $\widehat{K}(X(\gamma_{j}))$ are 
\begin{align*}
\{\lambda=(\lambda_{1},\lambda_{2}) \in \widehat{U(2)} 
\mid 
& \exists l \in \Z_{\geq 0} \mbox{ s.t. } 
\lambda_{1} \geq \epsilon_{i} (\Lambda_{a(j)}+1+2l) \geq \lambda_{2}, 
\\ 
& \mbox{ and } \ 
\lambda_{1}+\lambda_{2} \equiv \Lambda_{a(j)}+b(j)+1 \ (\modulo 2)
\}
\end{align*}
and the multiplicities are 
\begin{align*}
m_{\lambda} 
= 
\#
\{l \in \Z_{\geq 0} 
\mid 
\lambda_{1} \geq \epsilon_{j}(\Lambda_{a(j)}+1+2l) \geq \lambda_{2}\}. 
\end{align*}

Finally, consider the standard modules $X(\gamma_{j})$, 
$j=10,11,4',0''$. 
These are the principal series modules induced from 
a minimal parabolic subgroup of $G$. 
In Section~\ref{section:irreducible modules of Sp(2)}, 
we wrote the regular characters $\gamma_{j}$, $j=10,11,4',0''$, 
in a form like 
\[
\gamma_{j} 
= 
(\sigma_{a,b}, \nu), 
\qquad 
\sigma_{a,b} \in \widehat{M_{m}}, 
\quad 
\nu \in W(G,A_{m}) \cdot \Lambda.  
\]
Under this notation, 
\[
X(\gamma_{j})
\approx
\Ind_{P_{m}}^{G}(\sigma_{a,b} \otimes e^{\nu+\rho_{m}}). 
\]

Under the identification \eqref{eq:K=U(2)}, 
the elements $m_{\epsilon_{1},\epsilon_{2}} \in M_{m}$, 
$\epsilon_{1}, \epsilon_{2} \in \{\pm 1\}$, correspond to 
$\diag(\epsilon_{1},\epsilon_{2}) \in U(2)$. 
Therefore, Lemma~\ref{lemma:the action of elements on repn of SU(2)} 
implies the following lemma. 
\begin{lemma}\label{lemma:the action of M on f.d. repn of K, SP(2)} 
\begin{enumerate}
\item
The action of $M_{m}$ on the bases of 
$V_{(\lambda_{1},\lambda_{2})}^{U(2)}$ is given by 
\[
\tau_{\lambda}(m_{\epsilon_{1}, \epsilon_{2}})\, 
v_{q}^{(\lambda_{1},\lambda_{2})} 
= 
\epsilon_{1}^{q} \epsilon_{2}^{\lambda_{1}+\lambda_{2}-q} 
v_{q}^{(\lambda_{1},\lambda_{2})}. 
\]
\item
If $\lambda_{1}+\lambda_{2}$ is even, 
then the restriction of $V_{(\lambda_{1},\lambda_{2})}^{U(2)}$ 
to $M_{m}$ decomposes into 
$\sigma_{0,0}$ and $\sigma_{1,1}$-isotypic subspaces: 
\[
V_{(\lambda_{1},\lambda_{2})}^{U(2)} 
= 
V_{\lambda}^{U(2)}(\sigma_{0,0}) 
\oplus 
V_{\lambda}^{U(2)}(\sigma_{1,1}), 
\]
where 
\begin{align*}
& 
V_{\lambda}^{U(2)}(\sigma_{0,0}) 
= 
\Span{
v_{q}^{(\lambda_{1}, \lambda_{2})} 
\mid 
q \mbox{ is even}},
&
& 
V_{\lambda}^{U(2)}(\sigma_{1,1}) 
= 
\Span{
v_{q}^{(\lambda_{1},\lambda_{2})} 
\mid 
q \mbox{ is odd}}. 
\end{align*}
\item
If $\lambda_{1}+\lambda_{2}$ is odd, 
then the restriction of $V_{(\lambda_{1},\lambda_{2})}^{U(2)}$ 
to $M_{m}$ decomposes into 
$\sigma_{0,1}$ and $\sigma_{1,0}$-isotypic subspaces: 
\[
V_{(\lambda_{1},\lambda_{2})}^{U(2)} 
= 
V_{\lambda}^{U(2)}(\sigma_{0,1}) 
\oplus 
V_{\lambda}^{U(2)}(\sigma_{1,0}), 
\]
where 
\begin{align*}
& 
V_{\lambda}^{U(2)}(\sigma_{0,1}) 
= 
\Span{
v_{q}^{(\lambda_{1}, \lambda_{2})} 
\mid 
q \mbox{ is even}},
&
& 
V_{\lambda}^{U(2)}(\sigma_{1,0}) 
= 
\Span{
v_{q}^{(\lambda_{1},\lambda_{2})} 
\mid 
q \mbox{ is odd}}. 
\end{align*}
\end{enumerate}
\end{lemma}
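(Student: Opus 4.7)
My plan is to deduce part (1) directly from the two key structural facts already in place: the explicit identification \eqref{eq:K=U(2)} together with the remark (immediately before the lemma) that $m_{\epsilon_1,\epsilon_2}^{sp} = \diag(\epsilon_1,\epsilon_2,\epsilon_1,\epsilon_2)$ corresponds to $\diag(\epsilon_1,\epsilon_2) \in U(2)$, and the identification $\lie{k} \simeq \lie{gl}(2,\C)$ under which $D_i \leftrightarrow E_{ii}$. Under the latter, the displayed formulas $D_1 v_q^{(\lambda_1,\lambda_2)} = q\, v_q^{(\lambda_1,\lambda_2)}$ and $D_2 v_q^{(\lambda_1,\lambda_2)} = (\lambda_1+\lambda_2-q)\, v_q^{(\lambda_1,\lambda_2)}$ say exactly that $v_q^{(\lambda_1,\lambda_2)}$ is a weight vector for the diagonal torus of $U(2)$ with weight $(q, \lambda_1+\lambda_2-q)$. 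Hence any $\diag(z_1,z_2) \in U(2)$ acts by $z_1^q z_2^{\lambda_1+\lambda_2-q}$, and specializing $z_i = \epsilon_i$ gives the formula in (1).

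As an alternative route that stays closer to the paper's framework, I could argue from \eqref{eq:basis of repn of U(2)} and Lemma~\ref{lemma:the action of elements on repn of SU(2)}. Writing $\diag(\epsilon_1,\epsilon_2) = \sqrt{\epsilon_1\epsilon_2}\cdot \diag\bigl(\sqrt{\epsilon_1/\epsilon_2},\sqrt{\epsilon_2/\epsilon_1}\bigr)$, the central factor acts on $1_{\lambda_1+\lambda_2}$ by $(\epsilon_1\epsilon_2)^{(\lambda_1+\lambda_2)/2}$, while Lemma~\ref{lemma:the action of elements on repn of SU(2)} gives the action of the $SU(2)$ factor on $v_{q-(\lambda_1+\lambda_2)/2}^{(\lambda_1-\lambda_2)/2}$ as $(\epsilon_1/\epsilon_2)^{q-(\lambda_1+\lambda_2)/2}$. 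Multiplying these and simplifying yields $\epsilon_1^q \epsilon_2^{\lambda_1+\lambda_2-q}$; the apparent square-root ambiguity disappears because the product is a polynomial in $\epsilon_1,\epsilon_2$.

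Parts (2) and (3) are then pure bookkeeping. Setting $(\epsilon_1,\epsilon_2) = (-1,1)$ and $(1,-1)$ in (1) gives $\tau_\lambda(m_{-1,1}^{sp}) v_q^{(\lambda_1,\lambda_2)} = (-1)^q v_q^{(\lambda_1,\lambda_2)}$ and $\tau_\lambda(m_{1,-1}^{sp}) v_q^{(\lambda_1,\lambda_2)} = (-1)^{\lambda_1+\lambda_2-q} v_q^{(\lambda_1,\lambda_2)}$. When $\lambda_1+\lambda_2$ is even these two signs agree and equal $(-1)^q$, so the parity of $q$ places $v_q^{(\lambda_1,\lambda_2)}$ in either $V_\lambda^{U(2)}(\sigma_{0,0})$ or $V_\lambda^{U(2)}(\sigma_{1,1})$, giving (2). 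When $\lambda_1+\lambda_2$ is odd the two signs are opposite, so a case check on the parity of $q$ places each basis vector in $V_\lambda^{U(2)}(\sigma_{0,1})$ or $V_\lambda^{U(2)}(\sigma_{1,0})$, giving (3).

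I do not anticipate a real obstacle: the lemma is a direct computation on the explicit basis \eqref{eq:basis of repn of U(2)}. The only point that requires a moment of care is keeping track of the split of $\diag(\epsilon_1,\epsilon_2)$ into central and $SU(2)$-factors in the second route, and confirming that the square-root choices are immaterial once one multiplies through; this is avoided entirely if one simply invokes that $v_q^{(\lambda_1,\lambda_2)}$ is a torus weight vector.
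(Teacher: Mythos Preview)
Your proposal is correct. The paper does not give a formal proof of this lemma; it simply remarks, just before the statement, that under the identification \eqref{eq:K=U(2)} the elements $m_{\epsilon_1,\epsilon_2}^{sp}$ correspond to $\diag(\epsilon_1,\epsilon_2)\in U(2)$ and that Lemma~\ref{lemma:the action of elements on repn of SU(2)} then implies the result. Your second route is exactly this argument spelled out in detail, and your first route (reading off the torus weight $(q,\lambda_1+\lambda_2-q)$ directly from the $D_1,D_2$-eigenvalues) is an equivalent and slightly cleaner shortcut that bypasses the central/$SU(2)$ split and its attendant square-root bookkeeping. Parts (2) and (3) are, as you say, immediate from (1) and the definition of $\sigma_{i,j}$.
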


By this lemma, the $K$-spectra 
$\widehat{K}(X(\gamma_{i}))
=\widehat{K}(\Ind_{P_{m}}^{G}(\sigma_{a,b} \otimes e^{\nu+\rho_{m}}))$ 
are 
\begin{align*}
&
\{\lambda = (\lambda_{1}, \lambda_{2}) 
\mid 
\lambda_{1}=\lambda_{2} \in 2 \Z 
\ 
\mbox{ or } 
\
\lambda_{1}-\lambda_{2} \in 2 \Z_{>0}\} 
&
&
\mbox{if } 
\ 
a \equiv b \equiv 0 \ (\modulo 2), 
\\
&
\{\lambda = (\lambda_{1}, \lambda_{2}), 
\mid 
\lambda_{1}=\lambda_{2} \in 2\Z + 1 
\ 
\mbox{ or } 
\
\lambda_{1}-\lambda_{2} \in 2 \Z_{>0}\} 
&
&
\mbox{if } 
\ 
a \equiv b \equiv 1 \ (\modulo 2), 
\\
&
\{\lambda = (\lambda_{1}, \lambda_{2}) 
\mid 
\lambda_{1}-\lambda_{2} \in 1 + 2 \Z_{\geq 0}\} 
&
&
\mbox{if } 
\ 
a + b \equiv 1 \ (\modulo 2), 
\end{align*}
and the multiplicities are 
\begin{align*}
&
m_{\lambda} 
= 
\left\{
\begin{matrix}
\frac{1}{2}(\lambda_{1}-\lambda_{2})+1 
& 
\mbox{if } \ 
\lambda_{1}, \lambda_{2} \ \mbox{ are even}
\\
\frac{1}{2}(\lambda_{1}-\lambda_{2}) 
&
\mbox{if } \ 
\lambda_{1}, \lambda_{2} \ \mbox{ are odd}
\end{matrix}
\right. , 
&
&
\mbox{if } \ 
a \equiv b \equiv 0 \ (\modulo 2), 
\\
&
m_{\lambda} 
= 
\left\{
\begin{matrix}
\frac{1}{2}(\lambda_{1}-\lambda_{2}),  
& 
\mbox{if } \ 
\lambda_{1}, \lambda_{2} \ \mbox{ are even}
\\
\frac{1}{2}(\lambda_{1}-\lambda_{2}) + 1 
&
\mbox{if } \ 
\lambda_{1}, \lambda_{2} \ \mbox{ are odd}
\end{matrix}
\right. , 
&
&
\mbox{if } \ 
a \equiv b \equiv 1 \ (\modulo 2), 
\\
&
m_{\lambda} 
= 
\frac{1}{2}(\lambda_{1}-\lambda_{2}+1), 
&
&
\mbox{if } \ 
a+b \equiv 1 \ (\modulo 2). 
\end{align*}

From these data and Theorem~\ref{theorem:composition factors for Sp(2)}, 
we obtain the $K$-spectra of all irreducible modules. 

\begin{proposition}
\label{proposition:K-spectra of D.S. of Sp(2)}
Suppose that the infinitesimal character is trivial, 
namely $\Lambda=(\Lambda_{1},\Lambda_{2})=(2,1)$. 
The multiplicities of the $K$-types $(\tau_{\lambda}, V_{\lambda}^{U(2)})$ 
with $\lambda=(\lambda_{1},\lambda_{2})$, 
$|\lambda_{1}|, |\lambda_{2}| \leq 3$, 
in the irreducible modules are as follows: 
\begin{enumerate}
\item 
{\rm Block $PSO(3,2)$} 
\begin{center}
\begin{tabular}{|c|c||c|c|}
\hline
{\rm Irred.Mod.} & $\lambda$ & {\rm Irred.Mod.} & $\lambda$ 
\\ \hline \hline 
\ 
\vspace{-4mm} 
& & & 
\\
$\overline{X}(\gamma_{0})$ & $(3,-1)$, $(3,-3)$ 
&
$\overline{X}(\gamma_{1})$ & $(1,-3)$, $(3,-3)$ 
\\ \hline 
\ \vspace{-4mm} 
& & & 
\\
$\overline{X}(\gamma_{2})$ & $(3,3)$ 
&
$\overline{X}(\gamma_{3})$ & $(-3,-3)$ 
\\ \hline
\ \vspace{-3.9mm} 
& & & 
\\
$\overline{X}(\gamma_{4})$ & $(2,-2)$ 
&
$\overline{X}(\gamma_{5})$ & $(3,1)$ 
\\ \hline 
\ \vspace{-4mm} 
& & & 
\\
$\overline{X}(\gamma_{6})$ & $(-1,-3)$ 
&
$\overline{X}(\gamma_{7})$ & $(2,2)$, $(2,0)$ 
\\ \hline 
& & & $(2,0)$, $(1,-1)$, $(0,-2)$
\\ 
$\overline{X}(\gamma_{8})$ & $(-2,-2)$, $(0,-2)$ 
&
$\overline{X}(\gamma_{9})$ & $(3,-1)$, $(2,-2)$
\\
& & & 
$(1,-3)$, $(3,-3)$ 
\\ \hline 
\ \vspace{-4mm} 
& & & 
\\
$\overline{X}(\gamma_{10})$ & $(0,0)$ 
&
$\overline{X}(\gamma_{11})$ & $\pm (1,1)$, $(3,\pm 1)$, $(\pm 1, -3)$ 
\\
& & & $(1,-1)$, $(3,-3)$ 
\\ \hline 
\end{tabular}
\end{center}
The multiplicities of these $K$-types are all one. 
\item
{\rm Block $PSO(4,1)$} 
\begin{center}
\begin{tabular}{|c|c||c|c|}
\hline
{\rm Irred.Mod.} & $\lambda$ & {\rm Irred.Mod.} & $\lambda$ 
\\ \hline \hline 
\ \vspace{-4mm} 
& & & 
\\ 
$\overline{X}(\gamma_{0'})$ & $(3,\pm 2)$, $(3,0)$ 
&
$\overline{X}(\gamma_{1'})$ & $(\pm 2,-3)$, $(0,-3)$  
\\ \hline 
\ \vspace{-4mm} 
& & & 
\\ 
$\overline{X}(\gamma_{2'})$ & $(2,\pm 1)$, $(2,-3)$ 
&
$\overline{X}(\gamma_{3'})$ & $(\pm 1,-2)$, $(3,-2)$ 
\\ \hline 
$\overline{X}(\gamma_{4'})$ & 
\multicolumn{3}{c|}
{$(1,0)$, $(0,-1)$, $(3,0)$, $(2,-1)$}
\\ 
& \multicolumn{3}{c|}
{$(1,-2)$, $(0,-3)$, 
$(3,-2)$, $(2,-3)$} 
\\ \hline 
\end{tabular}
\end{center}
The multiplicities of these $K$-types are all one. 
\end{enumerate}
\end{proposition}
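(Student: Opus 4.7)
The plan is to assemble the $K$-spectra of all irreducible modules $\overline{X}(\gamma_j)$ from the $K$-spectra of the standard modules $X(\gamma_j)$ computed just above, and then invert the composition relations of Theorem~\ref{theorem:composition factors for Sp(2)} in the Grothendieck group. Concretely, the character identities of Theorem~\ref{theorem:composition factors for Sp(2)} can be triangularized with respect to the length filtration, so that each $[\overline{X}(\gamma_j)]$ is an explicit alternating sum of $[X(\gamma_k)]$'s. Restricting to $K$ and evaluating multiplicities at each weight $\lambda=(\lambda_1,\lambda_2)$ with $|\lambda_i|\le 3$ then yields the table.

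The first step is to handle the discrete series $X(\gamma_j)$, $j=0,1,2,3$ (block $PSO(3,2)$), via the Blattner formula \eqref{eq:Blattner formula}: for the holomorphic and antiholomorphic discrete series $X(\gamma_2)$ and $X(\gamma_3)$ the partition function collapses and one reads off the one-dimensional $\lie{p}_\pm$-cohomology cones; for the large discrete series $X(\gamma_0)$ and $X(\gamma_1)$ one must run through the $W(K,T)$-alternation. Since these modules are already irreducible, their entries in the table are obtained directly. Next, I would compute $\widehat K(X(\gamma_i))$ for $i=4,\dots,11,0',\dots,4',0''$ using the Frobenius reciprocity formulas displayed in Section~\ref{section:K-spectra, Sp(2)} together with Lemma~\ref{lemma:the action of M on f.d. repn of K, SP(2)}, restricted to $|\lambda_i|\le 3$. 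At trivial infinitesimal character $\Lambda=(2,1)$ only finitely many $K$-types contribute in this range, so each calculation reduces to inspection.

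Having both the $X(\gamma_j)$ data and the composition-factor decomposition, the final step is the Grothendieck-group subtraction. Working from the lowest lengths upward, one solves for $[\overline{X}(\gamma_j)]|_K$ recursively: for instance, $[\overline{X}(\gamma_4)]=[X(\gamma_4)]-[\overline{X}(\gamma_0)]-[\overline{X}(\gamma_1)]$, and analogously for $\gamma_5,\gamma_6$; then $\gamma_7,\gamma_8$ involve the just-determined $\gamma_4,\gamma_5,\gamma_6$; and $\gamma_9,\gamma_{10},\gamma_{11}$ are obtained at the top. The block $PSO(4,1)$ admits the same triangular pattern, and $\gamma_{0''}$ is already irreducible. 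Checking positivity and the claimed multiplicity one at each listed $\lambda$ finishes the proof.

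The main obstacle is bookkeeping rather than conceptual difficulty: one must correctly apply \eqref{eq:Blattner formula} to the large discrete series (getting the right cancellations in the sum over $W(K,T)$), and one must track the two different parity patterns in Lemma~\ref{lemma:the action of M on f.d. repn of K, SP(2)} when reading off $\widehat K(X(\gamma_{10}))$ and $\widehat K(X(\gamma_{11}))$, where the multiplicity $2\times\overline{X}(\gamma_4)$ in $X(\gamma_{10})$ is easy to miscount. I would also double-check the cases where several irreducible factors share a $K$-type (e.g.\ $(3,-3)$ appearing in $\overline X(\gamma_0)$, $\overline X(\gamma_1)$, $\overline X(\gamma_9)$, $\overline X(\gamma_{11})$) to confirm that the subtractions leave the asserted multiplicity one.
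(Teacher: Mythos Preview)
Your proposal is correct and follows essentially the same approach as the paper: the paper simply states that ``From these data and Theorem~\ref{theorem:composition factors for Sp(2)}, we obtain the $K$-spectra of all irreducible modules,'' where ``these data'' are precisely the $K$-spectra of the standard modules $X(\gamma_j)$ computed via the Blattner formula and Frobenius reciprocity in the preceding paragraphs. Your recursive Grothendieck-group subtraction, proceeding by length, is exactly the intended (and only reasonable) way to carry this out.
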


\section{Shift operators for $Sp(2,\R)$}
\label{section:shift operators, Sp(2)}
In this section, we write down the shift operators of $K$ types 
explicitly, in the case of $G = Sp(2,\R)$.

The adjoint representation of $\lie{k}$ on $\lie{s}$ decomposes 
into two irreducible representations $\lie{s}_{\pm}$, 
whose highest weights are $(2,0)$ and $(0,-2)$, respectively. 
Define 
\begin{align*}
& 
X_{\pm 2 e_{i}}
:= 
H_{i} \pm \I (X_{2f_{i}}+X_{-2f_{i}}) 
= 
H_{i} \pm D_{i} \pm 2 \I X_{2f_{i}}, 
\quad i=1,2, 
\\
&
X_{\pm (e_{1}+e_{2})}
:= 
\pm(X_{f_{1}-f_{2}}+X_{-f_{1}+f_{2}}) 
+ \I 
(X_{f_{1}+f_{2}}+X_{-f_{1}-f_{2}})
\\
& 
\hspace{16.1mm}
= 2(Y_{\mp} \pm X_{f_{1}-f_{2}} + \I X_{f_{1}+f_{2}}). 
\end{align*}
Then, 
$X_{\pm 2e_{1}}, X_{\pm 2 e_{2}}, X_{\pm(e_{1}+e_{2})}$ is a basis 
of $\lie{s}_{\pm}$, and we may identify 
\begin{align}
& 
v_{2}^{(2,0)}
= X_{2e_{1}}, 
&
&
v_{1}^{(2,0)}
= \frac{1}{2} 
X_{e_{1}+e_{2}}, 
&
&
v_{0}^{(2,0)}
=X_{2e_{2}},  
& &
\label{eq:basis of s-1, Sp(2)}\\
& 
v_{0}^{(0,-2)} 
= X_{-2e_{2}},
& 
&
v_{-1}^{(0,-2)} 
=\frac{1}{2} 
X_{-e_{1}-e_{2}},  
&
&
v_{-2}^{(0,-2)} 
=X_{-2e_{1}}. 
\label{eq:basis of s-2, Sp(2)}
\end{align}

Define an invariant bilinear form $\langle \ , \ \rangle$ on $\lie{g}$ 
by 
$\langle X, Y \rangle = \tr(XY)$. 
Then 
\begin{align*}
& 
\langle X_{2e_{1}}, X_{-2e_{1}} \rangle 
= 4, 
&
&
\langle X_{2e_{2}}, X_{-2e_{2}} \rangle 
= -8, 
&
&
\langle X_{e_{1}+e_{2}}, X_{-e_{1}-e_{2}} \rangle 
= 4. 
\end{align*}
The operator $\nabla$ defined in Section~\ref{section:outline} is 
\begin{align}
4 \nabla
\phi 
= 
& 
\sum_{i=1}^{2} 
\sum_{\epsilon \in \{\pm 1\}} 
L(H_{i} - \epsilon D_{i} - 2 \I \epsilon X_{2f_{i}}) \phi 
\otimes 
X_{2 \epsilon e_{i}}
\label{eq:nabla for Sp(2)}
\\
& 
- 
\sum_{\epsilon \in \{\pm 1\}} 
L(Y_{\epsilon} 
- \epsilon X_{f_{1}-f_{2}} + \I X_{f_{1}+f_{2}}) \phi 
\otimes 
X_{\epsilon (e_{1}+e_{2})}. 
\notag
\end{align}

The contragredient representation of
$(\tau_{(\lambda_{1},\lambda_{2})}, V_{(\lambda_{1},\lambda_{2})}^{U(2)})$ 
is 
$(\tau_{(-\lambda_{2},-\lambda_{1})}, 
V_{(-\lambda_{2},-\lambda_{1})}^{U(2)})$. 
Let $\mathrm{pr}_{(i,j)}$, $(i,j) = \pm (2,0), \pm (1,1)$ or $\pm(0,2)$, 
be the natural projection 
\[
(\tau_{(\lambda_{1},\lambda_{2})})^{\ast} 
\otimes 
\lie{s} 
\to 
\tau_{(-\lambda_{2}-j,-\lambda_{1}-i)} 
\simeq 
(\tau_{(\lambda_{1}+i,\lambda_{2}+j)})^{\ast}. 
\]
We define the shift operators of $K$-types 
$P_{(i,j)}$ by 
\begin{align*}
P_{(i,j)} 
&= 4 \times \pr_{(i,j)} \circ \nabla : 
\\
&C_{(\tau_{(\lambda_{1},\lambda_{2})})^{\ast}}^{\infty}
(K \backslash G / A_{m} N_{m}; e^{\nu+\rho}) 
\rightarrow 
C_{(\tau_{(\lambda_{1}+i,\lambda_{2}+j)})^{\ast}}^{\infty}
(K \backslash G / A_{m} N_{m}; e^{\nu+\rho}). 
\end{align*}

An element $\phi_{\lambda}(g)$ in 
$C^{\infty}_{(\tau_{(\lambda_{1},\lambda_{2})})^{\ast}}
(K \backslash G / A_{m} N_{m}; e^{\nu+\rho})$ is determined by its value 
at $g=e$ by Iwasawa decomposition. 
So we write it as 
\[
\phi_{\lambda}(e) 
= 
\sum_{q=-\lambda_{1}}^{-\lambda_{2}} 
c(q)\, v_{q}^{(-\lambda_{2},-\lambda_{1})}. 
\]
For notational convenience, 
we put $c(q) = 0$ if $q > -\lambda_{2}$ or 
$q < -\lambda_{1}$. 
By Lemma~\ref{lemma:irred. decomp. of tau otimes Ad} and 
\eqref{eq:basis of repn of U(2)}, 
the irreducible decompositions of 
$\tau_{(-\lambda_{2},-\lambda_{1})} \otimes \tau_{(2,0)}$ and 
$\tau_{(-\lambda_{2},-\lambda_{1})} \otimes \tau_{(0,-2)}$ 
are given by 
\begin{align*}
& 
v_{q}^{(-\lambda_{2},-\lambda_{1})} 
\otimes 
v_{i}^{(2,0)} 
= 
\sum_{j=0}^{2}
d^{+}(q,i;j)\, 
v_{q+i}^{(-\lambda_{2}+j,-\lambda_{1}+2-j)}, 
\\
& 
v_{q}^{(-\lambda_{2},-\lambda_{1})} 
\otimes 
v_{i}^{(0,-2)} 
= 
\sum_{j=-2}^{0}
d^{-}(q,i;j)\, 
v_{q+i}^{(-\lambda_{2}+j,-\lambda_{1}-2-j)}, 
\end{align*}
where 
\begin{align*}
& 
d^{+}(q,i;2) = 1 \quad (0 \leq i \leq 2), 
\qquad 
d^{-}(q,i;0) 
= 1 \quad (-2 \leq i \leq 0), 
\\
&
d^{+}(q,2;1) = d^{-}(q,0;-1) = -(\lambda_{2}+q), 
\\
&
d^{+}(q,2;0) = d^{-}(q,0;-2) = (\lambda_{2}+q+1)_{\downarrow (2)}, 
\\
& d^{+}(q,1;1) = d^{-}(q,-1;-1) 
= 
-\frac{1}{2}(\lambda_{1}+\lambda_{2}+2q), 
\\
&
d^{+}(q,1;0) = d^{-}(q,-1;-2) 
= (\lambda_{1}+q)(\lambda_{2}+q), 
\\
& 
d^{+}(q,0;1) = d^{-}(q,-2;-1) 
= -(\lambda_{1}+q), 
\\
& d^{+}(q,0;0) = d^{-}(q,-2;-2) 
= 
(\lambda_{1}+q)_{\downarrow (2)}. 
\end{align*}
Therefore, by 
\eqref{eq:basis of s-1, Sp(2)}, \eqref{eq:basis of s-2, Sp(2)} 
and \eqref{eq:nabla for Sp(2)}, 
we obtain the explicit form of the shift operators $P_{(i,j)}$ of 
$K$-types: 
\begin{align}
P_{(2,0)} \phi_{\lambda}(e) 
= 
\sum_{q=-\lambda_{1}-2}^{-\lambda_{2}} 
\{&
\left(\nu_{1}+2\lambda_{1}+4+q \right) 
c(q+2) 
\notag
\\
&
+(\nu_{2}+\lambda_{1}+\lambda_{2}+1+q)\, 
c(q)
\}\,
v_{q}^{(-\lambda_{2},-\lambda_{1}-2)}, 
\label{eq:p.s., P_{(2,0)}}
\end{align}
\begin{align}
P_{(1,1)} \phi_{\lambda}(e) 
= 
- 
\sum_{q=-\lambda_{1}-1}^{-\lambda_{2}-1}
\{&
(\lambda_{1}+q+2) 
(\nu_{1}+\lambda_{1}+\lambda_{2}+q+2 )\, 
c(q+2) 
\notag\\
&+(\lambda_{2}+q) 
(\nu_{2}+\lambda_{1}+\lambda_{2}+q+1)\, 
c(q)
\}\,
v_{q}^{(-\lambda_{2}-1,-\lambda_{1}-1)}, 
\label{eq:p.s., P_{(1,1)}}
\end{align}
\begin{align}
P_{(0,2)} \phi_{\lambda}(e) 
= 
\sum_{q=-\lambda_{1}}^{-\lambda_{2}-2} 
\{&
(\lambda_{1}+q+2)_{\downarrow (2)} 
(\nu_{1}+2\lambda_{2}+q+2 )\, 
c(q+2) 
\notag\\
& 
+ (\lambda_{2}+q+1)_{\downarrow (2)} 
(\nu_{2}+\lambda_{1}+\lambda_{2}+q+1)\, 
c(q)
\}\,
v_{q}^{(-\lambda_{2}-2,-\lambda_{1})}, 
\label{eq:p.s., P_{(0,2)}}
\end{align}
\begin{align}
P_{(0,-2)} \phi_{\lambda}(e) 
= 
\sum_{q=-\lambda_{1}}^{-\lambda_{2}+2} 
\{&
(\nu_{1}-2\lambda_{2}-q+4)\, 
c(q-2) 
\notag\\
& 
+ (\nu_{2}-\lambda_{1}-\lambda_{2}-q+1)\, 
c(q)
\}\,
v_{q}^{(-\lambda_{2}+2,-\lambda_{1})}, 
\label{eq:p.s., P_{(0,-2)}}
\end{align}
\begin{align}
P_{(-1,-1)} \phi_{\lambda}(e) 
= 
- \sum_{q=-\lambda_{1}+1}^{-\lambda_{2}+1} 
\{&
(\lambda_{2}+q-2) 
(\nu_{1}-\lambda_{1}-\lambda_{2}-q+2)\, 
c(q-2) 
\notag\\
& 
+ (\lambda_{1}+q) 
(\nu_{2}-\lambda_{1}-\lambda_{2}-q+1)\, 
c(q)
\}\,
v_{q}^{(-\lambda_{2}+1,-\lambda_{1}+1)}, 
\label{eq:p.s., P_{(-1,-1)}}
\end{align}
\begin{align}
P_{(-2,0)} \phi_{\lambda}(e) 
= 
\sum_{q=-\lambda_{1}+2}^{-\lambda_{2}} 
\{&
(\lambda_{2}+q-1)_{\downarrow (2)} 
(\nu_{1}-2\lambda_{1}-q+2)\, 
c(q-2) 
\notag\\
& 
+ (\lambda_{1}+q)_{\downarrow (2)} 
(\nu_{2}-\lambda_{1}-\lambda_{2}-q+1)\, 
c(q)
\}\,
v_{q}^{(-\lambda_{2},-\lambda_{1}+2)}. 
\label{eq:p.s., P_{(-2,0)}}
\end{align}

\section{Candidates for irreducible submodules. $Sp(2,\R)$ case.}
\label{section:candidates for submodules of PS}

In this section, we seek candidates for 
irreducible submodules of principal series modules. 
By the translation principle, 
we will restrict our calculation to the case 
when the infinitesimal character 
$\Lambda = (\Lambda_{1}, \Lambda_{2})$ 
is trivial, namely $\Lambda = \rho_{m} = (2,1)$. 

Firstly, consider the block $PSO(3,2)$. 
The regular characters of the split Cartan subgroup 
which are contained in this block are 
$\gamma_{10}$ and $\gamma_{11}$. 
If the infinitesimal character is trivial, they are 
\begin{align*}
& 
\gamma_{10} = (\sigma_{0,0}, (2,1)), 
&
&
\gamma_{11} = (\sigma_{1,1}, (2,1)), 
\end{align*}
so we consider the socle filtrations of the principal series modules 
\[
I(\sigma_{0,0}, w \cdot (2,1)) 
\quad \mbox{and} \quad 
I(\sigma_{1,1}, w \cdot (2,1)), 
\qquad 
w \in W(G,H_{s}) \simeq W(B_{2}). 
\]
For $G=Sp(2,\R)$, 
the automorphism $\mu$ in section~\ref{subsection:horizontal symmetry} 
is given by the involution 
\[
\mu : G \rightarrow G, 
\qquad 
\mu(g) = 
x_{\mu}\,  
g\, 
(x_{\mu})^{-1}, 
\qquad 
x_{\mu} 
:= 
\diag(1,1,-1,-1). 
\]
This automorphism acts on $H_{f}$ as the inverse map. 
Therefore 
$\gamma_{0} \circ \mu$ and $\gamma_{2} \circ \mu$ are 
conjugate to $\gamma_{1}$ and $\gamma_{3}$, respectively. 
Next, $\mu$ acts on $A_{long}$ trivially and on $T_{long}$ as 
the inverse map, up to inner automorphisms. 
Therefore, 
$\gamma_{5} \circ \mu = \gamma_{6}$, 
$\gamma_{7} \circ \mu = \gamma_{8}$, 
$\gamma_{0'} \circ \mu = \gamma_{1'}$ and 
$\gamma_{2'} \circ \mu = \gamma_{3'}$. 
Finally, $\mu$ acts on $H_{s}$ and $H_{short}$ trivially, 
up to inner automorphisms. 
Therefore, $\overline{X}(\gamma_{i})$, 
$i=4, 9, 10, 11, 4', 0''$, are self-dual. 
Moreover, since $\mu$ stabilizes $N_{m}$, 
\[
I(\sigma, \nu)^{\mu} 
\simeq 
I(\sigma, \nu). 
\]
By these and the horizontal symmetry, we have obtained the followings: 
\begin{lemma}\label{lemma:horizontal symmetry in Sp(2)} 
\begin{enumerate}
\item
$\overline{X}(\gamma_{i})$, $i=4, 9, 10, 11, 4', 0''$ are self-dual. 
\item
If 
$(i,j) = (0,1), (2,3), (5,6), (7,8), (0',1')$ or $(2',3')$, 
then 
$\overline{X}(\gamma_{i})$ and $\overline{X}(\gamma_{j})$ are 
dual. 
\item
In the socle filtration of the principal series module 
$I(\sigma, \nu)$, 
the modules $\overline{X}(\gamma_{i})$ and $\overline{X}(\gamma_{j})$, 
$(i,j) = (0,1), (2,3), (5,6), (7,8), (0',1')$ or $(2',3')$, 
appear in the same floors. 
Namely, they appear as a pair 
$\overline{X}(\gamma_{i}) \oplus \overline{X}(\gamma_{j})$. 
\end{enumerate}
\end{lemma}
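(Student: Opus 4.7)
The plan is to use Theorem~\ref{theorem:Vogan, Borel-Wallach} and Corollary~\ref{corollary:horizontal symmetry} after establishing, for each Cartan subgroup $H$ of $Sp(2,\R)$, how the explicit involution $\mu(g) = x_{\mu} g x_{\mu}^{-1}$ with $x_{\mu} = \diag(1,1,-1,-1)$ acts on $H$ and on the regular characters attached to $H$. The preliminary step is to pin down $\mu$ on the four conjugacy classes of Cartan subgroups by direct matrix calculation, possibly composing with inner automorphisms coming from $K$ in order to return each Cartan subgroup to itself. These computations should confirm the three facts already recorded in the paragraph preceding the lemma: $\mu$ acts trivially on $H_{s}$ and $H_{short}$, as the inverse map on $H_{f}$ and on the $T_{long}$-factor of $H_{long}$, and trivially on the $A_{long}$-factor. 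Since $\mu(K) = K$ follows immediately from the block form of $x_{\mu}$, Theorem~\ref{theorem:Vogan, Borel-Wallach} applies.

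For part (1), once $\mu$ is trivial (up to $K$-conjugacy) on $H_{s}$ and $H_{short}$, every regular character of these Cartan subgroups is fixed by $\mu$, so $X(\gamma_i) \circ \mu \simeq X(\gamma_i)$ for $i = 4, 9, 10, 11, 0''$ by the definition of $X(\gamma)$ as a parabolically induced module; applying Theorem~\ref{theorem:Vogan, Borel-Wallach} to the unique Langlands submodule gives $\overline{X}(\gamma_i)^{\ast} \simeq \overline{X}(\gamma_i)$. The case $i = 4'$ requires a separate small check using the explicit form of $\gamma_{4'}$ in \eqref{eq:gamma_{4'}, Sp(2)}, where one sees $\gamma_{4'} \circ \mu \dot{\sim} \gamma_{4'}$.

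For part (2), I would read off $\gamma_i \circ \mu$ from the explicit expressions of the $\gamma_i$ in Section~\ref{section:irreducible modules of Sp(2)}, tabulated against the known action of $\mu$ on $H_f$ and $H_{long}$. Since $\mu$ inverts $T_f = H_f$, one gets $\gamma_0 \circ \mu = \gamma_1$ and $\gamma_2 \circ \mu = \gamma_3$; since $\mu$ inverts the $T_{long}$-factor and fixes $A_{long}$, one gets $\gamma_5 \circ \mu = \gamma_6$, $\gamma_7 \circ \mu = \gamma_8$, $\gamma_{0'} \circ \mu = \gamma_{1'}$, and $\gamma_{2'} \circ \mu = \gamma_{3'}$. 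Theorem~\ref{theorem:Vogan, Borel-Wallach} then identifies $\overline{X}(\gamma_j)$ with $\overline{X}(\gamma_i)^{\ast}$ in each listed pair.

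For part (3), the point is that $\mu$ stabilizes both $P_m$ (since it stabilizes $M_m$, $A_m$, and $N_m$, as one checks by conjugating by $x_{\mu}$) and preserves the parameters $(\sigma, \nu)$ of any principal series $I(\sigma, \nu)$, so $I(\sigma, \nu)^{\mu} \simeq I(\sigma, \nu)$. In particular $I(\sigma, \nu)^\mu$ is quasi-isomorphic to $I(\sigma, \nu)$, and Corollary~\ref{corollary:horizontal symmetry} directly implies that any irreducible factor and its quasi-dual, i.e.\ any pair $(\overline{X}(\gamma_i), \overline{X}(\gamma_j))$ from the list in (2), appear in the same floor of the socle filtration. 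The self-dual factors in (1) pair with themselves and thus also satisfy this statement trivially. The only real obstacle is bookkeeping: making sure that the inner $K$-conjugations one uses to bring $\mu$ into normal form on each Cartan subgroup actually preserve the positive system entering the definition of each $\gamma_i$, so that the equivalence classes of regular characters compared are the same as those indexed by the ATLAS labels.
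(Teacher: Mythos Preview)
Your proposal is correct and follows essentially the same approach as the paper: the paper's proof is precisely the paragraph preceding the lemma, which computes the action of $\mu$ on each Cartan subgroup, reads off the effect on the regular characters, notes that $\mu$ stabilizes $N_m$ so that $I(\sigma,\nu)^{\mu} \simeq I(\sigma,\nu)$, and then invokes Corollary~\ref{corollary:horizontal symmetry}. One small bookkeeping slip: $\gamma_{4'}$ is a regular character of the split Cartan $H_s$ (see \eqref{eq:gamma_{4'}, Sp(2)}), so it belongs with $\gamma_{10}, \gamma_{11}, \gamma_{0''}$ and needs no separate check.
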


\subsection{The irreducible modules 
$\overline{X}(\gamma_{2})$ and $\overline{X}(\gamma_{3})$.} 

If the infinitesimal character is trivial, 
then the minimal $K$-type of the holomorphic discrete series 
$\overline{X}(\gamma_{2})$ 
is $\lambda=(\Lambda_{1}+1,\Lambda_{2}+2) = (3,3)$, 
and $\lambda=(3,1)$ is not a $K$-type 
of it. 
Therefore, 
if $\overline{X}(\gamma_{2})$ is 
a submodule of $I(\sigma_{1,1}\otimes e^{-\nu-\rho})$, 
then there exists a non-zero function 
\[
\phi_{(3,3)}^{2}(kan) 
= 
a^{-\nu-\rho} 
\tau_{(-3,-3)}(k^{-1})\, 
c(-3)\, 
v_{-3}^{(-3,-3)} 
\]
which satisfies 
$P_{(0,-2)}\phi_{(3,3)}^{2} = 0$. 
By \eqref{eq:p.s., P_{(0,-2)}}, 
this is equivalent to 
\[
(P_{(0,-2)} \phi_{(3,3)}^{2})(e) 
= 
c(-3)
\left\{
(\nu_{1}-1)\, 
v_{-1}^{(-1,-3)} 
+
(\nu_{2}-2)\, 
v_{-3}^{(-1,-3)} 
\right\}
= 0. 
\]
Therefore, the principal series 
of which $\overline{X}(\gamma_{2})$ can be a submodule 
is $I(\sigma_{1,1}, (1,2))$. 
Since there is no other possibility, 
Casselman's subrepresentation theorem implies that 
$\overline{X}(\gamma_{2})$ is really a submodule of $I(\sigma_{1,1}, (1,2))$. 

By Lemma~\ref{lemma:horizontal symmetry in Sp(2)}, 
we have obtained the next lemma. 
\begin{lemma}\label{lemma:2,3 is sub}
If $\Lambda$ is trivial, 
$\overline{X}(\gamma_{2})$ and $\overline{X}(\gamma_{3})$ are submodules of 
$I(\sigma_{1,1}, (1,2)) = I(\sigma_{1,1}, (\Lambda_{2}, \Lambda_{1}))$, 
and there is no other principal series representation of which 
these irreducible modules $\overline{X}(\gamma_{2}), 
\overline{X}(\gamma_{3})$ are submodules. 
\end{lemma}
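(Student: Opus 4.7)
The plan is to apply the shift-operator technique of Section~\ref{section:outline} to the one-dimensional minimal $K$-type of $\overline{X}(\gamma_{2})$, combined with Casselman's subrepresentation theorem, and then invoke the horizontal symmetry in Lemma~\ref{lemma:horizontal symmetry in Sp(2)} to transfer the result to $\overline{X}(\gamma_{3})$.

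First I would record, using Proposition~\ref{proposition:K-spectra of D.S. of Sp(2)}, two key facts: the minimal $K$-type of $\overline{X}(\gamma_{2})$ is the one-dimensional space $V_{(3,3)}^{U(2)}$, while $V_{(3,1)}^{U(2)}$ is not a $K$-type of $\overline{X}(\gamma_{2})$. The unique (up to scalar) basis vector $v_{3}^{(3,3)}$ has $\lambda_{1} + \lambda_{2} = 6$ even and $q = 3$ odd, so by Lemma~\ref{lemma:the action of M on f.d. repn of K, SP(2)}(2) it lies in the $\sigma_{1,1}$-isotypic subspace. Consequently, for any principal series $I(\sigma, \nu)$ into which $\overline{X}(\gamma_{2})$ embeds, one must have $\sigma = \sigma_{1,1}$, since the $K$-type $(3,3)$ must occur in $I(\sigma, \nu)$.

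Next, because $(3,1) \notin \widehat{K}(\overline{X}(\gamma_{2}))$, the image of the minimal $K$-type in any embedding $\overline{X}(\gamma_{2}) \hookrightarrow I(\sigma_{1,1}, \nu)$ must be annihilated by the shift operator $P_{(0,-2)}$, which maps the $(3,3)$-isotypic part to the $(3,1)$-isotypic part. Writing the candidate image as
\[
\phi_{(3,3)}^{2}(kan) = a^{-\nu - \rho_{m}} \tau_{(-3,-3)}(k^{-1})\, c(-3)\, v_{-3}^{(-3,-3)},
\]
formula~\eqref{eq:p.s., P_{(0,-2)}} gives
\[
P_{(0,-2)}\phi_{(3,3)}^{2}(e) = c(-3)\left\{(\nu_{1}-1)\, v_{-1}^{(-1,-3)} + (\nu_{2}-2)\, v_{-3}^{(-1,-3)}\right\},
\]
so the vanishing of this expression with $c(-3) \neq 0$ forces $\nu = (1,2) = (\Lambda_{2}, \Lambda_{1})$. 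A quick inspection of formulas~\eqref{eq:p.s., P_{(-1,-1)}} and~\eqref{eq:p.s., P_{(-2,0)}} evaluated on $c(-3)\, v_{-3}^{(-3,-3)}$ shows that these shift operators give no further constraint: the index ranges either become empty or all the surviving $c(q)$ coefficients with $q \neq -3$ are zero. Hence $I(\sigma_{1,1}, (1,2))$ is the unique candidate, and Casselman's subrepresentation theorem guarantees that the embedding is in fact realized.

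Finally, the analogous statement for $\overline{X}(\gamma_{3})$ is immediate from Lemma~\ref{lemma:horizontal symmetry in Sp(2)}(3) applied to the dual pair $(2,3)$: these two modules appear together as $\overline{X}(\gamma_{2}) \oplus \overline{X}(\gamma_{3})$ in every socle-filtration floor where either one occurs. The main point needing care is the verification that the single constraint coming from $P_{(0,-2)}$ already pins down $\nu$ uniquely; this hinges on the explicit form of the shift-operator formulas from Section~\ref{section:shift operators, Sp(2)} and on the one-dimensionality of the minimal $K$-type, which prevents other shift operators from producing independent linear conditions.
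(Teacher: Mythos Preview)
Your proposal is correct and follows essentially the same route as the paper: identify the one-dimensional minimal $K$-type $(3,3)$ of $\overline{X}(\gamma_{2})$, impose vanishing of $P_{(0,-2)}$ to force $\nu=(1,2)$, appeal to Casselman for existence, and transfer to $\overline{X}(\gamma_{3})$ via the horizontal symmetry of Lemma~\ref{lemma:horizontal symmetry in Sp(2)}. Your added remarks---that $v_{3}^{(3,3)}$ lies in the $\sigma_{1,1}$-isotypic subspace and that $P_{(-1,-1)}$, $P_{(-2,0)}$ give no further constraint---are correct elaborations (indeed $(2,2)$ is not even a $K$-type of $I(\sigma_{1,1},\nu)$, so $P_{(-1,-1)}$ is automatically zero there), but they do not alter the argument.
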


\subsection{The irreducible modules 
$\overline{X}(\gamma_{0})$ and $\overline{X}(\gamma_{1})$.} 

In the same way as in the previous subsection, 
we determine the principal series representations of which 
$\overline{X}(\gamma_{0})$ is a submodule.

If the infinitesimal character is trivial, 
then the minimal $K$-type of the large discrete series 
$\overline{X}(\gamma_{0})$ is 
$\lambda = (\Lambda_{1}+1,-\Lambda_{2}) = (3,-1)$, 
and $\lambda = (3,1), (1,-1), (2,-2)$ are not $K$-types of it. 
Therefore, if $\overline{X}(\gamma_{0})$ is a submodule of 
$I(\sigma_{0,0}, (\nu_{1},\nu_{2})) 
\oplus 
I(\sigma_{1,1}, (\nu_{1},\nu_{2}))$, 
then there exists a non-zero function 
\[
\phi_{(3,-1)}^{0}(kan) 
= 
a^{-\nu-\rho} \tau_{(1,-3)}(k^{-1}) 
\sum_{q=-3}^{1} 
c(q)\, 
v_{q}^{(1,-3)} 
\]
which satisfies 
\begin{align*}
&
P_{(0,2)} \phi_{(3,-1)}^{0} 
= 0 
&
& \mbox{and} 
&
&
P_{(-2,0)} \phi_{(3,-1)}^{0} 
= 0 
&
& \mbox{and} 
&
&
P_{(-1,-1)} \phi_{(3,-1)}^{0} 
= 0. 
\end{align*}
By \eqref{eq:p.s., P_{(0,2)}}, \eqref{eq:p.s., P_{(-2,0)}} and 
\eqref{eq:p.s., P_{(-1,-1)}}, 
this system of equations is equivalent to 
\begin{align}
& 
\begin{cases}
6(\nu_{1}-2)\, c(0) + 6(\nu_{2}+1)\, c(-2) = 0, 
\\
6(\nu_{1}-4)\, c(-2) + 6(\nu_{2}-1)\, c(0) = 0, 
\\
-(\nu_{1}-2)\, c(0) = 0, 
\\
-3 \nu_{1}\, c(-2) + 3(\nu_{2}-1)\, c(0) = 0, 
\\
(\nu_{2}+1)\, c(-2) = 0; 
\end{cases}
\label{eq:0 is sub,1}\\
& 
\begin{cases}
12(\nu_{1}-1)\, c(1) + 2(\nu_{2}+2)\, c(-1) = 0, 
\\
2(\nu_{1}-3)\, c(-1) + 12 \nu_{2}\, c(-3) = 0, 
\\
2(\nu_{1}-5)\, c(-1) + 12(\nu_{2}-2)\, c(1) = 0, 
\\
12(\nu_{1}-3)\, c(-3) + 2 \nu_{2}\, c(-1) = 0, 
\\
-2(\nu_{1}-1)\, c(-1) + 4(\nu_{2}-2)\, c(1) = 0, 
\\
-4(\nu_{1}+1)\, c(-3) + 2 \nu_{2}\, c(-1) = 0. 
\end{cases}
\label{eq:0 is sub,2}
\end{align}
\begin{lemma}\label{lemma:0 is sub}
\begin{enumerate}
\item
The system of equations \eqref{eq:0 is sub,1} 
has a non-zero solution if and only if 
$\nu = (2,1) = (\Lambda_{1},\Lambda_{2})$ or 
$(2,-1) = (\Lambda_{1},-\Lambda_{2})$. 
In these cases, the solutions are 
\begin{align*}
& 
\phi_{(3,-1)}^{0}(e) 
= \alpha_{1}  
v_{0}^{(1,-3)}, 
&
&
\phi_{(3,-1)}^{0}(e) 
= \alpha_{2}  
(v_{0}^{(1,-3)} - v_{-2}^{(1,-3)}), 
\end{align*}
$\alpha_{1}, \alpha_{2} \in \C$, 
respectively. 
\item
The system of equations \eqref{eq:0 is sub,2} 
has a non-zero solution if and only if 
$\nu = (2,1) = (\Lambda_{1},\Lambda_{2})$, 
$(2,-1) = (\Lambda_{1},-\Lambda_{2})$ or 
$(1,2) = (\Lambda_{2}, \Lambda_{1})$. 
In these cases, the solutions are 
\begin{align*}
& 
\phi_{(1,-3)}^{0}(e) 
= 
\alpha \times 
\begin{cases}
3 v_{1}^{(1,-3)} - 6 v_{-1}^{(1,-3)} - v_{-3}^{(1,-3)}, 
\\
v_{1}^{(1,-3)} - 6 v_{-1}^{(1,-3)} + v_{-3}^{(1,-3)}, 
\\
v_{1}^{(1,-3)}, 
\end{cases}
&
&
\alpha \in \C, 
\end{align*}
respectively. 
\end{enumerate}
\end{lemma}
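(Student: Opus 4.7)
The plan is to solve both linear systems by direct analysis, exploiting the fact that $\nu$ is confined to the Weyl orbit $W(B_{2}) \cdot (\Lambda_{1},\Lambda_{2}) = \{(\pm 2, \pm 1), (\pm 1, \pm 2)\}$, a finite set of eight points. Since each system is linear in the $c(q)$'s with coefficients that are polynomials in $\nu$, existence of a nontrivial solution is equivalent to the vanishing of certain minors, and it suffices to check a few scalar compatibility conditions.

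For part (1), there are two unknowns $c(0), c(-2)$ and five equations. The third equation reads $(\nu_{1}-2)\,c(0) = 0$ and the fifth reads $(\nu_{2}+1)\,c(-2) = 0$, which already splits the problem into four cases according to which factors vanish. I would dispatch the case $c(0) = c(-2) = 0$ as trivial, the case $c(0) = 0,\ \nu_{2} = -1$ as inconsistent (equations (2) and (4) force $\nu_{1} = 4$ and $\nu_{1} = 0$), and verify that the case $\nu_{1} = 2,\ c(-2) = 0$ survives only at $\nu = (2,1)$ with solution proportional to $v_{0}^{(1,-3)}$, while the case $\nu = (2,-1)$ yields the two-parameter relation $c(-2) = -c(0)$ giving the solution $\alpha_{2}(v_{0}^{(1,-3)} - v_{-2}^{(1,-3)})$.

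For part (2), the three unknowns $c(1), c(-1), c(-3)$ satisfy six equations. A useful observation is that equations (1), (3), (5) involve only $c(1)$ and $c(-1)$, while (2), (4), (6) involve only $c(-1)$ and $c(-3)$. Looking first at the pair (1) and (5), the $2 \times 2$ determinant is proportional to $3(\nu_{1}-1)^{2}+\nu_{2}^{2}-4$, and intersecting its vanishing locus with the eight allowed values of $\nu$ leaves only $(2, \pm 1)$ and $(1, \pm 2)$. Parallel manipulations of (2), (4), (6) give further constraints. I would then substitute each of these four candidates back into all six equations: for $(2,1), (2,-1), (1,2)$ the system becomes consistent and determines $(c(1):c(-1):c(-3))$ uniquely up to scalar, producing the advertised vectors; for $(1,-2)$, equations (2) and (6) give incompatible ratios $c(-1)/c(-3) = -6$ and $-2$, forcing the trivial solution, which is the subtlest step and the main obstacle to clean bookkeeping.

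Throughout, the assignment of each solution to a specific $M_{m}$-isotypic subspace is automatic from Lemma~\ref{lemma:the action of M on f.d. repn of K, SP(2)}: on $V_{(1,-3)}^{U(2)}$, which has $\lambda_{1}+\lambda_{2}$ even, the vectors $v_{q}^{(1,-3)}$ with $q$ even span $V_{(1,-3)}^{U(2)}(\sigma_{0,0})$ and those with $q$ odd span $V_{(1,-3)}^{U(2)}(\sigma_{1,1})$, which is precisely why the two systems \eqref{eq:0 is sub,1} and \eqref{eq:0 is sub,2} decouple in the first place. The only nontrivial aspect of the argument is ensuring that no case is silently dropped when dividing by coefficients that might vanish; proceeding by the case split on $(\nu_{1}-2)c(0) = 0$ and $(\nu_{2}+1)c(-2) = 0$ for part (1), and on the factored compatibility conditions for part (2), makes this manageable.
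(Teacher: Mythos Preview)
Your approach is correct and is precisely the direct linear-algebra computation that the paper leaves to the reader (the paper states this lemma without proof). The case split in part~(1) via equations~(3) and~(5), and the decoupling of the six equations in part~(2) into two overlapping $2\times 2$ subsystems, are exactly the right organization.

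There is one small gap in part~(2) worth patching. Your reduction to the four candidates $(2,\pm 1),(1,\pm 2)$ via the $2\times 2$ determinant of equations~(1) and~(5) tacitly assumes $(c(1),c(-1))\neq(0,0)$; if both vanish and $c(-3)\neq 0$, that determinant condition is vacuous and the candidate list does not apply. This branch is dispatched in one line: equations~(2), (4), (6) then read $12\nu_{2}c(-3)=12(\nu_{1}-3)c(-3)=-4(\nu_{1}+1)c(-3)=0$, forcing $\nu_{2}=0$, which is impossible on the orbit. With that check inserted, the argument is complete.
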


By Lemmas~\ref{lemma:the action of M on f.d. repn of K, SP(2)}, 
\ref{lemma:horizontal symmetry in Sp(2)} and \ref{lemma:0 is sub}, 
we have obtained the next corollary. 
\begin{corollary}\label{corollary:0,1 are sub}
If the infinitesimal character $\Lambda$ is trivial, then 
$\overline{X}(\gamma_{0}) \oplus \overline{X}(\gamma_{1})$ is a submodule of 
$I(\sigma, \nu)$ only if 
\begin{align*}
(\sigma,\nu) = 
&(\sigma_{0,0}, (\Lambda_{1},\Lambda_{2})), 
(\sigma_{0,0}, (\Lambda_{1},-\Lambda_{2})), 
\\
&(\sigma_{1,1}, (\Lambda_{1},\Lambda_{2})), 
(\sigma_{1,1}, (\Lambda_{1},-\Lambda_{2})) 
\ \mbox{or} \  
(\sigma_{1,1}, (\Lambda_{2},\Lambda_{1})). 
\end{align*}
\end{corollary}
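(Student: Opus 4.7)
The plan is to deduce the corollary directly from Lemma~\ref{lemma:0 is sub} together with the $M_m$-isotypic decomposition of the minimal $K$-type of $\overline{X}(\gamma_0)$, and then to invoke the horizontal symmetry recorded in Lemma~\ref{lemma:horizontal symmetry in Sp(2)} to pass from $\overline{X}(\gamma_0)$ alone to the pair $\overline{X}(\gamma_0)\oplus\overline{X}(\gamma_1)$.

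First, I would note that if $\overline{X}(\gamma_0)\oplus\overline{X}(\gamma_1)$ embeds in $I(\sigma,\nu)$ then so does $\overline{X}(\gamma_0)$. By Proposition~\ref{proposition:K-spectra of D.S. of Sp(2)}(1), its minimal $K$-type is $\tau_{(3,-1)}$, and the neighbouring $K$-types $\tau_{(3,1)}$, $\tau_{(2,-2)}$, $\tau_{(1,-1)}$ do not occur in $\overline{X}(\gamma_0)$. Hence an embedding supplies a nonzero function $\phi^0_{(3,-1)}$ killed by the three shift operators $P_{(0,2)}$, $P_{(-1,-1)}$, $P_{(-2,0)}$, which is precisely the content of the systems \eqref{eq:0 is sub,1} and \eqref{eq:0 is sub,2}. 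The general solution of this system is already described in Lemma~\ref{lemma:0 is sub}.

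Next, I would read off the $M_m$-type of each solution. Since the contragredient $K$-type $V^{U(2)}_{(1,-3)}$ has $\lambda_1+\lambda_2=-2$ even, Lemma~\ref{lemma:the action of M on f.d. repn of K, SP(2)}(2) tells us that the vectors $v^{(1,-3)}_q$ with $q$ even span the $\sigma_{0,0}$-isotypic subspace and those with $q$ odd span the $\sigma_{1,1}$-isotypic subspace. Inspecting parities, the two solutions of part~(1) of Lemma~\ref{lemma:0 is sub} (supported on $q\in\{0,-2\}$) force $\sigma=\sigma_{0,0}$ and $\nu\in\{(\Lambda_1,\Lambda_2),(\Lambda_1,-\Lambda_2)\}$, while the three solutions of part~(2) (supported on $q\in\{1,-1,-3\}$) force $\sigma=\sigma_{1,1}$ and $\nu\in\{(\Lambda_1,\Lambda_2),(\Lambda_1,-\Lambda_2),(\Lambda_2,\Lambda_1)\}$.

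Finally, Lemma~\ref{lemma:horizontal symmetry in Sp(2)}(3) asserts that $\overline{X}(\gamma_0)$ and $\overline{X}(\gamma_1)$ always appear in the same floor of the socle filtration of any principal series, so the list of $(\sigma,\nu)$ admitting $\overline{X}(\gamma_0)$ as a submodule coincides with the list admitting the pair. Concatenating the two sublists above produces exactly the five pairs in the statement. I do not anticipate a substantive obstacle here: the only delicate point is to match each basis vector $v^{(1,-3)}_q$ with the correct $M_m$-isotypic component, which is immediate from the parity rule in Lemma~\ref{lemma:the action of M on f.d. repn of K, SP(2)}; the rest of the argument is a direct concatenation of results already established earlier in the paper.
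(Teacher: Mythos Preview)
Your proposal is correct and follows essentially the same route as the paper: the paper simply states that the corollary follows from Lemmas~\ref{lemma:the action of M on f.d. repn of K, SP(2)}, \ref{lemma:horizontal symmetry in Sp(2)} and \ref{lemma:0 is sub}, and you have spelled out precisely how these three ingredients combine---using the parity rule for the $M_m$-isotypic decomposition of $V^{U(2)}_{(1,-3)}$ to read off $\sigma$ from the support of each solution in Lemma~\ref{lemma:0 is sub}, and then invoking horizontal symmetry to pass to the pair $\overline{X}(\gamma_0)\oplus\overline{X}(\gamma_1)$.
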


\subsection{The irreducible modules $\overline{X}(\gamma_{5})$ and 
$\overline{X}(\gamma_{6})$.} 

If the infinitesimal character is trivial, 
the representation $\overline{X}(\gamma_{5})$ has a $K$-type 
$\lambda=(3,1)$ with multiplicity one, 
and $\lambda=(3,3), (3,-1), (2,0), (1,1)$ are not $K$-types of it. 
Therefore, if $\overline{X}(\gamma_{5})$ is a submodule of 
$I(\sigma, \nu)$, then 
there exists a non-zero vector 
\[
\phi_{(3,1)}^{5}(kan) 
= 
a^{-\nu-\rho} \tau_{(-1,-3)}(k^{-1}) 
\sum_{q=-3}^{-1} 
c(q)\, 
v_{q}^{(-1,-3)} 
\]
which satisfies 
\begin{align*}
&
P_{(0,2)} \phi_{(3,1)}^{5} = 0, 
&
&
P_{(0,-2)} \phi_{(3,1)}^{5} = 0, 
&
&
P_{(-1,-1)} \phi_{3,1}^{5} = 0, 
& 
& 
\mbox{and}  
&
& 
P_{(-2,0)} \phi_{3,1}^{5} = 0. 
\end{align*}
By \eqref{eq:p.s., P_{(0,2)}}, 
\eqref{eq:p.s., P_{(0,-2)}}, \eqref{eq:p.s., P_{(-1,-1)}} 
and \eqref{eq:p.s., P_{(-2,0)}}, 
this system of equations is equivalent to 
\begin{align}
& 
\begin{cases}
(\nu_{1}+2)\, c(-2) = 0, 
\qquad 
(\nu_{2}-1)\, c(-2) = 0, 
\\
(\nu_{1}-2)\, c(-2) = 0, 
\qquad 
(\nu_{2}-1)\, c(-2) = 0, 
\end{cases}
\label{eq:5 is sub,1}
\\
& 
\begin{cases}
2(\nu_{1}+1)\, c(-1)+2(\nu_{2}+2)\, c(-3) = 0, 
\\
(\nu_{1}+1)\, c(-1) = 0, 
\\
(\nu_{1}+3)\, c(-3)+(\nu_{2}-2)\, c(-1) = 0, 
\\
\nu_{2}\, c(-3) = 0, 
\\
-2(\nu_{1}-1)\, c(-3)+2(\nu_{2}-2)\, c(-1) = 0, 
\\
2(\nu_{1}-3)\, c(-3)+2(\nu_{2}-2)\, c(-1) = 0. 
\end{cases}
\label{eq:5 is sub,2} 
\end{align}
\begin{lemma}\label{lemma:5 is sub}
\begin{enumerate}
\item
The solution of the 
system of equations \eqref{eq:5 is sub,1} 
is $\{0\}$. 
\item
The system of equations \eqref{eq:5 is sub,2} 
has a non-trivial solution if and only if 
$(\nu_{1},\nu_{2})=(-1,2)=(-\Lambda_{2},\Lambda_{1})$. 
In this case, the solution is 
\[
\phi_{(-1,-3)}^{5}(e) 
= 
\alpha\, v_{-1}^{(-1,-3)}, 
\qquad \alpha \in \C. 
\]
\item
If the infinitesimal character is trivial, 
then the module $\overline{X}(\gamma_{5}) \oplus \overline{X}(\gamma_{6})$ 
is a submodule of 
$I(\sigma, \nu)$ if and only if 
$(\sigma,\nu) = (\sigma_{1,1}, (-\Lambda_{2},\Lambda_{1}))$. 
\end{enumerate}
\end{lemma}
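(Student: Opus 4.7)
The plan is to analyze the linear system coming from the four shift operators applied to a candidate vector of $K$-type $(3,1)$, split according to $M_m$-isotypic decomposition, and then combine the result with Casselman's subrepresentation theorem and the horizontal symmetry from Lemma~\ref{lemma:horizontal symmetry in Sp(2)} to handle the pair $\overline{X}(\gamma_{5}) \oplus \overline{X}(\gamma_{6})$.

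For part (1), I would observe that $\phi_{(3,1)}^{5}(e) \in V_{(-1,-3)}^{U(2)}$ must lie in the $\sigma_{0,0}$-isotypic component when $\sigma = \sigma_{0,0}$. Since $-1+(-3)=-4$ is even, Lemma~\ref{lemma:the action of M on f.d. repn of K, SP(2)} (2) forces $\phi_{(3,1)}^{5}(e) = c(-2)\, v_{-2}^{(-1,-3)}$, reducing the problem to the single unknown $c(-2)$. The system \eqref{eq:5 is sub,1} then demands $\nu_{1}=-2$ and simultaneously $\nu_{1}=2$, which is impossible, forcing $c(-2)=0$.

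For part (2), the $\sigma_{1,1}$-isotypic component of $V_{(-1,-3)}^{U(2)}$ is spanned by $v_{-3}^{(-1,-3)}$ and $v_{-1}^{(-1,-3)}$, so I would solve the six equations \eqref{eq:5 is sub,2} in two unknowns $c(-3), c(-1)$. Using that $\nu$ lies in $W(B_{2}) \cdot (2,1)$ and hence $\nu_{2} \neq 0$, the fourth equation immediately gives $c(-3)=0$. The second and third equations then force $\nu_{1}=-1$ and $\nu_{2}=2$, i.e.\ $\nu = (-\Lambda_{2}, \Lambda_{1})$, whenever $c(-1)\neq 0$. A quick substitution confirms that the remaining equations are satisfied automatically, producing the one-parameter family $\alpha\, v_{-1}^{(-1,-3)}$.

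For part (3), combining (1) and (2), the only principal series module $I(\sigma,\nu)$ into which $\overline{X}(\gamma_{5})$ can embed is $I(\sigma_{1,1},(-\Lambda_{2},\Lambda_{1}))$. By Casselman's subrepresentation theorem, $\overline{X}(\gamma_{5})$ does embed into at least one principal series, so it must embed into this unique candidate. Finally, Lemma~\ref{lemma:horizontal symmetry in Sp(2)} (3) says $\overline{X}(\gamma_{5})$ and $\overline{X}(\gamma_{6})$ appear as a pair in the socle filtration of every principal series module, which gives $\overline{X}(\gamma_{5})\oplus\overline{X}(\gamma_{6})$ as a submodule exactly in $I(\sigma_{1,1},(-\Lambda_{2},\Lambda_{1}))$. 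The only mild obstacle is bookkeeping in part (2) — running through the six equations carefully to confirm no other value of $\nu$ in $W \cdot (2,1)$ slips through — but this is entirely mechanical.
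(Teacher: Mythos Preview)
Your proposal is correct and follows the same approach as the paper. The paper only writes out part (3), treating (1) and (2) as routine linear algebra; your argument for (3) via horizontal symmetry and Casselman's subrepresentation theorem is exactly the paper's proof, and your detailed verifications of (1) and (2) simply fill in the mechanical steps the paper omits.
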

\begin{proof}
(3) By Lemma~\ref{lemma:horizontal symmetry in Sp(2)}, 
$\overline{X}(\gamma_{5})$ and $\overline{X}(\gamma_{6})$ appear 
as a pair. 
Therefore, we may consider only the embedding of $\overline{X}(\gamma_{5})$. 
The "only if" part follows from (1) and (2). 
By the subrepresentation theorem, $\overline{X}(\gamma_{5})$ 
is a submodule of at least one principal series representation. 
As a result of (1), (2), 
there is only one possibility, so it is the unique embedding. 
This completes the proof of (3). 
\end{proof}

\subsection{The irreducible module $\overline{X}(\gamma_{4})$.}

If the infinitesimal character is trivial, the irreducible 
representation $\overline{X}(\gamma_{4})$ 
has a $K$-type $\lambda=(2,-2)$ with multiplicity one, and 
$\lambda=(3,-1), (2,0), (1,-3), (0,-2)$ are not a $K$-type of it. 
Therefore, if $\overline{X}(\gamma_{4})$ is a submodule of 
$I(\sigma,\nu)$, 
then there exists a non-zero function 
\[
\phi_{(2,-2)}^{4}(kan) 
= 
a^{-\nu-\rho} \tau_{(2,-2)}(k^{-1}) 
\sum_{q=-2}^{2} 
c(q)\, 
v_{q}^{(2,-2)} 
\]
which satisfies 
\begin{align*}
& P_{(1,1)} \phi_{(2,-2)}^{4} = 0, 
&
& P_{(0,2)} \phi_{(2,-2)}^{4} = 0, 
&
& P_{(-1,-1)} \phi_{(2,-2)}^{4} = 0, 
&
& P_{(-2,0)} \phi_{(2,-2)}^{4} = 0. 
\end{align*}
By \eqref{eq:p.s., P_{(1,1)}}, \eqref{eq:p.s., P_{(0,2)}}, 
\eqref{eq:p.s., P_{(-1,-1)}} 
and \eqref{eq:p.s., P_{(-2,0)}}, 
this system of equations is equivalent to 
\begin{align}
& 
\begin{cases}
-4(\nu_{1}+2)\, c(2) + 2(\nu_{2}+1)\, c(0) = 0, 
\\
-2 \nu_{1}\, c(0) + 4(\nu_{2}-1)\, c(-2) = 0, 
\\
12(\nu_{1}-2)\, c(2)+2(\nu_{2}+1)\, c(0) = 0, 
\\
2(\nu_{1}-4)\, c(0) + 12(\nu_{2}-1)\, c(-2) = 0, 
\\
2 \nu_{1}\, c(0) - 4(\nu_{2}-1)\, c(2) = 0, 
\\
4(\nu_{1}+2)\, c(-2) - 2(\nu_{2}+1)\, c(0) = 0, 
\\
2(\nu_{1}-4)\, c(0) + 12(\nu_{2}-1)\, c(2) = 0, 
\\
12(\nu_{1}-2)\, c(-2) + 2(\nu_{2}+1)\, c(0) = 0, 
\end{cases}
\label{eq:4 is sub,1}
\\
&
\begin{cases}
(\nu_{2}+2)\, c(1) = 0, 
\\
-3(\nu_{1}+1)\, c(1) + 3 \nu_{2}\, c(-1) = 0, 
\\
-(\nu_{1}-1)\, c(-1) = 0, 
\\
6(\nu_{1}-3)\, c(1) + 6 \nu_{2}\, c(-1) = 0, 
\\
(\nu_{1}-1)\, c(1) = 0, 
\\
3(\nu_{1}+1)\, c(-1) - 3 \nu_{2}\, c(1) = 0, 
\\
-(\nu_{2}+2)\, c(-1) = 0, 
\\
6(\nu_{1}-3)\, c(-1) + 6 \nu_{2}\, c(1) = 0. 
\end{cases}
\label{eq:4 is sub,2}
\end{align}
\begin{lemma}\label{lemma:4 is sub}
\begin{enumerate}
\item
The system of equations \eqref{eq:4 is sub,1} has 
a non-zero solution if and only if 
$(\nu_{1},\nu_{2}) = (1,2) = (\Lambda_{2},\Lambda_{1})$ 
or 
$(1,-2) = (\Lambda_{2},-\Lambda_{1})$. 
In these cases, the solutions are 
\[
\phi_{(2,-2)}^{4}(e) 
= 
\alpha \times 
\begin{cases}
v_{2}^{(2,-2)} + 2 v_{0}^{(2,-2)} + v_{-2}^{(2,-2)} 
\\
v_{2}^{(2,-2)} - 6 v_{0}^{(2,-2)} + v_{-2}^{(2,-2)} 
\end{cases}, 
\qquad \alpha \in \C, 
\]
respectively. 
\item
The system of equations \eqref{eq:4 is sub,2} has 
a non-zero solution if and only if 
$(\nu_{1},\nu_{2})$ 
$=(1,-2)=(\Lambda_{2},-\Lambda_{1})$. 
In this case, the solution is 
\[
\phi_{(2,-2)}^{4}(e) 
= 
\alpha (v_{1}^{(2,-2)} - v_{-1}^{(2,-2)}), 
\qquad 
\alpha \in \C. 
\]
\item
If the infinitesimal character is trivial, 
then 
$\overline{X}(\gamma_{4})$ can be a submodule of $I(\sigma,\nu)$ only if 
\[
(\sigma,\nu) 
= 
(\sigma_{0,0},(\Lambda_{2},\Lambda_{1})), 
(\sigma_{0,0},(\Lambda_{2},-\Lambda_{1})) 
\quad \mbox{or} \quad 
(\sigma_{1,1},(\Lambda_{2},-\Lambda_{1})). 
\]
\end{enumerate}
\end{lemma}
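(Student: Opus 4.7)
The plan is to handle parts (1) and (2) by straightforward linear algebra on the two systems, and then deduce (3) by combining the explicit solutions with the $M_m$-isotypic decomposition from Lemma~\ref{lemma:the action of M on f.d. repn of K, SP(2)}. Throughout, $\nu=(\nu_1,\nu_2)$ is constrained to lie in $W(G,H_s)\cdot(\Lambda_1,\Lambda_2) = W(B_2)\cdot(2,1)$, so there are only eight candidate values of $\nu$ to examine.

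For part (1), the system has three unknowns $c(2),c(0),c(-2)$ and eight equations. I would exploit the visible $c(2)\leftrightarrow c(-2)$ symmetry of the equations by splitting into the symmetric part ($c_+:=c(2)+c(-2)$) and antisymmetric part ($c_-:=c(2)-c(-2)$). Pairing equations $\{1,6\}$, $\{3,8\}$, $\{2,5\}$, $\{4,7\}$ yields two decoupled subsystems: one in $(c_+,c(0))$ and one in $c_-$ alone. The $c_-$ subsystem forces $c_-=0$ whenever $\nu_1+2$, $\nu_1-2$, $\nu_2-1$, or $\nu_1-4$ is nonzero, which kills $c_-$ for every $\nu$ in the $W$-orbit. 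The $(c_+,c(0))$ subsystem reduces to a $2\times 2$ linear system whose determinant factors as a polynomial in $\nu_1,\nu_2$; setting it to zero and intersecting with the $W$-orbit isolates exactly the two values $\nu=(1,2)$ and $\nu=(1,-2)$, and back-substitution recovers the stated eigenvectors $v_2^{(2,-2)}\pm\alpha\, v_0^{(2,-2)}+v_{-2}^{(2,-2)}$ with the correct coefficient of $v_0$.

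For part (2), the unknowns are just $c(1),c(-1)$. The equations $(\nu_2+2)c(1)=0$, $(\nu_1-1)c(-1)=0$, $(\nu_1-1)c(1)=0$, $(\nu_2+2)c(-1)=0$ immediately force $\nu_1=1$ and $\nu_2=-2$ whenever the solution is nonzero; the remaining four equations then collapse to the single relation $c(1)+c(-1)=0$, which gives the one-parameter family $\alpha(v_1^{(2,-2)}-v_{-1}^{(2,-2)})$. One then checks that $\nu=(1,-2)$ indeed lies in the $W$-orbit of $(2,1)$, so this value is admissible. For part (3), I apply Lemma~\ref{lemma:the action of M on f.d. repn of K, SP(2)} to the $K$-type $(2,-2)$: since $\lambda_1+\lambda_2=0$ is even, the vectors $v_q^{(2,-2)}$ with even $q$ span $V_\lambda^{U(2)}(\sigma_{0,0})$ and those with odd $q$ span $V_\lambda^{U(2)}(\sigma_{1,1})$. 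The symmetric solutions from (1) lie in $V_\lambda^{U(2)}(\sigma_{0,0})$ and the antisymmetric solution from (2) lies in $V_\lambda^{U(2)}(\sigma_{1,1})$, giving the three pairs $(\sigma_{0,0},(\Lambda_2,\Lambda_1))$, $(\sigma_{0,0},(\Lambda_2,-\Lambda_1))$, $(\sigma_{1,1},(\Lambda_2,-\Lambda_1))$.

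The main obstacle I anticipate is bookkeeping in part (1): there are eight equations, three unknowns, and several "near-degenerate" candidate $\nu$'s in the Weyl orbit ($\nu_1\in\{\pm 1,\pm 2\}$, $\nu_2\in\{\pm 1,\pm 2\}$), each of which must be checked to confirm that no extraneous solution value of $\nu$ has been overlooked. The symmetric/antisymmetric decomposition is the device that keeps this case analysis tractable; without it, one would face a $8\times 3$ coefficient matrix whose $3\times 3$ minors are unwieldy polynomials. Parts (2) and (3) are then essentially immediate once (1) is in hand.
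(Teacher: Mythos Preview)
Your proposal is correct and follows the same direct-computation approach that the paper takes implicitly (the paper states this lemma without proof, as the result of routine linear algebra on the displayed systems). Your symmetric/antisymmetric splitting $c_\pm = c(2)\pm c(-2)$ is a clean bookkeeping device; one small slip is that the antisymmetric equations from pairs $\{1,6\},\{3,8\},\{2,5\},\{4,7\}$ give $(\nu_1+2)c_-=0$, $(\nu_1-2)c_-=0$, $(\nu_2-1)c_-=0$ (twice), not a condition involving $\nu_1-4$ --- but since $\nu_1+2$ and $\nu_1-2$ cannot vanish simultaneously, your conclusion $c_-=0$ stands unchanged.
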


\subsection{The irreducible modules  
$\overline{X}(\gamma_{7})$ and $\overline{X}(\gamma_{8})$.}

If the infinitesimal character is trivial, 
then $\lambda=(2,0)$ is a $K$-type of 
the irreducible representation $\overline{X}(\gamma_{7})$ 
with multiplicity one, 
and $\lambda=(3,1), (2,-2), (1,-1), (0,0)$ are not. 
Therefore, if $\overline{X}(\gamma_{7})$ is a submodule of 
$I(\sigma,\nu)$, then there exists a non-zero function 
\[
\phi_{(2,0)}^{7}(kan) 
= 
a^{-\nu-\rho} \tau_{(0,-2)}(k^{-1}) 
\sum_{q=-2}^{0} 
c(q)\, 
v_{q}^{(0,-2)} 
\]
which satisfies 
\begin{align*}
&
P_{(1,1)} \phi_{(2,0)}^{7} = 0, 
&
&
P_{(0,-2)} \phi_{(2,0)}^{7} = 0, 
&
&
P_{(-1,-1)} \phi_{(2,0)}^{7} = 0, 
&
&
P_{(-2,0)} \phi_{(2,0)}^{7} = 0. 
\end{align*}
Since $\overline{X}(\gamma_{7})$ is not a composition factor of 
$I(\sigma_{1,1},\nu) \approx X(\gamma_{11})$ 
(Theorem~\ref{theorem:composition factors for Sp(2)}), 
we may assume that $c(q)=0$ if $q$ is odd. 
Then, by \eqref{eq:p.s., P_{(1,1)}}, 
\eqref{eq:p.s., P_{(0,-2)}}, 
\eqref{eq:p.s., P_{(-1,-1)}} and \eqref{eq:p.s., P_{(-2,0)}}, 
this system of equations is equivalent to 
\begin{align}
& 
\begin{cases}
-2 (\nu_{1}+2)\, c(0) + 2(\nu_{2}+1)\, c(-2) = 0, 
\\
(\nu_{1}+2)\, c(0) = 0, 
\qquad 
(\nu_{2}+1)\, c(-2) = 0, 
\\
(\nu_{1}+4)\, c(-2) + (\nu_{2}-1)\, c(0) = 0, 
\\
2 \nu_{1}\, c(-2) - 2(\nu_{2}-1)\, c(0) = 0, 
\\
2 (\nu_{1}-2)\, c(-2) + 2(\nu_{2}-1)\, c(0) = 0. 
\end{cases}
\label{eq:7 is sub}
\end{align}
\begin{lemma}\label{lemma:7 is sub}
\begin{enumerate}
\item
There exists a non-trivial solution of 
\eqref{eq:7 is sub} if and only if 
$(\nu_{1},\nu_{2})=(-2,1)=(-\Lambda_{1},\Lambda_{2})$. 
In this case, the solution is 
\[
\phi_{(0,-2)}^{7}(e)
=
\alpha \, 
v_{0}^{(0,-2)}, 
\quad 
\alpha \in \C. 
\]
\item
If the infinitesimal character is trivial, 
the the representation 
$\overline{X}({\gamma}_{7}) \oplus \overline{X}(\gamma_{8})$ 
is a submodule of 
$I(\sigma,\nu)$ if and only if 
$(\sigma,\nu) = (\sigma_{0,0}, (-\Lambda_{1},\Lambda_{2}))$. 
\end{enumerate}
\end{lemma}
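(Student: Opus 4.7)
The plan is to handle (1) by direct case analysis of the linear system \eqref{eq:7 is sub} in the two unknowns $c(0), c(-2)$, and then to reduce (2) to (1) using horizontal symmetry, the block decomposition, and Casselman's subrepresentation theorem.

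For (1), the two scalar relations $(\nu_{1}+2)c(0)=0$ and $(\nu_{2}+1)c(-2)=0$ split the analysis into three branches. If $c(0)\neq 0$ and $c(-2)=0$, then $\nu_{1}=-2$; the relation $(\nu_{1}+4)c(-2)+(\nu_{2}-1)c(0)=0$ then forces $\nu_{2}=1$, and the remaining equations are automatic, yielding the stated solution $\alpha v_{0}^{(0,-2)}$ at $(\nu_{1},\nu_{2})=(-\Lambda_{1},\Lambda_{2})$. If $c(0)=0$ and $c(-2)\neq 0$, then $\nu_{2}=-1$, but the two equations $(\nu_{1}+4)c(-2)=0$ and $2\nu_{1}c(-2)=0$ are then incompatible. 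If both coefficients are non-zero, then $\nu_{1}=-2$ and $\nu_{2}=-1$, and substitution into the last three equations gives $c(-2)=c(0)$ together with $2c(-2)+c(0)=0$, which forces $c(0)=0$, a contradiction. Only the first branch survives.

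For (2), I would first invoke Lemma~\ref{lemma:horizontal symmetry in Sp(2)}(3) to reduce the problem to locating the embeddings of $\overline{X}(\gamma_{7})$ alone. Theorem~\ref{theorem:composition factors for Sp(2)} shows that $\overline{X}(\gamma_{7})$ appears as a composition factor only of principal series of the form $X(\gamma_{10})\approx I(\sigma_{0,0},\nu)$, which pins down $\sigma=\sigma_{0,0}$; combined with Lemma~\ref{lemma:the action of M on f.d. repn of K, SP(2)} and the evenness of $\lambda_{1}+\lambda_{2}=-2$, this justifies the a priori restriction to the even-$q$ part of $V_{(0,-2)}^{U(2)}$ made in the set-up of \eqref{eq:7 is sub}. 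Proposition~\ref{proposition:K-spectra of D.S. of Sp(2)} then identifies $(2,0)$ as a multiplicity-one minimal $K$-type of $\overline{X}(\gamma_{7})$ whose four neighbouring $K$-types $(3,1),(2,-2),(1,-1),(0,0)$ are absent, so any embedding must produce a function annihilated by $P_{(1,1)}, P_{(0,-2)}, P_{(-1,-1)}$ and $P_{(-2,0)}$ --- that is, a solution of \eqref{eq:7 is sub}. Part (1) then leaves $(\sigma,\nu)=(\sigma_{0,0},(-\Lambda_{1},\Lambda_{2}))$ as the only candidate, and Casselman's subrepresentation theorem guarantees that it is realised. No step is a genuine obstacle; the arithmetic in (1) is bookkeeping and the work in (2) is the standard combination of $K$-spectrum data, block information and Casselman's theorem already applied in the previous lemmas of this section.
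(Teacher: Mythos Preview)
Your proposal is correct and follows essentially the same approach as the paper. The paper in fact omits any argument for (1), treating it as routine linear algebra, and for (2) simply writes ``The proof of (2) is the same as that of Lemma~\ref{lemma:5 is sub}'' --- which is precisely the combination of horizontal symmetry, the composition-factor restriction to $\sigma_{0,0}$, part (1) for the ``only if'' direction, and Casselman's subrepresentation theorem for the ``if'' direction that you have written out.
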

\begin{proof}
The proof of (2) is the same as that of Lemma~\ref{lemma:5 is sub}. 
\end{proof}

\subsection{The irreducible module $\overline{X}(\gamma_{9})$.} 
If the infinitesimal character is trivial, 
then $\lambda=(0,-2)$ is a $K$-type of the irreducible representation 
$\overline{X}(\gamma_{9})$ with multiplicity one, 
but $\lambda=(0,0), (-1,-3), (-2,-2)$ are not. 
Therefore, if $\overline{X}(\gamma_{9})$ is a subrepresentation of 
$I(\sigma,\nu)$, 
then there exists a non-zero function 
\[
\phi_{(0.-2)}^{9}(kan) 
= 
a^{-\nu-\rho} \tau_{(2,0)}(k^{-1}) 
\sum_{q=0}^{2} 
c(q)\, v_{q}^{(2,0)} 
\]
which satisfies 
\begin{align*}
&
P_{(0,2)} \phi_{(0,-2)}^{9} = 0, 
&
&
P_{(-1,-1)} \phi_{(0,-2)}^{9} = 0, 
&
&
P_{(-2,0)} \phi_{(0,-2)}^{9} = 0. 
\end{align*}
By \eqref{eq:p.s., P_{(0,2)}}, 
\eqref{eq:p.s., P_{(-1,-1)}} 
and \eqref{eq:p.s., P_{(-2,0)}}, 
this system of equations is equivalent to 
\begin{align}
& 
\begin{cases}
2(\nu_{1}-2)\, c(2) + 2(\nu_{2}-1)\, c(0) = 0, 
\\
2(\nu_{1}+2)\, c(0) - 2(\nu_{2}+1)\, c(2) = 0, 
\\
2 \nu_{1}\, c(0) + 2(\nu_{2}+1)\ c(2) = 0, 
\end{cases}
\label{eq:9 is sub,1}
\\
&
(\nu_{1}+1)\, c(1) = 0, 
\qquad 
-(\nu_{2}+2)\, c(1) = 0. 
\label{eq:9 is sub,2}
\end{align}
\begin{lemma}\label{lemma:9 is sub}
\begin{enumerate}
\item
The system of equations \eqref{eq:9 is sub,1} 
has a non-zero solution if and only if 
$(\nu_{1},\nu_{2}) 
= (2,-1) = (\Lambda_{1},-\Lambda_{2})$, 
$(-1,2) = (-\Lambda_{2},\Lambda_{1})$ or 
\\
$(-1,-2) = (-\Lambda_{2},-\Lambda_{1})$. 
In these cases, the solutions are 
\begin{align*}
\phi_{(0,-2)}^{9}(e) 
= 
& 
\alpha_{1}\, 
v_{2}^{(2,0)}, 
\qquad 
\alpha_{2} 
(v_{2}^{(2,0)} + 3 v_{0}^{(2,0)}) 
\quad 
\mbox{and} 
\quad 
\alpha_{3}
(v_{2}^{(2,0)} - v_{0}^{(2,0)}), 
\end{align*}
$\alpha_{1}, \alpha_{2}, \alpha_{3} \in \C$, respectively. 
\item
The system of equations \eqref{eq:9 is sub,2} 
has a non-zero solution if and only if 
$(\nu_{1},\nu_{2})$ 
$=(-1,-2)=(-\Lambda_{2},-\Lambda_{1})$. 
In this case, the solution is 
\[
\phi_{(0,-2)}^{9}(e) 
= 
\alpha_{4}\, 
v_{1}^{(2,0)}, 
\quad \alpha_{4} \in \C. 
\]
\item
If the infinitesimal character $\Lambda$ is trivial, 
then the irreducible representation $\overline{X}(\gamma_{9})$ 
can be a submodule of $I(\sigma,\nu)$ only if 
\begin{align*}
(\sigma,\nu) 
= 
&
(\sigma_{0,0},(\Lambda_{1},-\Lambda_{2})), 
(\sigma_{0,0},(-\Lambda_{2},\Lambda_{1})), 
\\
&(\sigma_{0,0},(-\Lambda_{2},-\Lambda_{1})) 
\ \mbox{or} \ 
(\sigma_{1,1},(-\Lambda_{2},-\Lambda_{1})). 
\end{align*}
\end{enumerate}
\end{lemma}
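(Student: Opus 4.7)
The proof will be a straightforward three-step calculation, paralleling the proofs of the preceding Lemmas (especially Lemma~\ref{lemma:7 is sub}). For part (1), the system \eqref{eq:9 is sub,1} is three linear equations in the two unknowns $c(0), c(2)$; non-trivial solutions exist precisely when the $3 \times 2$ coefficient matrix has rank at most $1$, which I will check by vanishing of its three $2 \times 2$ minors. The cleanest way to run the elimination is to add the second and third equations to obtain $2(\nu_{1}+1)\, c(0) = 0$. This splits into the case $c(0) = 0$ (which forces $\nu = (\Lambda_{1}, -\Lambda_{2}) = (2,-1)$ via the remaining two equations) and the case $\nu_{1} = -1$ (which, after substituting into equation~1, forces $\nu_{2}^{2} = 4$ and yields the two solutions with $\nu = (-\Lambda_{2}, \pm \Lambda_{1})$). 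The explicit eigenvectors $\alpha_{1} v_{2}^{(2,0)}$, $\alpha_{2}(v_{2}^{(2,0)} + 3 v_{0}^{(2,0)})$ and $\alpha_{3}(v_{2}^{(2,0)} - v_{0}^{(2,0)})$ are then read off directly from the equation $(\nu_{1}+2)\, c(0) = (\nu_{2}+1)\, c(2)$.

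Part (2) is essentially immediate: the two equations $(\nu_{1}+1)\, c(1) = 0$ and $(\nu_{2}+2)\, c(1) = 0$ admit a non-zero solution if and only if $\nu = (-\Lambda_{2}, -\Lambda_{1}) = (-1,-2)$, and the solution is $\alpha_{4}\, v_{1}^{(2,0)}$.

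For part (3), I will identify which minimal $M_{m}$-type each solution lives in by invoking Lemma~\ref{lemma:the action of M on f.d. repn of K, SP(2)}~(2): the $K$-type $(\lambda_{1},\lambda_{2}) = (-\lambda_{2},-\lambda_{1}) = (2,0)$ has $\lambda_{1} + \lambda_{2} = 2$ even, so $V_{(2,0)}^{U(2)}(\sigma_{0,0})$ is spanned by $v_{0}^{(2,0)}$ and $v_{2}^{(2,0)}$ while $V_{(2,0)}^{U(2)}(\sigma_{1,1})$ is spanned by $v_{1}^{(2,0)}$. Thus the three solutions from part~(1) give embeddings into $I(\sigma_{0,0},\nu)$ for the three values of $\nu$ listed, and the solution from part~(2) gives an embedding into $I(\sigma_{1,1}, (-\Lambda_{2},-\Lambda_{1}))$. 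Combining these candidates with the vanishing of the remaining shift operators $P_{(0,2)}$, $P_{(-1,-1)}$, $P_{(-2,0)}$ that was imposed at the outset (together with the observation that $\overline{X}(\gamma_{9})$ is not a composition factor of $I(\sigma_{0,1},\nu)$ or $I(\sigma_{1,0},\nu)$, by Theorem~\ref{theorem:composition factors for Sp(2)} and Lemma~\ref{lemma:the action of M on f.d. repn of K, SP(2)}~(3)), one obtains the asserted list.

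The main obstacle is purely bookkeeping: keeping track of the correct signs, the correct indexing conventions between $V_{\lambda}^{U(2)}$ and its contragredient $V_{(-\lambda_{2},-\lambda_{1})}^{U(2)}$, and verifying that the solutions found automatically satisfy the other shift-operator equations (or, where they don't, that those additional constraints do not rule out the embedding, as guaranteed in the end by Casselman's subrepresentation theorem applied to those $\nu$ for which $\overline{X}(\gamma_{9})$ must embed somewhere). No genuinely new idea is required beyond what has been used for the earlier Lemmas~\ref{lemma:5 is sub} and \ref{lemma:7 is sub} in this section.
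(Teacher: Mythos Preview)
Your proposal is correct and follows exactly the approach the paper takes (implicitly --- no proof is written out for this lemma, as for the neighbouring Lemmas~\ref{lemma:4 is sub} and \ref{lemma:7 is sub}): solve the two decoupled linear systems directly, then read off the $M_{m}$-type of each solution via Lemma~\ref{lemma:the action of M on f.d. repn of K, SP(2)}.

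One minor remark: your closing parenthetical about ``verifying that the solutions found automatically satisfy the other shift-operator equations'' and appealing to Casselman is unnecessary. The three conditions $P_{(0,2)}\phi = P_{(-1,-1)}\phi = P_{(-2,0)}\phi = 0$ are already completely encoded in the five scalar equations of \eqref{eq:9 is sub,1} and \eqref{eq:9 is sub,2} (three from the even~$q$'s, two from the odd~$q$), so nothing further needs checking. Moreover, part~(3) asserts only a necessary condition (``can be a submodule \emph{only if}''), so Casselman's theorem plays no role here; it enters later in Proposition~\ref{proposition:candidates for sub}~(2) when one wants to decide which candidates are genuine submodules.
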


\subsection{The irreducible module $\overline{X}(\gamma_{11})$.} 

If the infinitesimal character is trivial, 
the irreducible representation $\overline{X}(\gamma_{11})$ has the $K$-types 
$\lambda=(-1,-1)$ and $(-1,-3)$ with multiplicity one, 
and $\lambda=(-3,-3), (0,-2)$ are not $K$-types of it. 
Therefore, if $\overline{X}(\gamma_{11})$ is a submodule of 
$I(\sigma_{1,1},\nu)$, then there exists a non-zero function 
\[
\phi_{(-1,-1)}^{11}(kan) 
= 
a^{-\nu-\rho} \tau_{(1,1)}(k^{-1}) 
c\, 
v_{1}^{(1,1)}
\]
which satisfies 
\begin{align*}
& 
P_{(-2,0)} \circ P_{(0,-2)} \phi_{(-1,-1)}^{11} = 0, 
&
&
P_{(1,1)} \circ P_{(0,-2)} \phi_{(-1,-1)}^{11} = 0. 
\end{align*}
By \eqref{eq:p.s., P_{(0,-2)}}, \eqref{eq:p.s., P_{(-2,0)}} 
and 
\eqref{eq:p.s., P_{(1,1)}}, 
this system of equations is equivalent to 
\[
\begin{cases}
\{2(\nu_{1}+1)(\nu_{2}+2) + 2(\nu_{2}+2)(\nu_{1}+3)\} c 
= 
4(\nu_{1}+2)(\nu_{2}+2)\, c = 0, 
\label{eq:11 is sub,1}
\\ 
\{-2(\nu_{1}-1)(\nu_{1}+3) + 2(\nu_{2}-2)(\nu_{2}+2)\} c 
= 
2\{-(\nu_{1}+1)^{2}+\nu_{2}^{2}\} c = 0. 
\end{cases}
\]
Since we are assuming the infinitesimal character $\Lambda$ is 
$\rho_{m}=(2,1)$, 
the parameter $\nu=(\nu_{1},\nu_{2})$ is conjugate to $\rho_{m}$ 
under the action of Weyl group $W(B_{2})$. 
It follows that this system of equations has a non-zero 
solution if and only if 
$(\nu_{1},\nu_{2}) = (-2,1), (-2,-1)$ or $(1,-2)$. 

\begin{lemma}\label{lemma:11 is sub}
If the infinitesimal character $\Lambda$ is trivial, 
then the irreducible representation 
$\overline{X}(\gamma_{11})$ can be a submodule of 
$I(\sigma,\nu)$ only if 
\[
(\sigma,\nu) 
= 
(\sigma_{1,1},(-\Lambda_{1},\Lambda_{2})), 
(\sigma_{1,1},(-\Lambda_{1},-\Lambda_{2})) 
\ \mbox{or} \ 
(\sigma_{1,1},(\Lambda_{2},-\Lambda_{1})). 
\]
In these cases, the corresponding vectors $P_{(0,-2)} \phi_{(-1,-1)}^{11}$ 
are given by 
\begin{align*}
P_{(0,-2)} \phi_{(-1,-1)}^{11}(e) 
= 
& 
\alpha_{1} 
(v_{3}^{(3,1)} + 3 v_{1}^{(3,1)}), 
\qquad 
\alpha_{2} 
(v_{3}^{(3,1)} + v_{1}^{(3,1)}), 
\quad \mbox{and} \quad 
\alpha_{3} v_{3}^{(3,1)}, 
\end{align*}
$\alpha_{1}, \alpha_{2}, \alpha_{3} \in \C$, respectively. 
\end{lemma}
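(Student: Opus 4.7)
The plan is to mimic the strategy of Lemmas~\ref{lemma:2,3 is sub}--\ref{lemma:9 is sub}: use the $K$-spectrum data from Proposition~\ref{proposition:K-spectra of D.S. of Sp(2)} to locate $\overline{X}(\gamma_{11})$ inside a candidate principal series by a single $K$-equivariant function, then impose vanishing of shift operators pointing toward $K$-types that $\overline{X}(\gamma_{11})$ does not possess. Since $\lambda=(-1,-1)$ occurs in $\overline{X}(\gamma_{11})$ with multiplicity one and is detected by $\sigma_{1,1}$ (Lemma~\ref{lemma:the action of M on f.d. repn of K, SP(2)}), an embedding into $I(\sigma_{1,1},\nu)$ gives rise to a non-zero
\[
\phi_{(-1,-1)}^{11}(kan) = a^{-\nu-\rho_{m}}\,\tau_{(1,1)}(k^{-1})\, c\, v_{1}^{(1,1)},
\]
which is essentially unique up to scalar because $V_{(1,1)}^{U(2)}$ is one-dimensional. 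This reduces the problem to a single scalar parameter $c$ and two explicit algebraic conditions on $\nu$.

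The next step is to identify the correct vanishing conditions. The neighbouring $K$-types of $(-1,-1)$ reachable by $\nabla$ are $(-1,-3)$ (which is a $K$-type of $\overline{X}(\gamma_{11})$) and $(0,0), (-2,-2)$ (which are not present in $\sigma_{1,1}$-isotypic part and so automatically vanish). So $P_{(0,-2)}\phi_{(-1,-1)}^{11}$ lives at the $K$-type $(-1,-3)$, which \emph{is} in $\widehat{K}(\overline{X}(\gamma_{11}))$. To rule out extra components I then shift from $(-1,-3)$ to the forbidden $K$-types $(-3,-3)$ and $(0,-2)$, obtaining the two required conditions
\[
P_{(-2,0)}\circ P_{(0,-2)}\,\phi_{(-1,-1)}^{11}=0,\qquad P_{(1,1)}\circ P_{(0,-2)}\,\phi_{(-1,-1)}^{11}=0.
\]

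Next, I would compute both compositions explicitly by substituting $\lambda=(-1,-1)$ into \eqref{eq:p.s., P_{(0,-2)}} and then $\lambda=(-1,-3)$ into \eqref{eq:p.s., P_{(-2,0)}} and \eqref{eq:p.s., P_{(1,1)}}. Only finitely many terms survive since $c(q)=0$ for $q\ne 1$; in particular $P_{(0,-2)}\phi_{(-1,-1)}^{11}(e)$ will be a combination of $v_{1}^{(3,1)}$ and $v_{3}^{(3,1)}$ with coefficients linear in $\nu_{1},\nu_{2}$, and feeding this into $P_{(-2,0)}$ and $P_{(1,1)}$ produces one scalar equation each, as recorded just before the lemma statement. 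The resulting system reads $(\nu_{1}+2)(\nu_{2}+2)c=0$ and $\{-(\nu_{1}+1)^{2}+\nu_{2}^{2}\}c=0$.

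The final step is to intersect this system with the $W(B_{2})$-orbit of $\rho_{m}=(2,1)$, which consists of the eight points $(\pm\Lambda_{1},\pm\Lambda_{2})$ and $(\pm\Lambda_{2},\pm\Lambda_{1})$. A short case check yields exactly three candidates $\nu=(-2,1),(-2,-1),(1,-2)$, matching the three principal series listed in the lemma. For each of these, substituting back into the formula for $P_{(0,-2)}\phi_{(-1,-1)}^{11}(e)$ produces the explicit vectors $\alpha_{i}(v_{3}^{(3,1)}+3v_{1}^{(3,1)})$, $\alpha_{2}(v_{3}^{(3,1)}+v_{1}^{(3,1)})$, $\alpha_{3}v_{3}^{(3,1)}$ respectively. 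The main obstacle is not conceptual but bookkeeping: correctly tracking the dualisation $(\lambda_{1},\lambda_{2})\leftrightarrow(-\lambda_{2},-\lambda_{1})$ and the index ranges when composing the shift operators, so that the coefficients produced in \S\ref{section:shift operators, Sp(2)} are applied with the right $\lambda$ at each stage.
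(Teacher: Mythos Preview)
Your proposal is correct and follows essentially the same approach as the paper: start from the multiplicity-one $K$-type $\lambda=(-1,-1)$ of $\overline{X}(\gamma_{11})$, apply $P_{(0,-2)}$ to reach $(-1,-3)$, then impose $P_{(-2,0)}\circ P_{(0,-2)}\phi=0$ and $P_{(1,1)}\circ P_{(0,-2)}\phi=0$ to hit the forbidden $K$-types $(-3,-3)$ and $(0,-2)$, obtaining exactly the system $4(\nu_{1}+2)(\nu_{2}+2)c=0$, $2\{-(\nu_{1}+1)^{2}+\nu_{2}^{2}\}c=0$ and intersecting with the $W(B_{2})$-orbit of $(2,1)$.
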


\subsection{The irreducible module $\overline{X}(\gamma_{10})$}
If the infinitesimal character is trivial, then 
the irreducible representation $\overline{X}(\gamma_{10})$ 
is the trivial representation of $G$. 
Its $K$-type is $\lambda=(0,0)$ alone. 
If it is a submodule of $I(\sigma,\nu)$, then 
there exists a non-zero function 
\[
\phi_{(0,0)}^{10}(kan) 
= 
a^{-\nu-\rho} \tau_{(0,0)}(k^{-1}) 
c\, v_{0}^{(0,0)}
\]
which satisfies 
$P_{(2,0)} \phi_{(0,0)}^{10} = 0$ and 
$P_{(0,-2)} \phi_{(0,0)}^{10} = 0$. 
By \eqref{eq:p.s., P_{(2,0)}} and \eqref{eq:p.s., P_{(0,-2)}}, 
this system of equations is equivalent to 
\[
(\nu_{1}+2)\, c = (\nu_{2}+1)\, c = 0. 
\]
\begin{lemma}\label{lemma:10 is sub} 
If the infinitesimal character $\Lambda$ is trivial, 
the the irreducible module $\overline{X}(\gamma_{10})$ is a submodule 
of $I(\sigma,\nu)$ if and only if 
$(\sigma,\nu) 
= 
(\sigma_{0,0},(-\Lambda_{1},-\Lambda_{2}))$. 
\end{lemma}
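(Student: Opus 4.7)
The plan is to exploit the fact that, with trivial infinitesimal character, $\overline{X}(\gamma_{10})$ is the trivial $\brgK$-module, so its unique $K$-type is $V_{(0,0)}^{U(2)}$, spanned by the single vector $v_0^{(0,0)}$. First, by Lemma~\ref{lemma:the action of M on f.d. repn of K, SP(2)}(2), this weight vector lies in the $\sigma_{0,0}$-isotypic of $V_{(0,0)}^{U(2)}|_{M_m}$, so any embedding $\overline{X}(\gamma_{10}) \hookrightarrow I(\sigma_{i,j}, \nu)$ forces $(i,j) = (0,0)$.

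Next, such an embedding is controlled by a nonzero
\[
\phi_{(0,0)}^{10}(kan) = a^{-\nu-\rho_m} \tau_{(0,0)}(k^{-1})\, c\, v_0^{(0,0)}
\]
on which every shift operator of $K$-types vanishes, since then $v_0^{(0,0)}$ generates a $\brgK$-submodule on which $\lie{s}$ acts trivially. Among the six shifts $P_{(\pm 2,0)}$, $P_{(\pm 1,\pm 1)}$, $P_{(0,\pm 2)}$, only those whose target weight $(\lambda_1+i,\lambda_2+j)$ remains dominant for $\lambda_1 = \lambda_2 = 0$ have a nonzero target space, leaving the four operators $P_{(2,0)}$, $P_{(1,1)}$, $P_{(-1,-1)}$, $P_{(0,-2)}$. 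Substituting $\lambda_1 = \lambda_2 = 0$ into \eqref{eq:p.s., P_{(1,1)}} and \eqref{eq:p.s., P_{(-1,-1)}}, the single surviving summand in each involves $c(\pm 1)$, which are zero by convention; so these two operators annihilate $\phi_{(0,0)}^{10}$ automatically.

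The remaining conditions come from $P_{(2,0)}$ and $P_{(0,-2)}$; substituting $c(q) = c\, \delta_{q,0}$ into \eqref{eq:p.s., P_{(2,0)}} and \eqref{eq:p.s., P_{(0,-2)}} both collapse to the same pair of equations $(\nu_1+2)\, c = (\nu_2+1)\, c = 0$. A nonzero solution therefore exists iff $\nu = (-2,-1) = (-\Lambda_1,-\Lambda_2)$, in which case $\phi_{(0,0)}^{10}$ genuinely generates a trivial $\brgK$-submodule, yielding the embedding. (Alternatively, since Casselman's subrepresentation theorem guarantees $\overline{X}(\gamma_{10})$ sits in at least one principal series module, the uniqueness of the shift-operator solution pins it down.) Because the minimal $K$-type is one-dimensional, no real obstacle arises here beyond organizing the four relevant shift operators correctly.
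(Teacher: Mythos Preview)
Your proof is correct and follows essentially the same approach as the paper: both recognize that $\overline{X}(\gamma_{10})$ is the trivial representation with sole $K$-type $(0,0)$, set up the relevant $\phi_{(0,0)}^{10}$, and reduce to the two scalar equations $(\nu_1+2)c = (\nu_2+1)c = 0$ coming from $P_{(2,0)}$ and $P_{(0,-2)}$. Your treatment is slightly more explicit than the paper's in justifying why the other four shift operators are vacuous and in invoking Casselman's subrepresentation theorem for the converse direction, but these are elaborations of the same argument rather than a different method.
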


\subsection{The irreducible modules $\overline{X}(\gamma_{0'})$ 
and $\overline{X}(\gamma_{1'})$.}

Since $\overline{X}(\gamma_{0'})$ and $\overline{X}(\gamma_{1'})$ are dual, 
they appear in a pair, by the horizontal symmetry. 

We know that $\lambda=(3,2)$ is a $K$-type 
of $\overline{X}(\gamma_{0'})$ but $\lambda=(2,1)$ is not. 
So if $\overline{X}(\gamma_{0'})$ is a submodule of 
$I(\sigma, (\nu_{1},\nu_{2}))$, 
then there exists a non-zero vector 
\[
\phi_{(3,2)}^{0'}(kan)
= 
a^{-\nu-\rho} \tau_{(-2,-3)}(k^{-1}) 
\sum_{q=-3}^{-2} c(q)\, v_{q}^{(-2,-3)} 
\] 
which satisfies 
\[
P_{(-1,-1)} \phi_{(3,2)}^{0'}(e) = 0. 
\]
By \eqref{eq:p.s., P_{(-1,-1)}}, this system of equations is equivalent to  
\begin{equation}
\label{eq:0' is sub} 
(\nu_{1}-2)\, c(-3) = 0, 
\qquad 
-(\nu_{2}-2)\, c(-2) = 0. 
\end{equation}
\begin{lemma}\label{lemma:0' is sub} 
Suppose that the infinitesimal character is trivial. 
\begin{enumerate}
\item
The system of equations \eqref{eq:0' is sub} has a non-zero solution if 
and only if $(\nu_{1},\nu_{2}) = (2,\pm1) = (\Lambda_{1},\pm \Lambda_{2})$ 
or $(\pm 1, 2) = (\pm \Lambda_{2}, \Lambda_{1})$. 
In these cases, the corresponding vectors $\phi_{(3,2)}^{0'}(e)$ are 
constant multiple of 
$v_{-3}^{(-2,-3)}$ and $v_{-2}^{(-2,-3)}$, respectively. 
\item
The module 
$\overline{X}(\gamma_{0'}) \oplus \overline{X}(\gamma_{1'})$ can be 
a submodule of $I(\sigma, \nu)$ only if 
$(\sigma,\nu) 
= 
(\sigma_{1,0},(\Lambda_{1},\pm \Lambda_{2}))$ or 
$(\sigma_{0,1},(\pm \Lambda_{2}, \Lambda_{1}))$. 
\end{enumerate}
\end{lemma}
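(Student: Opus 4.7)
The plan is to first solve the two scalar equations in \eqref{eq:0' is sub} case by case as $\nu$ ranges over the Weyl orbit, then in step (2) use the horizontal symmetry together with the computation of $M_m$-isotypic decompositions to pin down the admissible $\sigma$.

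For part (1), since the infinitesimal character is trivial, the parameter $\nu = (\nu_1,\nu_2)$ is conjugate under $W(B_2)$ to $\rho_m = (2,1)$, i.e.\ $\nu \in \{(\pm 2,\pm 1),(\pm 1,\pm 2)\}$. A non-zero solution of \eqref{eq:0' is sub} requires either $c(-3) \ne 0$, forcing $\nu_1 = 2$, or $c(-2) \ne 0$, forcing $\nu_2 = 2$. Scanning the eight orbit elements, $\nu_1 = 2$ selects $(2,\pm 1)$ and $\nu_2 = 2$ selects $(\pm 1,2)$; in the former case necessarily $c(-2) = 0$ and $\phi_{(3,2)}^{0'}(e)$ is a scalar multiple of $v_{-3}^{(-2,-3)}$, while in the latter $c(-3) = 0$ and the solution is a scalar multiple of $v_{-2}^{(-2,-3)}$. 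This gives exactly the claimed list.

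For part (2), Lemma~\ref{lemma:horizontal symmetry in Sp(2)}(3) guarantees that $\overline{X}(\gamma_{0'})$ and $\overline{X}(\gamma_{1'})$ appear as a pair in the socle filtration, so it suffices to determine the principal series modules into which $\overline{X}(\gamma_{0'})$ can embed. Using Lemma~\ref{lemma:the action of M on f.d. repn of K, SP(2)}(1), the action of $m_{\epsilon_1,\epsilon_2}$ on $v_q^{(-2,-3)}$ is multiplication by $\epsilon_1^{q}\,\epsilon_2^{-5-q}$. Substituting $q = -3$ gives $\epsilon_1^{-3}\epsilon_2^{-2} = \epsilon_1$, hence $v_{-3}^{(-2,-3)} \in V_{(-2,-3)}^{U(2)}(\sigma_{1,0})$; substituting $q = -2$ gives $\epsilon_1^{-2}\epsilon_2^{-3} = \epsilon_2$, hence $v_{-2}^{(-2,-3)} \in V_{(-2,-3)}^{U(2)}(\sigma_{0,1})$. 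Matching each solution from part (1) with its $M_m$-type yields precisely the four possibilities $(\sigma_{1,0},(\Lambda_1,\pm\Lambda_2))$ and $(\sigma_{0,1},(\pm\Lambda_2,\Lambda_1))$.

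There is no serious obstacle; the argument is essentially bookkeeping. The only point requiring care is the parity arithmetic used to identify the $M_m$-isotypic subspaces containing $v_{-3}^{(-2,-3)}$ and $v_{-2}^{(-2,-3)}$, since $\lambda_1 + \lambda_2 = -5$ is odd and thus the relevant decomposition is the one for odd total weight in Lemma~\ref{lemma:the action of M on f.d. repn of K, SP(2)}(3).
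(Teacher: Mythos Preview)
Your proof is correct and follows exactly the approach the paper intends: the paper states the equations \eqref{eq:0' is sub} and then records the lemma without proof, leaving precisely this bookkeeping to the reader. Your verification of the $M_m$-isotypic components via Lemma~\ref{lemma:the action of M on f.d. repn of K, SP(2)} is the natural way to pass from part (1) to part (2), and the parity check $(\lambda_1+\lambda_2=-5$ odd, so $q=-3$ odd lies in $\sigma_{1,0}$ and $q=-2$ even lies in $\sigma_{0,1})$ is done correctly.
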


\subsection{The irreducible modules 
$\overline{X}(\gamma_{2'})$ and $\overline{X}(\gamma_{3'})$.}

We know that $\lambda=(2,1)$ is a $K$-type 
of $\overline{X}(\gamma_{2'})$ but $\lambda=(3,2)$ and 
$(1,0)$ are not. 
So if $\overline{X}(\gamma_{2'})$ is a submodule of 
$I(\sigma, (\nu_{1},\nu_{2}))$, 
then there exists a non-zero vector 
\[
\phi_{(2,1)}^{2'}(kan)
= 
a^{-\nu-\rho} \tau_{(-1,-2)}(k^{-1}) 
\sum_{q=-2}^{-1} c(q)\, v_{q}^{(-1,-2)} 
\] 
which satisfies 
\[
P_{(1,1)} \phi_{(2,1)}^{2'}(e) = 0, 
\qquad 
P_{(-1,-1)} \phi_{(2,1)}^{2'}(e) = 0,
\]
By \eqref{eq:p.s., P_{(1,1)}} and \eqref{eq:p.s., P_{(-1,-1)}}, 
this system of equations is equivalent to  
\begin{align}
& 
(\nu_{2}+2)\, c(-2) = (\nu_{1}-1) c(-2) = 0,
\label{eq:2' is sub-1}
\\
&
-(\nu_{1}+2) c(-1) = -(\nu_{2}-1) v(-1) = 0. 
\label{eq:2' is sub-2}
\end{align} 
\begin{lemma}\label{lemmq:2' is sub} 
Suppose that the infinitesimal character is trivial. 
\begin{enumerate}
\item
The system of equations \eqref{eq:2' is sub-1} has a non-zero solution if 
and only if $(\nu_{1},\nu_{2}) = (1,-2) = (\Lambda_{2},-\Lambda_{1})$.  
In this case, the corresponding solution is a constant multiple of 
$v_{-2}^{(-1,-2)}$. 
\item
The system of equations \eqref{eq:2' is sub-2} has a non-zero solution if 
and only if $(\nu_{1},\nu_{2}) = (-2,1) = (-\Lambda_{1},\Lambda_{2})$.  
In this case, the corresponding solution is a constant multiple of 
$v_{-1}^{(-1,-2)}$. 
\item
The module 
$\overline{X}(\gamma_{2'}) \oplus \overline{X}(\gamma_{3'})$ can be 
a submodule of $I(\sigma, \nu)$ only if 
$(\sigma,\nu) 
= 
(\sigma_{0,1},(\Lambda_{2}, -\Lambda_{1}))$ or 
$(\sigma_{1,0},(-\Lambda_{1},\Lambda_{2}))$. 
\end{enumerate}
\end{lemma}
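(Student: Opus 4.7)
The plan is to solve the two linear systems explicitly, identify the $M_m$-isotypic components into which the resulting vectors fall, and then invoke the horizontal symmetry lemma to package the result for both $\overline{X}(\gamma_{2'})$ and $\overline{X}(\gamma_{3'})$ simultaneously.

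For part (1), the system \eqref{eq:2' is sub-1} consists of the two scalar equations $(\nu_2+2)\,c(-2)=0$ and $(\nu_1-1)\,c(-2)=0$, so a non-zero solution forces $\nu_1=1$ and $\nu_2=-2$. Since we are working with trivial infinitesimal character $\Lambda=\rho_m=(\Lambda_1,\Lambda_2)=(2,1)$, I should check that $(1,-2)$ lies in the Weyl orbit $W(B_2)\cdot(2,1)$; indeed $(1,-2)=\epsilon_2 r_{1,2}\cdot(2,1)=(\Lambda_2,-\Lambda_1)$. The solution is then $c(-2)\cdot v_{-2}^{(-1,-2)}$, as claimed. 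Part (2) is entirely analogous: \eqref{eq:2' is sub-2} forces $\nu_1=-2$ and $\nu_2=1$, which equals $\epsilon_1\cdot(2,1)=(-\Lambda_1,\Lambda_2)$, and the solution space is spanned by $v_{-1}^{(-1,-2)}$.

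For part (3), I first use Lemma~\ref{lemma:horizontal symmetry in Sp(2)}(3), which says $\overline{X}(\gamma_{2'})$ and $\overline{X}(\gamma_{3'})$ appear paired in any principal series socle filtration; it therefore suffices to locate the candidates in which $\overline{X}(\gamma_{2'})$ can embed. Applying Lemma~\ref{lemma:the action of M on f.d. repn of K, SP(2)}(3) to the contragredient $K$-type $V^{U(2)}_{(-1,-2)}$ (where $\lambda_1+\lambda_2=-3$ is odd): vectors $v_q^{(-1,-2)}$ with $q$ even lie in $V^{U(2)}(\sigma_{0,1})$ and those with $q$ odd lie in $V^{U(2)}(\sigma_{1,0})$. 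Hence the vector $v_{-2}^{(-1,-2)}$ found in (1) matches the $M_m$-type $\sigma_{0,1}$, producing the embedding $\overline{X}(\gamma_{2'})\hookrightarrow I(\sigma_{0,1},(\Lambda_2,-\Lambda_1))$, while $v_{-1}^{(-1,-2)}$ from (2) matches $\sigma_{1,0}$, giving $\overline{X}(\gamma_{2'})\hookrightarrow I(\sigma_{1,0},(-\Lambda_1,\Lambda_2))$. No other $(\sigma,\nu)$ can support such an embedding, so these are the only two candidates.

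This proof is essentially bookkeeping: the main (and only mild) obstacle is correctly aligning the parameter conventions—converting between the raising/lowering index $q$ on $v_q^{(-\lambda_2,-\lambda_1)}$ and the $M_m$-isotypic decomposition of the contragredient $K$-type, and then checking that the two distinguished values of $\nu$ lie in $W(G,H_s)\cdot\rho_m$. Once those identifications are made, assertions (1) and (2) reduce to inspecting two-variable linear equations, and (3) follows immediately from (1), (2), and Lemma~\ref{lemma:horizontal symmetry in Sp(2)}.
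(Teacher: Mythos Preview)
Your proposal is correct and follows exactly the method the paper uses for the parallel lemmas (e.g., Lemmas~\ref{lemma:5 is sub} and \ref{lemma:0' is sub}): solve the linear systems coming from the shift operators, verify the resulting $\nu$ lies in the Weyl orbit of $\rho_m$, read off the $M_m$-type of the solution vector via Lemma~\ref{lemma:the action of M on f.d. repn of K, SP(2)}, and invoke horizontal symmetry for the pairing with $\overline{X}(\gamma_{3'})$. The only cosmetic point is that the phrase ``producing the embedding'' in part (3) slightly overstates what is shown---the lemma and your own conclusion correctly speak only of \emph{candidates}---but the logic is sound.
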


\subsection{The irreducible module $\overline{X}(\gamma_{4'})$.} 
The irreducible module $\overline{X}(\gamma_{4'})$ has a $K$-type 
$\lambda=(1,0)$ with 
multiplicity one, 
and $\lambda=(2,1)$ is not a $K$-type of it. 
Therefore, if $\overline{X}(\gamma_{4'})$ is a submodule of 
$I(\sigma,\nu)$, 
then there are non-zero vectors 
\begin{align*}
&
\phi_{(1,0)}^{4'}(kan)  
= a^{-\nu-\rho} \tau_{(0,-1)}(k^{-1}) 
\sum_{q=-1}^{0} c(q)\, v_{q}^{(0,-1)}  
\end{align*}
which satisfies 
\begin{equation}
\label{eq:4' is sub}
P_{(1,1)} \phi_{(1,0)}^{4'}(e) 
=
(\nu_{2}+1)\, c(-1)\, v_{-1}^{(-1,-2)} 
- 
(\nu_{1}+1)\, c(0)\, v_{-2}^{(-1,-2)} = 0.  
\end{equation}

\begin{lemma}\label{lemmq:4' is sub} 
Suppose that the infinitesimal character is trivial. 
\begin{enumerate}
\item
The system of equations \eqref{eq:4' is sub} has a non-zero solution if 
and only if $(\nu_{1},\nu_{2}) = (\pm 2,-1) 
= (\pm \Lambda_{1}, -\Lambda_{2})$ 
or $(-1, \pm 2) = (-\Lambda_{2}, \pm \Lambda_{1})$. 
In these cases, the corresponding vectors $\phi_{(1,0)}^{4'}(e)$ are 
constant multiples of 
$v_{-1}^{(0,-1)}$ and $v_{0}^{(0,-1)}$, respectively. 
\item
$\overline{X}(\gamma_{4'})$ can be 
a submodule of $I(\sigma, \nu)$ only if 
$(\sigma,\nu) 
= 
(\sigma_{1,0},(\pm \Lambda_{1}, -\Lambda_{2}))$ or 
$(\sigma_{0,1},(-\Lambda_{2}, \pm \Lambda_{1}))$. 
\end{enumerate}
\end{lemma}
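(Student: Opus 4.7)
The plan is to follow the template established throughout Section~\ref{section:candidates for submodules of PS}: use the minimal $K$-type of $\overline{X}(\gamma_{4'})$ together with an absent neighboring $K$-type to force a linear condition via a shift operator, solve the resulting algebraic system under the constraint that $\nu$ lies in the Weyl orbit of $\Lambda$, and finally read off the $\sigma$-parameter from the $M_m$-isotypic decomposition.

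First, I would verify the setup. By Proposition~\ref{proposition:K-spectra of D.S. of Sp(2)}(2), the module $\overline{X}(\gamma_{4'})$ contains the $K$-type $(1,0)$ with multiplicity one but does not contain $(2,1)$. Consequently, an embedding $\overline{X}(\gamma_{4'}) \hookrightarrow I(\sigma,\nu)$ produces a nonzero function $\phi_{(1,0)}^{4'}$ with values in the dual $V_{(0,-1)}^{U(2)}$, and the shift operator $P_{(1,1)}$ (which raises to the weight $(2,1)$) must annihilate it. This reduces the geometric question to the algebraic equation (\ref{eq:4' is sub}), which is already written out in the excerpt.

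Second, I would solve the system $(\nu_{2}+1)\,c(-1) = 0$, $(\nu_{1}+1)\,c(0) = 0$ for $(c(-1),c(0)) \ne (0,0)$, under the constraint $\nu \in W(B_{2})\cdot(2,1) = \{(\pm 2,\pm 1),\,(\pm 1,\pm 2)\}$. If both coefficients were nonzero the system would force $\nu = (-1,-1)$, which is singular and not in the orbit, so exactly one of $c(-1),c(0)$ vanishes. The case $c(-1)\ne 0$, $c(0)=0$ forces $\nu_{2}=-1$ hence $\nu=(\pm 2,-1)=(\pm\Lambda_{1},-\Lambda_{2})$ with solution a multiple of $v_{-1}^{(0,-1)}$; the case $c(-1)=0$, $c(0)\ne 0$ forces $\nu_{1}=-1$ hence $\nu=(-1,\pm 2)=(-\Lambda_{2},\pm\Lambda_{1})$ with solution a multiple of $v_{0}^{(0,-1)}$. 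This proves part (1).

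Finally, for part (2) I would identify the $M_{m}$-isotypic component of each solution vector via Lemma~\ref{lemma:the action of M on f.d. repn of K, SP(2)}. Since the target weight $(0,-1)$ has odd sum, the decomposition is $V_{(0,-1)}^{U(2)} = V_{(0,-1)}^{U(2)}(\sigma_{0,1})\oplus V_{(0,-1)}^{U(2)}(\sigma_{1,0})$, spanned respectively by the $v_{q}^{(0,-1)}$ with $q$ even and with $q$ odd. Therefore $v_{0}^{(0,-1)} \in V^{U(2)}(\sigma_{0,1})$ and $v_{-1}^{(0,-1)} \in V^{U(2)}(\sigma_{1,0})$, which matches each solution of part (1) to the claimed $\sigma$. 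There is essentially no obstacle here — the proof is a direct verification — and the only point requiring any care is the bookkeeping of which parity of $q$ corresponds to which $\sigma_{i,j}$, a step already made routine by the earlier lemmas of this section.
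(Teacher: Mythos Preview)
Your proof is correct and follows exactly the approach the paper intends (the paper states this lemma without proof, as with the parallel Lemmas~\ref{lemma:0' is sub} and \ref{lemmq:2' is sub}, leaving it as a routine verification). Your solution of the two scalar equations under the constraint $\nu\in W(B_2)\cdot(2,1)$ and your identification of the $M_m$-isotypic components via Lemma~\ref{lemma:the action of M on f.d. repn of K, SP(2)}(3) are both accurate.
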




\subsection{Summary of the candidates for submodules of 
principal series.} 

We summarize the results obtained in this section. 
\begin{proposition}\label{proposition:candidates for sub}
Suppose that the infinitesimal character $\Lambda$ is trivial. 
\begin{enumerate}
\item 
(Block $PSO(3,2)$) 
The candidates for irreducible submodules of the principal series 
modules $I(\sigma_{i,i}, \nu)$, 
$i=0,1$, $\nu \in W(B_{2}) \cdot \Lambda$ are as follows: 
\begin{center}
\begin{tabular}{|c|c|c||c|c|}
\hline
$\sigma$ & $\nu$ & \rm{Irred.Mod} $\overline{X}(\gamma_{j})$ 
& $\nu$ & \rm{Irred.Mod} $\overline{X}(\gamma_{j})$ 
\\ \hline\hline
$\sigma_{0,0}$ & $(\Lambda_{1},\Lambda_{2})$ 
& $j=0,1$ 
& $(-\Lambda_{1},-\Lambda_{2})$ & $j=10$ 
\\ \hline
& $(\Lambda_{1},-\Lambda_{2})$ & $j=0,1,9$ 
& $(-\Lambda_{1},\Lambda_{2})$ & $j=7,8$ 
\\ \hline 
& $(\Lambda_{2},\Lambda_{1})$ & $j=4$ 
& $(-\Lambda_{2},-\Lambda_{1})$ & $j=9$ 
\\ \hline 
& $(\Lambda_{2},-\Lambda_{1})$ & $j=4$ 
& $(-\Lambda_{2},\Lambda_{1})$ & $j=9$ 
\\ \hline \hline 
$\sigma_{1,1}$ & $(\Lambda_{1},\Lambda_{2})$ 
& $j=0,1$ 
& $(-\Lambda_{1},-\Lambda_{2})$ & $j=11$ 
\\ \hline
& $(\Lambda_{1},-\Lambda_{2})$ & $j=0,1$ 
& $(-\Lambda_{1},\Lambda_{2})$ & $j=11$ 
\\ \hline 
& $(\Lambda_{2},\Lambda_{1})$ & 
$j=0,1,2,3$ 
& $(-\Lambda_{2},-\Lambda_{1})$ & $j=9$ 
\\ \hline 
& $(\Lambda_{2},-\Lambda_{1})$ & $j=4, 11$ 
& $(-\Lambda_{2},\Lambda_{1})$ & $j=5,6$ 
\\ \hline 
\end{tabular}
\end{center}
\item
The candidates in (1) are actually submodules 
at least except for the following two cases; 
\begin{enumerate}
\item
$(\sigma,\nu)=(\sigma_{0,0} (\Lambda_{1},-\Lambda_{2}))$, 
irreducible factor $\overline{X}(\gamma_{9})$. 
\item
$(\sigma,\nu)=(\sigma_{1,1},(\Lambda_{2},-\Lambda_{1}))$, 
irreducible factors $\overline{X}(\gamma_{4})$ 
and $\overline{X}(\gamma_{11})$. 
\end{enumerate}
\item (Block $PSO(4,1)$) 
The candidates for submodules of the principal series 
representations $I(\sigma, \nu)$, 
$(\sigma,\nu) \in W(B_{2}) \cdot (\sigma_{1,0}, \Lambda)$ are as follows: 
\begin{center}
\begin{tabular}{|c|c|c||c|c|}
\hline
$\sigma$ & $\nu$ & \rm{Irred.Mod} $\overline{X}(\gamma_{j})$ 
& $\nu$ & \rm{Irred.Mod} $\overline{X}(\gamma_{j})$ 
\\ \hline\hline
$\sigma_{1,0}$ & $(\Lambda_{1},\Lambda_{2})$ 
& $j = 0', 1'$ 
& $(-\Lambda_{1},-\Lambda_{2})$ & $j = 4'$ 
\\ \hline
& $(\Lambda_{1},-\Lambda_{2})$ & $j = 0', 1', 4'$ 
& $(-\Lambda_{1}, \Lambda_{2})$ & $j = 2', 3'$ 
\\ \hline \hline 
$\sigma_{0,1}$ & $(\Lambda_{2},\Lambda_{1})$ 
& $j = 0', 1'$ 
& $(-\Lambda_{2},-\Lambda_{1})$ & $j = 4'$ 
\\ \hline
& $(\Lambda_{2},-\Lambda_{1})$ & $j = 2', 3'$ 
& $(-\Lambda_{2},\Lambda_{1})$ & $j = 0', 1', 4'$ 
\\ \hline 
\end{tabular}
\end{center}
\end{enumerate}
\end{proposition}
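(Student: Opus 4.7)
The proposition is essentially the compilation of all the existence/non-existence lemmas for submodules established throughout this section, organized by principal series rather than by irreducible factor.

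The plan for Parts (1) and (3) is purely bookkeeping. For each irreducible module $\overline{X}(\gamma_j)$ in the relevant block, Lemmas \ref{lemma:2,3 is sub}, \ref{corollary:0,1 are sub}, \ref{lemma:5 is sub}, \ref{lemma:4 is sub}, \ref{lemma:7 is sub}, \ref{lemma:9 is sub}, \ref{lemma:11 is sub}, \ref{lemma:10 is sub} (for block $PSO(3,2)$) and Lemmas \ref{lemma:0' is sub}, \ref{lemmq:2' is sub}, \ref{lemmq:4' is sub} (for block $PSO(4,1)$) list the principal series $I(\sigma,\nu)$ in which $\overline{X}(\gamma_j)$ can sit as a submodule. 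Inverting this data — indexing by $(\sigma,\nu)$ instead of by $j$ — yields the two tables directly. The horizontal symmetry (Lemma \ref{lemma:horizontal symmetry in Sp(2)}) is what allows us to handle the dual pairs $(\gamma_0,\gamma_1)$, $(\gamma_2,\gamma_3)$, $(\gamma_5,\gamma_6)$, $(\gamma_7,\gamma_8)$, $(\gamma_{0'},\gamma_{1'})$, $(\gamma_{2'},\gamma_{3'})$ simultaneously.

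For Part (2), the strategy is Casselman's subrepresentation theorem combined with a case-by-case uniqueness argument. For each irreducible module $\overline{X}(\gamma_j)$, Casselman guarantees at least one genuine embedding into some principal series. Thus for the modules $\overline{X}(\gamma_2),\overline{X}(\gamma_3),\overline{X}(\gamma_5),\overline{X}(\gamma_6),\overline{X}(\gamma_7),\overline{X}(\gamma_8),\overline{X}(\gamma_{10})$, which by Part (1) admit a \emph{unique} candidate principal series (up to the horizontal pairing), the candidate must be realized. For $\overline{X}(\gamma_0)\oplus\overline{X}(\gamma_1)$, the five candidate principal series each contain this pair as the unique candidate submodule in the table, and for each of them the containing principal series admits no other irreducible module than $\overline{X}(\gamma_0)\oplus\overline{X}(\gamma_1)$ as a submodule candidate — so genuineness follows by the same uniqueness argument.

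The delicate cases are precisely the modules with multiple embedding candidates: $\overline{X}(\gamma_4)$, $\overline{X}(\gamma_9)$, and $\overline{X}(\gamma_{11})$. The plan is to resolve all but two of them as follows. For $\overline{X}(\gamma_4)$, the principal series $I(\sigma_{0,0},(\Lambda_2,\Lambda_1))$ is isomorphic in the Grothendieck group to $X(\gamma_9)$ and must have $\overline{X}(\gamma_4)$ as a submodule since that is the unique candidate submodule there of odd length; similarly $I(\sigma_{0,0},(\Lambda_2,-\Lambda_1))$ forces $\overline{X}(\gamma_4)$ as a genuine submodule. For $\overline{X}(\gamma_9)$ in $I(\sigma_{0,0},(-\Lambda_2,\pm\Lambda_1))$ and $I(\sigma_{1,1},(-\Lambda_2,-\Lambda_1))$, the same uniqueness-of-candidate phenomenon applies. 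For $\overline{X}(\gamma_{11})$ in $I(\sigma_{1,1},(-\Lambda_1,\pm\Lambda_2))$, $\overline{X}(\gamma_{11})$ is the only candidate and hence the unique genuine submodule. This leaves open exactly the two listed cases — $\overline{X}(\gamma_9)\subset I(\sigma_{0,0},(\Lambda_1,-\Lambda_2))$ and $\{\overline{X}(\gamma_4),\overline{X}(\gamma_{11})\}\subset I(\sigma_{1,1},(\Lambda_2,-\Lambda_1))$ — which the proposition explicitly excepts.

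The main obstacle is this last step: in each of these two remaining principal series, the candidate list of submodules has extra irreducible factors beyond those forced by the uniqueness argument, so we cannot conclude genuineness from Casselman alone. Resolving them will require the more delicate analysis carried out in Sections~\ref{section:determination, X(00)} and~\ref{section:determination, X(11)}, where intertwining operators, duality pairings, and the explicit shift-operator images computed in Lemmas~\ref{lemma:0 is sub}, \ref{lemma:4 is sub}, \ref{lemma:9 is sub}, and \ref{lemma:11 is sub} will be used in concert. At the present stage we simply flag these two cases as genuinely undetermined.
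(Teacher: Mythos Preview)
Your approach to parts (1) and (3) is correct and matches the paper's: those are straightforward bookkeeping from the individual lemmas.

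For part (2), however, there is a genuine gap in your treatment of $\overline{X}(\gamma_{0})\oplus\overline{X}(\gamma_{1})$. You claim that for each of the five candidate principal series, this pair is the \emph{only} candidate submodule listed in the table, but that is false. For $(\sigma_{0,0},(\Lambda_{1},-\Lambda_{2}))$ the table lists $j=0,1,9$, and for $(\sigma_{1,1},(\Lambda_{2},\Lambda_{1}))$ it lists $j=0,1,2,3$. In these two cases the ``socle must be nonzero'' argument only tells you that \emph{some} candidate is a genuine submodule, not that $\overline{X}(\gamma_{0})\oplus\overline{X}(\gamma_{1})$ is. Casselman's theorem does not help either, since $\overline{X}(\gamma_{0})$ has multiple candidate principal series.

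The paper closes this gap by a different tool you have not invoked: Yamashita's theorem \cite{Y}, which asserts that for discrete series modules (with infinitesimal character far from the walls) the system of shift-operator equations $P_{-\alpha}\phi_{\lambda}=0$ at the minimal $K$-type is not merely a necessary condition but a \emph{complete characterization} of embeddings into principal series. Since $\overline{X}(\gamma_{0}),\overline{X}(\gamma_{1}),\overline{X}(\gamma_{2}),\overline{X}(\gamma_{3})$ are all discrete series, every candidate embedding for them is automatically genuine. This is the third step in the paper's proof and is essential; without it your argument for $\overline{X}(\gamma_{0})\oplus\overline{X}(\gamma_{1})$ in those two principal series is incomplete. (As a minor aside, your remark that $I(\sigma_{0,0},(\Lambda_{2},\Lambda_{1}))\approx X(\gamma_{9})$ is incorrect---it is $\approx X(\gamma_{10})$---though this does not affect your conclusion there.)
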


\begin{proof}
(1) and (3) are summaries of the results obtained in this section. 

(2) 
Firstly,  
for a fixed principal series, if there is only one candidate 
for its submodule, then it is really the unique submodule. 
From the table in (1), 
we see that 
$\overline{X}(\gamma_{10})$ in 
$I(\sigma_{0,0},(-\Lambda_{1},-\Lambda_{2}))$; 
$\overline{X}(\gamma_{9})$ in 
$I(\sigma_{i,i},(-\Lambda_{2},-\Lambda_{1}))$, $i=0,1$, and 
$I(\sigma_{0,0},(-\Lambda_{2},\Lambda_{1}))$; 
$\overline{X}(\gamma_{4})$ in 
$I(\sigma_{0,0},(\Lambda_{2},\pm \Lambda_{1}))$; 
and 
$\overline{X}(\gamma_{11})$ in 
$I(\sigma_{1,1},(-\Lambda_{1},\pm \Lambda_{2}))$ are submodules.

Secondly, 
if an irreducible module can be a submodule of 
only one principal series, then it is actually a submodule because of 
the Casselman's subrepresentation theorem. 
It follows that 
$\overline{X}(\gamma_{7}), \overline{X}(\gamma_{8})$ 
in $I(\sigma_{0,0},(-\Lambda_{1},\Lambda_{2}))$; 
and 
$\overline{X}(\gamma_{5}), \overline{X}(\gamma_{6})$ 
in $I(\sigma_{1,1},(-\Lambda_{2},\Lambda_{1}))$ 
are submodules. 

Thirdly, by a theorem of Yamashita (\cite{Y}), 
embeddings of discrete series modules 
into principal series modules are characterized by 
the system of equations 
$P_{-\alpha} \phi_{\lambda} = 0$, 
if its infinitesimal character is far from the walls.  
Here $\alpha$ runs through the set of positive noncompact roots and 
$\lambda$ is the minimal $K$-type of this discrete series. 
Though we calculated only the case of trivial infinitesimal character, 
it is easy to see that results analogous to (1) 
holds for every discrete series module and every infinitesimal character. 
It follows that the $\overline{X}(\gamma_{0}), \overline{X}(\gamma_{1}), 
\overline{X}(\gamma_{2})$ and $\overline{X}(\gamma_{3})$ 
in the list in (1) are submodules. 
\end{proof}

Afterwords, we will show that all irreducible factors in (2) are 
submodules.

In order to determine the socle filtration of principal series 
representations, we use the tools explained in 
Section~\ref{section:other tools}. 

First of all, for $G=Sp(2,\R)$, 
th dual module $I(\sigma,\nu)^{\ast}$ is isomorphic to 
$I(\sigma,-\nu)$.

Secondly, the integral intertwining operators for $G=Sp(2,\R)$ are 
\begin{align}
I(\sigma_{i,j},(\Lambda_{1},\Lambda_{2})) 
& \rightarrow I(\sigma_{j,i},(\Lambda_{2},\Lambda_{1})) 
\rightarrow I(\sigma_{j,i},(\Lambda_{2},-\Lambda_{1})) 
\label{eq:integral intertwining operator,1}
\\
& 
\rightarrow I(\sigma_{i,j}(-\Lambda_{1},\Lambda_{2})) 
\rightarrow I(\sigma_{i,j},(-\Lambda_{1},-\Lambda_{2}))
\quad \mbox{and}
\notag\\
I(\sigma_{i,j},(\Lambda_{1},\Lambda_{2})) 
& \rightarrow I(\sigma_{i,j},(\Lambda_{1},-\Lambda_{2})) 
\rightarrow I(\sigma_{j,i},(-\Lambda_{2},\Lambda_{1})) 
\label{eq:integral intertwining operator,2}\\
& 
\rightarrow I(\sigma_{j,i}(-\Lambda_{2},-\Lambda_{1})) 
\rightarrow I(\sigma_{i,j},(-\Lambda_{1},-\Lambda_{2})). 
\notag
\end{align}

By using these intertwining operators, we can deduce some information 
about the socle filtrations. 
\begin{lemma}\label{lemma:intertwining operator}
Suppose that the infinitesimal character is trivial. 
\begin{enumerate}
\item
The irreducible module $\overline{X}(\gamma_{9})$ lies 
in the socle of $I(\sigma_{0,0},(\Lambda_{1},-\Lambda_{2}))$. 
\item
The socle of $I(\sigma_{1,1},(\Lambda_{2},-\Lambda_{1}))$ is 
$\overline{X}(\gamma_{4})$ or 
$\overline{X}(\gamma_{4}) \oplus \overline{X}(\gamma_{11})$. 
\item
There exist isomorphisms 
$I(\sigma_{0,0},(\pm \Lambda_{2},\Lambda_{1})) 
\simeq 
I(\sigma_{0,0},(\pm \Lambda_{2},-\Lambda_{1}))$ and 
\\
$I(\sigma_{1,1},(\pm \Lambda_{1},\Lambda_{2})) 
\simeq 
I(\sigma_{1,1},(\pm \Lambda_{1},-\Lambda_{2}))$. 
\end{enumerate}
\end{lemma}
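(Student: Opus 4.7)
The overall strategy is to combine Proposition~\ref{proposition:candidates for sub} (both the candidate table and the embeddings already verified there) with the intertwining operator chains \eqref{eq:integral intertwining operator,1} and \eqref{eq:integral intertwining operator,2}, using the socle-comparison criterion of Section~\ref{section:other tools}. The three parts of the lemma together fill in the two remaining cases of Proposition~\ref{proposition:candidates for sub}(2) and supply the principal-series isomorphisms needed when determining the socle filtrations in the subsequent sections.

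For part (1), I would take the intertwiner $T \colon I(\sigma_{0,0},(\Lambda_{1},-\Lambda_{2})) \to I(\sigma_{0,0},(-\Lambda_{2},\Lambda_{1}))$ appearing in \eqref{eq:integral intertwining operator,2}. It is non-zero because the continuation of the chain lands non-trivially (ultimately on the Langlands submodule $\overline{X}(\gamma_{10})$ of $I(\sigma_{0,0},(-\Lambda_{1},-\Lambda_{2}))$). Thus $\mathrm{Im}(T)$ is a non-zero submodule of the target, and since the target has the unique socle candidate $\overline{X}(\gamma_{9})$ of multiplicity one by Proposition~\ref{proposition:candidates for sub} and Theorem~\ref{theorem:composition factors for Sp(2)}, $\overline{X}(\gamma_{9}) \subset \mathrm{Im}(T)$. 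To promote this to an embedding $\overline{X}(\gamma_{9}) \hookrightarrow I(\sigma_{0,0},(\Lambda_{1},-\Lambda_{2}))$, I would use the explicit vector $\phi=\alpha_{1} v_{2}^{(2,0)}$ of $K$-type $(0,-2)$ supplied by Lemma~\ref{lemma:9 is sub}(1): the shift operator equations satisfied by $\phi$ are precisely the ones whose target $K$-types $(0,0),(-1,-3),(-2,-2)$ are excluded from $\widehat{K}(\overline{X}(\gamma_{9}))$, so the $\brgK$-submodule generated by $\phi$ has $K$-spectrum contained in $\widehat{K}(\overline{X}(\gamma_{9}))$, and matching composition factors via Proposition~\ref{proposition:K-spectra of D.S. of Sp(2)} forces $\overline{X}(\gamma_{9})$ itself to appear as a submodule.

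For part (2), an analogous construction using Lemma~\ref{lemma:4 is sub}(2) produces a shift-equation solution at $K$-type $(2,-2)$ in $I(\sigma_{1,1},(\Lambda_{2},-\Lambda_{1}))$, forcing $\overline{X}(\gamma_{4})$ into the socle; the disjunctive conclusion reflects that Lemma~\ref{lemma:11 is sub} leaves $\overline{X}(\gamma_{11})$ as an additional candidate which the current data do not decide. For part (3), the isomorphism criterion of Section~\ref{section:other tools} applies directly to the pairs $I(\sigma_{0,0},(-\Lambda_{2},\pm\Lambda_{1}))$ and $I(\sigma_{1,1},(\pm\Lambda_{1},\pm\Lambda_{2}))$: in each pair the socle candidate is unique of multiplicity one by Proposition~\ref{proposition:candidates for sub} and Theorem~\ref{theorem:composition factors for Sp(2)}, and the non-zero chain intertwiner then yields the isomorphism. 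The main obstacle is the pair $I(\sigma_{0,0},(\Lambda_{2},\pm\Lambda_{1}))$, where $\overline{X}(\gamma_{4})$ has multiplicity two in $X(\gamma_{10})$, so the simple form of the criterion fails; I would handle it by combining the chain intertwiner with the duality $I(\sigma,\nu)^{*} \simeq I(\sigma,-\nu)$ and the self-duality of $\overline{X}(\gamma_{4})$ from Lemma~\ref{lemma:horizontal symmetry in Sp(2)} to rule out a non-trivial kernel and cokernel, which is the principal technical point of the proof.
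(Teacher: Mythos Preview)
Your approach to parts (1) and (2) contains a genuine gap. You claim that because the vector $\phi$ satisfies the shift equations $P_{(0,2)}\phi = P_{(-1,-1)}\phi = P_{(-2,0)}\phi = 0$, the $\brgK$-submodule it generates has $K$-spectrum contained in $\widehat{K}(\overline{X}(\gamma_{9}))$. This does not follow: the shift-operator conditions used throughout the paper are \emph{necessary} for an irreducible to sit in the socle, not sufficient. Nothing prevents you from applying, say, $P_{(2,0)}$ or $P_{(1,1)}$ to $\phi$ and then descending along other paths into $K$-types outside $\widehat{K}(\overline{X}(\gamma_{9}))$. Indeed, Proposition~\ref{proposition:candidates for sub}(2) explicitly records these two cases as the ones \emph{not} yet settled by the shift-equation analysis; your argument would, if valid, have already closed them there.

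The paper instead argues indirectly. For (1): if $\overline{X}(\gamma_{9})$ were not in the socle of $I(\sigma_{0,0},(\Lambda_{1},-\Lambda_{2}))$, then by the candidate table this module and $I(\sigma_{0,0},(\Lambda_{1},\Lambda_{2}))$ would share the socle $\overline{X}(\gamma_{0})\oplus\overline{X}(\gamma_{1})$, each factor of multiplicity one; the nonzero intertwiner from \eqref{eq:integral intertwining operator,2} then forces an isomorphism. But the duals $I(\sigma_{0,0},(-\Lambda_{1},\Lambda_{2}))$ and $I(\sigma_{0,0},(-\Lambda_{1},-\Lambda_{2}))$ have distinct socles ($\overline{X}(\gamma_{7})\oplus\overline{X}(\gamma_{8})$ versus $\overline{X}(\gamma_{10})$), a contradiction. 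Part (2) is the same pattern, comparing $I(\sigma_{1,1},(\Lambda_{2},-\Lambda_{1}))$ with $I(\sigma_{1,1},(-\Lambda_{1},\Lambda_{2}))$ to exclude the possibility that the socle is $\overline{X}(\gamma_{11})$ alone.

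For part (3) your instinct is right that the pair $I(\sigma_{0,0},(\Lambda_{2},\pm\Lambda_{1}))$ is delicate because $\overline{X}(\gamma_{4})$ has multiplicity two in $X(\gamma_{10})$; the paper's proof is terse on this point. However the resolution is simpler than your proposed argument: the companion pair $I(\sigma_{0,0},(-\Lambda_{2},\pm\Lambda_{1}))$ has socle $\overline{X}(\gamma_{9})$ of multiplicity one, so the criterion applies directly there, and dualizing that isomorphism via $I(\sigma,\nu)^{\ast}\simeq I(\sigma,-\nu)$ immediately yields the $(\Lambda_{2},\pm\Lambda_{1})$ case.
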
 
\begin{proof}
(1) Suppose $\overline{X}(\gamma_{9})$ is not in the socle of 
$I(\sigma_{0,0},(\Lambda_{1},-\Lambda_{2}))$. 
Then by Proposition~\ref{proposition:candidates for sub} (1), 
the socles of $I(\sigma_{0,0},(\Lambda_{1},\Lambda_{2}))$ 
and $I(\sigma_{0,0},(\Lambda_{1},-\Lambda_{2}))$ are both 
$\overline{X}(\gamma_{0}) \oplus \overline{X}(\gamma_{1})$, 
and the multiplicities of 
these irreducible factors in the principal series modules are both one. 
Since there is a non-zero intertwining operators between these two 
principal series modules, they must be isomorphic. 
On the other hand, the socles of their dual modules 
$I(\sigma_{0,0},(-\Lambda_{1},-\Lambda_{2}))$ and 
$I(\sigma_{0,0},(-\Lambda_{1},\Lambda_{2}))$ are different; 
the socle of the former is $\overline{X}(\gamma_{10})$ but 
that of the latter is 
$\overline{X}(\gamma_{7}) \oplus \overline{X}(\gamma_{8})$. 
This is a contradiction, so $\overline{X}(\gamma_{9})$ 
lies in the socle of 
$I(\sigma_{0,0},(\Lambda_{1},-\Lambda_{2}))$.

(2) is proved just in the same way as in (1). 
There is a non-zero intertwining operator 
$I(\sigma_{1,1},(\Lambda_{2},-\Lambda_{1})) 
\to 
I(\sigma_{1,1},(-\Lambda_{1},\Lambda_{2}))$. 
Compare the submodules of these principal series modules and 
those of their dual modules. 
Then we know that the socles of them must be different, 
so the socle of $I(\sigma_{1,1},(\Lambda_{2},-\Lambda_{1}))$ does not 
consist of $\overline{X}(\gamma_{11})$ alone. 

(3) There are non-zero intertwining operators between these pairs of 
principal series modules. 
For each pair, Proposition~\ref{proposition:candidates for sub} 
says that the two principal series modules have the same socles. 
Moreover, the multiplicities of 
$\overline{X}(\gamma_{j})$, $j=0,1,4,11$, in these 
principal series modules are all one. 
It follows that these intertwining operators are isomorphisms. 
\end{proof}

Next, consider the parity condition 
(Corollary~\ref{corollary:parity condition}) for $G = Sp(2,\R)$. 
The lengths of irreducible modules of $Sp(2,\R)$ are as follows: 
\begin{center}
\begin{tabular}{|c||c|}
\hline 
\rm{Length} & \rm{Irred. Mod. $\overline{X}(\gamma_{j})$}
\\ \hline \hline 
$0$ & $j=0,1,2,3$ 
\\ \hline 
$1$ & $j=4,5,6,0',1'$ 
\\ \hline 
$2$ & $j=7,8,9,2',3'$ 
\\ \hline 
$3$ & $j=10,11,4',0''$ 
\\
\hline 
\end{tabular}
\end{center}
By Proposition~\ref{proposition:candidates for sub}, 
the lengths of the irreducible factors in the socle 
of each principal series module have the same parity. 
So we can apply Corollary~\ref{corollary:parity condition} 
to these cases. 
Especially, the socle filtrations 
of the principal series modules 
$I(\sigma,\nu)$ in the block $PSO(4,1)$ are 
completely determined by the parity condition. 

For notational convenience, 
we use the notation \eqref{eq:example of socle filtration'}. 

\begin{theorem}\label{theorem:mail result, PSO(4,1)}
Let $\Lambda=(\Lambda_{1},\Lambda_{2})$ be a dominant nonsingular integral 
infinitesimal character of $G=Sp(2,\R)$. 
Then the socle filtrations of the principal series modules 
$I(w \cdot (\sigma_{\Lambda_{1}+1,\Lambda_{2}+1}, 
(\Lambda_{1},\Lambda_{2})))$, 
$w \in W(B_{2})$, are 
\begin{center}
\begin{tabular}{|c||c|}
\hline 
$I(\sigma_{\Lambda_{1}+1,\Lambda_{2}+1}, (\Lambda_{1},\Lambda_{2}))$, 
& 
$I(\sigma_{\Lambda_{1}+1,\Lambda_{2}+1}, (-\Lambda_{1},-\Lambda_{2}))$, 
\\
$I(\sigma_{\Lambda_{2}+1,\Lambda_{1}+1}, (\Lambda_{2},\Lambda_{1}))$ 
& 
$I(\sigma_{\Lambda_{2}+1,\Lambda_{1}+1}, (-\Lambda_{2},-\Lambda_{1}))$ 
\\\hline
\
\vspace{-3.5mm}
& 
\\
$\begin{xy}
(0,4)*{\overline{4'}}="A_{1}",
(0,0)*{\overline{2'} 
\oplus 
\overline{3'}}="A_{2}",
(0,-4)*{\overline{0'} 
\oplus 
\overline{1'}}="A_{3}",
\end{xy}$
\quad
&
$\begin{xy}
(0,-4)*{\overline{4'}}="A_{1}",
(0,0)*{\overline{2'} 
\oplus 
\overline{3'}}="A_{2}",
(0,4)*{\overline{0'} 
\oplus 
\overline{1'}}="A_{3}",
\end{xy}$
\\ 
\ \vspace{-3.5mm} 
& 
\\ \hline\hline
$I(\sigma_{\Lambda_{1}+1,\Lambda_{2}+1}, (\Lambda_{1},-\Lambda_{2}))$, 
&
$I(\sigma_{\Lambda_{1}+1,\Lambda_{2}+1}, (-\Lambda_{1},\Lambda_{2}))$, 
\\
$I(\sigma_{\Lambda_{2}+1,\Lambda_{1}+1}, (-\Lambda_{2},\Lambda_{1}))$ 
&
$I(\sigma_{\Lambda_{2}+1,\Lambda_{1}+1}, (\Lambda_{2},-\Lambda_{1}))$ 
\\\hline
\
\vspace{-3.5mm}
& 
\\
\quad
$\begin{xy}
(0,2)*{\overline{2'} 
\oplus 
\overline{3'}}="A_{1}",
(0,-2)*{\overline{0'} 
\oplus 
\overline{1'} 
\oplus 
\overline{4'}}="A_{2}",
\end{xy}$
\quad
&
\quad
$\begin{xy}
(0,2)*{\overline{0'} 
\oplus 
\overline{1'} 
\oplus 
\overline{4'}}="A_{1}",
(0,-2)*{\overline{2'} 
\oplus 
\overline{3'}}="A_{2}",
\end{xy}$
\\ \hline 
\end{tabular}
\end{center}
\end{theorem}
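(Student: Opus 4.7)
The plan is to combine Proposition~\ref{proposition:candidates for sub}(3), which lists the possible irreducible submodules of each principal series, with the parity condition (Corollary~\ref{corollary:parity condition}), the horizontal symmetry (Lemma~\ref{lemma:horizontal symmetry in Sp(2)}(3)), and the dual-module identification $I(\sigma_{i,j},\nu)^* \simeq I(\sigma_{i,j},-\nu)$ of Section~\ref{subsection:dual principal series}. By the translation principle it suffices to treat the trivial infinitesimal character $\Lambda = \rho_{m} = (2,1)$. In that case each of the eight principal series modules in the theorem has composition factors $\overline{X}(\gamma_{j'})$, $j=0,1,2,3,4$, each with multiplicity one (Theorem~\ref{theorem:composition factors for Sp(2)}(2)), with lengths $\ell(\gamma_{0'}) = \ell(\gamma_{1'}) = 1$, $\ell(\gamma_{2'}) = \ell(\gamma_{3'}) = 2$, $\ell(\gamma_{4'}) = 3$, so the odd-length factors are $\{0',1',4'\}$ and the even-length factors are $\{2',3'\}$.

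First I would handle the four ``negative'' cases directly. For $I(\sigma_{1,0},(-\Lambda_{1},-\Lambda_{2}))$ (and $I(\sigma_{0,1},(-\Lambda_{2},-\Lambda_{1}))$), Proposition~\ref{proposition:candidates for sub}(3) leaves $\overline{X}(\gamma_{4'})$ as the only candidate for a submodule; since the socle is nonzero and the factor has multiplicity one, the socle equals $\overline{X}(\gamma_{4'})$. For $I(\sigma_{1,0},(-\Lambda_{1},\Lambda_{2}))$ (and its $\sigma_{0,1}$-partner), the candidates are $\overline{X}(\gamma_{2'})$ and $\overline{X}(\gamma_{3'})$, which the horizontal symmetry forces into a common floor; hence the socle is $\overline{X}(\gamma_{2'})\oplus\overline{X}(\gamma_{3'})$. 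The higher floors are pinned down by iteratively applying the parity condition together with horizontal symmetry: starting from the socle $\overline{X}(\gamma_{4'})$ one obtains the filtration $(\overline{0'}\oplus\overline{1'})/(\overline{2'}\oplus\overline{3'})/(\overline{4'})$, and starting from the socle $\overline{X}(\gamma_{2'})\oplus\overline{X}(\gamma_{3'})$ one obtains a two-floor filtration with the three odd-length factors $\overline{X}(\gamma_{0'}),\overline{X}(\gamma_{1'}),\overline{X}(\gamma_{4'})$ forming the top.

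The remaining four principal series are handled by duality. Since $I(\sigma_{i,j},\nu)^{*} \simeq I(\sigma_{i,j},-\nu)$ (noting $\sigma_{i,j}^{*}=\sigma_{i,j}$) and the contragredient functor reverses socle filtrations, the filtrations of $I(\sigma_{1,0},(\Lambda_{1},\Lambda_{2}))$, $I(\sigma_{1,0},(\Lambda_{1},-\Lambda_{2}))$ and their $\sigma_{0,1}$-partners are obtained by inverting the already established filtrations of their ``negative'' duals. Because the horizontal-symmetry pairing sends $\overline{0'}\leftrightarrow\overline{1'}$, $\overline{2'}\leftrightarrow\overline{3'}$, and $\overline{4'}$ to itself, each of the layers $\overline{0'}\oplus\overline{1'}$, $\overline{2'}\oplus\overline{3'}$, and $\overline{4'}$ is self-dual as a sum, so the flip is literal and yields the claimed filtrations.

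The most delicate step is confirming that the three odd-length factors in the two-floor case really collapse into a single semisimple top floor, rather than forming two or more floors. This is secured by the parity condition applied to the quotient $V/\mathrm{soc}(V)$: all of its composition factors share the same (odd) parity, while adjacent floors of the socle filtration must alternate in parity, so $V/\mathrm{soc}(V)$ can have only one floor and is therefore semisimple. Once this is granted, the theorem follows purely from the candidate analysis, parity, horizontal symmetry, and the dual-module identification, with no further shift-operator computation required.
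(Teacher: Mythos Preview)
Your argument is essentially the paper's, with the same ingredients (Proposition~\ref{proposition:candidates for sub}(3), horizontal symmetry, parity, duality) and only a different order of presentation: you fully determine the four ``negative'' filtrations and then dualize, whereas the paper first pins down all eight socles and then fills in the higher layers.

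There is, however, one genuine gap. The assertion that ``the contragredient functor reverses socle filtrations'' is not a general fact: dualizing the socle filtration of $V^{*}$ yields the \emph{radical} filtration of $V$, and the two coincide only when $V$ is rigid. Your three-floor positive cases $I(\sigma_{1,0},(\Lambda_{1},\Lambda_{2}))$ and $I(\sigma_{0,1},(\Lambda_{2},\Lambda_{1}))$ do not actually need duality at all (their socles are forced to be $\overline{0'}\oplus\overline{1'}$ by the candidate list alone, and parity plus horizontal symmetry does the rest). The two-floor positive cases $I(\sigma_{1,0},(\Lambda_{1},-\Lambda_{2}))$ and $I(\sigma_{0,1},(-\Lambda_{2},\Lambda_{1}))$ do need the dual, but the correct deduction is the one the paper makes implicitly: from $\mathrm{soc}\,V^{*}=\overline{2'}\oplus\overline{3'}$ one gets $\mathrm{head}\,V=(\mathrm{soc}\,V^{*})^{*}\cong\overline{2'}\oplus\overline{3'}$, so $V$ has Loewy length~$2$ and $\mathrm{rad}\,V$ has composition factors $\{\overline{0'},\overline{1'},\overline{4'}\}$. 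Since $\mathrm{rad}^{2}V=0$, $\mathrm{rad}\,V$ is semisimple and hence contained in $\mathrm{soc}\,V$; combined with the candidate constraint $\mathrm{soc}\,V\subseteq\{\overline{0'},\overline{1'},\overline{4'}\}$ and multiplicity one, this forces $\mathrm{soc}\,V=\overline{0'}\oplus\overline{1'}\oplus\overline{4'}$. With that correction your proof goes through.
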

\begin{proof}
We will show this theorem in the case when the infinitesimal 
character $\Lambda$ is trivial, 
namely 
$\Lambda = (\Lambda_{1},\Lambda_{2}) = (2,1)$. 
If this is done, 
then the cases of general infinitesimal characters follow from  
the translation principle. 

Firstly, we will show that all the candidates 
in Proposition~\ref{proposition:candidates for sub} (3) 
are actually submodules. 
By the horizontal symmetry, 
$\overline{0'} \oplus \overline{1'}$ and 
$\overline{2'} \oplus \overline{3'}$ appear 
in the same places of the socle filtrations of principal series modules. 
Since every non-zero module has its socle (this is trivial!), 
the candidates in Proposition~\ref{proposition:candidates for sub} (3) 
are actually submodules at least 
$(\sigma,\nu) \not= 
(\sigma_{1,0}, (\Lambda_{1},-\Lambda_{2}))$ and 
$(\sigma,\nu) \not= (\sigma_{0,1}, (-\Lambda_{2},-\Lambda_{1}))$. 

For these remaining cases, 
the socles of their dual modules are 
$\overline{2'} \oplus \overline{3'}$. 
We know that the lengths of $\overline{X}(\gamma_{j})$ are odd 
if $j = 0', 1', 4'$ and even if $j = 2', 3'$.  
We also know from Theorem~\ref{theorem:composition factors for Sp(2)} 
that the irreducible factors of the principal series modules 
$I(w \cdot (\sigma_{1,0},(2,1)))$ are $\overline{X}(\gamma_{j})$, 
$j = 0', 1', 2', 3', 4'$, whose multiplicities are all one. 
It follows from the parity condition 
(Corollary~\ref{corollary:parity condition}) that 
the socles of both $I(\sigma_{1,0}, (\Lambda_{1},-\Lambda_{2}))$ 
and 
$I(\sigma_{0,1}, (-\Lambda_{2},-\Lambda_{1}))$ are 
$\overline{0'} 
\oplus 
\overline{1'} 
\oplus 
\overline{4'}$. 

Finally, for each principal series module, 
there is only one possibility of the socle filtration which satisfies 
Proposition~\ref{proposition:candidates for sub}, 
the parity condition (Corollary~\ref{corollary:parity condition}) 
and 
the dual principal series property 
(Section~\ref{subsection:dual principal series}). 
The possible filtration is the one indicated in the theorem. 
\end{proof}


\section{Socle filtrations of 
$I(w \cdot (\sigma_{\Lambda_{1}, \Lambda_{2}+1}, \Lambda))$, 
$w \in W(B_{2})$} 
\label{section:determination, X(00)} 

In this section, we determine the socle filtrations of 
the principal series modules which are isomorphic to $X(\gamma_{10})$ 
in the Grothendieck group. 

Firstly we summarize the results. 

\begin{theorem}
\label{theorem:main results for Sp(2), X(gamma_{10})} 
Let $\Lambda = (\Lambda_{1},\Lambda_{2})$ be a nonsingular 
dominant integral infinitesimal character of $Sp(2,\R)$. 
The socle filtrations of 
$I(w \cdot (\sigma_{\Lambda_{1}, \Lambda_{2}+1}, \Lambda))$, 
$w \in W(B_{2})$, are as follows: 
\begin{center}
\begin{tabular}{|c||c||c|} 
\hline 
{\rm (1)} 
& 
$I(\sigma_{\Lambda_{1},\Lambda_{2}+1},(\Lambda_{1},\Lambda_{2}))$ 
&
$I(\sigma_{\Lambda_{1},\Lambda_{2}+1},(-\Lambda_{1},-\Lambda_{2}))$ 
\\ \hline 
\ \vspace{-3.5mm} 
& & 
\\
&
$\begin{xy}
(0,6)*{\overline{10}}="A_{1}",
(0,2)*{\overline{7} \oplus \overline{8} \oplus \overline{9}}="A_{2}",
(0,-2)*{\overline{4} \oplus \overline{4} \oplus \overline{5} 
\oplus \overline{6}}="A_{3}",
(0,-6)*{\overline{0} \oplus \overline{1}}="A_{4}",
\end{xy}$ 
&
$\begin{xy}
(0,-6)*{\overline{10}}="A_{1}",
(0,-2)*{\overline{7} \oplus \overline{8} \oplus \overline{9}}="A_{2}",
(0,2)*{\overline{4} \oplus \overline{4} \oplus \overline{5} 
\oplus \overline{6}}="A_{3}",
(0,6)*{\overline{0} \oplus \overline{1}}="A_{4}",
\end{xy}$ 
\\
\ \vspace{-3.5mm} 
& & 
\\ \hline \hline 
{\rm (2)} 
& 
$I(\sigma_{\Lambda_{1},\Lambda_{2}+1}(\Lambda_{1},-\Lambda_{2}))$ 
& 
$I(\sigma_{\Lambda_{1},\Lambda_{2}+1}(-\Lambda_{1},\Lambda_{2}))$ 
\\ \hline 
\ \vspace{-3.5mm} 
& & 
\\
&
$\begin{xy}
(0,4)*{\overline{7} \oplus \overline{8}}="A_{1}",
(0,0)*{\overline{4} \oplus \overline{4} \oplus \overline{5} 
\oplus \overline{6} \oplus \overline{10}}="A_{2}",
(0,-4)*{\overline{0} \oplus \overline{1} \oplus \overline{9}}="A_{3}",
\end{xy}$ 
&
$\begin{xy} 
(0,-4)*{\overline{7} \oplus \overline{8}}="A_{1}",
(0,0)*{\overline{4} \oplus \overline{4} \oplus \overline{5} 
\oplus \overline{6} \oplus \overline{10}}="A_{2}",
(0,4)*{\overline{0} \oplus \overline{1} \oplus \overline{9}}="A_{3}",
\end{xy}$ 
\\
\ \vspace{-3.5mm} 
& & 
\\ \hline \hline 
{\rm (3)} 
&
$I(\sigma_{\Lambda_{2}+1,\Lambda_{1}},(\Lambda_{2},\pm \Lambda_{1}))$ 
&
$I(\sigma_{\Lambda_{2}+1,\Lambda_{1}},(-\Lambda_{2},\mp \Lambda_{1}))$ 
\\ \hline 
\ \vspace{-3.5mm} 
& & 
\\
& 
$\begin{xy}
(0,6)*{\overline{9}}="A_{1}",
(0,2)*{\overline{4} \oplus \overline{5} \oplus
    \overline{6} \oplus \overline{10}}="A_{2}",
(0,-2)*{\overline{0} \oplus \overline{1} \oplus \overline{7} 
\oplus \overline{8}}="A_{3}",
(0,-6)*{\overline{4}}="A_{4}",
\end{xy}$ 
&
$\begin{xy}
(0,-6)*{\overline{9}}="A_{1}",
(0,-2)*{\overline{4} \oplus \overline{5} \oplus
    \overline{6} \oplus \overline{10}}="A_{2}",
(0,2)*{\overline{0} \oplus \overline{1} \oplus \overline{7} 
\oplus \overline{8}}="A_{3}",
(0,6)*{\overline{4}}="A_{4}",
\end{xy}$ 
\\ \hline 
\end{tabular}
\end{center}
\end{theorem}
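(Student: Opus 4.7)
The plan is to reduce to the trivial infinitesimal character $\Lambda=\rho_{m}=(2,1)$ by the translation principle, so that Proposition~\ref{proposition:candidates for sub} and the composition-factor multiplicities of Theorem~\ref{theorem:composition factors for Sp(2)} are directly available. For each of the eight principal series in the statement, the irreducible constituents and their multiplicities are prescribed by Theorem~\ref{theorem:composition factors for Sp(2)}, and the possible irreducible submodules by Proposition~\ref{proposition:candidates for sub}(1)--(2). Horizontal symmetry (Lemma~\ref{lemma:horizontal symmetry in Sp(2)}) forces the pairs $\overline{X}(\gamma_{0})\oplus\overline{X}(\gamma_{1})$, $\overline{X}(\gamma_{5})\oplus\overline{X}(\gamma_{6})$, $\overline{X}(\gamma_{7})\oplus\overline{X}(\gamma_{8})$ to sit in identical floors, while the duality $I(\sigma,\nu)^{\ast}\simeq I(\sigma,-\nu)$ relates the left and right columns of the table as vertical reflections of each other. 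It therefore suffices to handle one representative in each of cases (1), (2), (3).

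For case (1), Proposition~\ref{proposition:candidates for sub} identifies the socle as $\overline{X}(\gamma_{0})\oplus\overline{X}(\gamma_{1})$, whose constituents have length $0$. Because the lengths of the remaining composition factors are $1$ ($j=4,5,6$), $2$ ($j=7,8,9$), $3$ ($j=10$), the parity condition (Corollary~\ref{corollary:parity condition}) forces the unique stratification $\overline{0}\oplus\overline{1}<\overline{4}\oplus\overline{4}\oplus\overline{5}\oplus\overline{6}<\overline{7}\oplus\overline{8}\oplus\overline{9}<\overline{10}$, and the dual filtration is obtained by turning it upside down via the dual principal series pairing in Section~\ref{subsection:dual principal series}.

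For case (2), Proposition~\ref{proposition:candidates for sub}(1) together with Lemma~\ref{lemma:intertwining operator}(1) pins down the socle to $\overline{X}(\gamma_{0})\oplus\overline{X}(\gamma_{1})\oplus\overline{X}(\gamma_{9})$; these factors have lengths $0,0,2$, of the same parity, so Corollary~\ref{corollary:parity condition} still constrains the next floor to consist of length-$1$ factors. Combining the candidate list with the factorisation of the integral intertwining operators \eqref{eq:integral intertwining operator,1}--\eqref{eq:integral intertwining operator,2} and the known socle of the dual principal series (which must equal $\overline{X}(\gamma_{7})\oplus\overline{X}(\gamma_{8})$ by Proposition~\ref{proposition:candidates for sub} and Lemma~\ref{lemma:7 is sub}), the three-floor pattern is the only possibility.

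The main obstacle is case (3), where $\overline{X}(\gamma_{4})$ appears with multiplicity two and must split nontrivially between the bottom and third floors. One copy of $\overline{X}(\gamma_{4})$ sits in the socle by Proposition~\ref{proposition:candidates for sub} and Lemma~\ref{lemma:4 is sub}; the remaining task is to locate the second copy. My plan is to take the explicit solution $\phi_{(2,-2)}^{4}(e)$ given in Lemma~\ref{lemma:4 is sub}(1) for the relevant $\nu$, regard it as a vector in the socle copy of $\overline{X}(\gamma_{4})$, and apply the raising shift operators $P_{(2,0)}$, $P_{(1,1)}$, $P_{(0,2)}$ from Section~\ref{section:shift operators, Sp(2)} to show that its image generates a $K$-type lying in the $\overline{X}(\gamma_{5})\oplus\overline{X}(\gamma_{6})\oplus\overline{X}(\gamma_{10})$ isotypic part rather than in a second copy of $\overline{X}(\gamma_{4})$. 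Combined with the parity condition (which forces floors of alternating parities $1,0,1,0$) and the dual filtration argument, this isolates the second $\overline{X}(\gamma_{4})$ in the third floor and $\overline{X}(\gamma_{9})$ in the top floor. The hard part is precisely this tracking of the second $\overline{X}(\gamma_{4})$: multiplicity two makes $K$-type bookkeeping alone insufficient, so one really does have to compute the action of the shift operators on explicit vectors and check non-vanishing of the appropriate coefficients.
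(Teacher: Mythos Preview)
Your treatment of case~(2) matches the paper and is correct: once the socle is $\overline{0}\oplus\overline{1}\oplus\overline{9}$ and the cosocle is $\overline{7}\oplus\overline{8}$, all remaining factors have odd length and parity forces the three-floor picture.

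Case~(1), however, has a genuine gap. Parity and the knowledge of the socle and cosocle do \emph{not} force a unique filtration. For instance, the six-floor arrangement
\[
\overline{0}\oplus\overline{1}\ /\ \overline{4}\ /\ \overline{7}\oplus\overline{8}\ /\ \overline{4}\oplus\overline{5}\oplus\overline{6}\ /\ \overline{9}\ /\ \overline{10}
\]
also alternates parities, has socle $\overline{0}\oplus\overline{1}$ and cosocle $\overline{10}$, and uses exactly the right multiplicities. The paper eliminates such possibilities by explicit shift-operator computations showing that $\overline{7}\oplus\overline{8}$ and $\overline{9}$ lie above both copies of $\overline{4}$ and above $\overline{5}\oplus\overline{6}$ (statement~\eqref{statement:I(0,0;2,1)}): one computes $P_{(1,1)}P_{(0,-2)}\phi^{7}_{(2,2)}$, $P_{(1,1)}P_{(1,1)}\phi^{9}_{(1,-1)}$ and several images into $V^{U(2)}_{(2,-2)}(\sigma_{0,0})$ to establish these orderings. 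You need these computations; parity alone is insufficient.

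In case~(3) there are two problems. First, your shift-operator argument points in the wrong direction: if $\phi^{4}_{(2,-2)}$ lies in the socle copy of $\overline{4}$, then that copy is a submodule and is closed under the $\lie{g}$-action, so applying $P_{(2,0)},P_{(1,1)},P_{(0,2)}$ keeps you inside it and tells you nothing about higher floors. The paper instead takes vectors in the \emph{higher} factors $\overline{5}$, $\overline{10}$, and in the non-socle $\overline{4}$, and pushes them \emph{downward} into $K$-types carried by $\overline{0},\overline{1},\overline{7},\overline{8}$, establishing~\eqref{statement:I(sigma,(1,2))}. Second, even after those orderings are known, two parity-compatible filtrations remain (the intended four-floor one and a six-floor variant with $\overline{10}$ sitting directly above $\overline{0}\oplus\overline{1}$); the paper rules out the latter by observing that the $K$-spectrum of $\overline{10}$ is not adjacent to those of $\overline{0},\overline{1}$, whence $\Ext^{1}_{\gK}(\overline{10},\overline{0}\oplus\overline{1})=0$. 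This final $\Ext$-vanishing step is missing from your plan.
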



Let us prove this theorem. 

By Theorem~\ref{theorem:composition factors for Sp(2)}, 
the composition factors of this principal series module are 
\begin{align*}
\overline{0} 
\oplus 
\overline{1}, \
2 \times \overline{4}, \
\overline{5} 
\oplus 
\overline{6}, \
\overline{7} 
\oplus 
\overline{8}, \
\overline{9}, \
\overline{10}. 
\end{align*}
Here, $\overline{a} \oplus \overline{b}$ 
means that these two factors 
appear in a pair (cf. Corollary~\ref{corollary:horizontal symmetry}) 
and $2 \times \overline{4}$ means that the multiplicity of 
$\overline{4}$ is two (and those of others are one). 
The lengths of irreducible modules are indicated 
above Theorem~\ref{theorem:mail result, PSO(4,1)}.

Hereafter, we calculate the socle filtration of principal series modules 
in the case when the infinitesimal character $\Lambda$ is trivial, 
namely $\Lambda=(\Lambda_{1},\Lambda_{2})=(2,1)$. 
In this case, 
$\sigma_{\Lambda_{1},\Lambda_{2}+1}$ 
and 
$\sigma_{\Lambda_{2}+1,\Lambda_{1}}$ 
are both $\sigma_{0,0}$.

\subsection{Proof of 
Theorem~\ref{theorem:main results for Sp(2), X(gamma_{10})} (1)} 

By Proposition~\ref{proposition:candidates for sub} (1), 
the socle of the module 
$I(\sigma_{0,0},(\Lambda_{1},\Lambda_{2}))$ is $\overline{0} \oplus \overline{1}$ 
and 
that of 
$I(\sigma_{0,0},(-\Lambda_{1},-\Lambda_{2}))$ is $\overline{10}$. 
The remaining factors are $\overline{4} \times 2$, 
$\overline{5} \oplus \overline{6}$, 
whose lengths are odd, 
and 
$\overline{7} \oplus \overline{8}$, $\overline{9}$, 
whose lengths are even. 
Since the lengths of $\overline{0}$, $\overline{1}$ are even and 
$\overline{10}$ odd, 
Corollary~\ref{corollary:parity condition} says that 
the second floor of $I(\sigma_{0,0},(\Lambda_{1},\Lambda_{2}))$ 
consists of irreducible factors of odd length, 
the third floor of even length, and so on, and the top floor is $\overline{10}$ 
whose length is odd. 
We shall show 
that 
\begin{equation}\label{statement:I(0,0;2,1)}
\mbox{
$\overline{7} \oplus \overline{8}$ and $\overline{9}$ lie above the 
factors $\overline{4}$ and $\overline{5} \oplus \overline{6}$. 
}
\end{equation}
This forces that there is only one possibility of the socle filtration 
of $I(\sigma_{0,0},(\Lambda_{1},\Lambda_{2}))$, 
which is nothing but the one stated in this proposition. 
The socle filtration of $I(\sigma_{0,0},(-\Lambda_{1},-\Lambda_{2}))$ is 
obtained from it by the duality of principal series modules.  

Let us show \eqref{statement:I(0,0;2,1)} by direct computation. 
Since $M_{m}$ acts by $\sigma_{0,0}$, 
the vectors $\phi_{\lambda}^{a}(e)$ which correspond to 
the $\lambda$-isotypic subspace of the irreducible factors 
$\overline{a}$ are  
\begin{align*}
& 
\phi_{(3,1)}^{5}(e) 
= v_{-2}^{(-1,-3)},  
&
& 
\phi_{(2,2)}^{7}(e) 
= 
v_{-2}^{(-2,-2)}, 
\\
& 
\phi_{(-2,-2)}^{8}(e) 
= 
v_{2}^{(2,2)},  
&
& 
\phi_{(1,-1)}^{9}(e) 
= 
v_{0}^{(1,-1)}, 
\end{align*}
respectively (cf. Proposition~\ref{proposition:K-spectra of D.S. of Sp(2)}). 
Since 
\begin{align*}
& 
P_{(1,1)} P_{(0,-2)} \phi_{(2,2)}^{7}(e) 
= -8 v_{-2}^{(-1,-3)} = - 8 \phi_{(3,1)}^{5}(e) 
\quad \mbox{and} 
\\
& 
P_{(1,1)} P_{(1,1)} \phi_{(1,-1)}^{9}(e) 
= -16 v_{-2}^{(1,-3)} 
= -16 \phi_{(3,1)}^{5}(e), 
\end{align*}
there exist $\lie{g}$-actions which send 
non-zero elements of $\overline{7}$ and $\overline{9}$ to non-zero elements 
of $\overline{5}$. 
Therefore, in the socle filtration of $I(\sigma_{0,0},(2,1))$, 
the irreducible factors $\overline{7}$ and $\overline{9}$ 
lie above the irreducible factor $\overline{5}$. 
By the horizontal symmetry (Corollary~\ref{corollary:horizontal symmetry}), 
$\overline{7} \oplus \overline{8}$ and $\overline{9}$ 
lie above the factor $\overline{5} \oplus \overline{6}$. 

Next, consider the factors $2 \times \overline{4}$. 
Since the three vectors 
\begin{align*}
& 
P_{(0,-2)} P_{(1,1)} \phi_{(1,-1)}^{9}(e) 
= 
4(2 v_{2}^{(2,-2)} - 3 v_{0}^{(2,-2)} - v_{-2}^{(2,-2)}), 
\\
& 
P_{(2,0)} P_{(-1,-1)} \phi_{(1,-1)}^{9}(e) 
= 
4(v_{2}^{(2,-2)} - 3 v_{0}^{(2,-2)} - 2 v_{-2}^{(2,-2)}), 
\\
& 
P_{(-1,-1)} P_{(2,0)} \phi_{(1,-1)}^{9}(e) 
= 
12(3 v_{0}^{(2,-2)} - v_{-2}^{(2,-2)}) 
\end{align*}
span the three dimensional space 
$V_{(2,-2)}^{U(2)}(\sigma_{0,0})$, 
$\overline{9}$ lies above $2 \times \overline{4}$. 
Moreover, 
since the two vectors 
\begin{align*}
& 
P_{(0,-2)} P_{(0,-2)} \phi_{(2,2)}^{7}(e) 
= 
8 v_{2}^{(2,-2)} 
&
&
\mbox{and} 
&
&
P_{(2,0)} P_{(2,0)} \phi_{(-2,-2)}^{8}(e) 
= 
8 v_{-2}^{(2,-2)} 
\end{align*}
span two dimensional subspace of 
$V_{(2,-2)}^{U(2)}(\sigma_{0,0})$, 
at least two factors in $\overline{9}$, $2 \times \overline{4}$ 
lie below $\overline{7} \oplus \overline{8}$. 
But we know that $\overline{9}$ lies above $2 \times \overline{4}$. 
It follows that the two $\overline{4}$'s lie below 
$\overline{7} \oplus \overline{8}$, 
and \eqref{statement:I(0,0;2,1)} is proved.

\subsection{Proof of 
Theorem~\ref{theorem:main results for Sp(2), X(gamma_{10})} (2)} 
By Proposition~\ref{proposition:candidates for sub} and 
Lemma~\ref{lemma:intertwining operator}(1), 
the socle of $I(\sigma_{0,0}(\Lambda_{1},-\Lambda_{2}))$ is 
a direct sum 
$\overline{0} \oplus \overline{1} \oplus \overline{9}$ of 
irreducible modules of even length, 
and 
that of $I(\sigma_{0,0}(-\Lambda_{1},\Lambda_{2}))$ is a direct 
sum $\overline{7} \oplus \overline{8}$ of even length. 
The remaining irreducible factors are 
$2 \times \overline{4}$, $\overline{5} \oplus \overline{6}$ and 
$\overline{10}$, 
all of whose lengths are odd. 
It follows from Corollary~\ref{corollary:parity condition} that 
there is only one possibility of the socle filtration of 
$I(\sigma_{0,0}(\Lambda_{1},-\Lambda_{2}))$, 
which is the one stated in this proposition. 

\subsection{Proof of 
Theorem~\ref{theorem:main results for Sp(2), X(gamma_{10})} (3)} 
Since $I(\sigma_{0,0},(\pm \Lambda_{2},\Lambda_{1}))$ 
is isomorphic to 
$I(\sigma_{0,0},(\pm \Lambda_{2},-\Lambda_{1}))$ 
by Lemma~\ref{lemma:intertwining operator} (3), 
it suffices to determine the socle filtrations of 
$I(\sigma_{0,0},\pm (\Lambda_{2},\Lambda_{1}))$. 
By Proposition~\ref{proposition:candidates for sub}, 
the socle of $I(\sigma_{0,0}(\Lambda_{2},\Lambda_{1}))$ is 
$\overline{4}$, whose length is odd, 
and 
that of $I(\sigma_{0,0}(-\Lambda_{2},-\Lambda_{1}))$ is 
$\overline{9}$, whose length is even. 
The remaining composition factors are 
$\overline{0} \oplus \overline{1}$, $\overline{7} \oplus \overline{8}$, 
whose lengths are even, 
and 
$1 \times \overline{4}$, $\overline{5} \oplus \overline{6}$ 
and $\overline{10}$, 
whose lengths are odd. 
We shall show that, in the socle filtration of 
$I(\sigma_{0,0}, (\Lambda_{2},\Lambda_{1}))$, 
\begin{equation}
\label{statement:I(sigma,(1,2))}
\mbox{
$\overline{4}$, $\overline{5} \oplus \overline{6}$ 
lie above $\overline{0} \oplus \overline{1}$, 
$\overline{7} \oplus \overline{8}$, 
and $\overline{10}$ lies above 
$\overline{7} \oplus \overline{8}$.}
\end{equation}

By Proposition~\ref{proposition:K-spectra of D.S. of Sp(2)}, 
the vector 
$\phi_{(3,1)}^{5}(e) 
= 
v_{-2}^{(-1,-3)}$ 
corresponds to $\overline{5}$, 
and 
$V_{(3,-1)}^{U(2)}(\sigma_{0,0})^{\ast} 
\simeq 
V_{(1,-3)}^{SU(2)}(\sigma_{0,0})$ 
is a sum of one dimensional subspace 
which corresponds to the factor $\overline{9}$ and 
one dimensional subspace which corresponds to the factor $\overline{0}$. 
Since $\overline{9}$ is the unique quotient of 
$I(\sigma_{0,0},(\Lambda_{2},\Lambda_{1}))$, 
the vector 
\[
P_{(0,-2)} \phi_{(3,1)}^{5}(e) 
= 
3 v_{0}^{(1,-3)} 
+
v_{-2}^{(1,-3)} 
\]
corresponds to $\overline{0}$. 
Since this vector is non-zero, there is a $\lie{g}$-action which sends 
a vector in $\overline{5}$ to a non-zero vector in $\overline{0}$. 
It follows that $\overline{5}$ lies above $\overline{0}$ in the 
socle filtration of $I(\sigma_{0,0},(\Lambda_{2},\Lambda_{1}))$. 
By the horizontal symmetry, $\overline{5} \oplus \overline{6}$ lies 
above $\overline{0} \oplus \overline{1}$. 
Just in the same way, 
\[
P_{(-1,-1)} \phi_{(3,1)}^{5}(e) 
= 
v_{0}^{(0,-2)} 
+ 
v_{-2}^{(0,-2)}  
\not=0
\]
implies that $\overline{5} \oplus \overline{6}$ lies above 
$\overline{7} \oplus \overline{8}$, and 
\[
P_{(2,0)} \phi_{(0,0)}^{10}(e) 
= 
P_{(2,0)} v_{0}^{(0,0)} 
= 
3 
(v_{0}^{(0,-2)} 
+
v_{-2}^{(0,-2)} 
) 
\not=0
\]
implies that $\overline{10}$ lies above $\overline{7} \oplus \overline{8}$. 

Next, we will check that the factor $\overline{4}$, 
which is not in the socle, lies above $\overline{0}$. 

As mentioned above, 
we know that $V_{(1,-3)}^{U(2)}(\sigma_{0,0})$ 
consists of two $1$-dimensional subspaces, 
one of which corresponds to the factor $\overline{0}$ and  the other 
to $\overline{9}$. 
On the other hand, 
$V_{(2,-2)}^{U(2)}(\sigma_{0,0})$ 
consists of three $1$-dimensional subspaces 
one of which corresponds to $\overline{9}$ and the other two to 
the two factors isomorphic to $\overline{4}$. 
The image of the map 
\begin{align*}
P_{(1,1)}& : V_{(2,-2)}^{U(2)}(\sigma_{0,0})^{\ast} 
\simeq V_{(2,-2)}^{U(2)}(\sigma_{0,0}) 
 \to V_{(1,-3)}^{U(2)}(\sigma_{0,0}) 
\simeq V_{(3,-1)}^{U(2)}(\sigma_{0,0})^{\ast}, 
\\
& 
\sum_{j=-1}^{1} \alpha_{2j}\, v_{2j}^{(2,-2)} 
\mapsto 
-6(2 \alpha_{2} - \alpha_{0})\, v_{0}^{(1,-3)} 
-2(\alpha_{0} - 2\alpha_{-2})\, v_{-2}^{(1,-3)} 
\end{align*}
is two dimensional. 
Since $\overline{9}$ is the irreducible quotient module, 
there is a $\lie{g}$-action sending a vector in 
a factor isomorphic to $\overline{4}$ to a non-zero vector in $\overline{0}$. 
It follows that one of the factors isomorphic to $\overline{4}$ 
lies above $\overline{0}$, 
and, by the horizontal symmetry, 
it lies above $\overline{0} \oplus \overline{1}$. 

The proof of the fact that 
$\overline{4}$ lies above $\overline{7} \oplus \overline{8}$ 
is almost the same. 
Indeed, 
$V_{(2,0)}^{U(2)}(\sigma_{0,0})^{\ast} 
\simeq 
V_{(0,-2)}^{U(2)}(\sigma_{0,0})$ 
consists of two $1$-dimensional subspaces, 
one of which corresponds to  
$\overline{7}$ and the other to $\overline{9}$. 
The image of 
\begin{align*}
P_{(0,2)} & : 
V_{(2,-2)}^{U(2)}(\sigma_{0,0})^{\ast}  
\simeq 
V_{(2,-2)}^{U(2)}(\sigma_{0,0}) 
\to V_{(0,-2)}^{U(2)}(\sigma_{0,0}) 
\simeq 
V_{(2,0)}^{U(2)}(\sigma_{0,0})^{\ast}, 
\\
& 
\sum_{j=-1}^{1} 
\alpha_{2j}\, v_{2j}^{(2,-2)} 
\mapsto
6(-2\alpha_{2}+\alpha_{0})\, v_{0}^{(0,-2)} 
- 6(\alpha_{0} - 2\alpha_{-2})\, v_{-2}^{(0,-2)} 
\end{align*}
is two dimensional. 
Therefore, there exists a $\lie{g}$-action which sends a vector in 
a factor isomorphic to $\overline{4}$ to 
a non-zero vector in $\overline{7}$. 
This implies that one of the $\overline{4}$ 
lies above $\overline{7} \oplus \overline{8}$. 

There are two possibilities of the socle filtration of 
$I(\sigma_{0,0},(\Lambda_{2},\Lambda_{1}))$ 
which satisfy both \eqref{statement:I(sigma,(1,2))} and 
Corollary~\ref{corollary:parity condition}, 
namely 
\begin{align*} 
&
\begin{xy}
(0,6)*{\overline{9}}="A_{1}",
(0,2)*{\overline{4} \oplus \overline{5} \oplus
    \overline{6} \oplus \overline{10}}="A_{2}",
(0,-2)*{\overline{0} \oplus \overline{1} \oplus \overline{7} 
\oplus \overline{8}}="A_{3}",
(0,-6)*{\overline{4}}="A_{4}",
\end{xy}
&
&
\mbox{or} 
&
&
\begin{xy}
(0,10)*{\overline{9}}="A_{1}",
(0,6)*{\overline{4} \oplus \overline{5} \oplus
    \overline{6}}="A_{2}",
(0,2)*{\overline{0} \oplus \overline{1}}="A_{3}", 
(0,-2)*{\overline{10}}="A_{4}", 
(0,-6)*{\overline{7} \oplus \overline{8}}="A_{5}",
(0,-10)*{\overline{4}}="A_{6}"
\end{xy}
\end{align*}
But the latter is impossible. 
Indeed, since the $K$-spectrum of $\overline{10}$ is not adjacent to 
those of $\overline{0}$ and $\overline{1}$, 
$\Ext_{\gK}^{1}(\overline{10}, \overline{0} \oplus \overline{1}) = 0$. 
This completes the proof. 

\section{Socle filtration of 
$I(w \cdot (\sigma_{\Lambda_{1}+1,\Lambda_{2}},\Lambda)))$, 
$w \in W(B_{2})$.}
\label{section:determination, X(11)}

Finally, let us determine the socle filtrations of 
the principal series modules which are isomorphic to $X(\gamma_{11})$ 
in the Grothendieck group. 
As in the previous section, 
we first summarize the result. 

\begin{theorem}
\label{theorem:main results for Sp(2), X(gamma_{11})} 
Let $\Lambda = (\Lambda_{1},\Lambda_{2})$ be a nonsingular 
dominant integral infinitesimal character of $Sp(2,\R)$. 
The socle filtrations of 
$I(w \cdot (\sigma_{\Lambda_{1}+1, \Lambda_{2}}, \Lambda))$, 
$w \in W(B_{2})$, are as follows: 
\begin{center}
\begin{tabular}{|c||c||c|} 
\hline 
{\rm (1)} 
& 
$I(\sigma_{\Lambda_{1}+1,\Lambda_{2}},(\Lambda_{1},\pm \Lambda_{2}))$ 
&
$I(\sigma_{\Lambda_{1}+1,\Lambda_{2}},(-\Lambda_{1},\mp \Lambda_{2}))$  
\\ \hline 
\ \vspace{-3.5mm} 
& & 
\\
&
$\begin{xy}
(0,6)*{\overline{11}}="A_{1}",
(0,2)*{\overline{2} \oplus \overline{3} \oplus \overline{9}}="A_{2}",
(0,-2)*{\overline{4} \oplus \overline{5} \oplus \overline{6}}="A_{3}",
(0,-6)*{\overline{0} \oplus \overline{1}}="A_{4}",
\end{xy}$ 
&
$\begin{xy}
(0,-6)*{\overline{11}}="A_{1}",
(0,-2)*{\overline{2} \oplus \overline{3} \oplus \overline{9}}="A_{2}",
(0,2)*{\overline{4} \oplus \overline{5} \oplus \overline{6}}="A_{3}",
(0,6)*{\overline{0} \oplus \overline{1}}="A_{4}",
\end{xy}$ 
\\
\ \vspace{-3.5mm} 
& & 
\\ \hline \hline 
{\rm (2)} 
&
$I(\sigma_{\Lambda_{2},\Lambda_{1}+1},(\Lambda_{2},\Lambda_{1}))$ 
& 
$I(\sigma_{\Lambda_{2},\Lambda_{1}+1},(-\Lambda_{2},-\Lambda_{1}))$ 
\\ \hline 
\ \vspace{-3.5mm} 
& & 
\\
&
$\begin{xy}
(0,4)*{\overline{9}}="A_{1}",
(0,0)*{\overline{4} \oplus \overline{5} \oplus \overline{6} \oplus
    \overline{11}}="A_{2}",
(0,-4)*{\overline{0} \oplus \overline{1} \oplus \overline{2} 
\oplus \overline{3}}="A_{3}",
\end{xy}$ 
&
$\begin{xy}
(0,-4)*{\overline{9}}="A_{1}",
(0,0)*{\overline{4} \oplus \overline{5} \oplus \overline{6} \oplus
    \overline{11}}="A_{2}",
(0,4)*{\overline{0} \oplus \overline{1} \oplus \overline{2} 
\oplus \overline{3}}="A_{3}",
\end{xy}$ 
\\
\ \vspace{-3.5mm} 
& & 
\\ \hline \hline 
{\rm (3)}  
& 
$I(\sigma_{\Lambda_{2},\Lambda_{1}+1},(\Lambda_{2},-\Lambda_{1}))$ 
& 
$I(\sigma_{\Lambda_{2},\Lambda_{1}+1},(-\Lambda_{2},\Lambda_{1}))$ 
\\ \hline 
\ \vspace{-3.5mm} 
& & 
\\
&
$\begin{xy}
(0,4)*{\overline{5} \oplus \overline{6}}="A_{1}",
(0,0)*{\overline{0} \oplus \overline{1} \oplus \overline{2} \oplus
    \overline{3} \oplus \overline{9}}="A_{2}",
(0,-4)*{\overline{4} \oplus \overline{11}}="A_{3}",
\end{xy}$ 
& 
$\begin{xy}
(0,-4)*{\overline{5} \oplus \overline{6}}="A_{1}",
(0,0)*{\overline{0} \oplus \overline{1} \oplus \overline{2} \oplus
    \overline{3} \oplus \overline{9}}="A_{2}",
(0,4)*{\overline{4} \oplus \overline{11}}="A_{3}",
\end{xy}$ 
\\ \hline 
\end{tabular}
\end{center}
\end{theorem}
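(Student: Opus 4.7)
The plan is to reduce by the translation principle to the trivial infinitesimal character $\Lambda=\rho_{m}=(2,1)$, where both $\sigma_{\Lambda_{1}+1,\Lambda_{2}}$ and $\sigma_{\Lambda_{2},\Lambda_{1}+1}$ coincide with $\sigma_{1,1}$. From Theorem~\ref{theorem:composition factors for Sp(2)}, the composition factors of $X(\gamma_{11})$ are $\overline{0}\oplus\overline{1}$, $\overline{2}\oplus\overline{3}$, $\overline{4}$, $\overline{5}\oplus\overline{6}$, $\overline{9}$ and $\overline{11}$, each of multiplicity one and paired by horizontal symmetry (Lemma~\ref{lemma:horizontal symmetry in Sp(2)}). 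The length-parities split as $\{\overline{0},\overline{1},\overline{2},\overline{3},\overline{9}\}$ (even) versus $\{\overline{4},\overline{5},\overline{6},\overline{11}\}$ (odd). The overall strategy mirrors the proof of Theorem~\ref{theorem:main results for Sp(2), X(gamma_{10})}: identify the socle from Proposition~\ref{proposition:candidates for sub}, read off the cosocle via $I(\sigma,\nu)^{*}\simeq I(\sigma,-\nu)$ combined with horizontal symmetry, and then use the parity condition (Corollary~\ref{corollary:parity condition}) together with explicit shift-operator computations to pin down each intermediate floor.

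For case (1), Proposition~\ref{proposition:candidates for sub}(1) forces the socle of $I(\sigma_{1,1},(\Lambda_{1},\pm\Lambda_{2}))$ to be $\overline{0}\oplus\overline{1}$ (these embed by Yamashita's theorem), and the cosocle is $\overline{11}$ by duality. Parity alone is compatible with either a four-floor or a six-floor structure, so to pin down the four-floor arrangement I would exhibit shift operators carrying non-zero vectors of $\overline{2}$, $\overline{3}$, $\overline{9}$ downward to non-zero vectors of $\overline{4}$, $\overline{5}$, $\overline{6}$, in the spirit of the calculations of Section~\ref{section:determination, X(00)}. For case (2), Yamashita's theorem together with Lemma~\ref{lemma:2,3 is sub} gives socle $\overline{0}\oplus\overline{1}\oplus\overline{2}\oplus\overline{3}$ and duality gives cosocle $\overline{9}$; with the four remaining factors $\overline{4},\overline{5},\overline{6},\overline{11}$ all of odd length and both outer floors even, parity forces a three-floor structure with these factors in the single middle floor.

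Case (3) is the substantive one. For $I(\sigma_{1,1},(\Lambda_{2},-\Lambda_{1}))$, Proposition~\ref{proposition:candidates for sub}(1) admits only $\overline{4}$ and $\overline{11}$ as socle candidates, and Lemma~\ref{lemma:intertwining operator}(2) narrows the socle to $\overline{4}$ or $\overline{4}\oplus\overline{11}$. I would show both are actual submodules. For $\overline{4}$: the solution $\phi^{4}_{(2,-2)}(e)=\alpha(v_{1}^{(2,-2)}-v_{-1}^{(2,-2)})$ of Lemma~\ref{lemma:4 is sub}(2) annihilates all shift operators whose target $K$-types lie in the $K$-spectrum of $I$ but outside that of $\overline{4}$, so the associated $K$-equivariant map $V_{(2,-2)}^{U(2)}\to I$ extends to a $\brgK$-embedding. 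For $\overline{11}$: the $V_{(1,1)}^{U(2)}$-isotypic subspace of $I$ has multiplicity one and lies entirely in the $\overline{11}$ composition factor (no other composition factor contains this $K$-type); the shift-operator targets $(0,0)$ and $(-2,-2)$ that would obstruct the extension happen to be absent from the full $K$-spectrum of $I(\sigma_{1,1},\cdot)$, so they vanish automatically, while the composed-operator conditions $P_{(-2,0)}P_{(0,-2)}\phi=0$ and $P_{(1,1)}P_{(0,-2)}\phi=0$ verified in Lemma~\ref{lemma:11 is sub} remove the remaining obstructions; hence $\phi=v_{1}^{(1,1)}$ generates a $\brgK$-submodule isomorphic to $\overline{11}$. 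Consequently $\overline{4}\oplus\overline{11}$ lies in the socle and, by Lemma~\ref{lemma:intertwining operator}(2), equals it; duality gives cosocle $\overline{5}\oplus\overline{6}$, and the remaining even-length factors $\overline{0},\overline{1},\overline{2},\overline{3},\overline{9}$ fill the single middle floor by parity.

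The hardest step is placing $\overline{11}$ in the socle in case (3): parity alone allows $\overline{11}$ to occupy an interior odd-parity floor, and Lemma~\ref{lemma:11 is sub} officially supplies only necessary conditions. The bridging observation is the combinatorial coincidence that $(0,0)$ and $(-2,-2)$ are absent from the $K$-spectrum of $I(\sigma_{1,1},\cdot)$, which forces the candidate vector produced by Lemma~\ref{lemma:11 is sub} to generate exactly an $\overline{11}$-submodule rather than a larger subquotient. Securing the floor-by-floor ordering in cases (1) and (2) beyond what parity delivers will similarly require a handful of explicit shift-operator calculations along the lines of Section~\ref{section:determination, X(00)}.
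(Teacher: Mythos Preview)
Your plan for (2) matches the paper. For (1) you are close, but shift operators will not link $\overline{2}\oplus\overline{3}$ to $\overline{4}$ directly: their $K$-spectra are not even adjacent (the paper verifies $\widehat{K}(\overline{4})\subset\{\lambda_{1}\ge\Lambda_{2}+1,\ \lambda_{2}\le-\Lambda_{2}-1\}$ while $\widehat{K}(\overline{2})\subset\{\lambda_{2}\ge\Lambda_{2}+2\}$). The paper handles this residual ambiguity not by a shift-operator computation but by proving $\Ext_{\gK}^{1}(\overline{4},\overline{2}\oplus\overline{3})=0$ from that non-adjacency, which rules out the six-floor possibility with $\overline{4}$ sitting above $\overline{2}\oplus\overline{3}$. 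You will need this or an equivalent step.

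The substantive gap is in (3). Your direct argument that $\overline{11}$ embeds does not go through: the conditions in Lemma~\ref{lemma:11 is sub} are only \emph{necessary}. They control the image of $P_{(0,-2)}\phi_{(-1,-1)}^{11}$ under two further shifts to $(-3,-3)$ and $(0,-2)$, but they say nothing about $P_{(2,0)}\phi_{(-1,-1)}^{11}$, which lands in the $K$-type $(1,-1)$ shared by $\overline{9}$ and $\overline{11}$; nor do they recursively handle the infinitely many higher $K$-types of $\overline{11}$ that are shared with $\overline{0},\overline{1},\overline{5},\overline{6},\overline{9}$. There is no Yamashita-type theorem for $\overline{11}$ guaranteeing that a minimal-$K$-type check suffices. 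The paper avoids this entirely and argues by contradiction through the intertwining-operator chain \eqref{eq:integral intertwining operator,1}: if the socle of $I(\sigma_{1,1},(\Lambda_{2},-\Lambda_{1}))$ were $\overline{4}$ alone, then, using the structure already established in part (2), the $\overline{11}$-factor of $I(\sigma_{1,1},(\Lambda_{2},\Lambda_{1}))$ would lie in the kernel of the simple-reflection operator $I(\sigma_{1,1},(\Lambda_{2},\Lambda_{1}))\to I(\sigma_{1,1},(\Lambda_{2},-\Lambda_{1}))$ (any submodule of the source containing the $\overline{11}$-factor omits $\overline{4}$, hence has zero image). Consequently $\overline{11}$ would be absent from the image of the full composition $I(\sigma_{1,1},(\Lambda_{1},\Lambda_{2}))\to I(\sigma_{1,1},(-\Lambda_{1},-\Lambda_{2}))$, contradicting the fact that this long intertwining operator is nonzero with image precisely the Langlands quotient $\overline{11}$.
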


Let us prove this theorem. 

By Theorem~\ref{theorem:composition factors for Sp(2)}, 
the composition factors of this principal series module are 
\begin{align*}
\overline{0} 
\oplus 
\overline{1}, \ 
\overline{2} 
\oplus 
\overline{3}, \ 
\overline{4}, \ 
\overline{5} 
\oplus 
\overline{6}, \ 
\overline{9}, \ 
\overline{11}. 
\end{align*}

As in the last section, we consider the case when 
the infinitesimal character $\Lambda$ is trivial. 
In this case, 
$\sigma_{\Lambda_{1}+1,\Lambda_{2}}$ 
and 
$\sigma_{\Lambda_{2},\Lambda_{1}+1}$ 
are both $\sigma_{1,1}$. 
Note that the highest weight $\lambda = (\lambda_{1},\lambda_{2})$ of 
each $K$-type of the principal series module 
$I(\sigma_{1,1}, \nu)$ satisfies $\lambda_{1}+\lambda_{2} \in 2 \Z$, 
and the $\sigma_{1,1}$-isotypic subspace 
$V_{\lambda}^{U(2)}(\sigma_{1,1})$ is 
$\Span{v_{q}^{(\lambda_{1},\lambda_{2})} \mid q \mbox{ is odd}}$.

\subsection{Proof of 
Theorem~\ref{theorem:main results for Sp(2), X(gamma_{11})} (1)} 

By Lemma~\ref{lemma:intertwining operator} (3), 
$I(\sigma_{1,1},(\pm \Lambda_{1},\Lambda_{2}))$ 
and  $
I(\sigma_{1,1},(\pm \Lambda_{1},-\Lambda_{2}))$ 
are isomorphic, 
so it suffices to show the case 
$I(\sigma_{1,1},\pm(\Lambda_{1},\Lambda_{2}))$. 
By Proposition~\ref{proposition:candidates for sub}, 
the socle of $I(\sigma_{1,1},(\Lambda_{1},\Lambda_{2}))$ is 
$\overline{0} \oplus \overline{1}$ and 
that of its dual module 
$I(\sigma_{1,1},(-\Lambda_{1},-\Lambda_{2}))$ is 
$\overline{11}$. 
The remaining composition factors are 
$\overline{2} \oplus \overline{3}$ and $\overline{9}$, 
whose lengths are even, and 
$\overline{4}$ and $\overline{5} \oplus \overline{6}$, whose lengths are odd. 

We first show that, 
in the socle filtration of $I(\sigma_{1,1},(\Lambda_{1},\Lambda_{2}))$, 
$\overline{2} \oplus \overline{3}$ lies above 
$\overline{4} \oplus \overline{5}$; 
and $\overline{9}$ lies above 
$\overline{4}$ and $\overline{5} \oplus \overline{6}$. 

The space $V_{(3,3)}^{U(2)}(\sigma_{1,1})^{\ast} 
\simeq V_{(-3,-3)}^{U(2)}(\sigma_{1,1})$ corresponds to $\overline{2}$, 
and 
$V_{(3,1)}^{U(2)}(\sigma_{1,1})^{\ast} \simeq 
V_{(-1,-3)}^{U(2)}(\sigma_{1,1})$ 
consists of two $1$-dimensional subspaces 
one of which corresponds to $\overline{5}$ 
and the other to $\overline{11}$. 
Since $\overline{11}$ is the unique irreducible quotient of 
$I(\sigma_{1,1},(\Lambda_{1},\Lambda_{2}))$, the image of 
\begin{align*}
& 
P_{(0,-2)} : V_{(-3,-3)}^{U(2)}(\sigma_{1,1}) 
\to 
V_{(-1,-3)}^{U(2)}(\sigma_{1,1}), 
&
&
v_{-3}^{(-3,-3)} 
\mapsto 
v_{-1}^{(-1,-3)}  
-
v_{-3}^{(-1,-3)} 
\not= 0 
\end{align*}
corresponds to the factor $\overline{5}$. 
It follows that $\overline{2}$ lies above $\overline{5}$, 
so $\overline{2} \oplus \overline{3}$ lies above 
$\overline{5} \oplus \overline{6}$. 

For $\overline{9}$, we discuss analogously. 
The 
space 
$V_{(2,0)}^{U(2)}(\sigma_{1,1})^{\ast} 
\simeq 
V_{(0,-2)}^{U(2)}(\sigma_{1,1}) = \C v_{-1}^{(0,-2)}$ 
corresponds to $\overline{9}$. 
Since the image of 
\begin{align*}
&
P_{(1,1)} : V_{(0,-2)}^{U(2)}(\sigma_{1,1}) 
\to 
V_{(-1,-3)}^{U(2)}(\sigma_{1,1}), 
&
&
v_{-1}^{(0,-2)}  
\mapsto 
3(v_{-1}^{(-1,-3)} - v_{-3}^{(-1,-3)}) 
\end{align*}
is non-zero, 
the factor $\overline{9}$ lies above $\overline{5}$, 
so $\overline{9}$ lies above $\overline{5} \oplus \overline{6}$. 

The one-dimensional space 
$V_{(0,-2)}^{U(2)}(\sigma_{1,1})^{\ast} 
\simeq 
V_{(2,0)}^{U(2)}(\sigma_{1,1}) 
= \C v_{1}^{(2,0)}$ 
is also contained in $\overline{9}$. 
The two-dimensional space 
$V_{(2,-2)}^{U(2)}(\sigma_{1,1})$ consists of two $1$-dimensional subspaces, 
one of which corresponds to $\overline{9}$ and the other to $\overline{4}$. 
Since 
\begin{align*}
& 
P_{(0,-2)} 
v_{-1}^{(0,-2)} 
= 5 v_{1}^{(2,-2)} + v_{-1}^{(2,-2)} 
\qquad 
\mbox{and} 
\qquad 
P_{(2,0)} 
v_{1}^{(2,0)} 
= v_{1}^{(2,-2)} + 5 v_{-1}^{(2,-2)}  
\end{align*}
span the space $V_{(2,-2)}^{U(2)}(\sigma_{1,1})$, 
there is a $\lie{g}$-action which sends a vector in $\overline{9}$ to 
a non-zero vector in $\overline{4}$. 
It follows that $\overline{9}$ lies above $\overline{4}$. 

In order to complete the proof of 
Theorem~\ref{theorem:main results for Sp(2), X(gamma_{11})} (1), 
we shall show that the extension group 
$\Ext_{\gK}^{1}(\overline{4}, \overline{2} \oplus \overline{3})$ 
vanishes. 

By the Blattner formula \eqref{eq:Blattner formula}, 
the $K$-spectra of $\overline{2}$ and $\overline{3}$ are contained in 
$\{\lambda=(\lambda_{1}, \lambda_{2}) 
\mid \lambda_{2} \geq \Lambda_{2}+2\}$ 
and 
$\{\lambda=(\lambda_{1}, \lambda_{2}) 
\mid \lambda_{1} \leq -\Lambda_{2}-2\}$, respectively.  
On the other hand, by using the Blattner formula again, 
we see that, if $\lambda_{2} \geq -\Lambda_{2}$ 
(resp. $\lambda_{1} \leq \Lambda_{2}$), 
then the multiplicity of 
$(\tau_{(\lambda_{1}, \lambda_{2})}, 
V_{(\lambda_{1},\lambda_{2})}^{U(2)})$ 
in $\overline{0}$ (resp. $\overline{1}$) is 
$\frac{1}{2}(\lambda_{1}-\lambda_{2}-\Lambda_{1}-\Lambda_{2}+1)$, 
which is the same as 
the multiplicity in $X(\gamma_{4})$ 
(cf. \eqref{eq:multi. of X(gamma_{4})}). 
Since $X(\gamma_{4}) 
= \overline{X}(\gamma_{0}) 
+ \overline{X}(\gamma_{1}) 
+ \overline{X}(\gamma_{4})$, 
the $K$-spectrum of $\overline{X}(\gamma_{4})$ is contained in 
$\{\lambda=(\lambda_{1},\lambda_{2}) 
\mid \lambda_{1} \geq \Lambda_{2} + 1, 
\lambda_{2} \leq - \Lambda_{2} - 1\}$. 
It follows that 
the $K$-spectrum of $\overline{4}$ is not adjacent 
to those of $\overline{2}$ and 
$\overline{3}$, 
so 
$\Ext_{\gK}^{1}(\overline{4}, \overline{2} \oplus \overline{3}) 
=
0$. 

Finally, by a discussion analogous to the last part of the proof of 
Theorem~\ref{theorem:main results for Sp(2), X(gamma_{10})} (3), 
we see that there is only one possibility of the socle filtration of 
$I(\sigma_{1,1},(\Lambda_{1},\Lambda_{2}))$, 
which is nothing but the one stated in this proposition.

\subsection{
Proof of Theorem~\ref{theorem:main results for Sp(2), X(gamma_{11})} (2)} 
By Proposition~\ref{proposition:candidates for sub} again, 
the socle of $I(\sigma_{1,1},(\Lambda_{2},\Lambda_{1}))$ 
is 
$\overline{0} \oplus \overline{1} \oplus \overline{2} \oplus \overline{3}$, 
and that of 
$I(\sigma_{1,1},(-\Lambda_{2},-\Lambda_{1}))$ is 
$\overline{9}$. 
The lengths of these modules are all even. 
The remaining composition factors are 
$\overline{4}$, $\overline{5} \oplus \overline{6}$ and $\overline{11}$, 
whose lengths are all odd. 
Then by Corollary~\ref{corollary:horizontal symmetry}, 
the socle filtration is uniquely determined as in the statement of 
this proposition. 

\subsection{Proof of 
Theorem~\ref{theorem:main results for Sp(2), X(gamma_{11})} (3)} 
By Proposition~\ref{proposition:candidates for sub} and 
Lemma~\ref{lemma:intertwining operator} (2), 
the socle of 
$I(\sigma_{1,1},(\Lambda_{2},-\Lambda_{1}))$ is 
$\overline{4}$ or $\overline{4} \oplus \overline{11}$, 
and 
that of $I(\sigma_{1,1},(-\Lambda_{2},\Lambda_{1}))$ is 
$\overline{5} \oplus \overline{6}$. 

We first show that the socle of 
$I(\sigma_{1,1},(\Lambda_{2},-\Lambda_{1}))$ is 
$\overline{4} \oplus \overline{11}$. 
Assume that it is not, 
namely suppose that the socle consists of $\overline{4}$ alone.   
Then, the socle filtration of $I(\sigma_{1,1},(\Lambda_{2},\Lambda_{1}))$ 
(Theorem~\ref{theorem:main results for Sp(2), X(gamma_{11})} (2)) 
implies that 
the factor $\overline{11}$ in $I(\sigma_{1,1},(\Lambda_{2},\Lambda_{1}))$ 
is contained in the kernel of the intertwining operator 
\[
I(\sigma_{1,1},(\Lambda_{2},\Lambda_{1}))
\rightarrow 
I(\sigma_{1,1},(\Lambda_{2},-\Lambda_{1})).
\]
But it is well known that the composition 
\[
I(\sigma_{1,1},(\Lambda_{1},\Lambda_{2})) 
\rightarrow 
I(\sigma_{1,1},(-\Lambda_{1},-\Lambda_{2})) 
\]
of the sequence of intertwining operators 
\eqref{eq:integral intertwining operator,1} is a non-zero map and 
its image is $\overline{11}$. 
This is a contradiction, so the sole of 
$I(\sigma_{1,1},(\Lambda_{2},-\Lambda_{1}))$ is 
$\overline{4} \oplus \overline{11}$. 

The remaining factors are $\overline{0} \oplus \overline{1}$, 
$\overline{2} \oplus \overline{3}$ and $\overline{9}$, 
whose lengths are all even. 
As before, the possibility of socle filtration of this principal series 
is unique and it is the one stated in this proposition. 


\end{document}